\documentclass{article}
\usepackage[utf8]{inputenc}

\usepackage{amsfonts,amssymb,amsmath,amsthm}
\usepackage{bbm}
\usepackage{bbding}
\usepackage[english]{babel}
\usepackage[T1]{fontenc}
\usepackage[a4paper, total={6.7in, 10in}]{geometry}
\usepackage{calrsfs} 
\usepackage[dvipsnames,table]{xcolor}
\usepackage{upgreek}
\usepackage{enumerate}
\usepackage{enumitem}
\usepackage{stmaryrd}
\usepackage{graphicx}
\graphicspath{ {./images/} }
\usepackage{bbold}
\usepackage{mathrsfs}
\usepackage{tikz}
\usepackage{caption}
\usepackage[colorlinks]{hyperref}
\hypersetup{
    colorlinks=true,
    linkcolor=blue,
    filecolor=true,      
    urlcolor=blue,
    citecolor=cyan,
    }
    \usepackage{listings}
\usepackage{algorithm}
\usepackage[noend]{algpseudocode}
\algrenewcommand\algorithmicrequire{\textbf{Input:}}
\algrenewcommand\algorithmicensure{\textbf{Output:}}

\usepackage{subcaption} 

\newcommand{\C}{\mathcal{C}}

\newcommand{\K}{\mathbb{K}}
\newcommand{\Q}{\mathbb{Q}}
\newcommand{\Z}{\mathbb{Z}}
\newcommand{\N}{\mathbb{N}}
\newcommand{\A}{\mathbb{A}}
\newcommand{\pr}{\mathbb{P}}
\newcommand{\F}{\mathbb{F}}

\renewcommand{\O}{\mathcal{O}}
\renewcommand{\L}{\mathcal{L}}
\newcommand{\I}{\mathcal{I}}
\newcommand{\B}{\mathcal{B}}

\newcommand{\Mb}{\bold{M}}
\newcommand{\Nb}{\bold{N}}
\newcommand{\Eb}{\bold{E}}
\newcommand{\Fb}{\bold{F}}
\newcommand{\Tb}{\bold{T}}
\newcommand{\Db}{\bold{D}}
\newcommand{\Ub}{\bold{U}}
\newcommand{\Vb}{\bold{V}}
\newcommand{\Pb}{\bold{P}}

\newcommand{\Mbt}{\tilde{\bold{M}}}
\newcommand{\Nbt}{\tilde{\bold{N}}}

\newcommand{\Cbar}{\overline{C}}
\newcommand{\kCbar}{\overline{k[C]}}
\newcommand{\Ot}{\tilde{\mathcal{O}}}

\renewcommand{\P}{\mathcal{P}}
\newcommand{\p}{\mathfrak{p}}
\newcommand{\q}{\mathfrak{q}}

\newcommand{\m}{\mathfrak{m}}

\newtheorem{thm}{Theorem}[section]
\newtheorem{prop}[thm]{Proposition}
\newtheorem{definition}[thm]{Definition}
\newtheorem{coro}[thm]{Corollary}
\newtheorem{lem}[thm]{Lemma}
\newtheorem{rmq}[thm]{Remark}
\newtheorem{ex}[thm]{Example}

\DeclareMathOperator{\Supp}{Supp}
\DeclareMathOperator{\Div}{Div}

\DeclareMathOperator{\Tr}{Tr}
\DeclareMathOperator{\Spec}{Spec}
\DeclareMathOperator{\Proj}{Proj}

\DeclareMathOperator{\Card}{Card}

\DeclareMathOperator{\car}{char}
\DeclareMathOperator{\rdeg}{rdeg}

\DeclareMathOperator{\divi}{div}
\DeclareMathOperator{\ord}{ord}
\DeclareMathOperator{\Gal}{Gal}
\DeclareMathOperator{\Frac}{Frac}

\DeclareMathOperator{\diag}{diag}

\DeclareMathOperator{\red}{red}
\DeclareMathOperator{\Res}{Res}
\DeclareMathOperator{\disc}{disc}
\DeclareMathOperator{\com}{com}

\title{Fast computation of Riemann--Roch spaces for singular curves}
\author{Dounia Darkaoui}

\author{Martin Weimann}

\author{%
  Dounia Darkaoui \and Martin Weimann \\
}

\date{%
   LMNO, Universit\'e de Caen Normandie \\ \ \\
    \today
}

\begin{document}

\maketitle

\begin{abstract}
Let $\C$ be a projective curve defined over a field $k$ and let $D$ be a divisor of $\C$. The Riemann--Roch space $\L(D)$ is the set of rational functions on $\C$ for which certain zeros are imposed and certain poles are allowed, with some multiplicities determined by $D$. Riemann--Roch spaces play a fundamental role in algebraic geometry due to the central place of the Riemann--Roch theorem.  They have also important applications, such as coding theory or arithmetic of Jacobians of curves. 
In this article, we present what we believe is the fastest algorithm to date that computes a basis of a Riemann--Roch space for a curve with arbitrary singularities. Our algorithm is deterministic, works over any perfect field $k$, and works with no assumptions on the support of $D$.  
\end{abstract}



\section{Introduction}

Let $\C\subset \pr^2_k$ be an irreducible projective plane curve over a perfect field $k$, with function field  $L=k(\C)$. The Riemann--Roch  space attached to a divisor $D$ on $\C$ is $$\mathcal{L}(D):=\{b\in L \mid \divi(b)\geq D\}\cup \{0\}$$ where $\divi(b)$ stands for the principal divisor of $b$. The elements of $\L(D)$ are rational functions of $\C$ that are allowed to have some certain poles with prescribed multiplicities and are required to have certain zeros with prescribed multiplicities and $\L(D)$ is a $k$-vector space of finite dimension. 

The problem of computing Riemann--Roch spaces has attracted considerable interest since the 1980s,
when Goppa \cite{Goppa1,Goppa2,Goppa3} designed a new family of error-correcting codes (AG codes) obtained by evaluating functions of a Riemann--Roch space $\L(D)$ attached to a curve over a finite field (see \cite{Niederreiter} for a state of the art). The computation of particular Riemann--Roch spaces is also a cornerstone to perform operations in the Jacobian of the curve $\C$ \cite{Huang-Ierardi, Volchek}, with various applications in number theory and algebraic geometry. Among other applications of Riemann--Roch spaces, we can mention diophantine equations \cite{Coates}, integration of algebraic functions \cite{Davenport}, parametrization of rational curves \cite{Hoeij97}, or factorization of bivariate polynomials \cite{Wei}. 
Due to its omnipresence in algebraic geometry and its various applications, the problem of computing Riemann--Roch spaces became a fundamental task of computer algebra in the last four decades, as illustrated by an extensive literature, see e.g. \cite{LeBrigand-Risler,Hache,Volchek,PJ,ACL1,ACL2,ABCL, hess, schmidt, bauch, Ba16, Campillo}.

For curves with nodal or ordinary singularities, there exists now probabilistic algorithms of Las Vegas type with a complexity which is closed to be quasi-optimal \cite{ACL1,ACL2}. In this article, we present what we believe is the fastest algorithm to date that computes a basis of a Riemann--Roch space for a curve with arbitrary singularities (Theorem \ref{thm:main}). Our algorithm is deterministic, works over any perfect field $k$, with no assumptions on the support of $D$.  

We essentially follow Hess's algorithm \cite{hess}, based on the computation of integral bases of fractional ideals of Dedekind rings and on reduction of $k[t]$-modules. We revisit this method, by taking into account the recent developments in algorithmic of local and global fields, a cornerstone being the OM algorithm which allows to compute, represent and manipulate efficiently prime ideals of Dedekind domains \cite{Mo99,GuMoNa12,GuMoNa13,PoWe22}. 

Besides complexity issues, this article has a pedagogical focus. In particular, it intends to clear up the relations between the arithmetic point of view that we adopt here (divisors seen as a $\Z$-combination of prime ideals of Dedekind rings) and the geometric point of view that is often used in the literature (divisors seen as a $\Z$-combination of  points on a smooth model of a curve). Although this gymnastics is classical for experts, we hope this effort will help the non-expert reader understand the various methods of computing Riemann--Roch spaces.

\paragraph{Main result.} We use an algebraic RAM model as in \cite{Ka88}, counting only the number of arithmetic operations and zero tests in $k$. We denote $\O(g(n))$ and $\Tilde{\O}(g(n))$ to respectively hide constants and logarithmic factors with respect to $n$. We assume that fast multiplication of polynomials is used.  We denote $2\le \omega\le 3 $ the constant of linear algebra,  so that two $n\times n$ matrices over a commutative ring can be multiplied with $\O(n^\omega)$ ring operations. The best current bound is $\omega< 2.37286$ in \cite{omega}. We prove :

\begin{thm}\label{thm:main}
There exists a deterministic algorithm that, given  a projective curve $\C\subset \mathbb{P}^2_k$ of degree $n$ defined by an irreducible homogeneous polynomial $F\in k[X_0,X_1,X_2]$ monic and separable in $X_2$, and given a divisor $D\in \Div(\C)$ of non negative degree with effective part $D^+$, computes a compressed basis of $\mathcal{L}(D)$ with $$\Tilde{\O}(n^\omega(\deg(D^+)+\delta(\C)+n ))\,\, \subset \,\, \Ot(n^\omega(\deg(D^+)+n^2))$$
 operations in $k$, where $\delta(\C)=\frac{(n-1)(n-2)}{2}-g$, with $g$ the geometric genus of $\C$. 
\end{thm}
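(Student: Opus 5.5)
We follow Hess's strategy: translate $\L(D)$ into the intersection of two lattices, one over the finite Dedekind ring $A=\overline{k[x,y]}$ (the integral closure of $k[x]$ in $L$, with $x=X_1/X_0$, $y=X_2/X_0$) and one over the infinite ring $A_\infty$ (the integral closure of $k[1/x]_{(1/x)}$). Since $F$ is monic and separable in $X_2$, $L/k(x)$ is separable of degree $n$, every place of $\C$ lies over a place of $k(x)$, and the places over $x=\infty$ correspond to primes of $A_\infty$. Writing $D=D_0+D_\infty$ according to whether the support lies over finite places or over $\infty$, we have
\[
\L(D)=I_0\cap I_\infty,\qquad I_0=\prod_{\p}\p^{-v_\p(D_0)}\subset L,\quad I_\infty=\prod_{\P}\P^{-v_\P(D_\infty)}\subset L .
\]
Here $I_0$ is a fractional $A$-ideal, free of rank $n$ over $k[x]$, and $I_\infty$ is a fractional $A_\infty$-ideal. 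The algorithm then has three steps.

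\textbf{Step 1: integral bases.} First compute integral bases of $A$ and $A_\infty$, together with the prime ideals above the singular and relevant places. Use the OM algorithm (Montes) on $F$ over $k[x]$ locally at each factor of the discriminant, and at $\infty$. By the known complexity results for the OM algorithm / integral basis computation, this costs $\Ot(n^\omega\,\delta(\C)+n^{\omega+1})$ up to the discriminant's degree, which is $O(\delta(\C)+n)$-ish after excluding the smooth part. Each prime $\p$ in the support of $D$ is given by an OK (Okutsu) type, which yields a local uniformizer and a two-element representation.

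\textbf{Step 2: ideal bases.} Compute a $k[x]$-basis of $I_0$ in Hermite form relative to the integral basis. For each prime $\p$ of $D_0$, we build the $\p$-adic part via local computations: multiply by $x$-adic powers and truncate modulo $\mathfrak{p}^{e}$ with precision of order $\deg D^+$. Combining these by CRT on $k[x]$ costs $\Ot(n^\omega\deg D^+ )$, because the total size of the matrices (degrees in $x$) is bounded by $\deg D^+ + \delta(\C)$. The negative part of $D$ only contributes multiplications by ideals that can be handled at the same cost. We treat $I_\infty$ similarly, giving a basis whose coordinate matrix $M\in k(x)^{n\times n}$ has entries of degree bounded by $O(\deg D^+ +\delta+n)$.

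\textbf{Step 3: reduction.} Express the basis of $I_0$ in terms of that of $I_\infty$ via $M$. Then compute a Popov/row-reduced form of $M$ over $k[x]$, i.e.\ a reduced basis with respect to the $1/x$-adic valuation. We use the deterministic algorithm of Gupta–Sarkar–Storjohann–Valeriote / Neiger in $\Ot(n^\omega d)$, where $d$ is the average degree, here $O((\deg D^+ +\delta+n)/n\cdot n)$. A reduced basis $b_1,\dots,b_n$ with degrees $d_i$ gives $\L(D)$ as the span of $x^jb_i$ for $0\le j\le -d_i$. Output these in compressed form, i.e.\ the pairs $(b_i,d_i)$, which avoids paying for $\dim\L(D)$ explicitly.

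\textbf{Main obstacle.} The hard part is the complexity control of Step 2 and of the degree of $M$. We must show that the entries of the Hermite bases have degree bounded by $O(\deg D^+ +\delta(\C)+n)$ in total (sum of column degrees), so that the reduction costs $\Ot(n^\omega(\deg D^++\delta+n))$. We plan to obtain this bound from the index formula $\deg\disc(F)=2\delta'+\deg\disc(A)$ together with Riemann–Hurwitz. The final inclusion in the statement then follows from $\delta(\C)\le (n-1)(n-2)/2$.
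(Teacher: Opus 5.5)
Your outline is Hess's algorithm as the paper uses it: $\L(D)=I(D)\cap I_\infty(D)$, OM-based triangular bases, row reduction of the transition matrix, and compressed output. But the step that actually produces the complexity bound is missing. In Step 3 you assert that the matrix $M$ expressing the basis of $I$ in the basis of $I_\infty$ has, after clearing denominators, degree $O(\deg D^++\delta+n)$.

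To form $M$ you must invert the basis matrix of $I_\infty$. For the triangular basis $(1,h_1/u^{m_1},\dots,h_{n-1}/u^{m_{n-1}})$, the off-diagonal degrees add up under inversion. Cramer's rule only bounds the $u$-degree of the inverse by $\delta_{I_\infty}=m_1+\cdots+m_{n-1}$, plus $n$ coming from $y=x/t$. Moreover $\delta_{I_\infty}$ can be about $(n-1)\exp(I_\infty)$; take the example in the next paragraph with the roles of $t=0$ and $t=\infty$ exchanged. The polynomial matrix you would reduce then has degree a priori only bounded by $\exp(I)+\delta_{I_\infty}+n$, and reducing it costs up to a factor $n$ more than claimed.

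The missing idea is to row-reduce $\tilde{\mathbf N}=u^{e}\mathbf N^*\in k[u]^{n\times n}$ first, where $e=\exp(I_\infty)$; this matrix has degree $\le e+n$ since $\lambda=1$.
\begin{itemize}
\item For a row-reduced matrix the cofactors have degree $\le|\rdeg|=\deg\det$. Hence $u^{e+n}\tilde{\mathbf N}_{\mathrm{red}}^{-1}$ is a polynomial matrix of degree $\le e+n$, computable by high-order lifting.
\item Therefore $\mathbf P=\tilde{\mathbf M}\tilde{\mathbf N}_{\mathrm{red}}^{-1}\in k[t]^{n\times n}$ has degree $\le \exp(I)+\exp(I_\infty)+n$.
\item After reducing $\mathbf P$, the new basis of $I$ is obtained as $\mathbf U\tilde{\mathbf M}=\mathbf P_{\mathrm{red}}\tilde{\mathbf N}_{\mathrm{red}}$, without ever computing $\mathbf U$.
\end{itemize}
Your proposal also never says how the elements $b_i$ are recovered from a reduced form of $M$.

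Second, the bound you single out as the main obstacle is the wrong quantity and is false as stated. The sum of the degrees in a triangular basis of the normalized ideal is $\delta_I$, and this is not $O(\deg D^++\delta+n)$. Take $f(0,X)$ squarefree with $n$ roots in $k$, let $P$ be one of the $n$ places over $t=0$, let $Q$ be a degree-one place at infinity, and set $D=N(Q-P)$, so $\deg D=0$. Then $q_I=t^N$ and $I^*=\prod_{\q\mid t,\,\q\neq\p}\q^{-N}$, so $\delta_I=(n-1)N+\delta$ while $\deg D^+=N$.

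What is true, and suffices because the reduction costs $\Ot(n^\omega d)$ with $d$ the maximal degree, is the exponent bound
$\exp(I)+\exp(I_\infty)\le \deg D^++\deg D^-+\exp(B)+\exp(B_\infty)$. It is proved by normalizing with $q_I=\prod_p p^{-m_p}$, where $m_p=\min_{\p\mid p}\lfloor n_\p/e_\p\rfloor$, and exhibiting $a=\prod_p p^{l_p-m_p}$ with $aI^*\subset B$. You also need $h(q_I)\le \deg D^++\deg D^-$ for the final rescaling by $q_I$. Only at this point does the hypothesis $\deg D\ge 0$ enter, through $\deg D^-\le\deg D^+$. Your remark that the negative part ``can be handled at the same cost'' never uses this hypothesis.

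The discriminant formula together with Riemann--Hurwitz controls only the $D$-independent part. There it does work, and it is a reasonable alternative to the paper's localization argument combined with the genus formula. Add $\deg\disc f=2\delta+\deg\mathfrak{d}_{B/A}$ to its analogue at infinity, using $v_u(\disc f_\infty)=n(n-1)-\deg\disc f$. This gives $n(n-1)=2(\delta+\delta_\infty)+2g-2+2n$, that is $\delta+\delta_\infty=\delta(\C)$, valid when $k$ is the full constant field. From there $\exp(B)\le\delta$ and $\exp(B_\infty)\le\delta_\infty$ follow.
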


The divisor is represented as a $\Z$-combination of places of the function field $k(\C)$,  each place being given by an OM representation of a prime ideal of a suitable Dedekind subring (Definition \ref{def:OM rep}). A compressed basis (Definition \ref{def:compressed_basis}) is a list $(b_1,d_1),\ldots,(b_n,d_n)$ with $b_i\in k(t)[x]$ and $d_i\in \Z$ such that the $k$-vector space $\L(D)$ has basis
$$
\left\{ b_i(t,x) t^j, \,\, i=1,\ldots,n,\,\, j=0,\ldots,d_i \right\},
$$
where $t=X_1/X_0$ and $x=X_2/X_0$ are regarded modulo $F$.  If $D_\infty=\divi_\infty(t)$ is the divisor at infinity, we deduce for free the compressed basis of $\L(D+rD_\infty)$ for any integer $r\in \Z$ by simply replacing the integers $d_i$ with $d_i+r$. The cost of writting down all elements of a basis of $\L(D)$ is estimated in Proposition \ref{prop:expanded_basis}. 

Up to our knowledge, Theorem \ref{thm:main} improves significantly the best current bound $\Tilde{\O}((n^2 +\deg(D^+))^\omega)$ for curves with arbitrary singularities \cite{ABCL}. Moreover, we stress that our algorithm is deterministic, works regardless of the characteristic of the base field, and does not require $D$ to be supported at affine smooth points of $\C$, in contrast to  \cite{ABCL}. If the curve has ordinary singularities, our bound is less good by a factor $n$ than the (probabilistic) bound obtained in \cite{ACL1, ACL2}. Thus there is still room for improvements, a key point being that we represent elements of $\L(D)$ as rational functions with univariate denominators in contrast to \cite{ACL1, ACL2}.

The quantity $\delta(\C)$ is the so-called delta-invariant. It measures how far is $\C$ from being smooth: the less singularities the curve $\C$ has, the smaller $\delta(\C)$ gets and the faster our algorithm works. In particular, $\delta(\C)=0$ if (and only if) $\C$ is nonsingular. If the singular locus and the support of $D$ do not contain points at infinity, we get a slightly better complexity $\Tilde{\O}(n^\omega(\deg(D^+)+\delta(\C))$ (Theorem \ref{thm:main_recall}). 

A more precise statement of Theorem \ref{thm:main} is given in Theorem \ref{thm:RR}, with a complexity expressed in terms of finner (and possibly much smaller) invariants than  $\deg(D^+)$, which reflect how far is $D$ from being of shape $\divi(q)+r D_\infty$ with $q\in k(t)$ and $r\in \Z$ (Proposition \ref{prop:expI_vs_degD}).

\begin{rmq}
The hypothesis $F$ monic and separable with respect to $X_2$ can be freely replaced by the existence of two $k$-rational points $P,Q\in \pr_k^2$ that do not lie on $\C$ (Proposition \ref{prop:monic_separable}). If $\Card(k)>n$, this is not an issue. If $\Card(k)\le n$, it might happen that $\C$ passes through all $k$-rational points of $\pr_k^2$ (filling curves), in which case we have to work over a finite extension $k'$ of $k$ to reduce to the separable monic case. We compute a $k'$-basis of $\L(D)\otimes k'$ with the same complexity up to a logarithmic factor $[k':k]=O(\log(n))$. However, recovering a $k$-basis of $\L(D)$ from a $k'$-basis of $\L(D)\otimes k'$ involves probabilistic routines (see \cite{ACL2} and Section \ref{sec:monic_separable} for details).  
\end{rmq}

\paragraph{Main lines of the proof.} As said, we mainly follow Hess's algorithm \cite{hess} (see also Bauch's thesis \cite{bauch}). The function field $k(\C)$ is naturally isomorphic to a finite extension $L$ of a rational function field $k(t)$ defined by a separable, irreducible, monic polynomial $f\in k[t][X]$ of degree $n$.  
We can associate to $D$  two fractional ideals $I=I(D)$ and $I_\infty=I_\infty(D)$ of the respective integral closures of $k[t]$ and $k[t]_{(1/t)}$ in $L$. The fractional ideal $I$  is a free $k[t]$-module of rank $n$ which represents the affine part of the Riemann--Roch space $\L(D)$, while $I_\infty$ is a free $k[1/t]_{(1/t)}$-module of rank $n$ which represents the part at infinity.
We have $\L(D)=I\cap I_\infty$ and we are reduced to perform the following tasks : 

\noindent

$\bullet$ Step 1. Compute a $k[t]$-basis $\B$  of $I$ and a $k[1/t]_{(1/t)}$-basis $\B_\infty$ of $ I_\infty$.

\noindent

$\bullet$ Step 2. Compute a basis of the $k$-vector space $I\cap I_\infty$.

\noindent

For the first step, we use the recent fast algorithm in \cite{PoWe24} by Poteaux and the second author, following Okutsu's framework \cite{Ok82}. This algorithm is based on the combination of the fast OM algorithm in \cite{PoWe22} by the same authors (factorization of polynomials over local fields),  together with the remarkable MaxMin algorithm of Stainsby \cite{stainsby} which computes from an OM factorization a triangular $p$-basis of a given fractional ideal (see Section \ref{section:triang basis}).

The two  sets $\B$ and $\B_\infty$ form a basis of the $k(t)$-vector space $L$, and it is shown in \cite{hess} (see also \cite{bauch}) that step 2 reduces to compute a row reduced form of  the transition matrix $M\in k(t)^{n\times n }$ between $\B$ and $\B_\infty$. To this aim, we use fast algorithms for multiplying, inverting and reducing polynomial matrices of \cite{neiger-vu,stor13}, taking care of the particular shapes of the involved bases.

%



\paragraph{State of the art.}
Classical methods for computing bases of Riemann--Roch spaces are derived from the pioneer work of Brill and Noether which dates back to the second half of the 19th century. The Brill-Noether algorithm first computes a common denominator $H$ for all elements of $\L(D)$, thought as quotients of homogeneous polynomials of same degrees regarded modulo $F$. Writing $D=D^+-D^-$ as a difference of effective divisors with disjoint supports, Brill and Noether showed that it's sufficient to consider $\divi_0(H)\ge D^+ +\mathcal{A}$ where $\mathcal{A}$ is the so-called adjoint divisor. Given such an $H$, computing numerators amounts to compute a basis of the set of homogeneous polynomials $G$ (mod $F$) such that $\divi_0(G)\ge D-\divi_0(H)$. We have $D-\divi_0(H)\ge D^-+\mathcal{A}\ge 0$. Hence in both cases, we look for \emph{polynomials with prescribed zeros}. The Riemann--Roch theorem gives upper bounds for the degrees of the polynomials $G$ we are looking for, and after translating each condition $v_P(G)\ge n_P$ into a system of linear equations, the algorithm is mainly reduced to linear algebra. We usually talk about geometric methods, although \textit{linear algebra methods} also seems to be an appropriate terminology.

A first drawback of the Brill-Noether method is that it was originally developed for curves with nodal singularities, in which case the adjoint divisor is easy to describe. Modern algorithms with good complexities have been developed for nodal curves in \cite{PJ} using fast linear algebra, and improved in \cite{ACL1} using structured linear algebra based on $k[t]$-modules, leading to the best currently known complexity $\Tilde{\O}((n^2 +\deg(D^+))^{\frac{\omega +1}{2}})$ for nodal curves. This result was then generalized in \cite{ACL2} to curves with ordinary singularities. These algorithms are probabilistic of Las-Vegas type since they required various generic change of coordinates in order to compute a univariate representation of the involved divisors.  
Considering nodal curves is theoretically sufficient since any curve is birationally equivalent to a plane nodal curve (possibly after extension of the base field).
However, computing a nodal model of a singular curve is difficult and expensive. Moreover, considering curves with arbitrary (non ordinary) singularities led to the discovery of new AG-codes with excellent properties \cite{LeBrigand-Risler,TVZ,Niederreiter}. The Brill-Noether algorithm was thus extended to  curves with arbitrary singularities by various authors \cite{LeBrigand-Risler, Hache, Volchek, Campillo}, using blow-ups or local parametrizations of places. These methods have been recently improved in \cite{ABCL} under the assumption that $k$ has characteristic zero, in which case we can use the fast algorithm \cite{PoWe21} to represent singular places by Puiseux expansions. The algorithm of \cite{ABCL} is probabilistic, with complexity $\Tilde{\O}((n^2 +\deg(D^+))^\omega)$. Up to our knowledge, this was up to now the best complexity for curves with arbitrary singularities (assuming characteristic zero). 

A second drawback of the Brill-Noether method is that it supposes usually $D$ to be supported at smooth points. The fast algorithms \cite{PJ,ACL1,ACL2,ABCL} assume this. This hypothesis can not be dropped by a change of variable. Although this constraint is (up to our knowledge) not an issue for applications to AG codes, computing Riemann--Roch spaces attached to singular divisors is a relevant problem since various interesting divisors of a curve are precisely supported at singular places (different or co-different divisor, adjoint divisors, canonical divisor, etc.). For example, considering the canonical divisor has applications to integration of algebraic functions \cite{Davenport}, to parametrization of rational curves \cite{Hoeij97}, to factorization of bivariate polynomials \cite{Wei}, or to the computation of smooth canonical models of curves, from which one can extract various birational invariants such as the Clifford index or the gonality \cite{SSW}. 

Some alternative approaches that overcome the drawbacks of Brill-Noether algorithms have been proposed, with a more arithmetic flavour. Arithmetic methods do not require generic position (except maybe monicity and separability of the input polynomial), do not require $D$ to be supported at regular points, and do not make assumptions on the characteristic of the base field. They use integral bases of fractional ideals in function fields, in the spirit of the works of Coates \cite{Coates} and Davenport \cite{Davenport}. There has been several contributions in the last decades  \cite{schmidt, Matsumoto, bauch, hess}, Hess's algorithm \cite{hess} being a key reference since the years 2000s, implemented in the computer algebra systems \textsf{Magma} \cite{Magma} and \textsf{Singular} \cite{Singular1,Singular2}.  It is shown to run in polynomial time in \cite{hess}, although no complexity estimates are provided. In 2014, Bauch obtained complexity estimates in his thesis  \cite{bauch}, but the computation of integral bases in function fields was not yet competitive with the fastest linear algebra based algorithms of Brill-Noether type. Since then, there have been significant advances regarding computational aspects of function fields (and number fields). A key tool is the OM algorithm developed in 1999 by Montes \cite{Mo99}, following one century of work by Ore \cite{Ore23,Ore28}, MacLane \cite{Ma36a,Ma36b} and Okutsu \cite{Ok82}  and many others, and developed further by Nart and its collaborators \cite{BaNaSt13,GuMoNa10,GuMoNa11,GuMoNa12,GuNaPa12, GuMoNa13,GuMoNa15,AGNPRW} in the last two decades. Starting from a complete discrete rank one valued field $K$, a polynomial $f\in K[x]$  and the Gauss valuation on
$K[x]$, using Newton polygons and residual polynomials, the OM algorithm builds node after node a tree of valuations whose leaves correspond to the irreducible factors of $f$. It returns as a byproduct some extra data that allows to represent and manipulate effectively prime ideals in Dedekind domains. A version with quasi-optimal complexity has been recently developed in \cite{PoWe22}. Among various applications, the OM algorithm led to new algorithms to compute integral bases of fractional ideals \cite{Ba16, GuMoNa15, stainsby} and the quasi-optimal complexity estimates recently obtained in \cite{PoWe24} opened the door to make arithmetic methods competitive with geometric methods, as the present paper intends to show. 

Besides integral basis computations, an other feature of Hess's algorithm (and of fast Brill-Noether algorithms \cite{ACL1, ACL2, ABCL}) is to use basis reduction of free $k[t]$-modules, that is reduction of polynomial matrices. This problem of computer algebra has been considerably developed in the last two decades, and there are now fast algorithms for various tasks concerning polynomial matrices (multiplication, inversion, reduction, etc), see e.g. \cite{stor13,zhou,neiger-vu} and references therein. Some of these results are used in the present paper, and we believe there is still room to use more refined results to improve our complexity bounds.

\paragraph{Organization.}
In Section \ref{sec:2}, we gently introduce divisors and Riemann--Roch spaces in the language of function fields.
In Section \ref{sec:3}, we explain the relations between places and prime ideals of Dedekind rings and we define the OM representation of divisors. We mention too some other classical representations of divisors. Section \ref{sec:integral_basis} is dedicated to the computation of the integral bases of the fractional ideals $I$ and $I_\infty$ mentioned above. The complexity is naturally expressed in terms of invariants attached to $I$ and $I_\infty$, and then expressed in terms of $D$. In the key Section \ref{sec:5}, we compute the intersection $\L(D)=I\cap I_\infty$, a problem closely related to inversion and reduction of polynomial matrices. We prove Theorem \ref{thm:main} in Section \ref{sec:6}, relating the delta invariant of the projective curve $\C$ in terms of indices of integral closures. For the sake of completeness, we add two appendices : we first recall basic facts about integral closures (Annex \ref{sec:A}) and then explain why places of a function field $k(\C)$ correspond bijectively to closed points of the normalization of $\C$ (Annex \ref{sec:curves}).

\paragraph{Acknowledgments.} We thank Vincent Neiger for valuable discussions about reduction of polynomial matrices.

\section{Divisors and Riemann--Roch spaces}\label{sec:2}

A divisor on a smooth algebraic curve defined over an algebraically closed field  is usually defined as a finite formal $\Z$-combination of points of the curve. However, we want to consider singular curves defined over non algebraically closed fields, in which case the notion of point has to be replaced by the more subtle notion of place. 

\medskip

We fix $k$ a base field, assumed to be perfect. Typically $k$ is a finite field or a number field. Classical references for what follows are \cite{stichtenoth, Engler, samuel, serre}.

\begin{definition} 
A function field $L$ over $k$ (denoted $L/k$) is a finitely generated extension of $k$ of transcendence degree one.  A place of $L/k$ is the maximal ideal $P$ of a discrete valuation ring  $\O\subset L$ containing  $k$. We denote by $V_L$ the set of places of $L$. 
\end{definition}

Recall that a ring $\O\subset L$ is a valuation ring if for all $x\in L$, we have $x\in \O$ or $x^{-1}\in \O$. It is a local ring with maximal ideal $P=\O\setminus \O^\times$. The valuation ring $\O$ is discrete if moreover $P$ is principal. Since $L$ has transcendence degree one over $k$, all valuation rings of $L$ containing $k$ are discrete valuation rings \cite[Thm 1.1.6]{stichtenoth}. 

\begin{definition} Let $P$ be a place of $L$, with valuation ring $\O_P$. 
\begin{itemize}
\item The residue field of $P$ is $k_P=\O_P/P$. It is a finite extension of $k$. 

\item The degree of $P$ over $k$ is $\deg(P)=[k_P:k].$ 

\item  A uniformizer of $P$ is a generator of the principal ideal $P$.
\end{itemize}
\end{definition}

\begin{definition}
A valuation on a function field $L/k$ is a \emph{surjective} map $v:L\to \Z\cup\{\infty\}$ wich vanishes on $k$ and such that for all $b,b'\in L$, we have:
    \begin{enumerate}
         \item $v(b)=\infty \iff b=0$,
        \item $v(bb')=v(b)+v(b')$
        \item $v(b+b')\ge \min (v(b),v(b'))$ with equality if $v(b)\neq v(b')$.
    \end{enumerate}
\end{definition}

We can associate to each place a valuation. Let $P\in V_L$ with valuation ring $\O$ with uniformizer $\pi$. Any $b\in L^\times$ writes uniquely as $$b=u \pi^n,\quad u\in \O^\times,\quad n\in \Z.$$ The integer $n$ does not depend on the choice of the generator $\pi$ and the map $v_P(b)=n$ defines a valuation on $L$.  Conversely, a valuation $v$ on $L$  defines a valuation ring and its maximal ideal,
$$\O=\{b\in L \mid v(b)\ge 0\}\qquad {\rm and}\qquad P=\{b\in L \mid v(b)\ge 1\}$$
 and we  check that $v=v_P$. To summarize, there is a  one-to-one correspondence between the places of $L/k$, the (discrete) valuation rings of $L$ containing $k$ and the valuations on $L/k$ (see \cite[Thm 1.1.13]{stichtenoth} for further details). 

%
%


\begin{definition}
A divisor $D$ of $L/k$ is a finite formal $\Z$-combination of places of $L/k$, that is,  
$$
D= \sum_{P\in V_L} n_P\cdot P,  \qquad n_P\in \Z,\quad n_P\ne 0 \,\,\,{\rm for \,\,\, finitely \,\,\, many\,\,\, } P.
$$ 
We say that $D$ is effective if $n_P\ge 0$ for all $P$, denoted $D\ge 0$. \begin{itemize}
\item The degree of $D$ is $\deg (D)=\sum_P n_P\deg (P).$
\item The support of $D$ is $\Supp(D)=\{P\in L \mid n_P\ne 0\}.$
\end{itemize}
\end{definition}

\begin{rmq}
A divisor of a curve $\C$ defined over $k$ is simply a divisor of the field of rational functions $k(\C)$ of $\C$ (which is indeed a function field), see Section \ref{sec:6} for details. 
\end{rmq}

We omit the dependency in the base field $k$ and simply denote $\Div(L)$ the set of divisors of $L/k$. This is a group, namely the free abelian group generated by the set of places of $L/k$. This group admits a partial ordering by letting $E\ge D$ if $E-D\ge 0$. 

\begin{definition}
Let $b\in L^\times$. We have $v_P(b)\ne 0$ for finitely many places \cite[Cor. 1.3.4]{stichtenoth}. The divisor of $b$ is 
$$\divi(b)=\sum_{P\in V_L}v_P(b)\cdot P.$$ 
We say that $P$ is a zero of $b$ if $v_P(b)>0$ and that $P$ is a pole of $b$ if $v_P(b)<0$. A divisor of shape $\divi(b)$ is called a principal divisor.  
\end{definition} 

\noindent
For any $b\in \O_P$, we can "evaluate $b$ at $P$" by setting
$$
b(P):=b \,\,{\rm mod}\, P \,\,\in k_P,
$$ 
and $P$ is a zero of $b$ if and only if $b(P)=0$. If $b\notin \O_P$, we have $v_P(b)<0$ and we let $b(P)=\infty$. If $k$ is algebraically closed, then $k_P\simeq k$ for all $P$, and any $b\in L$ defines a  function $b:V_L\to k\cup \{\infty\}$. This explains the terminology "function field". 

\begin{definition} The Riemann--Roch space of a divisor $D$ is:
$$\mathcal{L}(D):=\{b\in L \mid \divi(b)+D\geq 0\}\cup\{0\}.$$
It is a $k$-vector space of finite dimension and $\mathcal{L}(D)=\{0\}$ if $\deg(D)< 0$ \cite[Prop. 1.4.9 and Cor. 1.4.12]{stichtenoth}.
\end{definition} 

We can write uniquely $D$ as a difference of effective divisors with disjoint support, namely $$D=D^+-D^-,\qquad {\rm where} \quad D^+=\sum_{n_P>0}n_P\cdot P,\quad {\rm and}\quad D^-=-\sum_{n_P<0}n_P\cdot P.$$ 
Likewise, for an element $b\in L$, we define $\divi_0(b):=\divi(b)^+$ \emph{the zero divisor} of $b$ and  $\divi_\infty(b):=\divi(b)^-$ \emph{the polar divisor} of $b$.
We thus have 
$$
b\in \mathcal{L}(D)\iff \,\, \divi_\infty(b)\leq D^+\quad {\rm and}\quad \divi_0(b)\geq D^-.
$$ 
In other words, we look for rational functions $b$ with at most that many poles and at least that many zeros.

\medskip
\noindent
We want to compute a $k$-basis of $\mathcal{L}(D)$.  The first questions are :

$\bullet$ How do we represent the field $L$ and its elements ?

$\bullet$ How do we represent a place $P\in V_L$ ? 

$\bullet$ How do we compute the valuation $v_P(b)$ of an element $b\in L$ ? 


\medskip
\noindent
There are various possible answers to these questions, which depend on how we think a place. There are usually two main points of view. The geometric point of view considers places as closed points of the desingularization of a projective plane curve $\C$. In this setting, places are usually represented using blow-ups \cite{LeBrigand-Risler}, Puiseux expansions \cite{ABCL, schmidt} or Hamburger-Noether expansions \cite{Campillo}. The arithmetic point of view, that we will adopt here,  considers places as prime ideals of Dedekind rings. In this setting, places can be represented by  generators of the prime ideal, by OM representation, or by expansions, as we will see in the next section. We refer the reader to Annex \ref{sec:curves} for the relations between these two points of view. 
%

\section{Representation of places of a function field}\label{sec:3}

We let $\Spec R$ be the set of prime ideals of a ring $R$.  For $p\in \Spec R$, we let $R_p$ be the localization of $R$ at $p$, that is by the multiplicative subset $R\setminus p$. 

\subsection{Places of a rational function field.}

\begin{definition} A function field $K$ is a rational function field over $k$ if $K=k(t)$ for some $t\in K$. 
\end{definition}

Let $K=k(t)$ be a rational function field. Let $A=k[t]$ and $p\in \Spec\,A$. Since $A$ is integrally closed of dimension one, the local ring $A_p$ is principal and integrally closed, hence is a discrete valuation ring of $K$, defining a place $P=pA_p$ of $K$. We say that $P$ is a finite place (with respect to the choice of the generator $t$). Note that $p$ is generated by a unique monic irreducible polynomial of $k[t]$ (abusively still denoted $p$) and the valuation $v_P$ coincides with the usual $p$-adic valuation $v_p$. 

There is an other valuation ring in $K$ containing $k$, the local ring $A_\infty=k[1/t]_{(1/t)}$, whose maximal ideal $(1/t)$ is called the place at infinity, denoted $P=\infty$.  The corresponding valuation $v_\infty$ is the minus degree valuation (take care of sign)$$v_\infty(g/h)=-\deg(g/h):=\deg(h)-\deg(g).$$ 

\begin{prop}\label{prop:palces_of_K}\cite[Thm 1.2.2]{stichtenoth}
The places of $K/k$ are exactly the places $P=pA_p$ with $p\in \Spec A$ and the place at infinity $P=\infty$. 
\end{prop}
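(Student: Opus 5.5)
Let $\O$ be a discrete valuation ring of $K=k(t)$ containing $k$, with maximal ideal $P$ and valuation $v=v_P$. The plan is to split into two cases according to whether $t\in\O$ or not. In each case I first show $\O$ contains one of the two reference local rings, and then use maximality of valuation rings among local subrings (domination) to conclude equality. Separately, I must check that the listed rings are indeed places and are pairwise distinct.

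\emph{Case $t\in\O$.} Then $A=k[t]\subset\O$, and $p:=P\cap A$ is a prime ideal of $A$. I claim $p\neq 0$. If $p=0$, every nonzero polynomial is a unit of $\O$, so $K=\Frac(A)\subset\O$, contradicting that $\O$ is a proper valuation ring (its valuation is surjective). Hence $p$ is a nonzero prime of the PID $A$, so $p$ is generated by a monic irreducible polynomial and is maximal. Elements of $A\setminus p$ lie in $\O\setminus P=\O^\times$, so $A_p\subset\O$ and $pA_p\subset P$.

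\emph{Case $t\notin\O$.} Then $v(t)<0$, so $u=1/t\in P$. Repeating the argument with $A'=k[u]$ and the prime $P\cap A'$, which contains $u$, gives $A'_{(u)}\subset\O$ with $uA'_{(u)}\subset P$, and $A'_{(u)}=k[1/t]_{(1/t)}=A_\infty$.

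In both cases we obtain a DVR $R$ of $K$ (namely $A_p$, or $A_\infty$) with $R\subset\O$ and $\m_R\subset P$, so $\O$ dominates $R$. The key step is that a valuation ring of $K$ is maximal for domination among proper local subrings with fraction field $K$. Concretely, take $x\in\O$ and suppose $x\notin R$. Since $R$ is a valuation ring, $x^{-1}\in\m_R\subset P$, so $v(x)<0$, contradicting $x\in\O$. Hence $\O=R$, and $P$ is $pA_p$ or the place at infinity.

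For the converse, $A_p$ is a DVR since $A$ is a Dedekind (indeed principal) domain, with uniformizer $p$; likewise $A_\infty$ is a DVR with uniformizer $1/t$. Both contain $k$, so both are places. They are distinct: $t\in A_p$ but $t\notin A_\infty$, and for two different monic irreducible polynomials $p\ne q$ we have $1/q\in A_p\setminus A_q$.

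The main obstacle is only the domination/maximality step, and the argument above settles it directly, using that $R$ is itself a valuation ring. Everything else is routine commutative algebra of the PID $k[t]$.
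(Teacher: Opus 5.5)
Your proof is correct and takes essentially the same route as the proof in the cited source; the paper itself gives no argument and only refers to \cite[Thm 1.2.2]{stichtenoth}. There, as in your proposal, one splits according to whether $t\in\O_P$, shows that $P\cap k[t]$ (resp.\ $P\cap k[1/t]$) is a nonzero prime so that $A_p\subset\O_P$ (resp.\ $A_\infty\subset\O_P$), and concludes equality from the maximality of valuation rings; you check that maximality directly via domination ($x^{-1}\in\m_R\subset P$), which is the same fact the paper invokes in its proof of Proposition~\ref{prop:Place_to_primes}.
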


There is thus a one-to-one correspondence between places of $K$ and $\P_A:=\Spec A\cup \{\infty\}$. Moreover, given $p\in \P_A$, the corresponding valuation $v_p$ is easy to describe. Note that geometrically, we may think $\P_A$ to be the set of closed points of the projective line $\pr^1_k=\A^1_k\cup\{\infty\}$.

%
%
%

\subsection{Places vs prime ideals of Dedekind rings.}

Let $L/k$ be an arbitrary function field. Let $t\in L$ be transcendental over $k$. Thus $L$ is a finite extension of the rational function field $K=k(t)$. 

\begin{lem}\label{lem:place_extension}
Let $P$ be a place of $L$. The restriction $P'=P\cap K$ is a place of $K$. We say that $P$ lies above $P'$.
\end{lem}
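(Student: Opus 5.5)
We must show that $P'=P\cap K$ is a place of $K/k$. Let $\O_P$ be the valuation ring of $P$. The plan is to prove that $\O':=\O_P\cap K$ is a discrete valuation ring of $K$ containing $k$ whose maximal ideal is exactly $P\cap K$. The result then follows from the definition of a place together with the fact, recalled in Section \ref{sec:2} (\cite[Thm 1.1.6]{stichtenoth}), that every valuation ring of a function field containing $k$ is discrete.

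First, $\O'$ is a valuation ring of $K$. It is a subring of $K$ containing $k$, since $k\subset \O_P$ and $k\subset K$. For $x\in K^\times$ we have $x\in \O_P$ or $x^{-1}\in \O_P$, and both $x$ and $x^{-1}$ lie in $K$, so $x\in \O'$ or $x^{-1}\in\O'$. Its maximal ideal is $\O'\setminus \O'^\times$. An element $x\in \O'$ is a unit of $\O'$ iff $x^{-1}\in \O_P$, iff $x\notin P$. Hence the maximal ideal is $\O_P\cap K\setminus\{x : x\notin P\}=P\cap K$.

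The main point is that $\O'\neq K$, i.e. $P\cap K\neq 0$, which requires a nontrivial element of $K$. I will show that $t\in P$ or $t^{-1}\in P$. Suppose not. Since $\O_P$ is a valuation ring, either $t$ is a unit of $\O_P$ or $t^{-1}$ lies in $P$, and the latter is excluded, so $t\in\O_P^\times$. Then $k(t)\subset \O_P$, because every polynomial $g(t)\ne 0$ should be a unit. To see this, note that the residue field $k_P=\O_P/P$ is a finite extension of $k$, by the definition of places in Section \ref{sec:2} (\cite[Prop. 1.1.15]{stichtenoth}). Suppose some $g(t)\in P$ with $g\neq0$. Then the image $\bar t$ of $t$ in $k_P$ is a root of $g$, which is consistent, so a cleaner argument is needed.

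The cleaner route is to use finiteness directly. Since $L/K$ is finite and $\O_P\neq L$, pick $z\in L$ with $v_P(z)<0$. The element $z$ satisfies an equation $z^m+a_{m-1}z^{m-1}+\dots+a_0=0$ with $a_i\in K$. If $K\subset \O_P$, then dividing by $z^m$ gives $1=-\sum_i a_i z^{i-m}$. The right-hand side lies in $P$, since each $a_i\in\O_P$ and $z^{-1}\in P$, and this is a contradiction. Hence $K\not\subset \O_P$, so $\O'\neq K$ and some $x\in K$ has $x^{-1}\in P\cap K$, which is nonzero.

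Thus $\O'$ is a proper valuation ring of $K$ containing $k$, so it is discrete and $P'=P\cap K$ is a place of $K$. By Proposition \ref{prop:palces_of_K}, it is either some $pA_p$ or $\infty$. I expect the main obstacle to be the nontriviality step, which is handled by the integral-equation argument above.
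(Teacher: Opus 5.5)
Your final argument is correct, and its first half (that $\O_P\cap K$ is a valuation ring of $K$ containing $k$, whose non-units are exactly the elements of $P\cap K$) is the same as the paper's proof. Where you differ is that you also prove $\O_P\cap K\neq K$, i.e.\ $P\cap K\neq 0$. The paper's ``Clearly'' passes over this, but it is needed: otherwise $P\cap K$ would just be the zero ideal of the field $K$, not the maximal ideal of a \emph{discrete} valuation ring. This step is the only place where the finiteness of $L/K$ enters. Your argument is the standard one: take $z$ with $v_P(z)<0$, divide a monic relation of $z$ over $K$ by $z^m$, and obtain $1\in P$. Your version is therefore the more complete of the two.

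You should, however, delete the abandoned paragraph rather than leave it in. The claim ``$t\in P$ or $t^{-1}\in P$'' is false in general: for a place above $p=t-1$ one has $v_P(t)=0$. Your final proof does not use it.
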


\begin{proof}
Clearly $\O_P \cap K$ is a valuation ring of $K$ which contains $k$. The maximal ideal of $\O_P$ is $P=\{x\in L,\, x^{-1}\notin \O_P\}$ and the  maximal ideal $P'$ is the set $P'=\{x\in K,\, x^{-1}\notin \O_P\cap K\}$, that is $P'=P\cap K$. 
\end{proof}

\begin{definition}\label{def:finite_places}
We say that a place of $L$ is finite if it lies above a finite place of $K$, and is infinite if it lies over $\infty$ (this notion depends on the choice of $t$). 
\end{definition}

Let $B$ and $B_\infty$ be the respective integral closures of $A$ and $A_\infty$ in $L$ (Definition \ref{def:integral_closure}). 
We denote for short 
$$\P_0=\Spec\,B,\quad \P_\infty=\Spec\,B_\infty \ \ \ \text{ and} \ \  \ \P=\P_0\cup \P_{\infty}.
$$ 
We call $\P$ the set of primes of $L/K$ (note that maximal ideals and non zero prime ideals coincide in our context). We say that $\p\in \P$  divides $p\in \P_A$ (or lies above $p$) if $p=\p\cap K$, denoted $\p|p$. 

\medskip

\begin{prop}\label{prop:Place_to_primes} Places of $L/k$ are in one-to-one correspondence with the primes of $L/K$. More precisely :
\begin{enumerate}
\item The set of finite places of $L$ is $V_L(B)=\{\p B_\p \mid \p\in \P_0\}$. 
\item The set of infinite places of $L$ is $V_L(B_\infty)=\{\p B_{\infty,\p} \mid \p\in \P_{\infty}\}$.
\item The places $P\in V_L$ lying above $pA_p\in V_K$ are one-to-one with the primes $\p\in \P$ lying above $p\in \P_A$. 
\end{enumerate}
\end{prop}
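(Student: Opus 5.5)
We want to match places of $L$ with the primes of $B$ and of $B_\infty$. The plan is to use the standard facts on integral closures recalled in Annex \ref{sec:A}. Namely, $B$ and $B_\infty$ are Dedekind domains with fraction field $L$, and an integral closure is the intersection of the valuation rings containing the base ring. We start with an arbitrary place $P$ of $L$ and its valuation ring $\O_P$. By Lemma \ref{lem:place_extension}, $P'=P\cap K$ is a place of $K$. By Proposition \ref{prop:palces_of_K}, either $\O_P\cap K=A_p$ with $p\in\Spec A$, or $\O_P\cap K=A_\infty$.

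\textbf{Case 1: $A\subset \O_P$ (finite place).} Since $\O_P$ is integrally closed (every valuation ring is) and contains $A$, it contains $B$. Set $\p=P\cap B$. This is a prime of $B$, and it is nonzero: it contains $P\cap A=p\neq 0$. Elements of $B\setminus\p$ are units of $\O_P$, so $B_\p\subset \O_P$. Now $B$ is Dedekind, so $B_\p$ is a discrete valuation ring with fraction field $L$. A valuation ring of $L$ is maximal for domination among local subrings of $L$ other than $L$ itself. Since $P$ dominates $\p B_\p$, we get $B_\p=\O_P$ and $P=\p B_\p$.

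Conversely, for $\p\in\Spec B$ nonzero, $B_\p$ is a DVR containing $k$. Hence $\p B_\p$ is a place, and it is finite because it contains $A$. The two maps are inverse to each other, since $\p B_\p\cap B=\p$. This proves item 1.

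\textbf{Case 2: infinite places.} The identical argument with $A_\infty$ and $B_\infty$ replacing $A$ and $B$ gives item 2. We then need that every place lies over exactly one of the two cases. A place $P$ lies over $\infty$ exactly when $A\not\subset\O_P$, that is, when $t\notin\O_P$. It lies over a finite place of $K$ exactly when $t\in \O_P$. The case $t\in \O_P$ with $1/t$ in the maximal ideal cannot occur for an infinite place, so finite and infinite places are disjoint. Note that a prime of $B_\infty$ above $(1/t)$ and a prime of $B$ are never identified, since we consider the disjoint union $\P$. This yields the bijection $V_L\leftrightarrow \P$.

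\textbf{Item 3.} Under the bijection $P\leftrightarrow\p$ we have $P\cap K=\p B_\p\cap K$. Moreover, $\p\cap A=p$ if and only if $\p B_\p\cap K=pA_p$, because localization commutes with these intersections: $\p B_\p\cap A=\p$ contracted to $A$. Likewise in the infinite case. Hence $P$ lies above $pA_p$ if and only if $\p\mid p$.

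\textbf{Main obstacle.} The only nontrivial step is the equality $B_\p=\O_P$. It rests on $B$ being Dedekind with $B\subset\O_P$, which in turn needs integrality of $B$ over $A$ and the finiteness of $L/K$, as in Annex \ref{sec:A}. The rest is bookkeeping.
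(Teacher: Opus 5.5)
Your proof is correct and follows the same route as the paper: $\O_P$ contains $A$ (resp.\ $A_\infty$), hence contains $B$ (resp.\ $B_\infty$) because valuation rings are integrally closed; you set $\p=P\cap B$ and conclude $\O_P=B_\p$ by the maximality of discrete valuation rings. The paper phrases that last step as ``a DVR is a maximal subring of its fraction field'' where you use maximality for domination, and you also write out the converse map, the disjointness of finite and infinite places, and item 3, all of which the paper leaves implicit.
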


\begin{proof} 
The rings $B$ and $B_\infty$ being integrally closed, their localizations at their maximal ideals are distinct valuation rings of $L$, hence define distinct places of $L$. Conversely, if $P\in V_L$ is a finite place, then $P\cap K$ is a finite place of $K$ by Lemma \ref{lem:place_extension}, that is $\O_P\cap K=A_p$ for some prime $p\in \Spec A$ by Proposition \ref{prop:palces_of_K}. Thus $\O_P$ contains $A$, hence contains $B$ since $\O_P$ is integrally closed. Let $\p=P\cap B$. Then $\p\cap A=p$ and thus $\O_P$ contains $B_\p$. Since a discrete valuation ring is a maximal subring of its field of fractions, we get $\O_P=B_\p$ and $P=\p B_\p$. The same reasoning applies for places at infinity.
\end{proof}

If $P\in V_L$, we let $\p\in \P$ be the corresponding prime, called the prime of $P$. Conversely, if $\p\in \P$, we let $P\in V_L$ be the corresponding place. We have
$$
P=\p B_\p\ \ \ \text{ and } \ \ \ \p=P\cap B
$$
for finite places and primes, and similarly for the places at infinity. Recall the following fact :

\begin{lem}\label{lem:qBp=Bp}
If $\p,\q$ are distinct maximal ideals of an integral ring $R$, then $\p\cap\q=\p\q$ and $\q R_\p=R_\p$.
\end{lem}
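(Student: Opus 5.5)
Both claims follow from the comaximality of two distinct maximal ideals. Since $\p\neq\q$ are maximal, $\p+\q$ is an ideal strictly containing $\p$, so $\p+\q=R$. Hence there exist $a\in\p$ and $b\in\q$ with $a+b=1$. I will use this single relation for each part.

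For the equality $\p\cap\q=\p\q$, one inclusion is immediate: $\p\q\subset\p$ and $\p\q\subset\q$, so $\p\q\subset\p\cap\q$. For the reverse inclusion, take $x\in\p\cap\q$ and write
$$x=x\cdot 1=xa+xb.$$
Here $xa=ax$ with $a\in\p$ and $x\in\q$, so $xa\in\p\q$. Likewise $xb$ with $x\in\p$ and $b\in\q$ lies in $\p\q$. Therefore $x\in\p\q$.

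For the equality $\q R_\p=R_\p$, the element $b=1-a$ lies in $\q$. It does not lie in $\p$: otherwise $1=a+b\in\p$, which is impossible. So $b\in R\setminus\p$ is invertible in $R_\p$. Since $b\in\q$, the ideal $\q R_\p$ contains a unit, and hence $\q R_\p=R_\p$.

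Neither step presents a real obstacle. The only point that needs care is the existence of the element $b\in\q\setminus\p$, and comaximality provides it directly. Integrality of $R$ is not actually needed beyond the localization $R_\p$ making sense, which holds for any commutative ring.
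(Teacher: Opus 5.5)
Your proof is correct and follows the paper's argument. Comaximality $\p+\q=R$ gives $\p\cap\q=\p\q$, and an element of $\q\setminus\p$ becomes a unit in $R_\p$, so $\q R_\p=R_\p$; you simply spell out the $x=xa+xb$ step and extract $b\in\q\setminus\p$ from $a+b=1$, where the paper asserts both facts directly.
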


\begin{proof}
As $\p,\q$ are distinct maximal ideals of $R$, we necessarily have $\p+\q=R$, i.e. $\p$ and $\q$ are coprime. Thus intersection and product coincide. Moreover, there exists $y\in \q$ such that $y\notin \p$. Thus $y$ is invertible in $R_\p$ and it follows that $1\in \q R_\p$, that is $\q R_\p=R_\p$. 
\end{proof}

\medskip
\noindent 

The ring $B$ is integrally closed of dimension one, hence is a Dedekind ring \cite[thm 9.3]{Atiyah}. Thus all fractional ideals of $B$ are invertible \cite[thm 9.8]{Atiyah}, the set of fractional ideals of $B$ is the free abelian multiplicative group generated by the primes $\p\in \P_0$ \cite[Cor. 9.8]{Atiyah}, and any fractional ideal $I$ of $B$ admits a unique factorization \cite[Cor 9.4]{Atiyah} $$I=\prod_{\p\in \P_0} \p^{n_\p},\quad \ \ \text{with} \ \ n_\p\in \Z \ \ \text{and} \ \ n_\p=0\ \ \text{for  almost all}\ \ \p.$$
We define $v_\p(I)=n_\p$, with convention $v_\p(0)=\infty$. This induces the $\p$-adic valuation $$v_\p: L\to \Z\cup\{\infty\},\quad v_\p(b):=v_\p(bB).$$ 
In the same way, $B_\infty$ is a Dedekind ring, and we  define analogously the $\p$-adic valuation attached to any prime $\p\in\P_{\infty}$.

%
%

\begin{lem}\label{lem:vP=vp}
Let $P\in V_L$ with prime $\p\in \P$. The valuations $v_P$ and $v_\p$ coincide.
\end{lem}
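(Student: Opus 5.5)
We treat the case of a finite place; the infinite case is identical with $B$ replaced by $B_\infty$ and $\P_0$ by $\P_\infty$. By Proposition \ref{prop:Place_to_primes}, $\O_P=B_\p$ and $P=\p B_\p$. Both $v_P$ and $v_\p$ are group homomorphisms $L^\times\to\Z$: for $v_P$ by the definition of a valuation, and for $v_\p$ because $(bb')B=(bB)(b'B)$ and exponents add in the unique factorization of fractional ideals. So it suffices to compare them on a generating set. Since every $b\in L^\times$ is a quotient of two elements of $B$ ($L$ is the fraction field of $B$), it is enough to prove $v_P(b)=v_\p(b)$ for nonzero $b\in B$.

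Take $b\in B\setminus\{0\}$ and write $bB=\prod_{\q\in\P_0}\q^{n_\q}$ with $n_\q\ge 0$, so that $v_\p(b)=n_\p$. We localize at $\p$, using that localization commutes with products of ideals: $bB_\p=\prod_\q (\q B_\p)^{n_\q}$. By Lemma \ref{lem:qBp=Bp}, $\q B_\p=B_\p$ for every $\q\ne\p$, because distinct nonzero primes of the Dedekind ring $B$ are distinct maximal ideals. Hence $bB_\p=(\p B_\p)^{n_\p}=P^{n_\p}$.

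On the other hand, with $\pi$ a uniformizer of $P$ we have $b=u\pi^{v_P(b)}$ for some $u\in\O_P^\times$. This gives $bB_\p=\pi^{v_P(b)}\O_P=P^{v_P(b)}$. The powers $P^m$ are pairwise distinct for distinct $m\ge0$, since $\pi^m\notin P^{m+1}$ by uniqueness of the decomposition $u\pi^n$. Therefore $v_P(b)=n_\p=v_\p(b)$, and the homomorphism argument extends the equality to all of $L^\times$; both valuations send $0$ to $\infty$.

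The only delicate point is the localization step. It needs $(IJ)B_\p=(IB_\p)(JB_\p)$, which is clear from generators, together with Lemma \ref{lem:qBp=Bp}; everything else is bookkeeping.
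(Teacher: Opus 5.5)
Your proof is correct and follows essentially the same route as the paper's: localize at $\p$, use Lemma \ref{lem:qBp=Bp} to discard the other primes, and identify $\p B_\p$ with $P$. You also spell out two steps the paper leaves implicit, namely the reduction to $b\in B$ via the homomorphism property and the comparison of $bB_\p$ with $P^{v_P(b)}$ using distinctness of the powers of $P$.
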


\begin{proof} 
By Lemma \ref{lem:qBp=Bp}, $v_\p(b)$ coincides with the largest integer $n$ such that $b\in \p^n$, and with the largest integer such that $b\in \p^{n}B_\p$. As $\p B_\p=P$, the claim follows.
\end{proof}



Let us conclude this section with few classical definitions and results.

\begin{definition}\label{def:residual_degree}
Let $\p\in \P$ be a finite place and let $p=\p\cap K$. 
\begin{itemize}
\item The degree of $\p$ is $\deg(\p):=[B/\p B :k]$.
\item The residual degree of $\p/p$ is  $f_\p:=[B/\p B: A/pA]$.
\item The index of ramification of $\p/p$ is $e_\p:=v_\p(p)$. 
\end{itemize}

\end{definition}


We extend trivially this definition for places at infinity, replacing $A,B$ by $A_\infty,B_\infty$. 
Note that in such a case, $p=\infty$ and $A_\infty/pA_\infty=k$. Thus, degree and residual degree coincide.  If $p$ is finite, we identify $p$ with its unique monic generator $p\in A=k[t]$. 

\begin{prop}\label{prop:degree_place}
Let $P$ be a finite place of $L$ with prime $\p$ lying above $p\in \Spec A$. We have  $$\deg(P)=\deg(\p)=f_\p \deg(p).$$
\end{prop}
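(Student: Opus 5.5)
We need to show $\deg(P)=\deg(\p)=f_\p\deg(p)$. The plan has two parts: identify the residue fields, then apply the tower law for degrees.

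\textbf{Step 1: $\deg(P)=\deg(\p)$.} By Proposition \ref{prop:Place_to_primes} we have $\O_P=B_\p$ and $P=\p B_\p$, so $k_P=B_\p/\p B_\p$. Consider the natural map $B/\p\to B_\p/\p B_\p$.
\begin{itemize}
\item It is injective, since its kernel is $\p B_\p\cap B=\p$, which holds because $\p$ is prime.
\item It is surjective. Any element of $B_\p$ has the form $b/s$ with $s\in B\setminus\p$. Since $\p$ is maximal, there is $s'\in B$ with $ss'\equiv 1 \bmod \p$. Then $b/s\equiv bs' \bmod \p B_\p$, because $b/s-bs'=b(1-ss')/s\in\p B_\p$.
\end{itemize}
Hence $k_P\simeq B/\p$ as $k$-algebras. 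It follows that $\deg(P)=[B/\p:k]=\deg(\p)$; here $B/\p B=B/\p$.

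\textbf{Step 2: $\deg(\p)=f_\p\deg(p)$.} We have $A\cap\p=p$, identified with its monic irreducible generator in $k[t]$. The inclusion $A\to B$ then induces a field embedding $A/pA\hookrightarrow B/\p$, and we get the tower $k\subset A/pA\subset B/\p$. By multiplicativity of degrees,
$$[B/\p:k]=[B/\p:A/pA]\,[A/pA:k]=f_\p\,[k[t]/(p):k].$$
Since $k[t]/(p)$ has $k$-basis $1,t,\dots,t^{\deg p-1}$, we have $[k[t]/(p):k]=\deg(p)$, which gives $\deg(\p)=f_\p\deg(p)$.

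\textbf{Finiteness.} The multiplicativity used in Step 2 requires all degrees to be finite. The residue field $k_P$ is finite over $k$ by the definition recalled earlier in the paper, so $B/\p$ is finite over $k$. Finiteness of the two intermediate degrees then follows from the tower.

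\textbf{Main obstacle.} The only non-formal point is the identification $B/\p\simeq B_\p/\p B_\p$ in Step 1. It rests on the maximality of $\p$, and this has already been noted in the paper: nonzero primes of $B$ are maximal.
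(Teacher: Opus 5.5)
Your proof is correct and follows the paper's argument. You identify $\O_P/P=B_\p/\p B_\p$ with $B/\p$, then apply multiplicativity of degrees in the tower $k\subset A/pA\subset B/\p$ with $[A/pA:k]=\deg(p)$; the only difference is that you check the isomorphism $B/\p\simeq B_\p/\p B_\p$ and the finiteness of degrees explicitly, which the paper treats as immediate.
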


\begin{proof}
First equality is immediate since $\O_P/P = B_\p/\p B_\p\simeq B/\p B$.
Second equality follows from the multiplicative property $[B/\p B: k]=[B/\p B: A/pA][A/pA:k]$ together with $[A/pA:k]=\deg(p)$.
\end{proof}

The following result is known as the fundamental inequality \cite[Thm 3.3.4 and Thm. 3.3.5]{Engler} :

\begin{prop}\label{prop:sum_ep_fp}
We have $\sum_{\p|p} e_\p f_\p \le [L:K]$, and equality holds if the extension is separable. 
\end{prop}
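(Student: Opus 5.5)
We prove the fundamental inequality by passing to the completion of $K$ at $p$, where the primes above $p$ become factors of the defining polynomial of $L$.

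\textbf{Setting and first reduction.} Fix $p\in\P_A$ and work in the Dedekind ring $B$ (or $B_\infty$ if $p=\infty$; the argument is identical). Localize at $p$: let $S=A\setminus pA$. Then $A_p$ is a discrete valuation ring and $B_p:=S^{-1}B$ is its integral closure in $L$. $B_p$ is a semi-local Dedekind ring whose maximal ideals are exactly $\p B_p$ for $\p|p$ (Lemma \ref{lem:qBp=Bp}). Factor $pB_p=\prod_{\p|p}(\p B_p)^{e_\p}$. By the Chinese remainder theorem, valid since the $\p$ are pairwise coprime, we get
\[
B_p/pB_p\simeq \prod_{\p|p} B_p/\p^{e_\p}B_p .
\]
Each factor has a filtration by the powers $\p^i/\p^{i+1}$, each a one-dimensional $B/\p$-vector space since $B_\p$ is a DVR. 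So $\dim_{A/pA} B_p/pB_p=\sum_{\p|p}e_\p f_\p$, and the statement becomes a bound on this dimension.

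\textbf{Inequality.} We need $\dim_{A/pA}B_p/pB_p\le n=[L:K]$. Take $x_1,\dots,x_m\in B_p$ whose residues are $A/pA$-linearly independent, and show they are $K$-linearly independent, which gives $m\le n$. Suppose $\sum a_i x_i=0$ with $a_i\in K$ not all zero. Multiply by a suitable power of the uniformizer $p$ so that all $a_i\in A_p$ and at least one $a_i$ is a unit. Reducing mod $p$ then gives a nontrivial relation among the residues, a contradiction. Note that this argument does not need $B_p$ to be finitely generated over $A_p$.

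\textbf{Equality in the separable case.} This is the main obstacle. The plan is to show $B_p$ is a free $A_p$-module of rank $n$. Then $B_p/pB_p\simeq (A/pA)^n$ and the dimension is exactly $n$. Separability gives a nondegenerate trace form $\Tr_{L/K}$. Take a basis $w_1,\dots,w_n$ of $L/K$ lying in $B$ (clear denominators of any basis) and let $w_i^\ast$ be the dual basis for the trace. For $b\in B_p$ we have $\Tr(bw_i)\in A_p$, because traces of integral elements are integral over $A_p$ and lie in $K$, hence lie in the integrally closed ring $A_p$. Therefore $B_p\subset\bigoplus A_p w_i^\ast$. Since $A_p$ is a PID, the submodule $B_p$ is free; it contains $\bigoplus A_p w_i$, so its rank is exactly $n$. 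Combining this with the first reduction gives $\sum_{\p|p} e_\p f_\p=n$.
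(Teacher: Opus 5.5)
Your proof is correct, and it takes a different route from the paper. The paper gives no argument of its own. It cites Engler and Prestel, that is, the general theory of extending an arbitrary valuation of $K$ to a finite extension $L$. There the inequality is proved with no Dedekind ring and no discreteness hypothesis; in that generality one typically relies on the approximation theorem for the finitely many extensions. Equality is then obtained for discrete valuations in the separable case.

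You instead give the classical Dedekind-domain proof:
\begin{enumerate}
\item localize at $p$;
\item identify $\sum_{\p|p}e_\p f_\p$ with $\dim_{A/pA}B_p/pB_p$, using the Chinese remainder theorem and the filtration by powers of $\p$;
\item bound this dimension by $n$ with the lifting argument;
\item get equality from the freeness of $B_p$ over the discrete valuation ring $A_p$, via the trace dual basis.
\end{enumerate}
Your route is self-contained in the paper's setting. It only uses structure the paper already has: $B$ and $B_\infty$ are Dedekind, and distinct primes are coprime as in Lemma \ref{lem:qBp=Bp}. It also shows exactly where separability enters, namely only in the freeness of $B_p$. The cited route buys generality, since it never needs the integral closure to be Dedekind.

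One cosmetic point: your opening sentence announces a passage to the completion of $K$ at $p$ and to the local factors of $f$, but the argument never uses it. Drop that sentence. That alternative, via Proposition \ref{prop:primes_vs_local_factors} and $\deg F_\p=e_\p f_\p$, would in any case presuppose the local version of the identity you are proving.
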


\subsection{OM-representations of places}

We assume from now on that $L=K[X]/(f)$ for some irreducible monic separable polynomial $f\in A[X]$.   
We let $n=\deg(f)=[L:K]$. Denoting $x=X\mod f$, we can write
$$
L=K(x)=k(t,x)=k(t)[x]
$$ 
and any $b\in L$ writes uniquely as $b=a_0 + a_1 x +\cdots + a_{n-1} x^{n-1}$, with $a_i\in k(t)$.

\medskip
By Proposition \ref{prop:Place_to_primes}, it's enough to represent the primes of $L/K$ to represent the places of $L/k$. Given $p\in \P_A$, we denote $\widehat{A}_p$ the $p$-adic completion of $A_p$. 

\begin{prop}\label{prop:primes_vs_local_factors}
There is a one-to-one correspondence between the primes $\p$ dividing $p$ and the monic irreducible factors $F_\p$ of $f$ in $\widehat{A}_p[X]$. 
\end{prop}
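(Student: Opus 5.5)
The plan is to pass through the completion $\widehat{K}_p=\Frac(\widehat{A}_p)$ and use the classical decomposition
$$L\otimes_K \widehat{K}_p \;\simeq\; \widehat{K}_p[X]/(f)\;\simeq\;\prod_{F} \widehat{K}_p[X]/(F),$$
where $F$ runs over the monic irreducible factors of $f$ in $\widehat{K}_p[X]$. This isomorphism comes from the Chinese remainder theorem, and the factors are pairwise coprime because $f$ is separable. Since $f$ is monic with coefficients in $A\subset\widehat{A}_p$ and $\widehat{A}_p$ is integrally closed (it is a complete DVR), Gauss's lemma shows that these monic factors have coefficients in $\widehat{A}_p$. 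Hence factoring over $\widehat{K}_p$ and factoring over $\widehat{A}_p$ give the same monic irreducible factors.

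\textbf{From factors to primes.} Each $L_F:=\widehat{K}_p[X]/(F)$ is a finite field extension of the complete field $\widehat{K}_p$, so $v_p$ extends uniquely to a valuation $w_F$ on $L_F$. Composing the embedding $L\hookrightarrow L_F$, $x\mapsto X \bmod F$, with $w_F$ and normalizing gives a discrete valuation on $L$ that is trivial on $k$ and restricts to a multiple of $v_p$ on $K$. Its valuation ring therefore contains $A$, hence contains $B$. By Proposition \ref{prop:Place_to_primes} this valuation is $v_\p$ for a unique prime $\p\mid p$, given by $\p=\{b\in B : w_F(b)>0\}$. The case $p=\infty$ is handled identically with $A_\infty$ and $B_\infty$.

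\textbf{From primes to factors.} Given $\p\mid p$, I would complete $L$ with respect to $v_\p$ to get $\widehat{L}_\p$, which contains $\widehat{K}_p$ and $x$. The compositum $\widehat{K}_p(x)\subset \widehat{L}_\p$ is complete, being finite over a complete field, and it contains the dense image of $L$. So $\widehat{L}_\p=\widehat{K}_p(x)$. The minimal polynomial $F_\p$ of $x$ over $\widehat{K}_p$ divides $f$ and is an irreducible factor of $f$.

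\textbf{Bijectivity.} This is the step I expect to be the main obstacle. The two maps are inverse to each other once we know two facts. First, the valuation on $L$ induced from $F$ determines $F$: by weak approximation, distinct factors give inequivalent valuations. Second, $F_\p$ is recovered from $\p$, which holds because $\widehat{L}_\p\simeq L_{F_\p}$ as valued fields compatibly with $x$. For distinctness I would argue by contradiction: if two distinct factors $F\neq G$ induced the same valuation on $L$, then both $L_F$ and $L_G$ would be completions of $L$ for the same valuation, hence isomorphic via an isomorphism sending $x\mapsto x$. The minimal polynomials of $x$ would then coincide, forcing $F=G$. As a consistency check, Proposition \ref{prop:sum_ep_fp} in the separable case together with $\sum_F \deg F=n$ and $\deg F_\p=e_\p f_\p$ confirms that no prime is missed. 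Alternatively, one can cite \cite[Thm 3.3.4, 3.3.5]{Engler} or \cite[Ch. II, §3]{serre} for the full statement.
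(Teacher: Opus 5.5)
Your proof is correct and follows the same route as the paper: the paper's proof simply cites \cite[Prop. 8.2]{Neukirch} (extensions of $v_p$ to $L$ correspond to the irreducible factors of $f$ over the completion $\widehat{K}_p$) together with Proposition \ref{prop:Place_to_primes}, and you have essentially reproved Neukirch's result via $\widehat{L}_\p=\widehat{K}_p(x)$ and uniqueness of completions. One caveat: at $p=\infty$ your Gauss-lemma step is not ``identical'', because $f$ has coefficients in $k[t]\not\subset\widehat{A}_\infty=k[[1/t]]$, so its monic factors over $k((1/t))$ need not be integral (e.g.\ $X^2-t^3$ with $\car(k)\neq 2$); there one should factor over $\widehat{K}_\infty$, or equivalently factor $f_\infty(Y)=t^{-n\lambda}f(t^\lambda Y)$ from Lemma \ref{lem:y} in $\widehat{A}_\infty[Y]$, as the paper implicitly does.
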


\begin{proof}
This follows from \cite[Prop. 8.2]{Neukirch} combined with Proposition \ref{prop:Place_to_primes}. 
\end{proof}


Given any $p\in \P_A$, the OM-algorithm allows to compute the factors $F_\p$ up to an arbitrary $p$-adic precision. This powerful algorithm has been developed by Montes \cite{Mo99} in the continuation of the pioneer work of Ore \cite{Ore23, Ore28}, MacLane \cite{Ma36a, Ma36b} and Okutsu \cite{Ok82}. In the last decade, it has been improved and generalized in various papers, see e.g. \cite{GuMoNa11, GuMoNa12, GuMoNa13, GuMoNa15, BaNaSt13, GuNaPa12, PoWe22, AGNPRW}. 

\medskip

Briefly speaking, the OM-algorithm detects at each iteration a partial factorization of $f$ on a higher order Newton polygon and on a higher order residual polynomial  (double dissection process), until the factorization is complete. It returns as a byproduct \emph{a type} $t_\p$ (also called Okutsu frame, or OM-representation)  attached to each prime $\p$. This object is a sequence 
\begin{equation}\label{eq:type}
t_\p=[p,\phi_{1},\ldots,\phi_{r_\p},\phi_{\p}]
\end{equation}
where $p=\p\cap A$ and where the $\phi_{i}\in A[X]$ are some particular irreducible monic polynomials  of strictly increasing degrees $d_1|\cdots|d_{r_\p}$, called key polynomials. The last key polynomial $\phi_\p$ is a $p$-adic approximation of $F_\p$ with a high enough precision which allows to distinguish $F_\p$ from the remaining factors $F_\q$ of $f$, see e.g. \cite[Sec.3]{BaNaSt13} for details.

\begin{rmq}
Key polynomials have been introduced by Mac Lane in the 1930s \cite{Ma36a,Ma36b}. When $k$ has characteristic zero or large enough, some appropriate "approximate roots" of $f$ may play the role of key polynomials. Approximate roots were originally introduced by Abhyankhar \cite{Ab89} to study germs of complex plane curves without using Puiseux series  (see e.g. \cite{Pop02} for a nice survey on this topic).
\end{rmq}

Let us give two simple examples. 

\begin{ex}\label{ex:OM_representation} 
$\bullet$ Let $f=X^3-X^2+t^2$ and let $p=(t)$. We assume that $\car(k)\ne 2$. The factorization of $f$ in $k[[t]][X]$ is $f=f_1 f_2 f_3$ with $f_1=X-t+O(t^2)$ , $f_2=X+t+O(t^2)$ and $f_3=X-1+O(t^2)$. Some OM-representations of the primes $\p_1,\p_2,\p_3$ lying above $(t)$ are given by
$$
t_{\p_1}=[t,X-t],\qquad t_{\p_2}=[t,X+t],\qquad t_{\p_3}=[t,X-1].
$$
\noindent
$\bullet$ Let $f=(X^2+1)^2+tX+t+t^2$. This polynomial is irreducible in $k((t))[X]$. Hence, there is a unique prime $\p$ lying above $(t)$. An OM-representation is
$$
t_{\p}=[t,X,X^2+1,(X^2+1)^2+t].
$$
\end{ex}

In practice, the types $t_\p$, $\p|p$ come together with extra numerical data which allows to calculate various arithmetic quantities (e.g. ramification, residual degree, index, intersection multiplicities), see e.g. \cite{GuMoNa13}. An important fact for us is that these data allow for a fast computation of the valuation $v_\p(b)$ of any $b\in L$. Concretely, this task is reduced to compute the $(\phi_{1},\ldots,\phi_{r_\p})$-multiadic expansion of $b$, which can be achieved in quasi-linear time. An other important feature of the OM-representation is that it allows also for a quick computation of an integral basis of a fractional ideal (see \cite{GuMoNa15, PoWe24} and Section \ref{sec:integral_basis}), which is a key point to compute Riemann--Roch spaces using arithmetic approach. 

\medskip

Let us insist that an OM-representation of $\p$ is given \textit{a priori} at small precision. If needed, we can compute the $\phi_\p$'s up to an arbitrary  precision using a single-factor lifting procedure \cite{GuNaPa12} or a multi-factor lifting procedure \cite{PoWe22}, the latter approach having a quasi-linear complexity thanks to a valuated Hensel's lemma. Higher precisions are required for various tasks such as computing valuations, computing two-elements representation of ideals, or computing integral bases of fractional ideals. 

\medskip

The OM-algorithm has been improved in \cite{PoWe22}. This fast version computes an OM-representation of all primes $\p$ dividing $p$ with $\Ot(n \delta_p)$ operations in the residue field $k_p$ if the residual characteristic is zero or high enough - this is quasi-linear in the size of the output -, or $\Ot(n \delta_p + \delta_p^2)$ operations in $k_p$ otherwise, where $\delta_p\le  v_p(\disc(f))$ is the so-called Okutsu invariant. We refer the reader to \cite{BaNaSt13, PoWe22, PoWe24} for more details about complexity issues.

\subsection{OM-representation of divisors}

\begin{definition}\label{def:OM rep}
Let $D=\sum_P n_P \cdot P$ be a divisor of $L$. An OM-representation of $D$ is a list 
$$
t_D=\{(t_P,n_P),\,P\in \Supp(D)\}
$$
where $t_P=t_\p$ is an OM-representation (a type as in \eqref{eq:type}) of the prime $\p\in \P$ associated to $P$. 
\end{definition}

Let us insist on the fact that the computation of the Riemann--Roch space $\L(D)$  does not involve only the places $P\in \Supp(D)$~: we need also to control that $v_P(b)\ge 0$ at all remaining places $P\notin \Supp(D)$. In particular, this requires to compute an OM-representation of all places centered at singular points of the projective curve defined by $f$. We do not assume that this data is given. However, this task has to be done only once if we need to compute Riemann--Roch spaces attached to several divisors.

\begin{rmq}
In many interesting situations, we may want  to compute the OM-representation of a divisor $D$ of $L$ which is of particular interest, and simply defined by its intrinsic properties. For instance the canonical divisor of a curve, the adjoint divisor of a curve, the conductor of $L/K$, the different, the codifferent, etc. For such divisors, there are explicit formula for the values $v_\p(D)$ in terms of the OM-invariants of $f$  (see e.g. \cite{Na14}), and it follows from \cite{PoWe22} that we can compute an OM-representation of $D$ in quasi-linear time.
\end{rmq}

\subsection{Other usual representations of places}

For the sake of completeness, let us mention two other representations of places which are classically used in the literature.

\subsubsection{Representation of places by Puiseux expansions.} \label{ssec:parametrization}

This representation of places is widely used in algebraic geometry, in particular for computing Riemann--Roch spaces \cite{ABCL}. Geometrically, the irreducible factor $F_\p$ of $f$ gives the local equation of a branch  $C_\p$ of the affine plane curve $C$ defined by $f$. In this setting, the valuation $v_\p$ corresponds to the intersection multiplicity with $F_\p$, which is usually computed by means of resultants. Namely :

\begin{prop}\label{prop:valuation_via_resultant} Let $h\in L=K[x]$. The $\p$-adic valuation of $h$ is given by 
\begin{equation}\label{eq:resultant}
v_\p(h)=\frac{v_p (\Res(h,F_{\p}))}{f_\p},
\end{equation}
where we still denote by $h$ the canonical lifting of $h$ to $K[X]$. 
\end{prop}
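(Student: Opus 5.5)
The identity will come from computing both sides in the completion $\widehat{K}_p:=\Frac(\widehat{A}_p)$. Write $\widehat{L}_\p=\widehat{K}_p[X]/(F_\p)$. By Proposition \ref{prop:primes_vs_local_factors} this is the completion of $L$ at $\p$. It is a finite extension of $\widehat{K}_p$ of degree $\deg F_\p=e_\p f_\p$, which is where separability and Proposition \ref{prop:sum_ep_fp} enter. The valuation $v_\p$ on $L$ extends uniquely to $\widehat{L}_\p$ with the same value group $\Z$, because completion does not change the ramification index. We may take $h\neq 0$; if $\Res(h,F_\p)=0$, both sides are infinite, since $F_\p$ is irreducible and does not divide $h$ whenever $h\ne 0$ in $L$. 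Here $h$ is the canonical lift of degree less than $n$.

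The key step is the classical norm formula $\Res(h,F_\p)=N_{\widehat{L}_\p/\widehat{K}_p}(h(\bar x))$, where $\bar x$ is the class of $X$. This holds because $F_\p$ is monic: $\Res(F_\p,h)=\prod_{F_\p(\alpha)=0} h(\alpha)$, and for monic $F_\p$ the orders $\Res(h,F_\p)$ and $\Res(F_\p,h)$ differ only by a sign, which does not affect valuations. The product over the roots $\alpha$ of $F_\p$ in an algebraic closure is exactly the norm of $h(\bar x)$. If $F_\p$ is only known to some precision, I would first argue with the exact factor and note that the formula uses $F_\p$ itself.

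Next I apply the standard relation between the valuation of a norm and the extended valuation in a complete field: for $y\in\widehat{L}_\p$,
$$v_p\big(N_{\widehat{L}_\p/\widehat{K}_p}(y)\big)=[\widehat{L}_\p:\widehat{K}_p]\cdot w(y),$$
where $w$ is the unique extension of $v_p$, normalized so that $w|_{\widehat{K}_p}=v_p$. The justification is uniqueness of extensions over a complete field: all conjugates of $y$ have the same $w$-valuation, and the norm is their product, raised to the inseparability degree, which is $1$ here. Since $v_\p(p)=e_\p$, we have $v_\p=e_\p w$. Combining,
$$v_p(\Res(h,F_\p))=e_\p f_\p\cdot \frac{v_\p(h)}{e_\p}=f_\p\, v_\p(h),$$
which is \eqref{eq:resultant}.

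The main obstacle is identifying the completion $\widehat{L}_\p$ with $\widehat{K}_p[X]/(F_\p)$ and checking that $v_\p$ agrees with the normalized extension of $v_p$ there. This is the content behind Proposition \ref{prop:primes_vs_local_factors}, via \cite[Prop. 8.2]{Neukirch}, together with $e(\widehat{L}_\p/\widehat{K}_p)=e_\p$ and $f(\widehat{L}_\p/\widehat{K}_p)=f_\p$. I would cite these facts rather than reprove them. The infinite place is handled identically, with $A_\infty$ replacing $A_p$.
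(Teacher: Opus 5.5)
Your proof is correct and follows essentially the same route as the paper: the paper writes $\Res(h,F_\p)$ by the Poisson formula as the product of the values $h(\theta_i)$ over the roots of $F_\p$, uses uniqueness of the extension $\bar v$ of $v_p$ over the completion to get $\bar v(h(\theta_i))=\bar v(h(\theta))$ and $v_\p(h)=e_\p\,\bar v(h(\theta))$, and concludes with $\deg F_\p=e_\p f_\p$. Your norm formulation $N_{\widehat{K}_p[X]/(F_\p)\,/\,\widehat{K}_p}(h(\bar x))$ repackages that product, with one harmless imprecision: the sign relation between $\Res(h,F_\p)$ and $\Res(F_\p,h)$ holds without any monicity assumption, whereas monicity is what removes the leading-coefficient factor in $\Res(F_\p,h)=\prod h(\alpha)$.
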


\begin{proof}
Let $\theta$ be a root of $F_\p$ in a fixed algebraic closure $\K$ of the $p$-adic completion of $K$. By the henselian property, the valuation $v_p$ extends uniquely to  a valuation $\bar{v}$ on $\K$, and we have the equality 
$v_\p(h)=e_\p \bar{v}(h(\theta))$ 
(see e.g. \cite[p.165]{Neukirch}). Let $d_\p=\deg F_\p$ and let $\theta_1=\theta,\theta_2,\ldots,\theta_{d_\p}$ be the roots of $F_\p$. By the Poisson formula for the resultant, we get
$v_p(\Res(h,F_{\p}))=\sum_{i=1}^{d_\p} \bar{v}(h(\theta_i))=d_\p \bar{v}(h(\theta))
$
and we conclude thanks to the equality $d_\p = e_\p f_\p$. 
\end{proof}

However, using resultant is not convenient in practice. An alternative way is to represent the prime $\p$ by a \emph{rational parametrization} of the corresponding branch $C_\p$, 
 that is by a pair $(a,b)\in k_\p((u))$ (which is truncated in practice) such that $F_\p(a(u),b(u))= 0$. If the characteristic of $k$ is zero or big enough, such a parametrization can be given by a "rational Puiseux expansion" of shape $(\lambda u^{e},S(u))$. The equality \eqref{eq:resultant} becomes 
\begin{equation}\label{eq:puiseux}
v_{\p}(h)=\ord_u h(\lambda u^{e},S(u)),
\end{equation}
which can be calculated quickly by Horner's rule \cite{ABCL}. Representing places as Puiseux expansions is particularly convenient for computing Riemann--Roch spaces since they allow to translate the conditions $v_\p(h)\ge n_\p$ into a finite system of linear equations over $k$ once we have a bound on the total degree of $h$ (see e.g. \cite{ABCL}). However, the complexity of the linear system solving is higher than our complexity bound for general singular curves. Nevertheless, using some extra $k[t]$-module structures (as we will do in this paper) it is shown in \cite{PJ, ACL1,ACL2} that we can fasten the computations for nodal or ordinary curves in general position.

\begin{rmq}
If the characteristic of $k$ is small, we can replace Puiseux  expansions by Hamburger-Noether parametrizations \cite{Ca80, Campillo}. However, we are not aware of any implementations and complexity estimates for this task. 
\end{rmq}

\begin{rmq}
The OM-algorithm computes the truncated factors $F_\p$, but does not allow to compute a parametrization of the corresponding branch. We believe that this is a nice open problem.
\end{rmq}

\subsubsection{Two-elements representation of a place.} It is based on the following classical result :

\begin{lem}\label{lem:two-elements-place}
Any  prime ideal $\p\subset B$ can be generated by two elements 
$\p=(p,q)$ where $p\in A$ is the monic generator of $\p\cap A$ and $q\in B$. If $\p\subset B_\infty$, there exists $q\in B_\infty$ such that $\p=(1/t,q)$. 
\end{lem}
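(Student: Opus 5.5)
The plan is to work inside the Dedekind ring $B$ (resp. $B_\infty$) and use two standard facts. The first is the Chinese remainder theorem for the pairwise coprime ideals $\p$ and $\p^2$ versus the other primes above $p$. The second is that $pB$ factors as $\prod_{\q|p}\q^{e_\q}$. I treat the finite case $\p\in\P_0$ first; the infinite case is identical with $A,B,p$ replaced by $A_\infty,B_\infty,1/t$.

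\textbf{Step 1 (choice of $q$).} Let $\q_1=\p,\q_2,\ldots,\q_s$ be the primes of $B$ dividing $p$. They are finitely many, being in bijection with the irreducible factors of $f$ over $\widehat{A}_p$ by Proposition \ref{prop:primes_vs_local_factors}. These maximal ideals are pairwise distinct, so by Lemma \ref{lem:qBp=Bp} the ideals $\p^2,\q_2,\ldots,\q_s$ are pairwise coprime. Pick $\pi\in\p\setminus\p^2$, which exists since $\p\neq\p^2$ by unique factorization of ideals. By the Chinese remainder theorem, choose $q\in B$ with
$$q\equiv \pi \bmod \p^2,\qquad q\equiv 1\bmod \q_i\ (i\ge 2).$$
Then $v_\p(q)=1$ and $v_{\q_i}(q)=0$ for $i\ge2$. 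Here I use Lemma \ref{lem:vP=vp} and the fact that $v_\p(b)$ is the largest $n$ with $b\in\p^n$.

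\textbf{Step 2 (comparing factorizations).} Consider the ideal $J=(p,q)=pB+qB$. Since $B$ is Dedekind, $J=\gcd(pB,qB)$, so $v_\r(J)=\min(v_\r(p),v_\r(q))$ for every prime $\r\in\P_0$. I check this prime by prime:
\begin{itemize}
\item If $\r$ does not divide $p$, then $v_\r(p)=0$ because $\r\cap A\neq pA$ and $p\notin\r$, so $v_\r(J)=0$.
\item If $\r=\q_i$ with $i\ge2$, then $v_\r(q)=0$, so $v_\r(J)=0$.
\item If $\r=\p$, then $v_\p(J)=\min(e_\p,1)=1$, since $e_\p\ge1$ because $p\in\p$.
\end{itemize}
Hence $J=\p$ by unique factorization of fractional ideals of $B$. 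Note that $p\in A$ is the monic generator of $\p\cap A$ by the definition of $\p|p$.

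\textbf{Step 3 (infinity).} For $\p\in\P_\infty$, the ring $B_\infty$ is Dedekind with only finitely many primes, all lying above $(1/t)$. The same CRT choice gives $q\in B_\infty$, and the same valuation comparison gives $\p=(1/t,q)$.

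The only delicate point is justifying that ideal sums correspond to minima of valuations and that each $\q_i$ is coprime to $\p^2$. Both follow from standard Dedekind theory (\cite[Ch. 9]{Atiyah}) and from Lemma \ref{lem:qBp=Bp} applied to the distinct maximal ideals $\p$ and $\q_i$, since $\p^2+\q_i\supseteq(\p+\q_i)^2=B$. I do not expect any real obstacle.
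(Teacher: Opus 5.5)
Your proof is correct. The paper gives no proof of this lemma; it quotes it as a classical fact. Your argument is the standard one: use the Chinese remainder theorem to get $q$ with $v_\p(q)=1$ and $v_{\q}(q)=0$ at the other primes above $p$, then compare $v_\r(pB+qB)=\min(v_\r(p),v_\r(q))$ prime by prime.

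Two small remarks. First, finiteness of the primes above $p$ follows directly from the factorization of $pB$, so you do not need Proposition \ref{prop:primes_vs_local_factors}. Second, in the infinite case every nonzero prime of $B_\infty$ lies above $1/t$. Your $q$ therefore already satisfies $qB_\infty=\p$, since the semi-local Dedekind ring $B_\infty$ is principal, and the generator $1/t$ is in fact redundant.
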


This is probably the most classical way to represent prime ideals in function fields or number fields, as used for instance in the computer algebra systems \textsf{SageMath} or \textsf{PariGP}.

\begin{definition} The pair $(p,q)$ is called a two-elements representation of the prime $\p$ (or of the place $P$). 
\end{definition}

The $\q$-adic valuation of a fractional ideal equals the minimal valuation of its generator. In particular, if $\p=(p,q)$, we have $\min(v_\p(p),v_\p(q))=v_\p(\p)=1$ so $p$ or $q$ has to be a uniformizer of the place $P=P_\p$. However, this condition is not sufficient.  Indeed, the generator $q\in L$ needs to ensure that all conditions
$$
v_\p(\p)= 1 \quad \text{and}\quad v_\q(\p)= 0\, \,\,\, \forall \,\, \q \in \P_0,\,\, \q\ne \p
$$
are satisfied. This imposes conditions on $q$ which involve  \emph{a priori} all finite places of $L$.

\begin{ex}\label{ex:two_elements_representation} Let $f=X^3-X^2+t^2$ as in example \ref{ex:OM_representation}, with factors $f_1=X-t+O(t^2)$ , $f_2=X+t+O(t^2)$ and $f_3=X-1+O(t^2)$ above $p=(t)$. 
We claim that 
$$
\p_1=(t,q)\qquad \mathrm{where} \qquad q=\frac{(x-1+t)(x-t)}{t}.
$$
Let $\p=(t,q)$. Note that $v_\q(\p)=\min (v_\q(t),v_\q(q))$ for all finite primes $\q$. We need to check that $v_{\p_1}(\p)=1$ and $v_{\q}(\p)=0$ for all $\q\ne \p_1$.  If $\q$ does not divide $t$, then $v_\q(t)=0$ and $v_\q(q)\ge 0$ (the inequality since $x\in B\subset B_\q$ as $f$ is monic). Hence, $v_\q(\p)=0$. If $\q=\p_i$, we have $v_{\p_i}(t)=1$. Let us compute $v_{\p_i}(q)$. For $i=1$, we compute $f_1$ with a higher precision $f_1=X-t-t^2/2+O(t^3)$. Proposition \ref{prop:valuation_via_resultant} gives
$$
v_{\p_1}(q)=v_p\left(\Res((X-1+t)(X-t),X-t-\frac{t^2}{2}+O(t^3)\right)-1=2-1=1.
$$
Hence $v_{\p_1}(\p)=1$. For $i=2,3$ we find that 
$$
v_{\p_2}(q)=v_p(\Res((X-1+t)(X-t),X+t+O(t^2))-1=1-1=0
$$
and similarly $v_{\p_3}(q)=0$. Hence $v_{\p_2}(\p)=v_{\p_3}(\p)=0$ as required.  In a similar way, we would obtain the two-elements representations $\p_2=(t,(x-1+t)(x+t)/t)$ and $\p_3=(t,x-1)$. 
\end{ex}

More generally, we can deduce easily a two-elements representation of a prime $\p$ from its OM-representation, see \cite{GuMoNa13}. However, two-elements representations are not suitable for rapid calculation of the $\p$-adic valuation and do not contain enough information to compute integral bases. For this reason, we prefer OM-representations, although we do not exclude that mixing both representations could be of interest for various tasks, such as computing the integral basis of a product of two fractional ideals.

\section{From divisors to integral bases}\label{sec:integral_basis}

We keep notations of previous section. We still assume that $L=K[X]/(f)$ for some irreducible monic separable polynomial $f\in A[X]$ of degree $n$. 

\subsection{Divisors as pairs of fractional ideals}

Let $D=\sum_{P\in V_L} n_P\cdot P$ be a divisor of $L/k$. Denote $\p\in \P$ the prime of $P$ and let $n_\p=n_P$. We define 
\begin{equation} \label{eq:ideaux}
I(D)=\prod_{\p\in \P_0} \p^{-n_\p} \ \ \ \ \text{ and} \ \ \ \ I_{\infty}(D)=\prod_{\p\in \P_\infty} \p^{-n_\p}.
\end{equation}
Thus $I(D)\subset L$ is a fractional ideal of $B$ and $I_\infty(D)$ is a fractional ideal of $B_\infty $. Let $\I$ (resp. $\I_\infty$) stand for the multiplicative groups of fractional ideals of $B$ (resp. $B_\infty$), and let $\I\times\I_\infty$ be their direct product.  

\begin{prop}\label{prop:LD=IcapIinfty}
The map $D\mapsto (I(D), I_{\infty}(D))$ is a group isomorphism from $\Div(L)$ to $ \I\times\I_\infty$. Moreover, we have $$\L(D)=I(D)\cap I_{\infty}(D).$$
\end{prop}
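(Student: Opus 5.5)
The proof splits into two parts: the group isomorphism, and the identity $\L(D)=I(D)\cap I_\infty(D)$.

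\textbf{The isomorphism.} By Proposition \ref{prop:Place_to_primes}, the places of $L/k$ correspond bijectively to $\P=\P_0\sqcup\P_\infty$. Hence $\Div(L)$ is the free abelian group on $\P_0\sqcup\P_\infty$, which is the direct product of the free abelian groups on $\P_0$ and on $\P_\infty$. Since $B$ and $B_\infty$ are Dedekind, unique factorization of fractional ideals shows that $\I$ and $\I_\infty$ are the free abelian groups on $\P_0$ and $\P_\infty$ respectively. The map $D\mapsto (I(D),I_\infty(D))$ sends the basis element $P$ to $\p^{-1}$ in the corresponding factor. It is therefore a homomorphism that maps a basis bijectively onto a basis (up to the sign, which is an automorphism), so it is an isomorphism.

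\textbf{The identity $\L(D)=I\cap I_\infty$.} The key fact, for a fractional ideal $I=\prod_{\p\in\P_0}\p^{m_\p}$ of $B$ and $b\in L^\times$, is
$$b\in I \iff v_\p(b)\ge m_\p \text{ for all } \p\in\P_0.$$
To prove it, note that $b\in I$ iff $bB\subset I$, and for invertible ideals this holds iff $I^{-1}bB\subset B$, i.e.\ $I^{-1}bB$ is an integral ideal. An ideal is integral iff all exponents in its factorization are nonnegative; this follows from $B=\bigcap_\p B_\p$, or equivalently from Lemma \ref{lem:qBp=Bp} applied via localization. The exponent of $I^{-1}bB$ at $\p$ is $v_\p(b)-m_\p$, which gives the claim. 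The same argument applies to $B_\infty$.

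Applying this with $m_\p=-n_\p$, we get $b\in I(D)$ iff $v_\p(b)+n_\p\ge 0$ for all finite $\p$, and $b\in I_\infty(D)$ iff the same holds at all infinite $\p$. By Lemma \ref{lem:vP=vp}, $v_\p=v_P$, so $b\in I(D)\cap I_\infty(D)$ iff $\divi(b)+D\ge 0$, i.e.\ $b\in\L(D)$. The case $b=0$ is trivial since $0$ lies in both sides.

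\textbf{Main obstacle.} The only non-formal step is the membership criterion $b\in I\iff v_\p(b)\ge m_\p$ for fractional ideals. I will prove it by localization: $I=\bigcap_\p I B_\p$ for a Dedekind domain, and $IB_\p=\p^{m_\p}B_\p$ by Lemma \ref{lem:qBp=Bp}. Membership in $\p^{m_\p}B_\p$ means exactly $v_\p(b)\ge m_\p$, because $B_\p$ is a DVR with maximal ideal $\p B_\p$.
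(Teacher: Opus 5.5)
Your proposal is correct and takes essentially the same route as the paper. The isomorphism comes from Proposition \ref{prop:Place_to_primes} together with the freeness of $\I$ and $\I_\infty$ on $\P_0$ and $\P_\infty$, and the identity $\L(D)=I(D)\cap I_\infty(D)$ reduces, via localization ($B=\bigcap_\p B_\p$, and likewise for $B_\infty$) and Lemma \ref{lem:qBp=Bp}, to the local conditions $v_\p(b)\ge -n_\p$. Your explicit membership criterion $b\in\prod\p^{m_\p}\iff v_\p(b)\ge m_\p$ for all $\p$ is a cleaner way of stating what the paper means when it replaces the intersection of the localized ideals $\p^{-n_\p}B_\p$ by the product.
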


\begin{proof}
First point is clear from Proposition \ref{prop:Place_to_primes} together with the fact that $\I$ and $\I_\infty$ are the free abelian groups generated by $\P_0$ (resp. $\P_\infty$). For the second point, we remark that  $\mathcal{L}(D)=\cap_{P\in V_L} P^{-n_P}$ by definition. 
By Proposition \ref{prop:integrally-closed=DVRintersection}, we have $B=\bigcap_{\p\in \P_0} B_{\p}$ and $B_{\infty}=\bigcap_{\p\in \P_\infty} B_{\infty,\p}$. It thus follows from Lemma \ref{prop:Place_to_primes} that $\L(D)$ is the intersection of the fractional ideal $\bigcap_{\p\in \P_0} \p^{-n_\p}$ of $B$ with the fractional ideal $\bigcap_{\p\in \P_\infty} \p^{-n_\p}$ of $B_\infty$. By Lemma \ref{lem:qBp=Bp} these intersections can be replaced by a product. 
\end{proof}

We want to compute a basis of $\L(D)$ using Proposition \ref{prop:LD=IcapIinfty}. The first step is to compute an integral basis of the fractional ideals $I(D)$ and $I_\infty(D)$. To this aim, we use recent results of \cite{PoWe22}. 

\subsection{Triangular bases of fractional ideals}

Let $R\subset K=k(t)$ be  a Dedekind ring with fraction field $K$ (typically $R=A$ or $R=A_\infty$).

\begin{definition}
    Let $M,N\subset L$ be free $R$-modules of rank $n$. We denote $[M:N]$ the fractional ideal of $R$ generated by the determinant of the transition matrix of an $R$-basis of $N$ to an $R$-basis of $M$. It is called the index ideal of $M$
 over $N$.
 \end{definition}
 
The definition does not depend on the choice of the bases since the determinant varies by a unit of $R$. The index ideal obeys to the following classical properties (see e.g. \cite{serre}):

\begin{lem}\label{lem:index}
    Let $L,M$ and $N$ be free $R$-modules of rank $n$. We have
    \begin{enumerate}
        \item $[L:N]=[L:M][M:N]$ and $[M:N]=[N:M]^{-1}$,
        \item If $N\subset M$, then there exists $a_1,\ldots ,a_n\in R$ such that  $a_1|\cdots |a_n$ and $$M/N\cong R/a_1 R\times \cdots \times R/a_n R.$$
        We have then $[M:N]=(a_1\cdots a_n)\subset R$ and $N=M$ if and only if $[M:N]=R$.
    \end{enumerate}
\end{lem}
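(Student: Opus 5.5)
The plan is to reduce everything to transition matrices between bases and to the structure theory of finitely generated modules over the principal ideal domain $R$. Note first that $R$, being a Dedekind ring with fraction field $K=k(t)$, is in the cases of interest ($R=A$ or $R=A_\infty$) a principal ideal domain, so fractional ideals of $R$ are principal. More generally, $[M:N]$ is only defined for free modules, so we may argue with bases throughout. Fix $R$-bases $\B_L,\B_M,\B_N$ of the three modules; they are also $K$-bases of the same $n$-dimensional $K$-vector space $L\otimes_R K$. Write $T_{N\to M}\in \mathrm{GL}_n(K)$ for the matrix expressing $\B_N$ in terms of $\B_M$, so that $[M:N]=\det(T_{N\to M})R$.

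For the first item, use the chain rule for change of basis: $T_{N\to L}=T_{M\to L}\,T_{N\to M}$. Taking determinants and the ideals they generate gives $[L:N]=[L:M][M:N]$, since the product of principal fractional ideals is generated by the product of generators. Taking $L=N$ gives $T_{N\to N}=\mathrm{Id}$, so $[N:N]=R$ and therefore $[M:N][N:M]=R$. This shows $[M:N]=[N:M]^{-1}$. Independence of the choice of bases holds because changing a basis multiplies the determinant by an element of $\mathrm{GL}_n(R)$, whose determinant lies in $R^\times$; this has already been observed in the text.

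For the second item, assume $N\subset M$. Then $T_{N\to M}$ has entries in $R$. Apply the Smith normal form over the PID $R$: there are $U,V\in \mathrm{GL}_n(R)$ with $UT_{N\to M}V=\diag(a_1,\ldots,a_n)$ and $a_1|\cdots|a_n$. Since $N$ has rank $n$, the matrix is invertible over $K$, so all $a_i\neq 0$. Interpret $U$ and $V$ as changes of basis of $M$ and $N$. We obtain a basis $m_1,\ldots,m_n$ of $M$ such that $a_1m_1,\ldots,a_nm_n$ is a basis of $N$. This gives $M/N\cong \prod R/a_iR$, and the determinant of the new transition matrix is $a_1\cdots a_n$, so $[M:N]=(a_1\cdots a_n)$.

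Finally, $N=M$ if and only if every $a_i$ is a unit, if and only if $a_1\cdots a_n\in R^\times$, if and only if $[M:N]=R$. The only delicate point is to justify that $R$ is a PID, so that Smith normal form applies. This holds for $A=k[t]$ and for the discrete valuation ring $A_\infty$. In general one would instead invoke the structure theorem for torsion modules over a Dedekind ring, using that $M/N$ is torsion and finitely generated, and that $N$, $M$ are free.
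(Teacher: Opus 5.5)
Your proof is correct for $R=A=k[t]$ and $R=A_\infty$, which are the only cases where the paper applies the lemma; both rings are principal. Multiplicativity of transition determinants gives item 1, and Smith normal form gives item 2. The paper gives no argument of its own and only cites these as classical facts (Serre); your argument is the standard proof behind that citation.

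One correction to your last sentence. Over a Dedekind ring that is not principal, the structure theorem only gives $M/N\cong\bigoplus_i R/\mathfrak{a}_i$ with ideals $\mathfrak{a}_i$. Item 2 as stated, with elements $a_i$, can then actually fail, so it cannot be rescued that way.

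For instance, take $R=\mathbb{R}[X,Y]/(X^2+Y^2-1)$, whose fraction field is a rational function field. The ideal $\mathfrak{a}=(X-1,Y)$ is not principal, but $\mathfrak{a}^2=(X-1)$. By Steinitz's theorem, $N=\mathfrak{a}\oplus\mathfrak{a}\cong R\oplus\mathfrak{a}^2$ is free of rank $2$ inside $M=R^2$. Yet $M/N\cong(R/\mathfrak{a})^2$ is not of the form $R/a_1R\times R/a_2R$; to see this, localize and compare invariant factors.

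So principality of $R$ is essential for item 2, not merely convenient. Item 1 and the criterion $N=M\iff[M:N]=R$ do hold for a general Dedekind $R$, by localizing at each prime.
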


\medskip
\noindent
Let $\overline{R}\subset L=k(t)[x]$ be the integral closure of $R$ in $L$ (typically $\overline{R}=B$ or $\overline{R}=B_\infty$). Thus $\overline{R}$ is again a Dedekind ring. As $L/K$ is assumed to be separable, all fractional ideals of $\overline{R}$ are free $R$-module of rank $n=[L:K]$. We refer to \cite[Thm.1.16]{stainsbyThesis} for the following result:

\begin{prop}\label{prop:triang_basis}
Any fractional ideal $I$ of $\overline{R}$ admits an $R$-basis 
of shape $\B=\left(q_0, q_1 h_1(x),\ldots,q_{n-1} h_{n-1}(x)\right)$
such that the following conditions are satisfied :
\begin{enumerate}
        \item For all $1\leq i <n$, $q_i\in K$ and $h_i(x)\in R[x]$ is monic of degree $i$. 
        \item We have inclusions $q_{0}R\subset q_{1}R\subset \cdots \subset q_{n-1}R$ of fractional ideals of $R$. 
    \end{enumerate}
We say that $\B$ is a triangular $R$-basis of $I$. The elements $q_i$ are uniquely determined up to multiplication by a unit of $R$ and we have
$$
[I:R[x]]=(q_0\cdots q_{n-1})^{-1} R.
$$ 
\end{prop}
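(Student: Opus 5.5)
Since $R$ is a Dedekind ring contained in $K=k(t)$ (a localization of $k[t]$ or $k[1/t]_{(1/t)}$), it is a principal ideal domain. Hence every fractional ideal of $R$ is of the form $qR$ with $q\in K^\times$. I would build the basis through the filtration of $L$ by degree in $x$. For $0\le i<n$, let $L_i=K\oplus Kx\oplus\cdots\oplus Kx^i$ and set $I_i=I\cap L_i$. Each $I_i$ is a finitely generated torsion-free $R$-module: it is a submodule of $I$, which is free of rank $n$ over the PID $R$. Since $I$ spans $L$ over $K$, the module $I_i$ has rank $i+1$.

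The key step is the leading-coefficient map $\lambda_i:I_i\to K$, sending an element of degree $\le i$ in $x$ to its coefficient of $x^i$. Its kernel is $I_{i-1}$, and its image $J_i$ is a finitely generated nonzero $R$-submodule of $K$. Nonzero because $I$ contains $cx^i$ for some $c\in R\setminus\{0\}$: take $c$ clearing the denominators of $x^i\cdot d$ with $d\in I\cap R$ nonzero. So $J_i$ is a fractional ideal, and $J_i=q_iR$ for some $q_i\in K^\times$. Pick $b_i\in I_i$ with $\lambda_i(b_i)=q_i$ and write $b_i=q_i h_i(x)$ with $h_i\in K[x]$ monic of degree $i$. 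Because $J_i$ is projective (free), the sequence $0\to I_{i-1}\to I_i\to J_i\to 0$ splits, so $I_i=I_{i-1}\oplus Rb_i$. Inducting on $i$ gives an $R$-basis $(b_0,\dots,b_{n-1})$ of $I=I_{n-1}$, with $b_0=q_0$.

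Next come the inclusions and integrality. Since $I$ is a $\overline R$-module and $x\in\overline R$ ($f$ is monic over $A\subset R$), we have $xI_{i}\subset I_{i+1}$. This gives $q_iR=J_i\subset J_{i+1}=q_{i+1}R$, which is condition~2. The step I expect to be the main obstacle is showing $h_i\in R[x]$, since a priori $h_i$ only has coefficients in $K$. I would first try to renormalize $b_i$ by subtracting $R$-combinations of lower basis elements, using induction, so that its lower coefficients become integral. A cleaner route is to reduce to the case $I=\overline R$, where Stainsby's argument via Okutsu/OM bases applies, or to argue locally at each prime $p$ of $R$ where everything is a DVR. Concretely, I would use $q_jR\supset q_iR$ for $j\ge i$... but that inclusion goes the wrong way for absorbing the lower coefficients. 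The honest fallback is to follow \cite[Thm.1.16]{stainsbyThesis}: work with $q_i^{-1}I_i$ and show that its leading-coefficient-one elements can be reduced modulo $x$-multiples of the lower $h_j$ into $R[x]$. This is exactly where the proof is delicate, and I would lean on the cited reference here.

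Finally, for uniqueness, $J_i$ is intrinsic to $I$, so $q_i$ is determined up to $R^\times$. For the index, $R[x]$ has basis $1,x,\dots,x^{n-1}$, and the transition matrix from this basis to $\B$ is triangular with diagonal entries $q_0,\dots,q_{n-1}$. Its determinant is therefore $q_0\cdots q_{n-1}$. Accounting for the direction convention in the definition of $[M:N]$ via Lemma~\ref{lem:index}, this gives $[I:R[x]]=(q_0\cdots q_{n-1})^{-1}R$.
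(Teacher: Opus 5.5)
Your construction is sound up to one point: the filtration $I_i=I\cap L_i$, the leading-coefficient ideals $J_i=q_iR$, the split basis $(b_0,\dots,b_{n-1})$, condition~2 and the index computation all work. The gap is exactly where you flag it: you never prove $h_i\in R[x]$. That is part of condition~1 and is the real content of the proposition. It is not a formality. For $n=2$, the $R$-lattice $R\oplus R\,(x+c)$ with $c\in K\setminus R$ has $J_0=J_1=R$. Yet every element of it with unit leading coefficient $u$ is $u\,(x+c+r)$ with $r\in R$, so no admissible $h_1$ lies in $R[x]$. Stability of $I$ under $x$ must therefore be used a second time.

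None of your fallbacks closes the gap. Localizing at a prime of $R$ leaves the identical problem over a DVR. A general fractional ideal is not a $K$-multiple of $\overline R$, so there is no reduction to $I=\overline R$. Subtracting $R$-multiples of $b_j$ only moves the coefficient of $x^j$ in $h_i$ by elements of $(q_j/q_i)R$, as you noticed. Deferring to Stainsby's thesis is what the paper does (it gives no proof of its own), but then this step is simply not proved in your argument.

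The missing idea is short: apply $x$-stability to $x\,b_{i-1}$ rather than to $b_i$, and induct on $i$ starting from $h_0=1$. Since $xb_{i-1}\in I_i=\bigoplus_{j\le i}Rb_j$, write $xb_{i-1}=\sum_{j\le i}\alpha_jb_j$ with $\alpha_j\in R$. Comparing coefficients of $x^i$ gives $q_{i-1}=\alpha_iq_i$, hence $\alpha_ib_i=q_{i-1}h_i$. Dividing by $q_{i-1}$,
\[
h_i \;=\; x\,h_{i-1}-\sum_{j<i}\alpha_j\,\frac{q_j}{q_{i-1}}\,h_j .
\]
By condition~2, $q_jR\subset q_{i-1}R$ for $j\le i-1$, so $q_j/q_{i-1}\in R$, and the induction hypothesis gives $h_i\in R[x]$. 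So every basis your construction produces is already triangular, and no renormalization is needed.

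Two smaller points. First, your justification ``$x\in\overline R$ since $f$ is monic over $A\subset R$'' fails for $R=A_\infty$: $t\notin A_\infty$, and $x$ need not be integral over $A_\infty$. Both condition~2 and the step above require the generator to lie in $\overline R$, so for $R=A_\infty$ you must run the argument with $y=xt^{-\lambda}$, as the paper does in Section~\ref{ssec:integral_basis_at_infinity}. Second, for uniqueness you should note that any basis of the stated shape is automatically adapted to the filtration (look at the top $x$-degree), so that $q_iR=J_i$ for it.
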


\begin{rmq} Any free $R$-module $I\subset L$ of rank $n$ admits an $R$-basis satisfying the first condition by Gaussian elimination, but the second condition may not be satisfied. It turns out that if $I$ is a fractional ideal of a Dedekind ring, then condition 2 follows  automatically from condition 1.
\end{rmq}

\noindent
\textbf{Notation.} For $R=A$ or $R=A_\infty$, any fractional ideal $J$ of $R$ is principal, hence generated by a unique element $h=a/b$ with $a,b\in k[t]$ monic and coprime. We then write for short 
$$
\deg(J):=\deg(h)=\deg(a)-\deg(b).
$$

\subsection{Computing triangular bases over finite places} \label{section:triang basis}

We consider here $R=A=k[t]$. We want to compute an integral basis of a fractional ideal $I$ of $B$.  The index of $I$ does not reflect well the size of $I$, and we will rather express the complexity of our algorithm in terms of the index of a normalized ideal of $I$.

\begin{definition}\label{def:height}
Let $I$ be a fractional ideal of $B$. Let $q_I\in K$ such that $I\cap K=q_I A$.
\begin{itemize}
\item The normalized ideal of $I$  is $I^*=q_I^{-1}I$.
\item The normalized index of $I$ is $\delta_I=\deg\,[I^*:A[x]]$. 
\item The normalized exponent of $I$  is  $\exp(I):=\min\{\deg(a)\mid a\in A, aI^*\subset A[x]\}$.
\end{itemize}
\end{definition}

\noindent

\begin{lem}\label{lem:index exp} We have  $A[x]\subset B\subset I^*$. In particular, $ 0\le \delta_B\le \delta_I$ and $0\le \exp(B)\le \exp(I)$. Moreover,
$$
\exp(I)\le \delta_I\le n\exp(I).
$$
\end{lem}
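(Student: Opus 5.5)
The plan is to prove the three claims in order: the chain $A[x]\subset B\subset I^*$, the inequalities $0\le\delta_B\le\delta_I$ and $0\le\exp(B)\le\exp(I)$, and finally $\exp(I)\le\delta_I\le n\exp(I)$.

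\textbf{Inclusions.} Since $f$ is monic in $A[X]$, $x$ is integral over $A$, so $A[x]\subset B$. For $B\subset I^*$, observe that $I^*$ is a fractional ideal of $B$, so it suffices to show $1\in I^*$. By definition $I^*\cap K=q_I^{-1}(I\cap K)=A$, which gives $1\in I^*$ and hence $B=B\cdot 1\subset I^*$.

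\textbf{Comparison with $B$.} Applying the definition to $I=B$, we get $B\cap K=A$ because $A$ is integrally closed. Hence $q_B=1$ and $B^*=B$. From $A[x]\subset B\subset I^*$, Lemma \ref{lem:index} gives $[I^*:A[x]]=[I^*:B][B:A[x]]$, and both factors are integral ideals of $A$. Their degrees are therefore nonnegative, which yields $0\le\delta_B\le\delta_I$. Similarly, any $a$ with $aI^*\subset A[x]$ also satisfies $aB\subset A[x]$, so $\exp(B)\le\exp(I)$; nonnegativity is clear.

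\textbf{The exponent inequalities.} Let $a\in A$ realise the minimum in the definition of $\exp(I)$, so $aI^*\subset A[x]\subset I^*$. Write $[I^*:A[x]]=(a_1\cdots a_n)$ with $a_1\mid\cdots\mid a_n$, using the elementary divisors of $I^*/A[x]\cong\prod A/a_iA$ from Lemma \ref{lem:index}.

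For the upper bound: $a$ kills $I^*/A[x]$, so $a_n\mid a$. Hence $\delta_I=\sum\deg a_i\le n\deg a_n\le n\exp(I)$.

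For the lower bound: $a_n$ kills the quotient, so $a_nI^*\subset A[x]$, giving $\exp(I)\le\deg a_n\le\delta_I$. This also shows $\exp(I)=\deg a_n$, the degree of the annihilator.

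The only point needing care is the identification $q_B=1$, which uses $B\cap K=A$ (integral closedness of $A$); everything else follows from the structure theorem in Lemma \ref{lem:index}.
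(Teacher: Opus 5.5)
Your proof is correct and follows the paper's argument essentially step by step. You obtain the inclusions from the integrality of $x$ and from $1\in I^*$, the comparison with $B$ from the multiplicativity $[I^*:A[x]]=[I^*:B][B:A[x]]$ of Lemma \ref{lem:index}, and the final bounds from the elementary divisors $a_1\mid\cdots\mid a_n$, which give $\exp(I)=\deg a_n$ and $\delta_I=\sum\deg a_i$. You also check explicitly that $q_B=1$, so $B^*=B$, using that $A$ is integrally closed; the paper leaves this implicit, and your argument for it is correct.
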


\begin{proof} As $f$ is monic, $x$ is integral over $A$, hence $A[x]\subset B$. We have $I\cap K=q A$, hence $I^*\cap K=q^{-1} q A=A$. Thus $1\in I^*$ and $B\subset I^*$. 
From these inclusions, we deduce $0\le \exp(B)\le \exp(I)$, and the inequality $ 0\le \delta_B\le \delta_I$ follows from Lemma \ref{lem:index}, using $[I^*:A[x]]=[I^*:B][B:A[x]]$ and $A[x]\subset B\subset I^*$. Still from Lemma \ref{lem:index}, we get a decomposition $I^*/A[x]\simeq A/a_1 A \times\cdots \times A/a_n A$ with $a_i\in A$ and $a_i|a_{i+1}$. We deduce that $\exp(I)=\deg a_n$. Moreover, $[I^*:A[x]]=a_1\cdots a_n A$, hence  $\delta_I=\deg a_1+\cdots +\deg a_n$. Since $\deg a_i\le \deg a_{i+1}$, the last claim follows.
\end{proof}

\medskip
Computing integral bases in function fields is a classical task of computer algebra and there are several algorithms to do this. There are geometric approaches based on Puiseux series (see \cite{Abe} for a state-of-the art in the case $I=B$) and there are arithmetic approaches (\cite{Ba16, GuMoNa15, stainsby, PoWe24} and references therein) which rather follow Okutsu's framework \cite{Ok82} and use the OM-algorithm as we do here. This latter approach has the advantage to work in any characteristic and applies \emph{mutatis mutandis} to the case of number fields. Let us briefly summarize the main steps. Let $I=\prod_{\p\in \P_0} \p^{n_\p}$ be a fractional ideal of $B$.

\begin{enumerate}
\item For each finite prime $p\in \P_A$, we compute a triangular $A_p$-basis of the free $A_p$-module $I_p=I\otimes_A A_p$.  If $p$ does not divide the index $[B:A[x]]$ and $n_\p=0$ for all $\p|p$, then a triangular basis is $1,x,\ldots,x^{n-1}$. Otherwise:
\begin{enumerate}
\item We run the OM-algorithm \cite{PoWe22} above $p$ to compute an OM-representation of each $\p|p$ with high enough precision (i.e. a list of key polynomials of the prime factor $F_\p\in \widehat{A}_p[x]$ of $f$).
\item  We apply the simple and powerful MaxMin algorithm of Stainsby \cite{stainsby} to detect for each $i=0,\ldots,n-1$  a degree $i$ multiplicative combination $h_i$ of key polynomials which maximizes  a certain semi-valuation $w_{I,p}$.
Then $h_0/p^{\lfloor w_{I,p}(h_0) \rfloor},\ldots,h_{n-1}/p^{\lfloor w_{I,p}(h_{n-1}) \rfloor}$ is a triangular $A_p$-basis of $I_p$, see \cite[Thm.3.4]{stainsby}. 
\end{enumerate}
\item We use the Chinese Remainder Theorem to glue together all $A_p$-bases, resulting in a triangular $A$-basis of $I$ (we use here the fact that the $A_p$-bases are triangular).
\end{enumerate}


We use notations $\O_\epsilon(g(n))=\O(g(n)^{1+\epsilon(n)})$ with $\epsilon(n)\to 0$. We denote for short $\delta=\delta_B$. We obtain the following complexity estimate :
\begin{thm}\label{thm:complexity_integral_basis}\cite[Thm.7]{PoWe24}
    Suppose $f$ separable and monic. Let $I=\prod \p^{n_\p}$ be a fractional ideal of $B$, the primes $\p$ being given in OM-representation. There is a deterministic algorithm which computes a triangular $A$-basis of $I$ with
    \begin{itemize}
        \item $\O_\epsilon(n\delta_I)$ operations in $k$ if $char(k)=0$ or $char(k)> n$,
        \item $\O_\epsilon(n\delta_I +\delta^2)$ operations in $k$ if $char(k)\leq n $.
    \end{itemize}
\end{thm}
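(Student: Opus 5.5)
The statement is cited from \cite[Thm.7]{PoWe24}, so the plan is to follow the three-step scheme just described (OM above each critical prime, MaxMin locally, CRT gluing) and bound the cost of each step by $\O_\epsilon(n\delta_I)$, plus $\O_\epsilon(\delta^2)$ in small characteristic.

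The first reduction is to replace $I$ by its normalized ideal $I^*=q_I^{-1}I$. This only rescales the basis by an element of $K$. The content $q_I$ can be read off the exponents $n_\p$ together with the ramification indices $e_\p$, which come with the OM-representation: $v_p(q_I)$ is determined by $\min_{\p|p}\lceil n_\p/e_\p\rceil$ up to sign conventions. By Lemma \ref{lem:index exp} we then have $A[x]\subset B\subset I^*$ and index $\delta_I$.

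The critical primes $p$ are the divisors of the index $[I^*:A[x]]$. These are exactly the primes below the support of $I$ together with the primes dividing $[B:A[x]]$. They are found by squarefree factoring $\disc(f)$ and keeping only the part relevant to $B$, which is where $\delta_p\le v_p(\disc f)$ enters; alternatively we use the primes already supplied in the OM-representation of $I$. For each critical $p$, we run the fast OM-algorithm of \cite{PoWe22}. It costs $\Ot(n\delta_p)$ operations in $k_p$, or $\Ot(n\delta_p+\delta_p^2)$ in small characteristic, hence $\Ot(n\delta_p\deg p)$ operations in $k$ (respectively with the extra $\delta_p^2\deg p$ term). We lift the key polynomials to the precision needed, namely $p$-adic precision roughly $\max_i\lfloor w_{I,p}(h_i)\rfloor\le v_p(\text{exponent})$, using the multi-factor lifting of \cite{PoWe22}. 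Summing over $p$ and using $\sum_p \delta_p\deg p\le \delta\le\delta_I$ gives the OM part within the bound. The $\delta^2$ term in positive characteristic $\le n$ comes precisely from the residual-characteristic term of the OM algorithm, which does not depend on $I$.

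Next, we apply Stainsby's MaxMin at each $p$ to produce, for $i=0,\ldots,n-1$, products $h_i$ of key polynomials and exponents $\lfloor w_{I,p}(h_i)\rfloor$. By \cite[Thm.3.4]{stainsby} these give a triangular $A_p$-basis of $I_p$. The combinatorial MaxMin step is cheap. The cost lies in expanding the $h_i$ as polynomials in $x$ with coefficients truncated modulo $p^{m_p}$, where $\sum_i\lfloor w_{I,p}(h_i)\rfloor\deg p$ is the local contribution to $\delta_I$. Here I would use a subproduct-tree computation of the products of key polynomials, truncating modulo $p^{m_p}$, to get quasi-linear cost $\O_\epsilon(n\cdot m_p\deg p\cdot\ldots)$. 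Finally we glue with CRT: for each $i$, combine the numerators $h_{i,p}$ modulo $\prod_p p^{m_{i,p}}$ by fast simultaneous CRT on each coefficient. The resulting $h_i$ is monic of degree $i$, and $q_i=\prod_p p^{-\lfloor w_{I,p}(h_{i,p})\rfloor}$. Triangularity ensures that local triangular bases glue to a global one. The total coefficient size is $\sum_i\deg(q_i^{-1})\cdot i\le n\delta_I$, so quasi-linear CRT gives $\O_\epsilon(n\delta_I)$.

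The main obstacle is the precision and size control in the last two steps. We must show that the truncation precision needed at each $p$ and for each $h_i$ is bounded so that the total bit size of the output (and of all intermediates) is $\O(n\delta_I)$ rather than $\O(n^2\exp(I))$. I would bound precision by the per-row exponent $\lfloor w_{I,p}(h_i)\rfloor$ rather than a uniform $\exp(I)$. This uses that reducing the coefficients of $h_i$ modulo $p^{\lfloor w_{I,p}(h_i)\rfloor}$ does not change the lattice generated, which holds by triangularity.
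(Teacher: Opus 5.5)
Your outline matches the paper's. The statement is imported from \cite[Thm.7]{PoWe24}, and the paper only sketches exactly your scheme: normalize, run OM above each critical prime, apply Stainsby's MaxMin to get $h_i/p^{\lfloor w_{I,p}(h_i)\rfloor}$, then glue by CRT. Your gluing argument is sound: changing a numerator modulo $p^{m_{i,p}}$ alters $h_i/p^{m_{i,p}}$ by an element of $A_p[x]$ of degree $<i$, which lies in the span of the earlier basis vectors because $A[x]\subset B\subset I^*$. The output count $\sum_i i\deg p_i\le n\delta_I$ is also correct.

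\textbf{The gap: finding the critical primes.} The step that fails as written is obtaining the critical primes within the budget $\O_\epsilon(n\delta_I)$.
\begin{itemize}
\item Computing $\disc(f)$ and square-free factoring it costs at least on the order of $\deg\disc(f)$. This is typically of order $n\deg_t f$ and has nothing to do with $\delta_I$. For instance, if $B=A[x]$ and $I=\p^{-1}$ with $e_\p=f_\p=\deg\p=1$, then $\delta_I=1$. You would still have to compute and process the whole discriminant to certify that no other prime divides $[I^*:A[x]]$.
\item Your fallback of using only the primes in the OM-representation of $I$ misses the primes dividing $[B:A[x]]$ outside the support of $I$.
\item Which square-free factors are ``relevant to $B$'' cannot be read off $\disc(f)$; it is the OM run itself that detects them.
\item A square-free but reducible factor does not give a residue field, so running OM on it requires dynamic evaluation.
\end{itemize}
This is why the paper (Remark~\ref{rem:integral_basis}) treats the set of primes dividing $[I^*:A[x]]$ as an input to \cite[Thm.7]{PoWe24}. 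It obtains that set once, by a square-free factorization of the discriminant combined with dynamic evaluation \cite{HoLe19}, and charges this cost to the global Riemann--Roch bound, where it fits, rather than to $\O_\epsilon(n\delta_I)$. Under that convention your argument goes through.

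\textbf{A minor slip.} For $I=\prod\p^{n_\p}$ one has $v_p(q_I)=\max_{\p|p}\lceil n_\p/e_\p\rceil$. Equivalently, this is $-m_p$ in Lemma~\ref{lem:formula_d0} after the sign change. It is not a minimum of ceilings.
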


\medskip
\noindent
 The returned basis is given as a pair 
$\B=(q,\B^*)$, with $q=q_I\in k(t)$ as in Definition \ref{def:height} and $\B=q\B^*$, where
\begin{equation}\label{eq:triang_basis}
\B^*=\left(1, \frac{g_1(t,x)}{p_1(t)},\ldots , \frac{g_{n-1}(t,x)}{p_{n-1}(t)}\right)
\end{equation}
is an $A$-basis of the normalized ideal $I^*$ such that :

(i) $p_i\in k[t]$ is monic, $p_1|p_2|\cdots |p_{n-1}$. 

(ii) $ g_i\in k[t][x]$ is monic of degree $i$ in $x$ and $\deg_t(g_i)< \deg(p_i)$. 

\medskip
\noindent
The $p_i$'s are uniquely determined. We have
 $\delta_I=\deg(p_1)+\cdots +\deg(p_{n-1})$ and  $\exp(I)=\deg(p_{n-1})$.  Not taking into account the factor $q$, the output has arithmetic dense size $\sum_{i=1}^{n-1} i\deg(p_i)=\O(n\delta_{I})$, and this bound is sharp. Thus the algorithm has a quasi-linear complexity in characteristic zero or high enough.  The element $q=q_I$ is given as a product of primes $q=\prod_{p\in \P_A} p^{-m_p}$ (not computed), with the $m_p$'s explicitly determined by $I$, see below.

\begin{rmq}\label{rem:integral_basis}
Theorem 7 of \cite{PoWe24} requires to know the set of primes $p\in \P_A$ dividing $[I^*:A[x]]$. A part of these primes is deduced for free from the OM-representation of $I$ (which is given). The remaining primes are the prime divisors of $[B:A[x]]$. They can be deduced from the irreducible factorization in $k[t]$ of the discriminant of $f$. Using dynamic evaluation \cite{HoLe19}, it's in fact sufficient to compute the square-free factors of the discriminant, whose cost fits in the aimed bound.
\end{rmq}

\subsection{Computing triangular bases at infinity}\label{ssec:integral_basis_at_infinity}

We now specialize Proposition \ref{prop:triang_basis} to the case $R=A_\infty=k[1/t]_{(1/t)}$. We want to compute a triangular $A_\infty$-basis 
of a fractional ideal $I_\infty$ of $B_\infty$.  
We need to take care that $A_\infty[x]$ is usually not contained in $B_\infty$. Let $f=\sum_{i=0}^{n} c_iX^i$ with $c_i\in A=k[t] $. Following \cite{hess}, we consider
\begin{equation}\label{eq:lambda}
y=x t^{-\lambda}\qquad \mathrm{where}\qquad \lambda=\lambda(f):=\max_{i<n}\left(\left\lceil \frac{\deg(c_i)}{n-i}\right \rceil\right ).
\end{equation}
\begin{lem}\label{lem:y}
The element $y\in L$ is integral over $A_\infty$.
\end{lem}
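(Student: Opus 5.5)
We exhibit an explicit monic polynomial with coefficients in $A_\infty$ that annihilates $y$. Write $f=X^n+\sum_{i<n}c_iX^i$ with $c_i\in k[t]$; the leading coefficient is $c_n=1$ since $f$ is monic. Substituting $x=yt^\lambda$ into $f(x)=0$ gives
$$
t^{n\lambda}y^n+\sum_{i<n}c_i t^{i\lambda}y^i=0 .
$$
Dividing by $t^{n\lambda}$ gives the monic relation
$$
y^n+\sum_{i<n} c_i t^{-(n-i)\lambda}\, y^i=0 .
$$

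It remains to check that each coefficient $a_i:=c_i t^{-(n-i)\lambda}\in k(t)$ lies in $A_\infty=k[1/t]_{(1/t)}$. Recall that $A_\infty$ is the valuation ring of $v_\infty=-\deg$, i.e. the set of rational functions of degree $\le 0$. We have
$$
\deg(a_i)=\deg(c_i)-(n-i)\lambda .
$$
By definition \eqref{eq:lambda}, $\lambda\ge \lceil \deg(c_i)/(n-i)\rceil\ge \deg(c_i)/(n-i)$ for every $i<n$, so $(n-i)\lambda\ge \deg(c_i)$ and hence $\deg(a_i)\le 0$. If $c_i=0$, the coefficient vanishes and there is nothing to check.

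Therefore $y$ is a root of the monic polynomial $Y^n+\sum_{i<n}a_iY^i\in A_\infty[Y]$, so $y$ is integral over $A_\infty$. There is no real obstacle here; the only point needing care is the sign convention for $v_\infty$, and the fact that $n-i>0$ for $i<n$, which makes the division in the definition of $\lambda$ legitimate.
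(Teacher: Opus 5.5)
Your proof is correct and is essentially the paper's argument: the paper forms $f_\infty(Y)=t^{-n\lambda}f(t^\lambda Y)$ and notes that, by the choice of $\lambda$, it is monic with coefficients in $k[t^{-1}]\subset A_\infty$. It then observes that $f_\infty(y)=0$. The only difference is cosmetic: you check membership in $A_\infty$ via $\deg(a_i)\le 0$, while the paper records the slightly stronger fact that the coefficients are polynomials in $t^{-1}$.
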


\begin{proof}
The polynomial $f_\infty(Y):=t^{-n\lambda}f(t^\lambda Y)$
is a monic polynomial of degree $n$ which by \eqref{eq:lambda} belongs to $k[t^{-1}][Y]$. Since $f_\infty(y)=t^{-n\lambda} f(x)=0$ and $k[t^{-1}]\subset A_\infty$, we conclude. 
\end{proof}

Let us denote $u=t^{-1}$. Note that $\deg_u(a)=-\deg(a)$ for all $a\in K$. The ideal $I_\infty\cap K$ is now a fractional ideal of $A_\infty=k[u]_{(u)}$. 

\begin{definition}\label{def:height infinity}
Let $I_\infty$ be a fractional ideal of $B_\infty$ and let $m_{I_\infty}\in \Z$ such that $I_\infty\cap K=u^{m_{I_\infty}} A_\infty$. 
\begin{itemize}
\item The normalized ideal of $I_\infty$  is $I_\infty^*=u^{-m_{I_\infty}}I_\infty$.
\item The normalized index of $I_\infty$ is $\delta_{I_\infty}=\deg_u\,[I_\infty^*:A_\infty[y]]$. 
\item The normalized exponent of $I_\infty$  is  $\exp(I_\infty):=\min\{\deg_u(a)\mid a\in A_\infty, aI^*\subset A_\infty[y]\}$.
\end{itemize}
\end{definition}

\noindent

\begin{lem}\label{lem:index exp infinity} We have  $A_\infty[y]\subset B_\infty \subset I_\infty^*$. In particular, $ 0\le \delta_{B_\infty} \le \delta_{I_\infty}$ and $0\le \exp(B_\infty)\le \exp(I_\infty)$. Moreover,
$$
\exp(I_\infty)\le \delta_{I_\infty}\le n\exp(I_\infty).
$$
\end{lem}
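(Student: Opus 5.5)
The plan is to copy the proof of Lemma \ref{lem:index exp} almost word for word, replacing $A$, $B$, $x$ and $\deg$ by $A_\infty$, $B_\infty$, $y$ and $\deg_u$.

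\emph{The chain of inclusions.} By Lemma \ref{lem:y}, $y$ is integral over $A_\infty$. Since $B_\infty$ is a ring containing $A_\infty$, this gives $A_\infty[y]\subset B_\infty$. Next, by definition of $m_{I_\infty}$ we have $I_\infty^*\cap K=u^{-m_{I_\infty}}u^{m_{I_\infty}}A_\infty=A_\infty$, so $1\in I_\infty^*$. As $I_\infty^*$ is a fractional ideal of $B_\infty$, it follows that $B_\infty=B_\infty\cdot 1\subset I_\infty^*$.

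\emph{Non-negativity and monotonicity.} By Lemma \ref{lem:index}, the index ideal is multiplicative:
$$[I_\infty^*:A_\infty[y]]=[I_\infty^*:B_\infty][B_\infty:A_\infty[y]].$$
Both factors are index ideals of inclusions of free $A_\infty$-modules of rank $n$, so they are integral ideals of the DVR $A_\infty$, i.e. of the form $u^{j}A_\infty$ with $j\ge 0$. Taking $\deg_u$ gives $0\le\delta_{B_\infty}\le\delta_{I_\infty}$. Since $1\in B_\infty\subset I_\infty^*$, any $a\in A_\infty$ with $aI_\infty^*\subset A_\infty[y]$ also satisfies $aB_\infty\subset A_\infty[y]$, and lies in $A_\infty$. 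Hence $0\le\exp(B_\infty)\le\exp(I_\infty)$. Here $\deg_u$ on $A_\infty$ is read as the $u$-adic valuation, which is the natural meaning of $\deg_u$ on elements and ideals of $A_\infty=k[u]_{(u)}$ up to units.

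\emph{The inequality $\exp(I_\infty)\le \delta_{I_\infty}\le n\exp(I_\infty)$.} Apply Lemma \ref{lem:index}(2) to $A_\infty[y]\subset I_\infty^*$. This gives
$$I_\infty^*/A_\infty[y]\simeq\prod_{i=1}^n A_\infty/a_iA_\infty,\qquad a_1|\cdots|a_n,$$
and, up to units, $a_i=u^{e_i}$ with $0\le e_1\le\cdots\le e_n$.
\begin{itemize}
\item The annihilator of the quotient is $a_nA_\infty$, so $\exp(I_\infty)=e_n$.
\item By Lemma \ref{lem:index}(2), $\delta_{I_\infty}=e_1+\cdots+e_n$.
\end{itemize}
Then $e_n\le\sum_i e_i\le ne_n$ gives the claim.

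The argument is routine. The only point needing care is that $A_\infty[y]$ is a free $A_\infty$-module of rank $n$ with basis $1,y,\dots,y^{n-1}$; this holds because $y$ has monic minimal polynomial $f_\infty$ of degree $n$. With that, the index ideals above are well defined.
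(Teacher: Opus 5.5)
Your proposal is correct and follows the paper's own argument: the paper proves this lemma in one line by saying it is the proof of Lemma \ref{lem:index exp} run over $A_\infty=k[u]_{(u)}$ instead of $A$. That is exactly your transcription: the inclusions via Lemma \ref{lem:y} and $1\in I_\infty^*$, multiplicativity of the index ideal, and the elementary-divisor decomposition of $I_\infty^*/A_\infty[y]$. Your extra remarks are details the paper leaves implicit, and they are accurate: $\deg_u$ on $A_\infty$ must be read as the $u$-adic valuation, consistent with the paper's convention $\deg_u=-\deg$, and $A_\infty[y]$ is free on $1,y,\dots,y^{n-1}$.
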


\begin{proof}
Similar to the proof of Lemma \ref{lem:index exp}, except that we work over the ring $A_\infty=k[u]_{(u)}$. 
\end{proof}

We denote for short $\delta_\infty=\delta_{B_\infty}$. Analogously to Theorem \ref{thm:complexity_integral_basis}, we deduce from \cite{PoWe24} :

\begin{thm} \label{thm:integral_basis_infinity}
    Let $I_\infty$ be a fractional ideal of $B_\infty$ given by OM-representation. There is a deterministic algorithm which computes a triangular $A_\infty$-basis of $I_\infty$ with :
    \begin{itemize}
        \item $\O_\epsilon(n\delta_{I_\infty})$ operations in $k$ if $char(k)=0$ or $char(k)> n$,
        \item $\O_\epsilon(n\delta_{I_\infty} +\delta_\infty^2)$ operations in $k$ if $char(k)\leq n $.
    \end{itemize}
\end{thm}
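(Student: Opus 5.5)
The plan is to reduce Theorem \ref{thm:integral_basis_infinity} to the finite case, Theorem \ref{thm:complexity_integral_basis}, by the change of variables $u=t^{-1}$, $y=xt^{-\lambda}$ of Lemma \ref{lem:y}. We work in the function field $L=k(u)(y)$, defined over $k(u)$ by the polynomial $f_\infty(Y)=t^{-n\lambda}f(t^\lambda Y)$. This polynomial is monic, lies in $k[u][Y]$, and is irreducible and separable because $f$ is and the substitution is invertible over $K$. Set $A'=k[u]$ and let $B'$ be the integral closure of $A'$ in $L$. The place $u=0$ of $k(u)$ is the place $\infty$ of $K$, so $A'_{(u)}=A_\infty$. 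Since integral closure commutes with localization, $B'\otimes_{A'}A_\infty=B_\infty$, and the primes of $B_\infty$ are exactly the primes of $B'$ above $(u)$.

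Next we extend $I_\infty=\prod_{\p\in\P_\infty}\p^{n_\p}$ to the fractional ideal $I'=\prod_{\p|u}\p^{n_\p}$ of $B'$, with exponent $0$ at all other primes. The OM-representations of the $\p|\infty$ are types over $p=u$, and translating them to the variables $(u,Y)$ is just a rewriting of the key polynomials. We then run the algorithm of Theorem \ref{thm:complexity_integral_basis} on $(f_\infty,I')$ but only at the single prime $p=u$. This means one OM-run above $u$ followed by the MaxMin step, which produces a triangular $A_\infty$-basis of $I'_u=I_\infty$. No CRT gluing is needed and we do not need the other primes dividing the discriminant of $f_\infty$, so the cost is at most that of the global algorithm.

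What remains is to check that the invariants match. The normalized ideal of $I'_u$ is $I^*_\infty$, and the local index $\deg_u[I^*_\infty:A_\infty[y]]$ equals $\delta_{I_\infty}$. The corresponding quantity $\delta$ for $B'$ localized at $u$ is $\delta_\infty$. The complexity of \cite{PoWe24} is really a sum of local contributions, roughly $n\,\delta_{I,p}\deg p$ plus $\delta_p^2$ in small characteristic. With $\deg p=1$ and only $p=u$ contributing, this gives $\O_\epsilon(n\delta_{I_\infty})$, respectively $\O_\epsilon(n\delta_{I_\infty}+\delta_\infty^2)$. The Okutsu invariant above $u$ is bounded by $\delta_\infty$, since it is dominated by the local index of $B_\infty$ over $A_\infty[y]$. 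Finally, the basis obtained is triangular in $y$, and multiplying $h_i(y)$ by $t^{-\lambda i}$ turns it into a triangular-shaped basis in $x$ at no arithmetic cost.

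The main obstacle is that Theorem \ref{thm:complexity_integral_basis} is stated globally in terms of $\delta_I$ and $\delta$, whereas here we need its local version at one prime. I would first try to cite the local statement inside \cite{PoWe24}, whose complexity is a sum over the primes $p$ processed. If that statement is not available, I would go back to the OM complexity $\Ot(n\delta_p+\delta_p^2)$ of \cite{PoWe22} and to the MaxMin cost, which is linear in the output size $\O(n\delta_{I_\infty})$. A secondary point is the cost of converting from $x$ to $y$: computing $f_\infty$ takes $\O(n)$ coefficient rescalings, which is negligible, though the degrees of its coefficients in $u$ should be handled by truncating at the needed $u$-adic precision.
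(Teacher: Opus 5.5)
Your proposal is correct and is essentially the paper's argument. The paper gives no proof beyond stating the result as the analogue of Theorem \ref{thm:complexity_integral_basis}, deduced from \cite{PoWe24} after the substitution $u=t^{-1}$, $y=xt^{-\lambda}$ of Lemma \ref{lem:y} and working only at the prime $u$; your localization and normalization bookkeeping, and your appeal to the local form of the \cite{PoWe24} bound (including the Okutsu-invariant bound), just spell out what the paper leaves to that citation. One small point: no conversion to $x$ belongs to this theorem, since its output is the $y$-triangular basis \eqref{eq:triang_basis_infinity}; the passage to $1,x,\ldots,x^{n-1}$ is the column scaling by $\diag(1,u^{\lambda},\ldots,u^{(n-1)\lambda})$ done later in Lemma \ref{lem:Ntilde}, not a rescaling of the elements $h_i(y)$ by $t^{-\lambda i}$, which would change the $A_\infty$-module.
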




\medskip
\noindent
The returned basis has shape $\B_\infty=u^{m}\B_{\infty}^*$ with $m=m_{I_\infty}\in \Z$ as in Definition \ref{def:height infinity} and where
\begin{equation}\label{eq:triang_basis_infinity}
\B_{\infty}^*=\left(1, \frac{h_1(u,y)}{u^{m_1}},\ldots , \frac{h_{n-1}(u,y)}{u^{m_{n-1}}}\right)
\end{equation}
is an $A_\infty$-basis of the normalized ideal $I_\infty^*$ which satisfies:

(i) $0\le m_1 \le m_2 \cdots \le m_{n-1}$. 

(ii) $ h_i\in k[u][y]$ is monic of degree $i$ in $y$ and $\deg_u(h_i)< m_i$. 

\medskip
\noindent
The $m_i$'s are uniquely determined. We have 
 $\delta_{I_\infty}=m_1+\cdots +m_{n-1}$ and  $\exp(I_\infty)=m_{n-1}$. The complexity is again quasi-linear with respect to the size of the output.

\subsection{Various bounds for the exponents}

We will need  upper bounds for the  exponents $\exp(I)$ and $\exp(I_\infty)$  in terms of the divisor $D$.  A key point is the following explicit formula. 

\begin{lem}\label{lem:formula_d0}
Let $I=\prod_{\p\in \P_0} \p^{-n_\p}$ and $I_\infty=\prod_{\p\in \P_\infty} \p^{-n_\p}$. For $p\in \P_A\cup\{\infty\}$, we let $m_p:=\min\left\{\left \lfloor\frac{n_\p}{e_\p}\right \rfloor \mid \p|p\right\}$. With notations of Definition \ref{def:height} and Definition \ref{def:height infinity} :
\begin{enumerate}
\item We have 
$
q_I=\prod_{p\in \P_A} p^{-m_p}$ and $I^*=\prod_{p\in \P_A}\prod_{\p|p}\p^{e_\p m_p-n_\p}$ 
\item We have
$m_{I_\infty}=-m_\infty$ and $I_\infty^*=\prod_{\p|\infty}\p^{e_\p m_\infty-n_\p}$. 
\end{enumerate}
\end{lem}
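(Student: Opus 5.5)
The plan is to compute $I\cap K$ valuation by valuation, working one prime $p\in\P_A$ at a time. Since $I\cap K$ is a fractional ideal of the principal ring $A$, it equals $q_IA$ with $q_I=\prod_p p^{-a_p}$ for some integers $a_p$. The goal is to show $a_p=m_p$, after which the description of $I^*$ follows from unique factorization of fractional ideals in $B$.

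First I characterize membership. An element $c\in K^\times$ lies in $I$ if and only if $v_\p(c)\ge -n_\p$ for every $\p\in\P_0$. This follows from Lemma \ref{lem:vP=vp} together with the factorization $I=\prod\p^{-n_\p}$: a fractional ideal $I$ contains $c$ exactly when $cB\subset I$, that is, when $v_\p(cB)\ge v_\p(I)$ for all $\p$. For $\p\mid p$ and $c\in K$, we have $v_\p(c)=e_\p v_p(c)$, since $v_\p(p)=e_\p$ by Definition \ref{def:residual_degree} and units of $A_p$ are units of $B_\p$. The condition therefore reads $e_\p v_p(c)\ge -n_\p$, i.e. $v_p(c)\ge \lceil -n_\p/e_\p\rceil=-\lfloor n_\p/e_\p\rfloor$, for every $\p\mid p$. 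Taking all $\p\mid p$ together, this becomes $v_p(c)\ge -m_p$. Hence $I\cap K=\{c\in K: v_p(c)\ge -m_p\ \forall p\}=\prod_p p^{-m_p}A$.

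This product is a genuine fractional ideal, because $m_p=0$ for all but finitely many $p$. Indeed, $n_\p=0$ for almost all $\p$, and a prime $p$ has only finitely many $\p$ above it. So $q_I=\prod_p p^{-m_p}$, up to a unit, which we normalize away. Next I use $pB=\prod_{\p\mid p}\p^{e_\p}$, which follows from $e_\p=v_\p(p)$ and the fact that $v_\q(p)=0$ for $\q\nmid p$. This gives $q_I^{-1}B=\prod_p\prod_{\p\mid p}\p^{e_\p m_p}$, and multiplying by $I$ yields the stated formula for $I^*$.

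The case at infinity is identical, with $A_\infty$, $B_\infty$, $u=1/t$ and $e_\p=v_\p(u)$. The same argument gives $I_\infty\cap K=u^{-m_\infty}A_\infty$, so $m_{I_\infty}=-m_\infty$, and $I_\infty^*=u^{m_\infty}I_\infty=\prod_{\p\mid\infty}\p^{e_\p m_\infty-n_\p}$.

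The only step needing care is the ceiling/floor conversion and the justification that $c\in I$ is equivalent to the valuation inequalities. This uses invertibility of fractional ideals in the Dedekind ring $B$, in the same way as in the proof of Proposition \ref{prop:LD=IcapIinfty}. I do not expect a real obstacle there.
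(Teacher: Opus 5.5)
Your proof is correct. The paper does not argue this lemma itself; its proof is the one-line citation of \cite[Lem.2]{PoWe24}.

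You instead derive the lemma directly from the Dedekind-ring facts already set up in Section~\ref{sec:3}. For $c\in K^\times$ and $\p\mid p$ one has $v_\p(c)=e_\p v_p(c)$. By Lemma~\ref{lem:vP=vp} and unique factorization, the condition $c\in I$ then becomes $v_p(c)\ge \lceil -n_\p/e_\p\rceil=-\lfloor n_\p/e_\p\rfloor$ for every $\p\mid p$, that is, $v_p(c)\ge -m_p$, which identifies $q_I$. The relation $pB=\prod_{\p\mid p}\p^{e_\p}$ then gives $I^*$. The same computation over the discrete valuation ring $A_\infty$, with $u=t^{-1}$, gives $m_{I_\infty}=-m_\infty$ and $I_\infty^*$.

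The citation buys brevity. Your route buys a self-contained proof, and it makes the sign and rounding bookkeeping explicit: the floor in $m_p$ appears as $-\lceil -n_\p/e_\p\rceil$ precisely because $I$ is written with exponents $-n_\p$. This can be checked against the worked example at the end of Section~\ref{sec:5}, where your formulas give $q=t-1$, $I^*=\p_1^{-2}\p_2^{-3}$ and $m=0$, as the paper finds.

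You leave a few ingredients implicit: $e_\p\ge 1$ (since $p\in\p$), and every $\p\in\P_0$ contracts to a nonzero prime of $A$ that has finitely many but at least one prime above it. The latter is what makes $m_p$ well defined and zero for almost all $p$. These facts are standard and cause no difficulty.
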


\begin{proof}
This follows from \cite[Lem.2]{PoWe24}.
\end{proof}

For $q=a/b\in K=k(t)$ with $a,b\in k[t]$ coprime, we denote $h(q)=\deg(a)+\deg(b)$ the height of $q$. 

\begin{prop} \label{prop:bound for expI}
    Let $D\in \Div(L)$ and let $I=I(D)$ and $I_\infty=I_\infty(D)$. We have 
    \begin{equation}\label{eq:rough_bound}
    \exp(B)+\exp(B_\infty) \,\, \le \,\, \exp(I)+\exp(I_\infty) \,\, \leq \,\, \deg(D^+)+ \deg(D^-)+ \exp(B)+\exp(B_\infty)
    \end{equation}
and $h(q_I)+|m_{I_\infty}|\le \deg(D^+)+\deg(D^-)$. 
\end{prop}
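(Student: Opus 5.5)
Lower bound first. Lemma \ref{lem:index exp} gives $A[x]\subset B\subset I^*$, so $\exp(B)\le\exp(I)$, and Lemma \ref{lem:index exp infinity} gives $\exp(B_\infty)\le\exp(I_\infty)$ in the same way. Adding the two inequalities yields the left inequality. Everything else will come from the explicit description of $I^*$, $I_\infty^*$, $q_I$ and $m_{I_\infty}$ in Lemma \ref{lem:formula_d0}.

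\emph{Upper bound.} Put $J=\prod_{\p\in\P_0}\p^{-n_\p^-}$, where $n_\p^-=\max(-n_\p,0)$ are the coefficients of $D^-$. Lemma \ref{lem:formula_d0} gives $I^*=\prod_p\prod_{\p|p}\p^{e_\p m_p-n_\p}$. The plan is to find a polynomial $a\in A$ with $aI^*\subset B$ and $\deg a\le \deg(D^+_0)+\deg(D^-_0)$, where the subscript $0$ denotes the finite part. Then if $c\in A$ realises $\exp(B)$, i.e.\ $cB\subset A[x]$, we get $acI^*\subset A[x]$, and so $\exp(I)\le \deg a+\exp(B)$.

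To build $a$, compare exponents prime by prime. We need $v_\p(a)\ge n_\p-e_\p m_p$ for every $\p|p$, and $v_\p(a)=e_\p v_p(a)$. So it suffices to take $v_p(a)=\max_{\p|p}\lceil n_\p/e_\p\rceil-m_p$, which is $\max\lceil n_\p/e_\p\rceil-\min\lfloor n_\p/e_\p\rfloor$. Now split this difference into the positive and negative parts of the $n_\p$. Using $\max\lceil n_\p/e_\p\rceil\le\max_\p n_\p^+$ and $-\min\lfloor n_\p/e_\p\rfloor\le \max_\p n_\p^-$ (both hold since $e_\p\ge1$), we get $v_p(a)\le \max_{\p|p}n_\p^++\max_{\p|p}n_\p^-$. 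Multiplying by $\deg p$ and using $\deg p\le f_\p\deg p=\deg\p$ (Proposition \ref{prop:degree_place}), this is at most $\sum_{\p|p}(n_\p^++n_\p^-)\deg\p$. Summing over $p$ gives $\deg a\le\deg D_0^++\deg D_0^-$. The same argument over $A_\infty$ with $u=1/t$ and the ring $A_\infty[y]$ gives $\exp(I_\infty)\le \deg D^+_\infty+\deg D^-_\infty+\exp(B_\infty)$. Adding the finite and infinite bounds gives the right-hand inequality.

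\emph{Height bound.} Lemma \ref{lem:formula_d0} gives $q_I=\prod p^{-m_p}$, hence $h(q_I)=\sum_p|m_p|\deg p$. If $m_p\ge0$, every $\p|p$ has $n_\p\ge e_\p m_p\ge0$, so $|m_p|\deg p\le n_\p\deg p\le n_\p^+\deg\p$ for any single $\p|p$. If $m_p<0$, some $\p$ has $\lfloor n_\p/e_\p\rfloor=m_p$, so $|m_p|=\lceil -n_\p/e_\p\rceil\le n_\p^-$ and $|m_p|\deg p\le n_\p^-\deg\p$. In either case $|m_p|\deg p$ is bounded by the contribution of one prime above $p$ to $D^+$ or $D^-$. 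Doing the same at infinity for $|m_{I_\infty}|=|m_\infty|$ and summing over all $p$ gives $h(q_I)+|m_{I_\infty}|\le\deg D^++\deg D^-$.

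The delicate step is the bookkeeping in the upper bound, namely the ceilings and floors in $v_p(a)$. One has to bound $\max\lceil\cdot\rceil-\min\lfloor\cdot\rfloor$ by the sum of positive and negative parts without an extra $+1$. This works because $\lceil n/e\rceil\le n^+$ for $e\ge1$. In the mixed case, where some $n_\p>0$ and another $n_{\p'}<0$ lie over the same $p$, the two parts come from distinct primes, so the bound stays within $\deg D^++\deg D^-$.
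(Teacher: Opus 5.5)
Your proof is correct and follows essentially the same route as the paper. Both arguments use the same element $a=\prod_p p^{\max_{\p|p}\lceil n_\p/e_\p\rceil-m_p}$ with $aI^*\subset B$, compose it with a minimal $c$ for $\exp(B)$, and bound the height prime by prime via $|m_p|\deg p\le |n_{\p_0}|\deg\p_0$; you split on the sign of $m_p$ where the paper splits on the sign of $n_{\p_0}$, which makes no difference, and the ideal $J$ you introduce at the start of the upper bound is never used and can be dropped.
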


\begin{proof}
We first prove the second point. We have $h(q_I)+|m_{I_\infty}|=\sum_{p\in \P_K} |m_p|\deg p$ from Lemma \ref{lem:formula_d0} (with convention $\deg \infty=1$). Let $p\in \P$ and let $\p_0|p$ such that $m_p=\left \lfloor\frac{n_{\p_0}}{e_{\p_0}}\right \rfloor$. 

$\bullet$ If $n_{\p_0} > 0$ then $0<n_{\p_0}/e_{\p_0}\le n_{\p_0}$.  Hence, $0\le \lfloor n_{\p_0}/e_{\p_0} \rfloor\le n_{\p_0}$ and $|m_p|\le |n_{\p_0}|$.

$\bullet$ If $n_{\p_0} < 0$, then $0>n_{\p_0}/e_{\p_0} \ge n_{\p_0}$. Hence, $0\ge \lfloor n_{\p_0}/e_{\p_0} \rfloor \ge n_{\p_0}$ and $|m_p|\le |n_{\p_0}|$. 

\noindent As $\deg p \le \deg \p_0$ by Proposition \ref{prop:degree_place}, we get $|m_p|\deg p\le |n_{\p_0}|\deg \p_0\le \sum_{\p|p} |n_{\p}|\deg \p$. Summing over all $p\in \P_K$, we get
$$
h(q_I)+|m_{I_\infty}|\le \sum_{\p\in \P_K} |n_{\p}|\deg \p = \deg(D^+)+\deg(D^-),
$$
as required. 

We prove now inequality \eqref{eq:rough_bound}. Since $A[x]\subset B\subset I^*$, clearly $\exp(B)\le \exp(I)$. Let $a,b\in A$ such that $aI^*\subset B$ and  $bB\subset A[x]$. We have $abI^*\subset A[x]$. Hence, $\exp(I)\leq \deg(a)+\deg(b)$. Considering $b$ of minimal degree, we obtain $\exp(I)\leq \deg(a)+\exp(B)$ and we are left to bound $\deg a$ in terms of $D$. Let us first construct such an $a$. Denote $l_p=\max\left\{\lceil n_\p/e_\p\rceil\mid \p|p\right\}$ for $p \in \P_A$ and consider $a=\prod_{p\in \P_A} p^{l_p-m_p}$. Since $l_p\ge m_p$, we have $a\in A$. For all $p\in \P_A$ and all $\p|p$, we have $v_\p(p)=e_\p$ (Definition \ref{def:residual_degree}) and Lemma \ref{lem:formula_d0} gives
$$
v_\p(a I^*)=e_\p (l_p-m_p) + e_\p m_p -n_\p=e_\p l_p -n_\p \ge 0.
$$
It follows that $aI^*\subset B$ and we are left to bound $\deg a$. We have 
$$
l_p\le \max_{\p|p, n_\p>0} \left \lceil \frac{n_\p}{e_\p}\right \rceil \le \max_{\p|p, n_\p>0} n_\p \qquad {\rm and}\qquad m_p\ge \min_{\p|p, n_\p<0} \left \lfloor \frac{n_\p}{e_\p}\right \rfloor \ge \min_{\p|p, n_\p<0} n_\p.
$$
Using again $\deg p\le \deg \p$ for all $\p|p$, we get
\begin{align*}
        \deg(a)=\sum_{p\in \P_A}(l_p-m_p)\deg(p)   &\leq \sum_{\p \in  \P_0\mid n_\p>0} n_\p \deg \p - \sum_{\p \in  \P_0\mid n_\p<0} n_\p \deg \p=\deg(D_0^+)+\deg(D_0^-)
    \end{align*}
    where $D_0$ is the finite part of $D$. 
    Hence  $\exp(I)\leq \deg(D_0^+)+\deg(D_0^-)+\exp(B)$. We show in the same way that $\exp(I_\infty)\leq \deg(D_1^+)+\deg(D_1^-)+\exp(B_\infty)$ where $D_1$ is the infinite part of $D$. As $D=D_0+D_1$, the claim follows. 
\end{proof}

%

\medskip
\noindent

The upper bounds of Proposition \ref{prop:bound for expI} are sharp in some cases, but quite bad in some other cases. Let us provide another estimate in terms of the divisor defined by the  normalized ideals.

\begin{prop}\label{prop:expI_vs_degD}
Let $D^*$ be the divisor defined by the pair of fractional ideals $(I^*,I_\infty^*)$ following Proposition \ref{prop:LD=IcapIinfty}. \begin{enumerate}  
\item We have $D^*\ge 0$ and $D^*=D+\divi(q)+ r D_\infty$ for some $q\in K=k(t)$ and some $r\in \Z$.
\item We have $q=q_I$ and $r=m_{I_\infty}+\deg q_I$.
\item If $D_0^*\ne D^*$ satisfies point 1 for some $q_0,r_0$, then $D_0^*> D^*$, $r< r_0$ and $q_0/q\in k[t]$, non constant. 
\item The following inequality holds :
\begin{equation}\label{eq:finner_bound}
\exp(I)+\exp(I_\infty)\le \deg (D^*)+\exp(B)+\exp(B_\infty)
\end{equation} 
\end{enumerate}
\end{prop}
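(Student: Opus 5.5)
We work throughout with the explicit description of the normalized ideals in Lemma \ref{lem:formula_d0}. By Proposition \ref{prop:LD=IcapIinfty} and \eqref{eq:ideaux}, the divisor $D^*$ attached to $(I^*,I^*_\infty)$ has coefficient $n^*_\p=n_\p-e_\p m_p$ at every prime $\p|p$, including $p=\infty$. Since $m_p=\min_{\p|p}\lfloor n_\p/e_\p\rfloor$, we have $e_\p m_p\le n_\p$, so $D^*\ge 0$. This gives the first half of point 1.

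To identify $D^*$ we compute $\divi(q_I)$ and $D_\infty$. For $q_I=\prod_{p\in\P_A}p^{-m_p}$, we have $v_\p(q_I)=-e_\p m_p$ at finite $\p|p$. At infinite primes, $v_\p(q_I)=e_\p\deg(q_I)$, because $v_\p(t)=-e_\p$ for $\p|\infty$. Also $D_\infty=\divi_\infty(t)=\sum_{\p|\infty}e_\p\,\p$. Hence $D+\divi(q_I)+rD_\infty$ has coefficient $n_\p-e_\p m_p=n^*_\p$ at finite primes, and coefficient $n_\p+e_\p(\deg q_I+r)$ at infinite primes. Matching this with $n_\p-e_\p m_\infty$ forces $r=-m_\infty-\deg q_I$. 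With $m_{I_\infty}=-m_\infty$, this is $r=m_{I_\infty}-\deg q_I$. This does not match the printed formula exactly, and the discrepancy is a sign convention for $\deg$ of $q_I$ (recall $\deg_u=-\deg$), so I will recheck it against the conventions of Definition \ref{def:height infinity}. Points 1 and 2 then follow.

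For point 3, let $D_0^*=D+\divi(q_0)+r_0D_\infty\ge 0$. Then $D_0^*-D^*=\divi(q_0/q)+(r_0-r)D_\infty$. At each finite prime $\p|p$ this gives
$$
n_\p+e_\p v_p(q_0)\ge 0 \quad\text{for all } \p|p,
\qquad\text{so}\qquad v_p(q_0)\ge -m_p=v_p(q),
$$
using that $v_p(q_0)$ is an integer and $m_p$ is the minimum of the floors. Hence $q_0/q\in k[t]$. The same argument at $\infty$ gives $\deg q_0+r_0\ge \deg q+r$, that is $r_0-r\ge\deg(q_0/q)\ge 0$.

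It remains to show $D_0^*\ne D^*$ forces a strict comparison. The difference $D_0^*-D^*=\divi(g)+sD_\infty$, with $g=q_0/q\in k[t]$ and $s=r_0-r\ge\deg g$, is effective: $\divi(g)$ is effective at finite primes, and at infinity the coefficient is $e_\p(s-\deg g)\ge 0$. It is nonzero by hypothesis, so $D_0^*>D^*$. The remaining claims, $r<r_0$ and $g$ non constant, should come from minimality at the primes realizing the minima $m_p$. I expect this step to be the main obstacle, since strictness needs care. If $g$ is constant and $s=0$, the two divisors coincide, so at least one of $\deg g>0$ or $s>0$ holds. Separating the two conclusions may need reinterpreting the statement, for instance with $r$ normalized so that $\deg g\le s$ already gives $r_0\ge r+\deg g$.

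For point 4, I will adapt the proof of \eqref{eq:rough_bound}. As there, $\exp(I)\le\deg(a)+\exp(B)$ for any $a\in A$ with $aI^*\subset B$. Since $I^*=\prod\p^{-n^*_\p}$ with $n^*_\p\ge0$, we can take
$$
a=\prod_{p\in\P_A}p^{l^*_p},\qquad l^*_p=\max_{\p|p}\lceil n^*_\p/e_\p\rceil\le\max_{\p|p} n^*_\p .
$$
Using $\deg p\le\deg\p$ (Proposition \ref{prop:degree_place}), this gives $\deg a\le\deg(D^*_0)$, where $D^*_0$ is the finite part of $D^*$. Doing the same at infinity and summing, using $D^*\ge0$, yields \eqref{eq:finner_bound}.
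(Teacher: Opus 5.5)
Your proof of $D^*\ge 0$ and of point 4 is fine. Point 4 is the paper's argument: you apply the bound \eqref{eq:rough_bound} to the effective divisor $D^*$, unrolled by hand. Reading off coefficients from Lemma \ref{lem:formula_d0}, instead of using the ideal identities $I=q_II^*$ and $I_\infty=u^{m_{I_\infty}}I_\infty^*$ as the paper does, is a legitimate route.

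\textbf{The sign error at infinity.} For $q\in K$ and $\p|\infty$ one has $v_\p(q)=e_\p v_\infty(q)=-e_\p\deg q$, not $+e_\p\deg q$. This is exactly what $v_\p(t)=-e_\p$ says. With the correct sign, the coefficient of $D+\divi(q_I)+rD_\infty$ at $\p|\infty$ is $n_\p+e_\p(r-\deg q_I)$. Matching it with $n_\p-e_\p m_\infty$ gives $r=\deg q_I-m_\infty=m_{I_\infty}+\deg q_I$, which is precisely the stated formula. So there is no convention issue to recheck: your value $r=m_{I_\infty}-\deg q_I$ is simply wrong. You can see this in the worked example at the end of Section \ref{sec:5}. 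There $q_I=t-1$, $m_{I_\infty}=0$ and $D_\infty=3\p_\infty$, and only $r=1$ turns $D+\divi(q_I)+rD_\infty$ into $D^*=2\p_1+3\p_2+\p_\infty$; your $r=-1$ gives the non-effective divisor $2\p_1+3\p_2-5\p_\infty$.

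The same slip produces a non sequitur in point 3. Your inequality $\deg q_0+r_0\ge\deg q+r$ yields $r_0-r\ge-\deg(q_0/q)$, not $r_0-r\ge\deg(q_0/q)$. The correct condition at infinity is $r_0-\deg q_0\ge -m_\infty=r-\deg q$, and that does give what you want.

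\textbf{Finishing point 3.} You leave point 3 open, but the part that is true is immediate and needs no minimality argument at the primes realizing $m_p$. Put $g=q_0/q\in k[t]$ and $s=r_0-r\ge\deg g\ge 0$. If $s=0$, then $\deg g=0$, so $g\in k^\times$ and $D_0^*-D^*=\divi(g)=0$, contradicting $D_0^*\ne D^*$. Hence $r<r_0$.

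Do not look for a proof that $q_0/q$ is non constant: that claim is false as stated. Take $q_0=q$ and $r_0=r+1$. Then $D_0^*=D^*+D_\infty$ is effective and different from $D^*$, yet $q_0/q=1$. The paper's own argument goes through $D_0^*=D^*+\divi_0(a)+(r_0-r-\deg a)D_\infty$ with $a=q_0/q$. What it actually establishes is this: $q_0/q\in k[t]$ with $\deg(q_0/q)\le r_0-r$, and $D_0^*=D^*$ if and only if $q_0/q\in k^\times$ and $r_0=r$.
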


\begin{proof}
Point 1. Since $B\subset I^*$ and $B_\infty \subset I^*_\infty$ by respectively Lemma \ref{lem:index exp} and Lemma \ref{lem:index exp infinity}, each $\p$ appears with negative exponents in $I^*$ and $I^*_\infty$, and we deduce from \eqref{eq:ideaux} that $D^*\ge 0$.
Let us consider $q=q_I$ and $m=m_{I_\infty}$. We have $I(D)=I=q I^*=q I(D^*)=I(D^*-\divi(q))$, the second equality by Definition \ref{def:height}, the third equality by definition of $D^*$ and the fourth equality by \eqref{eq:ideaux}. Hence, 
\begin{equation}\label{eq:Dprime}
D=D^*-\divi(q)+ D'
\end{equation}
for some $D'$ supported at infinity by \eqref{eq:ideaux}. In the same way, $I_\infty =u^m I_\infty^*=t^{-m}I_\infty^*$ (Definition \ref{def:height infinity}) implies that $I_\infty(D)=I_\infty(D^*+\divi(t^m))$, hence
\begin{equation}\label{eq:Dseconde}
D=D^*+\divi(t^m) + D^{''}
\end{equation}
for some $D^{''}$ supported at finite places. Let $D_\infty=\divi_\infty(t)=\divi_0(t^{-1})\ge 0$ be the divisor at infinity.  Since $q\in K$, the infinite part of $-\divi(q)$ is exactly $\deg(q) D_\infty$ while the infinite part of $\divi(t^m)$ is $-m D_\infty$. By equating the parts at infinity in \eqref{eq:Dprime} and \eqref{eq:Dseconde}, we deduce that $D'=-(m+\deg q) D_\infty$. Using \eqref{eq:Dprime} again, we get $D^*=D+\divi(q)+r D_\infty$, with $r=m+\deg q$. This shows point 1 together with the fact that $q=q_I$ and $m=m_{I_\infty}$ are solutions. We show points 2 and 3 simultaneously. We keep notations $q=q_I$ and $m=m_{I_\infty}$ and $r=m+\deg q$.  Let $D_0^*=D+ r_0 D_\infty+\divi(q_0)$ with $D^*_0\ge 0$, $q_0\in K$ and $m_0\in \Z$. Thus
\begin{equation}\label{eq:LDr0}
q_0\in  \L(D+ r_0 D_\infty)\cap K.
\end{equation}
By Proposition \ref{prop:LD=IcapIinfty}, we get in particular
$q_0\in  I(D+ r_0 D_\infty)\cap K=I(D)\cap K=q A,$
the first equality by \eqref{eq:ideaux} since $r_0 D_\infty$ is supported at infinity, and the second equality by Definition \ref{def:height} of $q=q_I$. Thus $q_0=aq$ for some $a\in A=k[t]$. In particular, $a$ has no poles at finite places and $\divi(q_0)-\divi(q)=\divi(a)=\divi_0(a)-\deg(a)D_\infty$. We get 
\begin{equation}\label{eq:D0*vsD*}
D=D_0^*-r_0 D_\infty-\divi(q_0) = D^*-r D_\infty-\divi(q) \quad \Longrightarrow \quad D_0^*=D^*+\divi_0(a)+(r_0-r-\deg a) D_\infty. 
\end{equation}
On the other hand, \eqref{eq:LDr0} and Proposition \ref{prop:LD=IcapIinfty} give $q_0 \in I_\infty(D+ r_0 D_\infty)\cap K$. Since $r_0 D_\infty$ is the infinite part of $\divi(t^{-r_0})$, we get
$$
I_\infty(D+r_0 D_\infty)\cap K= I_\infty(D+\divi(t^{-r_0}))\cap K= t^{r_0} I_\infty(D)\cap K=t^{r_0-m}A_\infty,
$$
the last equality by Definition \ref{def:height infinity} since $m=m_{I_\infty}$. Hence $q_0 t^{m-r_0}\in A_\infty$, that is $\deg q_0+m-r_0\le 0$. As $m=r-\deg(q)$, we get $r_0-r-\deg a \ge 0$. 
Combined with \eqref{eq:D0*vsD*}, we deduce that $D_0^*\ge D^*$, with equality if and only if $a\in k$ and $r=r_0$. This proves points 2 and 3. Finally, we have  $\exp(I)=\exp(I^*)$ by Definition \ref{def:height} and $I(D^*)=I^*$ by definition of $D^*$. Hence, $\exp(I)=\exp(I(D^*))$ and similarly at infinity. Point 4 thus follows from Proposition \ref{prop:bound for expI} applied to the effective divisor $D^*$.
\end{proof}

\medskip
\noindent
In short, Proposition \ref{prop:expI_vs_degD} says that :

$\bullet$ The divisor $D^*$ is the effective divisor of minimal degree which is $K$-linearly equivalent to $D$ modulo $D_\infty$.

$\bullet$ The quantity 
$\exp(I)+\exp(I_\infty)$ measures the defect of $K$-principality of $D$ modulo $D_\infty$. 

$\bullet$ The quantity $r_D=\deg(q_I)+m_{I_\infty}$ obeys to the following relation:
$$
\deg D^*=\deg D + n r_D \qquad {\rm and}\qquad r_D=\min\left\{r\in \Z, \,\, \L(D+r D_\infty)\cap K\ne \{0\}\right\},
$$

first equality by point 1 since $\deg(\divi(q))=0$ and $\deg D_\infty=n$ and second equality by combining points 1, 2, 3.

\begin{rmq}\label{rem:expI_vs_degD}
The bound \eqref{eq:finner_bound} is possibly much better than \eqref{eq:rough_bound}. 
In particular, if $D=\divi(q)+r D_\infty$ for some $q\in K$ and some $r\in \Z$, then $D^*=0$ and \eqref{eq:finner_bound} gives the minimal possible value $\exp(I)+\exp(I_\infty)=\exp(B)+\exp(B_\infty)$ which does not depend on $\deg(D)$, while the upper bound \eqref{eq:rough_bound} can be arbitrarily large when $h(q)$ increases. 

On the other hand, if $D$ is negative and far from being $K$-principal modulo $D_\infty$, the bound \eqref{eq:rough_bound} is better. An example is $D= -N \p$ with $N >0$, $e_\p=f_\p=1$. We get $\deg D=-N$, $r_D=\deg q_I+m_{I_\infty}=N$ by Lemma \ref{lem:formula_d0}, hence $\deg D^* = \deg D + n r_D = (n-1)N> N=\deg D^++\deg D^-$. 
\end{rmq}

\section{From integral bases to Riemann--Roch spaces}\label{sec:5}

\medskip
Given some integral bases of $I(D)$ and $I_\infty(D)$, we want now to compute a $k$-basis of $\L(D)=I(D)\cap I_\infty(D)$. We follow the algorithm of Hess \cite{hess} (see also Bauch's thesis \cite{bauch}), in the vein of the pioneer work of Schmidt \cite{schmidt}. Recall that the divisor at infinity is $D_\infty=\divi_\infty(t)$. It is supported at infinite places. We will compute a $k$-basis of $\L(D+r D_\infty)$ for any integer $r$.

\subsection{Hess's algorithm}

A key point is the notion of reduced basis, that we will rather express in terms of row reduced matrices. For $b\in k(t)^n$, we let $\deg(b)$ be the maximal degree of the entries of $b$.

\begin{definition} \label{def:reduced} Let $b_0,\ldots,b_{n-1}\in k(t)^n$. We denote  $\Mb=(b_0,\ldots,b_{n-1})\in k(t)^{n\times n}$ the matrix with rows $b_i$.
\begin{itemize}
\item The row-degree of $\Mb$ is $\rdeg(\Mb)=(\deg(b_0),\ldots,\deg(b_{n-1}))\in \Z^n$.
\item We say that $\Mb$ is row reduced if $$\deg\left(\sum \lambda_i b_i\right)=\max_i \deg \left(\lambda_i b_i\right) \,\,\,\, \text{for  all} \,\,\, \lambda_i\in k(t).$$ 
\end{itemize}
\end{definition}

\begin{lem}\label{lem:row_reduced} 
For $s\in \Z^n$, denote $|s|\in \Z$ the sum of the entries of $s$. Let $\Mb\in k(t)^{n\times n}$ be non singular.
\begin{enumerate}
\item The matrix $\Mb$ is row reduced if and only if $\deg \det(\Mb)=|\rdeg(\Mb)|$.
\item  There exists $\bold{U}\in k[t]^{n\times n}$ unimodular such that $\bold{U}\bold{M}$ is row reduced. 
\end{enumerate}
\end{lem}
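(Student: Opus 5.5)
The plan is to reduce everything to polynomial matrices via leading coefficient matrices. Since $\det$, $\deg$ and row degrees all shift additively when a row is multiplied by a power of $t$ or by a nonzero polynomial, we first clear denominators. Multiply each row $b_i$ by a suitable nonzero polynomial $c_i\in k[t]$ to get a polynomial matrix $\Mb'=\diag(c_i)\Mb\in k[t]^{n\times n}$. Both sides of the equivalence in point 1 shift by the same amount $\sum\deg c_i$: the determinant degree by $\deg\det\diag(c_i)$ and each row degree by $\deg c_i$. Row reducedness is also preserved, since $\sum\lambda_i b_i=\sum(\lambda_i/c_i)(c_ib_i)$. So we may assume $\Mb\in k[t]^{n\times n}$. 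Moreover, the definition of row reducedness with $\lambda_i\in k(t)$ reduces to $\lambda_i\in k[t]$ by clearing a common denominator, which shifts all degrees equally.

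For point 1, write $d_i=\deg(b_i)$ and let $\Mb_{\mathrm{lc}}\in k^{n\times n}$ be the leading row coefficient matrix, whose $i$-th row is the coefficient of $t^{d_i}$ in $b_i$. Then
$$\Mb=\diag(t^{d_i})\left(\Mb_{\mathrm{lc}}+\bold{E}\right),$$
where $\bold{E}$ has entries of negative degree. Hence
$$\det\Mb=t^{|\rdeg(\Mb)|}\left(\det\Mb_{\mathrm{lc}}+O(t^{-1})\right),$$
so $\deg\det\Mb\le|\rdeg(\Mb)|$, with equality if and only if $\det\Mb_{\mathrm{lc}}\ne0$.

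It remains to show that row reducedness is equivalent to $\Mb_{\mathrm{lc}}$ being nonsingular. Given $\lambda_i\in k[t]$, set $N=\max_i\deg(\lambda_ib_i)$. The coefficient of $t^N$ in $\sum\lambda_ib_i$ is $\sum_{i\in S}\mathrm{lc}(\lambda_i)\,(\Mb_{\mathrm{lc}})_i$, where $S$ is the set of indices achieving the maximum. If $\Mb_{\mathrm{lc}}$ is nonsingular, this combination is nonzero, so the degree is exactly $N$. Conversely, suppose there is a nontrivial $k$-relation $\sum_i a_i(\Mb_{\mathrm{lc}})_i=0$. Set $N=\max\{d_i : a_i\ne0\}$ and take $\lambda_i=a_it^{N-d_i}$ for those $i$, and $\lambda_i=0$ otherwise. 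Then $\sum\lambda_ib_i$ has degree $<N=\max\deg(\lambda_ib_i)$, and it is nonzero because $\Mb$ is nonsingular. So $\Mb$ is not row reduced.

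For point 2 we use the classical reduction algorithm. While $\Mb_{\mathrm{lc}}$ is singular, pick a relation $\sum a_i(\Mb_{\mathrm{lc}})_i=0$ and an index $j$ with $a_j\ne0$ and $d_j$ maximal among the $i$ with $a_i\ne0$. Replace $b_j$ by $b_j+\sum_{i\ne j}(a_i/a_j)t^{d_j-d_i}b_i$. This is a unimodular polynomial row operation, since $d_j-d_i\ge0$. It strictly decreases $d_j$ and leaves the other rows unchanged, so $|\rdeg(\Mb)|$ strictly decreases. This sum is bounded below by $\deg\det\Mb$, which is invariant under unimodular operations, so the process terminates. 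By point 1 it terminates exactly when the matrix is row reduced. Composing the operations gives $\bold{U}$.

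The main obstacle is the bookkeeping. For rational matrices the inequality $\deg\det\Mb\le|\rdeg(\Mb)|$ and the leading coefficient matrix must be handled after the denominator clearing, which is why we reduce to polynomial entries first. The key argument is the $k$-linear dependence ("predictable degree") step; everything else is routine.
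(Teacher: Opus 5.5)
Your proof is correct. It is the standard leading-row-coefficient-matrix argument (row reduced $\iff$ the leading row coefficient matrix is nonsingular $\iff$ $\deg\det\Mb=|\rdeg(\Mb)|$, plus the degree-lowering reduction loop), which is what the references the paper cites (Kailath, Zhou, Hess) contain; the paper gives no proof of its own beyond those citations.

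One bookkeeping point concerns item 2. There you should clear denominators with a single common $c\in k[t]$ rather than row-wise factors $c_i$. Then $\Ub(c\Mb)=c\,\Ub\Mb$, so the unimodular $\Ub$ obtained for $c\Mb$ also row-reduces $\Mb$. With distinct $c_i$, the diagonal factor would not be absorbed into a unimodular matrix.
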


\begin{proof}
This is standard, see e.g. \cite[Ch.6.3]{Kailath}, \cite[Sec.2.7]{zhou} or \cite[Lem.4.1]{hess}.
\end{proof} 

As in the previous section, we assume that $L=k(t)[X]/(f)$ for some irreducible monic separable polynomial $f$ of degree $n$. Thus $L=k(t)[x]$ is a $k(t)$-vector space of dimension $n$. Here is the key result of \cite{hess}.

\begin{prop}\label{prop:hess}
Let $D\in \Div(L)$. Let $\B=(b_0,\ldots,b_{n-1})$ be an $A$-basis of $I(D)$ and $\B_\infty$ an  $A_\infty$-basis of $I_\infty(D)$. 
\begin{itemize}
\item Both $\B$ and $\B_\infty$ form a basis of $L$ as a $k(t)$-vector space. 
\item Let $\Pb\in k(t)^{n\times n}$ be the matrix of the basis $\B$ expressed in the basis $\B_\infty$. If $\Pb$ is row reduced, then for all $r\in \Z$, the Riemann--Roch space $\L(D+rD_\infty)$ has $k$-basis 
$$(b_i t^{j}, \, 0\leq i\leq n-1, \, 0\leq j\leq d_i+r)$$
where $(d_0,\ldots,d_{n-1})=-\rdeg(\Pb)$. 
\item The integers $d_i\in \Z$ are uniquely determined (up to permutation) by the subring $A\subset L$ and the divisor $D$. 
\end{itemize}
\end{prop}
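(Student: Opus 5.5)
The proof reduces everything to a degree computation at infinity via Proposition \ref{prop:LD=IcapIinfty}. The first point is immediate. $I(D)$ is a free $A$-module of rank $n$ and $I_\infty(D)$ is a free $A_\infty$-module of rank $n$, both inside $L$. Each spans $L$ over $K$, since any $b\in L$ becomes integral, and then lands in the ideal, after multiplication by a suitable nonzero element of $A$ (resp. $A_\infty$). Being $n$ elements spanning an $n$-dimensional $K$-vector space, each basis is a $K$-basis of $L$.

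For the second point, first reduce to $r=0$. Note that $D_\infty$ is the infinite part of $\divi(t^{-1})$ and has no finite part, so $I(D+rD_\infty)=I(D)$ and $I_\infty(D+rD_\infty)=t^{r}I_\infty(D)$. Replacing $\B_\infty$ by $t^r\B_\infty$ turns $\Pb$ into $t^{-r}\Pb$, which is still row reduced, and subtracts $r$ from every row degree. It therefore suffices to treat $r=0$.

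Now take $b=\sum_i\lambda_i b_i\in L$. Then $b\in I(D)$ if and only if all $\lambda_i\in A=k[t]$. Writing $b=\sum_j c_j\beta_j$ in $\B_\infty=(\beta_j)$, the coefficient row is $c=\lambda\Pb$, and $b\in I_\infty(D)$ if and only if every $c_j\in A_\infty$, i.e. $\deg(c)\le 0$. Since $\Pb$ is row reduced, $\deg(\lambda\Pb)=\max_i(\deg\lambda_i+\deg b_i^{\Pb})=\max_i(\deg\lambda_i-d_i)$, where $b_i^{\Pb}$ is the $i$-th row. Hence, for $\lambda\in k[t]^n$, we have $b\in\L(D)$ if and only if $\deg\lambda_i\le d_i$ for every $i$. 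This means $\L(D)=\bigoplus_i\{\lambda b_i:\lambda\in k[t],\ \deg\lambda\le d_i\}$ as $k$-vector spaces. The family $b_it^j$ is linearly independent because the $b_i$ are $K$-independent, which gives the stated basis.

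The third point is where the real work lies. Uniqueness follows from the dimension count
$$\dim_k\L(D+rD_\infty)=\sum_i\max(0,d_i+r+1)$$
for all $r\in\Z$. This function of $r$ depends only on $D$ and $A$ (through the place set at infinity, i.e. through $t$), not on the chosen bases. Its second differences recover the multiset $\{d_i\}$: the jump $\dim\L(D+(r+1)D_\infty)-\dim\L(D+rD_\infty)$ equals $\#\{i:d_i\ge -r-1\}$, so the multiplicities of each value of $d_i$ are determined. The hypotheses under which each assertion holds need care, and the only nontrivial input is the row-reduced degree identity above (Definition \ref{def:reduced}).
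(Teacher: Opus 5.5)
Your proof is correct, and it takes a different route from the paper. The paper defers to Hess (Theorem 5.1 and Algorithm 6.1 of \cite{hess}). There the key structural fact is the existence of an $A$-basis $(b_i)$ of $I(D)$ and an $A_\infty$-basis $(v_i)$ of $I_\infty(D)$ with $b_i=t^{-d_i}v_i$. These bases are obtained by diagonalizing $\Pb$: a row reduction changes the basis of $I(D)$ and a column reduction changes the basis of $I_\infty(D)$. The description of $\L(D)$ is then read off coordinatewise, and the uniqueness of the $d_i$ is part of Hess's normal-form theorem.

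You bypass the diagonal form entirely. The predictable-degree identity $\deg(\lambda\Pb)=\max_i(\deg\lambda_i-d_i)$, which is just the paper's definition of row reducedness, already gives $\L(D)=\{\sum_i\lambda_ib_i:\lambda_i\in k[t],\ \deg\lambda_i\le d_i\}$. Your reduction to $r=0$, via $I_\infty(D+rD_\infty)=t^rI_\infty(D)$ and $\Pb\mapsto t^{-r}\Pb$, is sound.

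Your uniqueness argument is also different and cheaper. Instead of a module-theoretic uniqueness theorem, you recover the multiset $\{d_i\}$ from the jumps of the intrinsic function $r\mapsto\ell(D+rD_\infty)$, that is, from the second bullet itself.

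Your route buys self-containedness and an exact match with Algorithm~\ref{alg:Hess}, which only ever uses the row reduction (as the paper remarks at the end of its sketch). It also gives a transparent meaning to the $d_i$ as the jump points of $\ell(D+rD_\infty)$. Hess's route buys a stronger structural statement: both lattices admit compatible bases, essentially Grothendieck's splitting of the rank $n$ bundle on $\pr^1$ defined by the pair $(I(D),I_\infty(D))$. In that picture the $d_i$ are invariants of the pair itself, independent of any dimension count.
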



\begin{proof}
This follows from Theorem 5.1 and Algorithm 6.1 in \cite{hess} (with the minor difference that we deal here with row vectors while \cite{hess} deals with column vectors). 
The theorem arises from the existence of bases $(b_0,\ldots, b_{n-1})$ of $I=I(D)$ and $(v_0,\ldots,v_{n-1})$ of $I_\infty=I_\infty(D)$ such that $b_i=t^{-d_i}v_i$ holds with unique rational integers $d_0\ge \cdots \ge d_{n-1}$. 
Considering thoses bases and Proposition \ref{prop:LD=IcapIinfty}, we easily understand the obtainment of such $k$-basis of $\L(D)$ by writing element in $L$ in both bases. 
The condition on the $b_i$ arises from the way to diagonalize $\Pb$ a transition matrix from a basis of $I $ to a basis of $I_\infty$. 
We   row-reduce $\Pb$ which translates into a change-of-basis on $I$  and we column-reduce  $\Pb$ which translates into a change-of-basis of $I_\infty$. 
The matrix obtained is diagonal. In our case, we are only searching for the family $(b_0,\ldots,b_{n-1})$ so the row-reduction suffices.
\end{proof}

\begin{definition} \label{def:compressed_basis}
A family $((b_0,d_0),\ldots,(b_{n-1},d_{n-1}))$ as in Proposition \ref{prop:hess} is called a compressed basis of $\L(D)$. 
\end{definition}

\begin{rmq}\label{rem:compressed_basis}
Note that a compressed basis exists even if $\L(D)=\{0\}$, which corresponds to the case $d_i<0$ for all $i$. Computing a compressed basis in such a case makes sense if we want to compute $\L(D+rD_\infty)$ for some $r\in \Z$. 
\end{rmq}

\medskip
\noindent
In what follows, we freely identify $b\in L=k(t)[x]$ with the row vector of its coefficients in the basis $1,x,\ldots,x^{n-1}$ of the $k(t)$-vector space $L$. We are led to the following algorithm :

\begin{algorithm}[H]
\caption{Hess's algorithm }\label{alg:Hess}
\begin{algorithmic}[1]
\Require 
$L=k(t)[X]/(f)$ with $f\in k[t][X]$ monic and separable, and $D\in \Div(L)$. 
\Ensure A compressed basis of $\L(D)$. 
\State Compute $\Mb$ with rows an $A$-basis $\B$ of $I(D)$.
\State Compute $\Nb$ with rows an $A_\infty$-basis $\B_\infty$ of $I_\infty(D)$.
\State Compute $\bold{U}\in k[t]^{n\times n}$ unimodular such that $\Ub \Mb \Nb^{-1}$ is row reduced.
\State Compute $\bold{U}\Mb$, say $\bold{U}\Mb=(b_0,\ldots,  b_{n-1})$.
\State \Return $((b_i,d_i), \, 0\leq i\leq n-1)$
where $(d_0,\ldots,d_{n-1})=-\rdeg(\Ub \Mb \Nb^{-1})$.
\end{algorithmic}
\end{algorithm}

\begin{thm}\label{thm:hess}
This algorithm is correct. 
\end{thm}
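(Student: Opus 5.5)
The plan is to reduce Theorem \ref{thm:hess} to Proposition \ref{prop:hess}, applied to the basis $\Ub\B$ of $I(D)$ in place of $\B$. I will identify each element of $L$ with its row vector in the basis $1,x,\ldots,x^{n-1}$. Then the rows of $\Mb$ are the vectors of $\B$ and the rows of $\Nb$ are the vectors of $\B_\infty$.

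First I check the change-of-basis conventions. By the first point of Proposition \ref{prop:hess}, $\B_\infty$ is a $k(t)$-basis of $L$, so $\Nb$ is nonsingular. Take an element $b$ with row vector $c$ and write $b=\sum_j \lambda_j v_j$ with $v_j\in\B_\infty$ and $\lambda=(\lambda_j)$. Then $c=\lambda\Nb$, so $\lambda=c\,\Nb^{-1}$. Hence the matrix expressing the rows of $\Mb$ in the basis $\B_\infty$ is exactly $\Pb=\Mb\Nb^{-1}$.

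Second, I replace $\B$ by the rows of $\Ub\Mb$. Since $\Ub\in k[t]^{n\times n}$ is unimodular, its inverse also has entries in $A=k[t]$. So the rows $(b_0,\ldots,b_{n-1})$ of $\Ub\Mb$ are again an $A$-basis of $I(D)$. By the computation above, the transition matrix from this new basis to $\B_\infty$ is $(\Ub\Mb)\Nb^{-1}=\Ub\Mb\Nb^{-1}$. Step 3 of the algorithm makes this matrix row reduced, and Lemma \ref{lem:row_reduced} (point 2) guarantees that such a $\Ub$ exists, so the step is well defined.

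Now all hypotheses of Proposition \ref{prop:hess} hold for the pair $(\Ub\B,\B_\infty)$. Taking $r=0$, the family $(b_it^j)_{0\le j\le d_i}$ is a $k$-basis of $\L(D)$, with $(d_i)=-\rdeg(\Ub\Mb\Nb^{-1})$. This is precisely the output, which is therefore a compressed basis in the sense of Definition \ref{def:compressed_basis}. It also gives the bases of $\L(D+rD_\infty)$ for every $r$.

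The only real obstacle is the bookkeeping of conventions: rows versus columns, and which basis is expressed in which. Proposition \ref{prop:hess} is phrased for row vectors, so I will verify carefully that $\Mb\Nb^{-1}$, and not $\Nb\Mb^{-1}$ or a transpose, is the matrix of $\B$ in $\B_\infty$. I will also check that left multiplication by $\Ub$ is a change of basis of $I(D)$, and that it does not affect $I_\infty(D)$ or the $A_\infty$-basis $\B_\infty$.
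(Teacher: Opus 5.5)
Your proposal is correct and follows the same route as the paper's proof. The existence of $\Ub$ comes from Lemma \ref{lem:row_reduced}, and unimodularity of $\Ub$ keeps the rows of $\Ub\Mb$ an $A$-basis of $I(D)$. The transition matrix to $\B_\infty$ is then $\Ub\Mb\Nb^{-1}$, which is row reduced, and Proposition \ref{prop:hess} concludes; your explicit check of the row-vector convention ($\lambda=c\,\Nb^{-1}$) is a detail the paper leaves implicit.
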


\begin{proof}
The existence of $\Ub$ is ensured by Lemma \ref{lem:row_reduced}. Since  $\bold{U}\in k[t]^{n\times n}=A^{n\times n}$ is unimodular, the rows of $\Ub \Mb$ still form an $A$-basis $\B'$ of $I(D)$. The matrix of $\B'$ in the basis $\B_\infty$ is given by $\Pb=\Ub \Mb \Nb^{-1}$, which is row reduced. The result thus follows from Proposition \ref{prop:hess}. 
\end{proof}

Complexity of steps 1 and 2 have been analyzed in the previous section. We need to study the complexity of the remaining steps. 
A delicate point is that the computation of $\Mb \Nb^{-1}$ and $\Ub \Mb$ is too expensive if we consider the matrix $\Nb$ of the triangular $A_\infty$-basis of $I_\infty(D)$ given by  \eqref{eq:triang_basis_infinity}. We will need to compute first a suitable normal form of $\Nb$.  Let us first state some auxiliary complexity results about polynomial matrices.

\subsection{Multiply, invert and reduce polynomial matrices}

In what follows, the degree of a matrix is the maximal degree of its entries.

\begin{prop}\label{prop:multiplication}
Let $\Eb,\Fb\in k[t]^{n\times n}$ and let $d=\deg \Eb$ and $e=\deg \Fb$.  Then :
\begin{enumerate}
\item $\deg(\Eb \Fb)\le d+e$.
\item We can compute $\Eb \Fb$ in time $\O_\epsilon(n^\omega(d+e+1))$.
\end{enumerate}
\end{prop}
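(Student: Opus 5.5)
Point 1 is a direct degree count. Write $\Eb=(e_{ij})$ and $\Fb=(f_{jl})$, so that each entry of $\Eb\Fb$ is $\sum_j e_{ij}f_{jl}$. For every $j$ we have $\deg(e_{ij}f_{jl})=\deg e_{ij}+\deg f_{jl}\le d+e$, since the degree is additive for products in $k[t]$ (an integral domain). A sum of polynomials of degree at most $d+e$ again has degree at most $d+e$, which gives $\deg(\Eb\Fb)\le d+e$. The convention $\deg 0=-\infty$ handles zero entries without any special case.

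For point 2, I would use evaluation and interpolation, or more simply reduce to the known cost of multiplying matrices with entries in a polynomial ring.

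View $\Eb$ and $\Fb$ as polynomials in $t$ with matrix coefficients, $\Eb=\sum_{a=0}^d E_a t^a$ and $\Fb=\sum_{b=0}^e F_b t^b$ with $E_a,F_b\in k^{n\times n}$. The product $\Eb\Fb$ has degree at most $N:=d+e$ by point 1, so it is determined by $N+1$ matrix coefficients.

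\textbf{Large field.} If $k$ has at least $2N+2$ elements, or more generally supports a fast evaluation scheme, I evaluate $\Eb$ and $\Fb$ at $N+1$ distinct points. This uses multipoint evaluation entrywise, at cost $\Ot(n^2 N)$. I then multiply the $N+1$ pairs of scalar $n\times n$ matrices, at cost $\O(n^\omega N)$, and interpolate entrywise, at cost $\Ot(n^2N)$. The total is $\Ot(n^\omega(N+1))$, which lies in $\O_\epsilon(n^\omega(d+e+1))$.

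\textbf{Small field.} The main obstacle is that $k$ may be too small to provide enough evaluation points. In that case I would use the general result that two $n\times n$ matrices over a commutative ring $R$ can be multiplied with $\O(n^\omega)$ ring operations, a bilinear algorithm that works over any commutative ring as stated in the introduction. I apply it with $R=k[t]$ truncated to degree $N$, that is $R=k[t]/(t^{N+1})$; working modulo $t^{N+1}$ is harmless because the true product has degree at most $N$. Each operation in $R$ costs $\Ot(N+1)$ operations in $k$ by fast polynomial multiplication. The bilinear algorithm performs $\O(n^\omega)$ multiplications and additions in $R$, for a total of $\O(n^\omega)\cdot\Ot(N+1)=\O_\epsilon(n^\omega(d+e+1))$.

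The logarithmic factors coming from fast polynomial multiplication are absorbed into the $\O_\epsilon$ notation. This completes the plan; alternatively one could simply cite \cite{stor13} or \cite{neiger-vu} for this standard bound.
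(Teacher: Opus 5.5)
Your proof is correct, and point 1 is the same degree count as in the paper; the paper phrases it by viewing $\Eb,\Fb$ as polynomials in $t$ with coefficients in $k^{n\times n}$.

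For point 2 you take the dual decomposition.
\begin{itemize}
\item The paper keeps the coefficients in the noncommutative algebra $k^{n\times n}$ and invokes Cantor--Kaltofen \cite{CaKa90}. That result multiplies two polynomials of degree at most $N=d+e$ over an arbitrary algebra with $\O(N\log N)$ algebra multiplications and $\O(N\log N\log\log N)$ additions. Here this means $\O(n^\omega N\log N)$ operations for the scalar matrix products plus $\Ot(n^2N)$ for the additions.
\item You instead treat $\Eb,\Fb$ as matrices over the commutative ring $k[t]/(t^{N+1})$. You run an $\O(n^\omega)$ matrix multiplication algorithm, each of whose ring operations costs $\O(N\log N\log\log N)$ with fast univariate arithmetic.
\end{itemize}
Both land in $\O_\epsilon(n^\omega(d+e+1))$.

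The paper's route only ever multiplies matrices over $k$ itself, so it needs nothing about the matrix multiplication algorithm beyond its cost over $k$. Your route needs that algorithm to remain valid over a commutative $k$-algebra. This is exactly the convention on $\omega$ fixed in the introduction, and it holds for bilinear algorithms with coefficients in $k$. In exchange, your argument is self-contained once fast polynomial multiplication is available.

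Your small-field argument already works over every $k$, so the case distinction is unnecessary. The evaluation--interpolation branch is still worth keeping in mind: when $\Card(k)>N$ it costs $\O(n^\omega(N+1))+\Ot(n^2N)$, with no logarithmic factor on the $n^\omega$ term.
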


\begin{proof}
By considering $\Eb,\Fb$ as polynomials in $t$ with coefficients in $k^{n\times n}$, the inequality $\deg(\Eb \Fb)\le d+e$ is clear. The second point follows from \cite{CaKa90}.
\end{proof}

Next, we need to estimate the cost of computing a row reduced form (Definition \ref{def:reduced}) of a polynomial matrix.

\begin{prop}\label{prop:reduce}
Let $\Eb\in k[t]^{n\times n}$ of degree $d$. We can compute a row reduced form $\Eb_{\mathrm{red}}$ of $\Eb$ in time $\Ot(n^{\omega}(d+1))$. We have then $\deg \Eb_{\mathrm{red}}\le \deg \Eb$.
\end{prop}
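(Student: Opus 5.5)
The complexity claim is not something I would reprove from scratch. The plan is to reduce it to a known fast row-reduction algorithm for polynomial matrices, and then to derive the degree bound from the structure of row reduction.

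\textbf{Complexity.} I would invoke the deterministic reduction algorithm of Giorgi--Jeannerod--Villard, as refined in \cite{neiger-vu} (see also \cite{zhou}). For a nonsingular $n\times n$ matrix over $k[t]$ of degree $d$, it returns $\Ub$ unimodular and a row reduced $\Ub\Eb$ within $\Ot(n^\omega(d+1))$ operations in $k$. The $+1$ covers the degenerate case $d=0$: there $\Eb$ is a constant matrix, and Gaussian elimination already gives a reduced form in $\O(n^\omega)$ operations. If $\Eb$ is singular, the same references compute a reduced basis of the row space with zero rows. In our application $\Eb$ is nonsingular, since it comes from a change of basis, so this case matters little. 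I would state the nonsingular case and cite.

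\textbf{Degree bound.} The key fact is the minimality of row degrees of reduced forms. Let $\Eb_{\mathrm{red}}=\Ub\Eb$ be row reduced, with row degrees sorted increasingly as $r_1\le\cdots\le r_n$. Sort the row degrees of $\Eb$ increasingly as $s_1\le\cdots\le s_n$. Then $r_i\le s_i$ for every $i$.

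To prove this, suppose $r_i>s_i$ for some $i$. Since $\Ub$ is unimodular, the first $i$ rows of $\Eb$ are $k[t]$-combinations of the rows of $\Eb_{\mathrm{red}}$. These $i$ rows are linearly independent and have degree $\le s_i<r_i$. By the predictable-degree property of Definition \ref{def:reduced}, any combination $\sum\lambda_j b_j$ has degree $\max_j \deg(\lambda_j b_j)$. So a combination of degree $<r_i$ can only involve the rows of $\Eb_{\mathrm{red}}$ with degree $<r_i$, and there are at most $i-1$ of those. This contradicts the independence of the $i$ rows, so $r_i\le s_i$.

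In particular $\deg\Eb_{\mathrm{red}}=r_n\le s_n=\deg\Eb$.

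\textbf{Main obstacle.} The delicate step is only the bookkeeping in the citation. One must make sure the cited algorithm is deterministic, works over any field, and has cost measured in the maximal degree $d$ rather than an average degree. Since average degree is at most $d$, the bound in terms of $d$ follows in any case. One may also want $\Ub$ itself; its degree is bounded by $\deg\det\Eb\le nd$, and it can be recovered as $\Eb_{\mathrm{red}}\Eb^{-1}$ if needed.
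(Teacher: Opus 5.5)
Your proposal is correct and follows the paper's route. You cite \cite{neiger-vu} for the $\Ot(n^\omega(d+1))$ cost, then deduce $\deg\Eb_{\mathrm{red}}\le\deg\Eb$ from the entrywise minimality of the sorted row degrees of a reduced form. The paper simply quotes that fact as \cite[Lem 2.14]{zhou}, whereas you prove it directly, and correctly, via the predictable-degree property.

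One side remark in your proposal is wrong, though the proof does not need it. The bound $\deg\Ub\le\deg\det\Eb$ fails in general: for $\Eb=\begin{pmatrix}1&t^d\\0&1\end{pmatrix}$ one has $\deg\det\Eb=0$ but $\deg\Ub=d$. The identity $\Ub\det\Eb=\Eb_{\mathrm{red}}\,{}^t\com(\Eb)$ only gives $\deg\Ub\le nd-\deg\det\Eb\le nd$.
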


\begin{proof}
First point follows for instance from \cite[Thm 1.4]{neiger-vu}. We have $\overline{\rdeg(\Eb)}\geq \overline{\rdeg( \Eb_{\mathrm{red}})}$, see \cite[Lem 2.14]{zhou}, where $\overline{a}$ is the list of the entries of $a\in \Z^{n}$ sorted in increasing order. Particularly, it implies $\deg(\Eb_{\mathrm{red}})\leq \deg(\Eb)$.
\end{proof}

Finally, we will use the following key fact concerning the inversion of a polynomial matrix :

\begin{prop}\label{prop:inverse}
Let $\Fb\in k[t]^{n\times n}$ be a row reduced matrix and let $d\in \N$ such that :
$$\deg \Fb\le d \qquad {\rm and}\qquad t^d \Fb^{-1} \in k[t]^{n\times n}.$$
Then $\deg (t^d \Fb^{-1})\le d$ and $\Fb^{-1}$ can be computed in time $\O_\epsilon(n^\omega d)$. 
\end{prop}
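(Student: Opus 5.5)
The plan is to bound the degree of $\Gb:=t^d\Fb^{-1}$ row by row using the row-reducedness of $\Fb$, and then to compute $\Gb$ by a power series inversion in $u=t^{-1}$ carried out at a non-singular expansion point.

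\textbf{Degree bound.} Set $\Gb:=t^d\Fb^{-1}\in k[t]^{n\times n}$, so that $\Gb\Fb=t^d\,\mathrm{Id}$. Let $g_j$ be the $j$-th row of $\Gb$ and $(r_0,\ldots,r_{n-1})=\rdeg(\Fb)$.
\begin{itemize}
\item Each $r_i\ge 0$: $\Fb$ is a non-singular polynomial matrix, so no row vanishes.
\item Each $r_i\le d$, since $\deg\Fb\le d$.
\item Apply Definition \ref{def:reduced} to $g_j\Fb=t^d e_j$. It gives
$$d=\deg(t^de_j)=\max_i\big(\deg g_{ji}+r_i\big).$$
\end{itemize}
Hence $\deg g_{ji}\le d-r_i\le d$, which is the claimed bound, and in fact a finer columnwise bound.

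\textbf{Reduction to an inversion at a regular point.} Inverting $\Fb$ at $t=0$ may be impossible, and $k$ may be too small for evaluation–interpolation at $2d+1$ points. So I work at $t=\infty$. Write
$$\Fb=\diag(t^{r_0},\ldots,t^{r_{n-1}})\,\Lb(u),\qquad u=t^{-1}.$$
Here $\Lb\in k[u]^{n\times n}$ has degree at most $d$. Its constant term $\Lb(0)$ is the leading row coefficient matrix of $\Fb$, which is invertible because $\Fb$ is row reduced (Lemma \ref{lem:row_reduced}, standard criterion). Therefore $\Lb$ is invertible in $k[[u]]^{n\times n}$, and
$$\Gb=t^d\Lb(u)^{-1}\diag(u^{r_0},\ldots,u^{r_{n-1}}).$$
By the degree bound, column $i$ of $\Gb$ is a polynomial in $t$ of degree at most $d-r_i$. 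So column $i$ of $\Lb^{-1}$ equals $u^{d-r_i}$ times a polynomial of degree at most $d-r_i$ in $t$, that is, a polynomial in $u$ of degree at most $d-r_i\le d$. It therefore suffices to compute $\Lb^{-1}\bmod u^{d+1}$, which is then exact. We then read off $\Gb$ by reversing coefficients columnwise, and obtain $\Fb^{-1}=t^{-d}\Gb$.

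\textbf{Complexity.} The algorithm has three steps.
\begin{itemize}
\item Compute $\Lb(0)^{-1}$ in $\O(n^\omega)$.
\item Lift $\Lb^{-1}$ by Newton iteration $\Xb\leftarrow \Xb+\Xb(\mathrm{Id}-\Lb\Xb)$, doubling the $u$-adic precision up to $d+1$. Each step costs $\O_\epsilon(n^\omega d)$ by Proposition \ref{prop:multiplication}, and the costs form a geometric series, so the total is $\O_\epsilon(n^\omega d)$.
\item The diagonal shifts and reversals are free.
\end{itemize}

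The step that needs the most care is the exactness of the truncation. The argument depends on the columnwise bound $\deg g_{ji}\le d-r_i$ rather than only $\deg\Gb\le d$, and on the invertibility of the leading row coefficient matrix. This is exactly where the row-reducedness hypothesis is used, in addition to its role in the degree bound.
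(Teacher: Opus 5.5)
Your proof is correct, but it takes a different route from the paper in both halves.

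\textbf{Degree bound.} The paper writes $\Fb^{-1}={}^t\com(\Fb)/\det(\Fb)$ and bounds the cofactors by $|\rdeg(\Fb)|$. It then uses $\deg\det(\Fb)=|\rdeg(\Fb)|$ (Lemma \ref{lem:row_reduced}) to get $\deg(\Fb^{-1})\le 0$. You instead apply the predictable-degree property of Definition \ref{def:reduced} directly to $\mathbf{G}\Fb=t^d\,\mathrm{Id}$. This is shorter and gives the sharper columnwise bound $\deg g_{ji}\le d-r_i$.

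\textbf{Algorithm.} The paper expands around $t=1$. There $\Fb$ is invertible, because $t^d\Fb^{-1}\in k[t]^{n\times n}$ forces $\det(\Fb)$ to divide $t^{nd}$. It lifts $\Fb^{-1}$ modulo $(t-1)^{d+1}$ by high-order lifting \cite{stor13}, multiplies by $t^d$, and returns to the $t$-adic basis by radix conversion. You expand at $t=\infty$ instead. There regularity comes from row-reducedness itself, since the leading row coefficient matrix is invertible. Using both hypotheses one even gets that $\det\mathbf{L}=\det(\Fb)/t^{|\rdeg(\Fb)|}$ is a nonzero constant, so $\mathbf{L}$ is unimodular over $k[u]$. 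In your argument the polynomiality hypothesis is what makes truncation at $u^{d+1}$ exact. Plain Newton iteration followed by a columnwise coefficient reversal therefore suffices.

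\textbf{Comparison.} Your version is self-contained: it needs only Proposition \ref{prop:multiplication} and textbook Newton iteration, with no radix conversion or external lifting lemmas. It also makes visible where each hypothesis is used. The paper's version gets by with the coarse bound $\deg(t^d\Fb^{-1})\le d$ and delegates the lifting to existing results in the literature.
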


\begin{proof}
We follow \cite{rosenkield}.  
Using the determinant formula, the cofactor matrix of $\Fb$  satisfies $\deg({}^t\com(\Fb))\leq |\rdeg(\Fb)|$. Since $\Fb$ is row reduced, we have $\deg(\det(\Fb))= |\rdeg(\Fb)|$ (Lemma \ref{lem:row_reduced}). Using $\Fb^{-1}={}^t\com(\Fb)/\det(\Fb)$, we obtain $\deg(\Fb^{-1})\leq 0$. 
Hence $t^d \Fb^{-1}$ has degree at most $d$. The cost estimation follows by combining \cite[Lemma 3.8]{rosenkield} and \cite[Lemma 3.6]{rosenkield}  (first condition is expressed in \cite{rosenkield} in terms of column degree, but this has no matter for our purpose up to transpose). The strategy is as follows : we compute $\Fb^{-1}$ modulo $(t-1)$ and use fast high order lifting \cite{stor13} to deduce $\Fb^{-1}$  modulo $(t-1)^{d+1}$. We deduce $t^d \Fb^{-1}$  modulo $(t-1)^{d+1}$ and we use fast radix conversion to deduce the $t$-adic expansion of $t^{d} \Fb^{-1}$.  
\end{proof}

\begin{rmq}
There exist finner complexity results about multiplication and reduction of polynomials matrices,  which take into account the average degree of the rows (or columns) of the input matrices (see e.g. \cite{neiger-vu}). We don't need these improvements for our purpose. 
\end{rmq}

\subsection{Revisiting Hess's algorithm} 

From now on, we denote $I=I(D)$ and $I_\infty=I_\infty(D)$ and we let $\B,\B^*$ and $\B_\infty,\B_\infty^*$ the triangular basis respectively defined by \eqref{eq:triang_basis} and \eqref{eq:triang_basis_infinity}. 
We denote for short
\begin{equation}\label{eq:ed}
d=\exp(I)\qquad \mathrm{and}\qquad e=\exp(I_\infty).
\end{equation}
(see Definition \ref{def:height} and Definition \ref{def:height infinity}). We thus have $d=\deg p_{n-1}$ and $e=m_{n-1}$ with notations \eqref{eq:triang_basis} and \eqref{eq:triang_basis_infinity}.

\begin{lem}\label{lem:Mtilde}
Let $\Mb^*$ be the matrix of the basis $\B^*$ expressed in the basis $1,x,\ldots,x^{n-1}$. The matrix $\Mb^*$ is lower triangular and we have
$$
p_{n-1}\Mb^*=\Mbt,\quad  \Mbt\in k[t]^{n\times n},\quad \deg(\Mbt)= d.
$$
\end{lem}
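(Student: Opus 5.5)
The plan is to read everything off the explicit shape \eqref{eq:triang_basis} of $\B^*$. Its $i$-th element is $b_i^*=g_i(t,x)/p_i(t)$ for $i\ge 1$, and $b_0^*=1$ (we set $p_0=1$, $g_0=1$). Since $g_i\in k[t][x]$ is monic of degree $i$ in $x$, its coordinate vector in the basis $1,x,\ldots,x^{n-1}$ has entries $0$ beyond position $i$ and entry $1$ at position $i$. Hence row $i$ of $\Mb^*$ is supported in columns $0,\ldots,i$, so $\Mb^*$ is lower triangular with diagonal entries $1/p_i$.

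Next, by property (i), $p_i\mid p_{n-1}$ for all $i$ (with $p_0=1$ trivially). So $p_{n-1}\Mb^*$ has row $i$ equal to $(p_{n-1}/p_i)\,g_i$, and these coefficient vectors lie in $k[t]^n$. Setting $\Mbt:=p_{n-1}\Mb^*$, this shows $\Mbt\in k[t]^{n\times n}$.

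For the degree bound, consider an entry of row $i$. It equals $(p_{n-1}/p_i)\,c$ where $c$ is a $t$-coefficient of $g_i$. Property (ii) gives $\deg_t c<\deg p_i$ for $i\ge1$, while the diagonal entry $c=1$ has degree $0$. Hence every entry has degree at most $\deg p_{n-1}-\deg p_i+\deg p_i=\deg p_{n-1}=d$, and $\deg\Mbt\le d$.

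Equality is attained at the entry $(0,0)$: it equals $p_{n-1}\cdot 1=p_{n-1}$, of degree exactly $d=\exp(I)$ as recalled after \eqref{eq:ed}. Thus $\deg\Mbt=d$. If $n=1$ the statement is trivial with $p_{n-1}=1$ read as $p_0$. The only care needed, and the main obstacle if any, is the diagonal case $c=1$, where $\deg c<\deg p_i$ may fail when $\deg p_i=0$; the bound there still holds because that entry is $p_{n-1}/p_i$ itself.
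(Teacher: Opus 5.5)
Your proof is correct and follows the paper's route: the paper just says the lemma follows straightforwardly from the shape \eqref{eq:triang_basis} of $\B^*$ together with $d=\exp(I)=\deg p_{n-1}$, and you spell out those details (divisibility $p_i\mid p_{n-1}$, the degree bound coming from (ii), and equality attained at the entry $p_{n-1}$ in position $(0,0)$). Treating the diagonal entries separately is a sensible fix, since condition (ii), read literally, cannot hold for the leading coefficient $1$ of a monic $g_i$ when $p_i=1$; it should be read as applying to the non-leading coefficients.
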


\begin{proof}
This follows straightforwardly from \eqref{eq:triang_basis}  and \eqref{eq:ed}.
\end{proof}

Recall that we denote $u=t^{-1}$. Hence $\deg_u=-\deg_t$. 

\begin{lem}\label{lem:Ntilde}
Let $\Nb^*$ be the matrix of the basis $\B_{\infty}^*$ expressed in the basis $1,x,\ldots,x^{n-1}$ and let $\lambda$ as in \eqref{eq:lambda}. We have
$$
u^{e}\Nb^*=\Nbt \quad {\text where} \quad  \Nbt\in k[u]^{n\times n},\quad \deg_u(\Nbt)\le e+n\lambda.$$
Moreover, we have $u^{e+n\lambda} \Nbt^{-1}\in k[u]^{n\times n}$. 
\end{lem}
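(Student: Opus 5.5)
We first describe $\Nb^*$ explicitly. By \eqref{eq:triang_basis_infinity}, the $i$-th element of $\B_\infty^*$ is $h_i(u,y)/u^{m_i}$, where $h_i\in k[u][y]$ is monic of degree $i$ in $y$ with $\deg_u h_i<m_i$. Since $y=xt^{-\lambda}=u^{\lambda}x$, we have $y^j=u^{\lambda j}x^j$. Writing $h_i=\sum_{j\le i} h_{ij}(u)y^j$ with $h_{ii}=1$, the coefficient of $x^j$ in the $i$-th basis element is $h_{ij}(u)u^{\lambda j}/u^{m_i}$.

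Multiplying by $u^{e}$ with $e=m_{n-1}\ge m_i$ (by (i)), the entries of $\Nbt=u^e\Nb^*$ become $h_{ij}(u)u^{\lambda j+e-m_i}$. These are polynomials in $u$, and $\Nbt$ is lower triangular. The degree bound follows directly. For $j<i$ we have $\deg_u\le (m_i-1)+\lambda j+e-m_i<e+n\lambda$. On the diagonal the degree is $\lambda i+e-m_i\le e+n\lambda$. Hence $\deg_u\Nbt\le e+n\lambda$.

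For the inverse, we write $\Nbt=u^{e}\Nb^*$ and note that $\Nb^*=\Db\,\Tb$. Here $\Tb\in A_\infty^{n\times n}$, in fact in $k[u]^{n\times n}$, is the lower unitriangular matrix of $(h_i(u,y))_i$ in the basis $(y^j)_j$, and $\Db$ and the change from $(y^j)$ to $(x^j)$ are diagonal. More precisely,
\[
\Nb^*=\diag(u^{-m_i})\,\Tb\,\diag(u^{\lambda j}).
\]
Since $\Tb$ is unitriangular with entries in $k[u]$, its inverse is also in $k[u]^{n\times n}$. This gives
\[
\Nbt^{-1}=u^{-e}\diag(u^{-\lambda j})\,\Tb^{-1}\diag(u^{m_i}).
\]
Multiplying by $u^{e+n\lambda}$ turns the left diagonal factor into $\diag(u^{e+n\lambda-\lambda j-e})=\diag(u^{\lambda(n-j)})$, which is polynomial because $j\le n-1$ and $\lambda\ge 0$. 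The right diagonal factor has $m_i\ge 0$. So $u^{e+n\lambda}\Nbt^{-1}$ is a product of polynomial matrices in $u$, hence lies in $k[u]^{n\times n}$.

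The only point needing care is the sign of $\lambda$. The argument requires $\lambda\ge 0$, which holds because $\lambda$ is a maximum of ceilings of nonnegative quantities $\deg(c_i)/(n-i)$, at least when some $c_i\ne0$. If all $c_i$ vanish for $i<n$, then $f=X^n$, which contradicts irreducibility for $n>1$. The case $n=1$ is trivial.

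The other point to get right is the convention of row versus column ordering, so that the diagonal matrices multiply on the correct sides. Rows index basis elements $i$ and columns index the monomials $x^j$, which matches the factorization written above.
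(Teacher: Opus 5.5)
Your proof is correct and follows essentially the paper's argument. The paper writes $\Nb^*=\Nb'\Db$ with $\Nb'=\Db'\Tb$, where $\Db'$ is diagonal in powers of $u^{-1}$ and $\Tb$ is unitriangular over $k[u]$, and reads off both the degree bound and the polynomiality of $u^{e+n\lambda}\Nbt^{-1}$ from that factorization. Your explicit check that $\lambda\ge 0$ spells out a point the paper uses silently when it asserts $u^{n\lambda}\Db^{-1}\in k[u]^{n\times n}$.
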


\begin{proof}
Let $\Nb'$ be the matrix of $\B_{\infty}^*$ in the basis $1,y,\ldots,y^{n-1}$. As $y=x u^{\lambda}$, the matrix of $\B_{\infty}^*$ in the basis $1,x,\ldots,x^{n-1}$ is  $\Nb^*=\Nb' \Db$ where $\Db=\diag(1,u^\lambda,\ldots,u^{(n-1)\lambda})$.  It follows straightforwardly from \eqref{eq:triang_basis_infinity} and \eqref{eq:ed} that $u^e\Nb'\in k[u]^{n\times n}$, $\deg_u(u^e \Nb')=e$. Hence, $\Nbt=u^e \Nb' \Db\in k[u]^{n\times n}$ and 
$\deg_u \Nbt\le \deg (u^e\Nb')+\deg \Db\le e+ n\lambda$, as required. For the second point, we have $u^{e+n\lambda}\Nbt^{-1}=u^{n\lambda}\Db^{-1}\Nb'^{-1}$. Since $u^{n\lambda} \Db^{-1}\in k[u]^{n\times n}$, we are reduced to show that $\Nb'^{-1}\in k[u]^{n\times n}$. We deduce from \eqref{eq:triang_basis_infinity} that $\Nb'=\Db' \Tb$ where $\Tb\in k[u]^{n\times n}$ is lower triangular with ones on the diagonal (in particular, $\Tb$ is unimodular) and where 
$\Db'$ is a diagonal matrix with entries in $k[u^{-1}]$. Thus both $\Tb^{-1}$ and $\Db'^{-1}$ lie in $k[u]^{n\times n}$ and  $\Nb'^{-1}\in k[u]^{n\times n}$ as required.
\end{proof}

\begin{lem}\label{lem:Ntilde0}
Let $\Nbt_{\mathrm{red}}\in k[u]^{n\times n}$ be a row reduced form of $\Nbt$. We have :
\begin{enumerate}
\item [(i)] $\deg_u(\Nbt_{\mathrm{red}})\le e+n\lambda$

\item[(ii)] $u^{e+n\lambda} \Nbt_{\mathrm{red}}^{-1}\in k[u]^{n\times n}$

\item[(iii)] $\deg_u  (u^{e+n\lambda} \Nbt_{\mathrm{red}}^{-1})\le e+n\lambda$. 
\end{enumerate}
\noindent
Given $\Nbt$, we can compute both the matrix $\Nbt_{\mathrm{red}}$  and its inverse $\Nbt_{\mathrm{red}}^{-1}$ in time $\Ot(n^\omega (e+n\lambda))$.
\end{lem}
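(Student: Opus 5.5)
Write $\Nbt_{\mathrm{red}}=\Ub_\infty \Nbt$ with $\Ub_\infty\in k[u]^{n\times n}$ unimodular; such a matrix exists by Lemma \ref{lem:row_reduced}.

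For (i), invoke Proposition \ref{prop:reduce} with $u$ in place of $t$. It gives $\deg_u \Nbt_{\mathrm{red}}\le \deg_u\Nbt$, and $\deg_u\Nbt\le e+n\lambda$ by Lemma \ref{lem:Ntilde}. The same proposition computes $\Nbt_{\mathrm{red}}$ from $\Nbt$ in time $\Ot(n^\omega(e+n\lambda+1))$.

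For (ii), write $\Nbt_{\mathrm{red}}^{-1}=\Nbt^{-1}\Ub_\infty^{-1}$. Since $\Ub_\infty$ is unimodular, $\Ub_\infty^{-1}$ has polynomial entries in $u$, so
$$u^{e+n\lambda}\Nbt_{\mathrm{red}}^{-1}=(u^{e+n\lambda}\Nbt^{-1})\,\Ub_\infty^{-1}\in k[u]^{n\times n}.$$
Here the first factor is polynomial by the second point of Lemma \ref{lem:Ntilde}.

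For (iii) and the cost of the inverse, apply Proposition \ref{prop:inverse} with $\Fb=\Nbt_{\mathrm{red}}$, the variable $u$, and $d=e+n\lambda$. Its hypotheses are exactly that $\Fb$ is row reduced, that $\deg\Fb\le d$ (point (i)), and that $u^d\Fb^{-1}$ is polynomial (point (ii)). The proposition then yields $\deg_u(u^{d}\Nbt_{\mathrm{red}}^{-1})\le d$ and computes the inverse in $\O_\epsilon(n^\omega d)$.

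The reason this works is visible in the proof of Proposition \ref{prop:inverse}: row-reducedness forces $\deg\det \Fb=|\rdeg \Fb|$, which bounds $\deg \Fb^{-1}\le 0$. This is precisely why we reduce $\Nbt$ before inverting it, since for $\Nbt$ itself we only control the denominator of the inverse and not its degree.

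The only delicate point is checking that (ii) survives the reduction. This holds because left multiplication by a unimodular matrix does not introduce denominators in the inverse. Adding the costs of the two steps gives the total $\Ot(n^\omega(e+n\lambda))$; when $e+n\lambda=0$, a constant $+1$ is absorbed in the $\Ot$.
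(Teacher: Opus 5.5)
Your proposal is correct and matches the paper's proof step for step. You get (i) and the reduction cost from Proposition \ref{prop:reduce} combined with Lemma \ref{lem:Ntilde}, (ii) from $\Nbt_{\mathrm{red}}^{-1}=\Nbt^{-1}\Ub^{-1}$ with $\Ub^{-1}$ polynomial, and (iii) together with the inversion cost from Proposition \ref{prop:inverse} applied with $d=e+n\lambda$.
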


\begin{proof}
Item (i) and the complexity of computing $\Nbt_{\mathrm{red}}$ follow from Lemma \ref{lem:Ntilde} and Proposition \ref{prop:reduce}. We have $\Nbt_{\mathrm{red}}=\Ub\Nbt$ with $\Ub\in k[u]^{n\times n}$ unimodular, thus $\Nbt_{\mathrm{red}}^{-1}=\Nbt^{-1}\Ub^{-1}$ with $\Ub^{-1}\in k[u]^{n\times n}$ and  (ii) follows from Lemma \ref{lem:Ntilde}. Finally (iii) and the complexity of computing $\Nbt_{\mathrm{red}}^{-1}$ follow from (i), (ii) and Proposition \ref{prop:inverse}. 
\end{proof}

In the following two lemmas, we come back to the coordinate $t=u^{-1}$.

\begin{lem}\label{lem:Pb}
Let $\Pb=\Mbt \Nbt_{\mathrm{red}}^{-1}$. Then $\Pb\in k[t]^{n\times n}$ and $\deg_t \Pb\le d+e+n\lambda$. We can compute $\Pb$  in time $\Ot(n^\omega (d+e+n\lambda))$. 
\end{lem}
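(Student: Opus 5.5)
The plan is to combine Lemma \ref{lem:Mtilde} and Lemma \ref{lem:Ntilde0} after converting everything to the variable $t$. Set $N=e+n\lambda$. By Lemma \ref{lem:Ntilde0}(ii)--(iii), the matrix $\Vb:=u^{N}\Nbt_{\mathrm{red}}^{-1}$ lies in $k[u]^{n\times n}$ and has $\deg_u\Vb\le N$. Each entry is a polynomial $\sum_{j=0}^{N} c_j u^j$. Substituting $u=t^{-1}$ and multiplying by $t^{N}$ therefore gives a polynomial in $t$ of degree at most $N$. Since $u^N=t^{-N}$, we get
$$
\Nbt_{\mathrm{red}}^{-1}=t^{N}\,\Vb(t^{-1}),\qquad t^N\Vb(t^{-1})\in k[t]^{n\times n},\qquad \deg_t\bigl(t^N\Vb(t^{-1})\bigr)\le N.
$$
Computing this reversed matrix only reverses the coefficient lists of the entries of $\Vb$. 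By Lemma \ref{lem:Ntilde0}, $\Vb$ (equivalently $\Nbt_{\mathrm{red}}^{-1}$) is obtained in time $\Ot(n^\omega N)$, so this costs no arithmetic operations beyond that bound.

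Next, Lemma \ref{lem:Mtilde} gives $\Mbt\in k[t]^{n\times n}$ with $\deg_t\Mbt=d$. Hence $\Pb=\Mbt\cdot\bigl(t^N\Vb(t^{-1})\bigr)$ is a product of two polynomial matrices in $k[t]^{n\times n}$, so $\Pb\in k[t]^{n\times n}$. By Proposition \ref{prop:multiplication}(1), $\deg_t\Pb\le d+N=d+e+n\lambda$. By Proposition \ref{prop:multiplication}(2), the product is computed in $\O_\epsilon(n^\omega(d+e+n\lambda+1))$ operations.

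The total cost is the sum of three parts: obtaining $\Mbt$ (read off from the triangular basis, so free up to scaling by $p_{n-1}$), the cost $\Ot(n^\omega(e+n\lambda))$ of Lemma \ref{lem:Ntilde0}, and the multiplication. The sum is $\Ot(n^\omega(d+e+n\lambda))$.

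The one subtle point is the change of variable. Lemma \ref{lem:Ntilde0} is stated with polynomials in $u$, so we must check carefully that $\Nbt_{\mathrm{red}}^{-1}$ itself, and not only its scaled version $u^N\Nbt_{\mathrm{red}}^{-1}$, is polynomial in $t$. This holds because the degree bound in (iii) matches exactly the power of $u$ in (ii).
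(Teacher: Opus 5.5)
Your proof is correct and follows the paper's argument. Both use Lemma \ref{lem:Ntilde0}(ii)--(iii) to get $\Nbt_{\mathrm{red}}^{-1}\in k[u^{-1}]^{n\times n}=k[t]^{n\times n}$ with $\deg_t\le e+n\lambda$, then apply Proposition \ref{prop:multiplication} to the product with $\Mbt$; you just write out the coefficient-reversal step that the paper leaves implicit.
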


\begin{proof}
By Lemma \ref{lem:Mtilde}, we have $\Mbt\in k[t]^{n\times n}$, $\deg_t \Mbt=d$. From Lemma \ref{lem:Ntilde0} (ii) and (iii) we deduce that  $\Nbt_{\mathrm{red}}^{-1}\in k[u^{-1}]^{n\times n}=k[t]^{n\times n}$ and $\deg_t \Nbt_{\mathrm{red}}^{-1}\le e+n\lambda$. The result thus follows from Proposition \ref{prop:multiplication}.
\end{proof}

\begin{lem}\label{lem:Pb0}
We can compute a row reduced form $\Pb_{\mathrm{red}}\in k[t]^{n\times n}$ of $\Pb$   in time $\Ot(n^\omega (d+e+n\lambda))$. We have $\deg_t(\Pb_{\mathrm{red}})\le d+e+n\lambda$. 
\end{lem}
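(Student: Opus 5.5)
This is essentially an application of Proposition \ref{prop:reduce} to the matrix $\Pb$ provided by Lemma \ref{lem:Pb}. First we record the input data. By Lemma \ref{lem:Pb}, $\Pb=\Mbt\Nbt_{\mathrm{red}}^{-1}$ is a polynomial matrix, $\Pb\in k[t]^{n\times n}$, with $\deg_t\Pb\le d+e+n\lambda$. Lemma \ref{lem:Pb} also says that $\Pb$ can itself be computed within $\Ot(n^\omega(d+e+n\lambda))$ operations. So we may take $\Pb$ as an explicitly given $n\times n$ polynomial matrix of degree at most $D_0:=d+e+n\lambda$.

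Next we invoke Proposition \ref{prop:reduce} with $\Eb=\Pb$ and degree $D_0$. It returns a row reduced form $\Pb_{\mathrm{red}}=\Ub\Pb$, with $\Ub\in k[t]^{n\times n}$ unimodular, at cost $\Ot(n^\omega(D_0+1))$. Since $D_0+1\le d+e+n\lambda+1$ and the additive constant is absorbed in the $\Ot$ notation when $n\ge1$, this cost is $\Ot(n^\omega(d+e+n\lambda))$. The same proposition gives $\deg\Pb_{\mathrm{red}}\le\deg\Pb\le d+e+n\lambda$, which is the claimed degree bound. It also guarantees that $\Pb_{\mathrm{red}}$ is again in $k[t]^{n\times n}$, because it is obtained by left multiplication by a polynomial unimodular matrix.

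The only point needing care is that $\Pb$ really is polynomial, so that Proposition \ref{prop:reduce}, which is stated for matrices over $k[t]$, applies. This is exactly the content of Lemma \ref{lem:Pb}, which rests on Lemma \ref{lem:Ntilde0}(ii)--(iii). With that, the statement follows by direct composition of the two results.

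For later use, note that $\Pb_{\mathrm{red}}$ differs from the true transition matrix $\Mb\Nb^{-1}$ only by the scalar factor $u^{e}/p_{n-1}$ and by a unimodular change of the $A_\infty$-basis on the right. That right factor does not affect row reducedness in the relevant sense, but it matters only in the next step and not for this lemma.
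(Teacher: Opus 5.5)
Your proposal is correct and follows the paper's proof exactly: Lemma~\ref{lem:Pb} gives that $\Pb$ is polynomial of degree at most $d+e+n\lambda$, and Proposition~\ref{prop:reduce} then gives both the cost and the degree bound $\deg_t(\Pb_{\mathrm{red}})\le \deg_t(\Pb)$. Your closing side remark is imprecise but plays no role in this lemma: the precise relation is $\Ub\Mb\Nb^{-1}\Vb^{-1}=\frac{q\,u^{e-m}}{p_{n-1}}\Pb_{\mathrm{red}}$, so the scalar involves $q$ and $u^{-m}$, and there is also the left factor $\Ub$.
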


\begin{proof}
This follows from Proposition \ref{prop:reduce} and Lemma \ref{lem:Pb}.
\end{proof}

\begin{lem}\label{lem:Mt0}
Let $\Ub\in k[t]^{n\times n}$ be the unimodular matrix such that $\Pb_{\mathrm{red}}=\Ub \Pb$. 
We can compute $\Mbt_{\mathrm{red}}=\Ub \Mbt\in k[t]^{n\times n}$  in time $\Ot(n^\omega (d+e+n\lambda))$.
\end{lem}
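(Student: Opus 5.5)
The plan is to avoid computing $\Ub$ explicitly as a product and instead obtain $\Ub\Mbt$ from $\Pb_{\mathrm{red}}$ using the relation $\Pb=\Mbt\Nbt_{\mathrm{red}}^{-1}$ of Lemma \ref{lem:Pb}. From $\Pb_{\mathrm{red}}=\Ub\Pb=\Ub\Mbt\Nbt_{\mathrm{red}}^{-1}$ we get
$$
\Ub\Mbt=\Pb_{\mathrm{red}}\,\Nbt_{\mathrm{red}},
$$
where $\Nbt_{\mathrm{red}}\in k[u]^{n\times n}$ has already been computed in Lemma \ref{lem:Ntilde0}. So the whole task reduces to a single matrix product, provided we handle the fact that $\Nbt_{\mathrm{red}}$ is a polynomial matrix in $u=t^{-1}$ rather than in $t$.

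First, rewrite $\Nbt_{\mathrm{red}}$ in the variable $t$. Set $c=e+n\lambda$. By Lemma \ref{lem:Ntilde0}(i), $\deg_u\Nbt_{\mathrm{red}}\le c$, so $\Rb:=t^{c}\Nbt_{\mathrm{red}}$ lies in $k[t]^{n\times n}$ with $\deg_t\Rb\le c$. Computing $\Rb$ only reverses coefficient lists, at cost $\O(n^2 c)$. Second, compute the product $\Pb_{\mathrm{red}}\Rb$. By Lemma \ref{lem:Pb0}, $\deg_t\Pb_{\mathrm{red}}\le d+c$. Proposition \ref{prop:multiplication} then gives the product in time $\O_\epsilon(n^\omega(d+2c+1))\subset\Ot(n^\omega(d+e+n\lambda))$, and it has degree at most $d+2c$. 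Third, divide by $t^{c}$ to get $\Ub\Mbt=t^{-c}\Pb_{\mathrm{red}}\Rb$. The division is exact because $\Ub\Mbt\in k[t]^{n\times n}$, since $\Ub$ is unimodular over $k[t]$ and $\Mbt\in k[t]^{n\times n}$ by Lemma \ref{lem:Mtilde}. This last step is just a coefficient shift.

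The one point to check carefully is the exact division in the third step, namely that the low-order coefficients of $\Pb_{\mathrm{red}}\Rb$ vanish. This follows formally from the polynomiality of $\Ub\Mbt$ just noted.

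A byproduct of this check is a degree bound: $\deg_t(\Ub\Mbt)\le d+c$. This bound will matter for the size of the output compressed basis $\Ub\Mb=p_{n-1}^{-1}\Mbt_{\mathrm{red}}$.

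The main obstacle is that the naive route is too expensive. It would recover $\Ub=\Pb_{\mathrm{red}}\Pb^{-1}$, which requires inverting $\Pb$, and $\Pb$ is not known to be reduced, so Proposition \ref{prop:inverse} does not apply. The substitution $\Ub\Mbt=\Pb_{\mathrm{red}}\Nbt_{\mathrm{red}}$ sidesteps this entirely. I expect the remaining work to be purely bookkeeping of degrees.
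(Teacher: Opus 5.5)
Your proposal is correct and follows essentially the paper's proof: you rewrite $\Ub\Mbt=\Pb_{\mathrm{red}}\Nbt_{\mathrm{red}}$, make the second factor polynomial in $t$ by multiplying by $t^{e+n\lambda}$, compute one product via Proposition \ref{prop:multiplication}, and divide by $t^{e+n\lambda}$ at no cost. The degree bound $\deg_t(\Ub\Mbt)\le d+e+n\lambda$ that you note is the one the paper later cites from ``the proof of'' this lemma in Theorem \ref{thm:RR}.
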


\begin{proof} By definition of $\Pb$ in Lemma \ref{lem:Pb}, we have $\Ub \Mbt=\Ub \Pb \Nbt_{\mathrm{red}}= \Pb_{\mathrm{red}} \Nbt_{\mathrm{red}}.$  
By Lemma \ref{lem:Ntilde0} (i) and since $u=t^{-1}$, we deduce that $t^{e+n\lambda}\Nbt_{\mathrm{red}}\in k[t]^{n\times n}$ has degree at most $e+n\lambda$ in $t$. Hence computing  $t^{e+n\lambda}\Pb_{\mathrm{red}}\Nbt_{\mathrm{red}}$ fits in the aimed bound by Lemma \ref{lem:Pb0} and Proposition \ref{prop:multiplication}. There remains to divide the resulting matrix by  $t^{e+n\lambda}$, which can be done for free. 
\end{proof}

\begin{rmq}
Clearly, we could have studied more carefully the complexity of the various steps taking into account in particular that $\Nbt_{\mathrm{red}}$ is a left multiple of the diagonal matrix $\Db=(1,u^\lambda,\ldots,u^{(n-1)\lambda})$. This would not improve the overall cost estimate in general, but this leads to a slight improvement when the index at infinity is zero, see Subsection \ref{ssec:trivial_index_infinity}.
\end{rmq}

We can now revisit Hess algorithm to get complexity estimates.

\begin{algorithm}[H]
\caption{Riemann--Roch}\label{alg:RR}
\begin{algorithmic}[1]
\Require 
$L=k(t)[X]/(f)$ with $f\in k[t][X]$ monic and separable, and  $D\in \Div(L)$ given by OM-representation. 
\Ensure A compressed basis of $\L(D)$. 
\State Compute $\Mbt\in k[t]^{n\times n}$ as in Lemma \ref{lem:Mtilde} using Theorem \ref{thm:complexity_integral_basis}. 
\State Compute $\Nbt\in k[u]^{n\times n}$ as in Lemma \ref{lem:Ntilde} using Theorem \ref{thm:integral_basis_infinity}. 
\State Compute a row reduced form $\Nbt_{\mathrm{red}}$ of $\Nbt$ in $k[u]^{n\times n}$ and compute $\Nbt_{\mathrm{red}}^{-1}$ with Lemma \ref{lem:Ntilde0}.
\State Compute $\Pb=\Mbt \Nbt_{\mathrm{red}}^{-1}\in k[t]^{n\times n}$ with Lemma \ref{lem:Pb}.
\State Compute a row reduced form $\Pb_{\mathrm{red}}=\Ub \Pb$ in $k[t]^{n\times n}$ with Lemma \ref{lem:Pb0}. 
\State Compute $\Mbt_{\mathrm{red}}=\Ub \Mbt$ in $k[t]^{n\times n}$ with Lemma \ref{lem:Mt0}.
\State Compute $\Mb_{\mathrm{red}}=\frac{q}{p_{n-1}}\Mbt_{\mathrm{red}} \in k(t)^{n\times n}$, with $q\in k(t)$ and $p_{n-1}\in k[t]$ as in \eqref{eq:triang_basis}.
\State Let $\delta = m_{n-1}-m-\deg (q/p_{n-1})$ with $m_{n-1}\in \N$ and $m\in \Z$ as in \eqref{eq:triang_basis_infinity}. 
\State \Return $$((b_i,d_i),\,\, 0\leq i\leq n-1)$$ where $(b_0,\ldots,b_{n-1})=\Mb_{\mathrm{red}}$ and 
$(d_0,\ldots,d_{n-1})=-\rdeg \Pb_{\mathrm{red}} +(\delta,\ldots,\delta)$.
\end{algorithmic}
\end{algorithm}

Recall that $b_i\in k(t)^n$ is identified with an element of  $k(t)[x]$ at step 9. 
Using notations of Definition \ref{def:height} and Definition \ref{def:height infinity}, we get:
\begin{thm}\label{thm:RR}
This algorithm returns a correct answer. It performs at most 
\begin{itemize}
\item $\Ot(n^{\omega}(\exp(I)+\exp(I_\infty)+n\lambda)+n^2 h(q_I))$ operations in $k$ if $\car(k)=0$ or $\car(k)>n$,
\item $\Ot(n^{\omega}(\exp(I)+\exp(I_\infty)+n\lambda)+n^2h(q_I) +\delta^2 +\delta_\infty^2)$ operations in $k$ otherwise.
\end{itemize}
\end{thm}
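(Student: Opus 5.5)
The proof splits into correctness, which reduces to Theorem \ref{thm:hess} together with Proposition \ref{prop:hess}, and a step-by-step cost count using the lemmas of the previous subsection.

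\emph{Correctness.} Let $\Mb$ and $\Nb$ be the matrices of the triangular bases $\B=q\B^*$ and $\B_\infty=u^m\B_\infty^*$ in the basis $1,x,\ldots,x^{n-1}$. Lemmas \ref{lem:Mtilde} and \ref{lem:Ntilde} give
$$\Mb=\frac{q}{p_{n-1}}\Mbt, \qquad \Nb=u^{m}u^{-e}\Nbt=t^{e-m}\Nbt .$$
Write $\Nbt_{\mathrm{red}}=\Vb\Nbt$ with $\Vb\in k[u]^{n\times n}$ unimodular. Since $k[u]\subset A_\infty$, the matrix $\Vb$ is invertible over $A_\infty$. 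Hence the rows of $t^{e-m}\Nbt_{\mathrm{red}}$ still form an $A_\infty$-basis of $I_\infty(D)$.

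Likewise $\Ub$ is unimodular over $A=k[t]$. So the rows of $\Mb_{\mathrm{red}}=\frac{q}{p_{n-1}}\Ub\Mbt$ form an $A$-basis of $I(D)$.

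The transition matrix from this $A$-basis to this $A_\infty$-basis is
$$\frac{q}{p_{n-1}}\,t^{m-e}\,\Ub\Mbt\Nbt_{\mathrm{red}}^{-1}=\frac{q}{p_{n-1}}\,t^{m-e}\,\Pb_{\mathrm{red}} .$$
Multiplying a row reduced matrix by a nonzero scalar of $k(t)$ keeps it row reduced, because the condition of Definition \ref{def:reduced} is homogeneous. The row degrees simply shift by $\deg(q/p_{n-1})+m-e$. Proposition \ref{prop:hess} then gives $d_i=-\deg(\text{row}_i\Pb_{\mathrm{red}})+e-m-\deg(q/p_{n-1})$, which is exactly $-\rdeg\Pb_{\mathrm{red}}+\delta$ with $\delta=m_{n-1}-m-\deg(q/p_{n-1})$, since $e=m_{n-1}$.

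The main point to check carefully is the sign convention $u=t^{-1}$ in this shift, and that replacing the $A_\infty$-basis by $\Vb$ times it is legitimate.

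\emph{Complexity.}
\begin{enumerate}
\item Steps 1--2 cost $\O_\epsilon(n\delta_I+n\delta_{I_\infty})$ by Theorems \ref{thm:complexity_integral_basis} and \ref{thm:integral_basis_infinity}, with the additive $\delta^2+\delta_\infty^2$ in small characteristic. By Lemmas \ref{lem:index exp} and \ref{lem:index exp infinity}, $n\delta_I\le n^2\exp(I)\le n^\omega\exp(I)$, and similarly at infinity.
\item Producing $\Mbt$ and $\Nbt$ from the returned bases only needs clearing denominators and the diagonal rescaling by $\Db$. This takes $\O(n^2(d+e+n\lambda))$ operations.
\item Steps 3--6 cost $\Ot(n^\omega(d+e+n\lambda))$ by Lemmas \ref{lem:Ntilde0}, \ref{lem:Pb}, \ref{lem:Pb0} and \ref{lem:Mt0}.
\item Step 7 multiplies the $n^2$ entries of $\Mbt_{\mathrm{red}}$, each of degree at most $d+e+n\lambda$, by $q/p_{n-1}$. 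Recall that $q=q_I$ is given as a product of primes, so it must first be expanded.
\end{enumerate}

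This last step is where I expect the main bookkeeping obstacle. The bound $\deg\Mbt_{\mathrm{red}}\le d+e+n\lambda$ follows from $\Mbt_{\mathrm{red}}=\Pb_{\mathrm{red}}\Nbt_{\mathrm{red}}$ and the degree bounds in Lemmas \ref{lem:Pb0} and \ref{lem:Ntilde0}. Given that bound, fast multiplication costs $\Ot(n^2(d+e+n\lambda+h(q_I)))$, and this is absorbed into the stated total.

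Step 8 is free. Summing all contributions gives the two announced bounds.
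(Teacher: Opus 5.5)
Your proposal is correct and follows the paper's proof essentially line by line. It uses the same rescalings $\Mb=\frac{q}{p_{n-1}}\Mbt$ and $\Nb=u^{m-e}\Nbt$, and the same unimodularity of $\Vb$ over $A_\infty$ and $\Ub$ over $A$. It identifies the transition matrix as the $k(t)$-scalar multiple $\frac{q\,t^{m-m_{n-1}}}{p_{n-1}}\Pb_{\mathrm{red}}$, which is therefore row reduced with uniformly shifted row degrees. The cost count is the same, via Lemmas \ref{lem:Ntilde0}, \ref{lem:Pb}, \ref{lem:Pb0}, \ref{lem:Mt0} and the bound $\deg \Mbt_{\mathrm{red}}\le d+e+n\lambda$ for step 7. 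One cosmetic point: forming $\Mbt=p_{n-1}\Mb^*$ costs $\Ot(n^2 d)$ rather than $\O(n^2 d)$, which is absorbed in the stated bound anyway.
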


\begin{proof}
\emph{Correctness.} Let $\Mb$ and $\Nb$ be the respective matrices of $\B$ and $\B_\infty$ in the canonical basis $1,x,\ldots,x^{n-1}$. With notations \eqref{eq:triang_basis} and \eqref{eq:triang_basis_infinity} and notations of Lemma \ref{lem:Mtilde} and Lemma \ref{lem:Ntilde}, we thus have 
\begin{equation}\label{eq:M_Mstar}
\Mb=q \Mb^* =\frac{q}{p_{n-1}} \Mbt\qquad \mathrm{and}\qquad \Nb= u^m \Nb^* =u^{m-e} \Nbt.
\end{equation}
By construction, $\Nbt_{\mathrm{red}}=\Vb \Nbt$ for some unimodular matrix $\Vb\in k[u]^{n\times n}$. So $\Vb$ is unimodular over $A_\infty=k[u]_u$ and the rows of $\Nb_{\red}:=u^{m-e} \Nbt_{\mathrm{red}}=\Vb \Nb$ still form an $A_\infty$-basis of $\B_\infty$. The matrix $\Pb_{\mathrm{red}}=\Ub \Mbt \Nbt_{\mathrm{red}}^{-1}$ being row reduced at step 5 (with respect to $\deg_t$), we deduce from Definition \ref{def:reduced} that the matrix 
$\Pb_0:=\Ub \Mb \Nb_{\mathrm{red}}^{-1}$ is row reduced too since it is a $K$-multiple of $\Pb_{\mathrm{red}}$. It follows from Algorithm \ref{alg:Hess} and Theorem \ref{thm:hess} that a compressed basis of $\L(D)$ is given by $(b_0,\ldots,b_{n-1}):=\Ub \Mb$ together with $(d_0,\ldots,d_{n-1}):=-\rdeg(\Pb_0)$.  Using \eqref{eq:M_Mstar} together with $e=\exp(I_\infty)=m_{n-1}$ (by \eqref{eq:triang_basis_infinity}) and $u=t^{-1}$, we get
$$
\Ub\Mb=\frac{q}{p_{n-1}} \Ub \Mbt=\Mb_{\mathrm{red}} \qquad {\rm and}\qquad \Pb_0=\frac{q t^{m-m_{n-1}}}{p_{n-1}}\Pb_{\mathrm{red}},
$$
hence $(b_0,\ldots,b_{n-1})=\Mb_{\mathrm{red}}$ and 
$(d_0,\ldots,d_{n-1})=-\rdeg \Pb_{\mathrm{red}} +(\delta,\ldots,\delta)$ with $\delta=m_{n-1}-m-\deg (q/p_{n-1})$, as returned by Algorithm \ref{alg:RR}. 

\emph{Complexity.} We compute $\Mb^*$ and $\Nb^*$ in the aimed bound thanks to  Theorem \ref{thm:complexity_integral_basis} and Theorem \ref{thm:integral_basis_infinity} combined with the inequalities $n\delta_I\le n^2 \exp(I)$  and $n\delta_{I_\infty}\le n^2 \exp(I_\infty)$ by respectively Lemma \ref{lem:index exp} and Lemma \ref{lem:index exp infinity}. Since $\deg p_{n-1} =\exp(I)$ and the $n^2$ entries of $\Mb^*$ have height $\O(\exp(I))$, we compute $\Mbt=p_{n-1} \Mb^*$ in the aimed bound. We compute $\Nbt$ from $\Nb^*$ for free (multiplication by a power of $t$). The complexity of steps 3, 4, 5, 6 fits in the aimed bound from respectively Lemma \ref{lem:Ntilde0}, Lemma \ref{lem:Pb},  Lemma \ref{lem:Pb0} and Lemma \ref{lem:Mt0}. The polynomial matrix $\Mbt_{\mathrm{red}}$ has degree at most $\exp(I)+\exp(I_\infty)+n\lambda$ by (the proof of) Lemma \ref{lem:Mt0}. Since $q=q_I$ and $\deg p_{n-1}=\exp(I)$, we deduce that computing
$\Mb_{\mathrm{red}}$ at step 7 fits in the aimed bound. 
\end{proof}

Again, we could have been more careful when looking at the complexity of the various multiplications of matrices by elements of $K=k(t)$. However, this would not improve the overall cost estimate. 

\begin{rmq}
Instead of performing the product at step 7, we may also return a list of (non expanded) product 
$(q \tilde{b}_0 ,q \tilde{b}_1/p_{1} , \ldots, q \tilde{b}_{n-1}/p_{n-1})$ where the $\tilde{b}_i$ are given by the rows of $\Mbt_{\mathrm{red}}$. In such a case, we do not need the extra term $h(q_I)$ in the complexity estimate. This is particularly relevant if $\exp(I)<< h(q_I)$, that is when  $D$ is closed to be $K$-principal modulo $D_\infty$ (see Remark \ref{rem:expI_vs_degD}).
\end{rmq}

%
%

\subsection{The case of trivial exponent at infinity}\label{ssec:trivial_index_infinity}

If we assume that $\exp(I_\infty)=0$, the annoying extra term $n\lambda$ disappears in the complexity estimate of Theorem \ref{thm:RR} :

\begin{thm} \label{thm:complexity trivial index}
Suppose that $\exp(I_\infty)=0$. We can compute a compressed basis of $\L(D)$ with \begin{itemize}
\item $\Ot(n^{\omega}\exp(I)+n^2 h(q_I))$ operations in $k$ if $\car(k)=0$ or $\car(k)>n$,
\item $\Ot(n^{\omega} \exp(I) + n^2 h(q_I)+\delta^2)$ operations in $k$ otherwise.
\end{itemize} 
\end{thm}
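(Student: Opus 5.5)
When $\exp(I_\infty)=0$, I would bypass the reduction of $\Nbt$ and handle the infinite side directly. The starting point is that $m_1=\cdots=m_{n-1}=0$ in \eqref{eq:triang_basis_infinity}, since $e=m_{n-1}=0$. Condition (ii) then forces $h_i$ to be monic of degree $i$ in $y$ with $\deg_u h_i<0$, so $h_i=y^i$. Hence $\B_\infty^*=(1,y,\ldots,y^{n-1})$, and $I_\infty=u^m A_\infty[y]$.

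Since $y=xt^{-\lambda}$, the matrix of $\B_\infty$ in the basis $1,x,\ldots,x^{n-1}$ is the diagonal matrix $\Nb=u^m\Db$ with $\Db=\diag(1,u^\lambda,\ldots,u^{(n-1)\lambda})$. No integral basis computation is needed at infinity beyond knowing $m=m_{I_\infty}$, which comes from Lemma \ref{lem:formula_d0}. The cost is therefore only Step 1, and the $\delta_\infty^2$ term disappears.

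The transition matrix is $\Mb\Nb^{-1}=\frac{q}{p_{n-1}}t^{m}\,\Mbt\,\Db^{-1}$, where $\Db^{-1}=\diag(t^{-i\lambda})$. This matrix is not polynomial, so I would not reduce it as is. Instead I would apply a column shift: $\Mbt\Db^{-1}$ is row reduced iff $\Mbt$ is row reduced with respect to the shift $s=(0,-\lambda,\ldots,-(n-1)\lambda)$. Equivalently, the $s$-shifted row degree of a row is $\max_i(\deg \tilde m_{\cdot i}-i\lambda)$. So I would compute a $(-s)$-shifted reduced form, that is, an $s$-reduced form $\Ub\Mbt$ of $\Mbt$, with a shifted reduction algorithm (e.g. \cite{neiger-vu}). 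The unimodular $\Ub$ then stays over $k[t]$ and the correctness argument of Theorem \ref{thm:RR} goes through unchanged: $b_i$ are the rows of $\frac{q}{p_{n-1}}\Ub\Mbt$, and $d_i=-\rdeg_s(\Ub\Mbt)+\delta$ with $\delta=-m-\deg(q/p_{n-1})$.

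The main obstacle is the complexity of the shifted reduction. Shifted reduction costs depend on the shift amplitude, and $\max s-\min s=(n-1)\lambda$ would reintroduce the $n\lambda$ term. I would exploit two structural facts. First, $\Mbt$ is lower triangular of degree $d=\exp(I)$. Second, it is already close to reduced: its diagonal entries are $p_{n-1}/p_i$. Reduction with shift $s$ only mixes rows through leading terms of bounded degree, and the output degree stays bounded by $d$, as in Proposition \ref{prop:reduce}, because $\Ub\Mbt$ spans the same module $p_{n-1}I^*$.

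Concretely, I would first try to show that a shift $s$ can be replaced by a truncated shift $s'$ with amplitude at most $d$ without changing the reduced form. Columns whose shift differs from others by more than $d$ cannot compete for the leading position in any row of a degree-$\le d$ matrix, so $s'$ should work. The cost of \cite[Thm 1.4]{neiger-vu} then becomes $\Ot(n^\omega(d+1))$. Together with Theorem \ref{thm:complexity_integral_basis}, the bound $n\delta_I\le n^2\exp(I)$, and the final scaling by $q/p_{n-1}$ costing $\Ot(n^2 h(q_I))$ as in Theorem \ref{thm:RR}, this gives the stated estimate.
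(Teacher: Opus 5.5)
Your reduction matches the paper's. From $m_{n-1}=\exp(I_\infty)=0$ you get $\B_\infty^*=(1,y,\ldots,y^{n-1})$, so $\Nbt=\Db$ is diagonal, and steps 3--6 of Algorithm \ref{alg:RR} collapse into one shifted row reduction of $\Mbt$.

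The gap is in the complexity step, which is the real content of the theorem. Your truncation argument only justifies shrinking a gap between consecutive (sorted) shift values that exceeds $d$, and only down to $d+1$. Two columns whose shifts differ by at most $d$ do compete for the leading position in rows of degree $\le d$, and you give no reason to change their relative shift. For the shift at hand, $(0,\lambda,2\lambda,\ldots,(n-1)\lambda)$, no gap exceeds $d$ once $\lambda\le d$. So nothing is truncated, and the amplitude stays $(n-1)\lambda$, possibly of order $nd$. An amplitude-sensitive algorithm then brings back exactly the $n\lambda$ term you wanted to remove. Your claimed ``amplitude at most $d$'' is therefore not obtained.

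The missing idea is that no truncation is needed. Your premise that shifted reduction must cost something depending on the shift amplitude is false for nonsingular square matrices. Neiger and Vu compute the $\vec s$-Popov form of a nonsingular $n\times n$ polynomial matrix of degree $d$ in $\Ot(n^\omega d)$ operations for an arbitrary shift. This is Theorem \ref{thm:neiger}, and it is all the paper invokes.

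Using the $\vec s$-Popov form also gives the bound $\deg(\Ub\Mbt)\le d$ you need for the final scaling by $q/p_{n-1}$. Each pivot degree is at most $d$ because $p_{n-1}x^j$ lies in the row module $p_{n-1}I^*$ for every $j$ (as $A[x]\subset I^*$). By contrast, ``spanning the same module'' does not bound the degree of an arbitrary $\vec s$-reduced basis.

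There is also a sign slip. Since $\Db=\diag(1,u^\lambda,\ldots,u^{(n-1)\lambda})$, you get $\Db^{-1}=\diag(1,t^{\lambda},\ldots,t^{(n-1)\lambda})$, not $\diag(t^{-i\lambda})$. So $\Mbt\Db^{-1}$ is polynomial, of degree up to $d+(n-1)\lambda$; that is precisely why reducing it directly costs the $n\lambda$ term. The relevant shift, with column $j$ weighted by $t^{s_j}$, is $(0,\lambda,\ldots,(n-1)\lambda)$ rather than $(0,-\lambda,\ldots,-(n-1)\lambda)$. With your sign you would reduce the wrong matrix and return wrong $d_i$.

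Once these two points are fixed, the rest of your bookkeeping agrees with the paper: $h_i=y^i$, $\delta=-m-\deg(q/p_{n-1})$, Theorem \ref{thm:complexity_integral_basis} with $n\delta_I\le n^2\exp(I)$, and the $n^2h(q_I)$ scaling cost.
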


The proof requires to use shifted reduction of matrices instead of the usual  degree reduction.

\begin{definition}
Let $\vec{s}=(s_0,\ldots, s_{n-1})\in \Z^n$ be a shift and $\Eb\in k(t)^{n\times n}$. The matrix $\Eb$ is said to be row $\vec{s}$-reduced if $\Eb\Db$ is row reduced, where $\Db=\diag(t^{s_0},\ldots, t^{s_{n-1}})$. 
\end{definition}

The shift is used as column weights. 

\begin{thm}[\cite{neiger-vu}]\label{thm:neiger}
Let $\Eb\in k[t]^{n\times n}$ of degree $d$. We can compute a row $\vec{s}$-reduced form of $\Eb$ in time $\Tilde{\O}(n^\omega d)$.
\end{thm}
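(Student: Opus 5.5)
This result is imported from \cite{neiger-vu}, so the plan is to reconstruct the reduction behind it rather than build new machinery. Write $\vec{s}=(s_0,\ldots,s_{n-1})$. Row $\vec{s}$-reducedness is unchanged when a constant is added to all entries of $\vec{s}$, so we first normalize to $\min_i s_i=0$.

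\textbf{Small shifts.} If $\max_i s_i\le d$, the claim follows directly from Proposition \ref{prop:reduce}. Let $\Db=\diag(t^{s_0},\ldots,t^{s_{n-1}})$.
\begin{itemize}
\item The matrix $\Eb\Db$ lies in $k[t]^{n\times n}$ and has degree at most $2d$.
\item Proposition \ref{prop:reduce} computes a unimodular $\Ub$ such that $\Ub\Eb\Db$ is row reduced, in time $\Ot(n^\omega d)$.
\item Then $\Ub\Eb$ is a row $\vec{s}$-reduced form of $\Eb$.
\end{itemize}
So the only issue is a shift with large spread, where multiplying by $\Db$ blows up the degree.

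\textbf{Large shifts, nonsingular case.} Assume first that $\Eb$ is nonsingular (the case needed in Subsection \ref{ssec:trivial_index_infinity}).
\begin{enumerate}
\item Compute the Hermite normal form $\Hb$ of $\Eb$ with the deterministic algorithm of Labahn--Neiger--Zhou, in $\Ot(n^\omega d)$ operations. Its diagonal degrees $\delta_0,\ldots,\delta_{n-1}$ sum to $\deg\det\Eb\le nd$.
\item Replace $\vec{s}$ by a truncated shift $\vec{s}\,'$, obtained by clipping each difference $s_j-s_i$ against the diagonal degrees $\delta_i$. The justification: in any $\vec{s}$-reduced basis of the row space, the $\vec{s}$-pivot degrees are bounded by the $\delta_i$. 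Hence a column whose shift exceeds the others by more than $\sum_i\delta_i\le nd$ cannot affect which entry is the $\vec{s}$-pivot, and an $\vec{s}\,'$-reduced basis is also $\vec{s}$-reduced.
\item The truncated shift still has spread up to $nd$. To avoid multiplying by a diagonal of that degree, use \emph{partial linearization}: expand the columns and rows of high degree into $O(n)$ new columns and rows of degree $O(d)$, in the style of Gupta--Sarkar--Storjohann--Valeriote.
\item On the linearized matrix, run the approximant/kernel basis computation of Neiger--Vu with the corresponding linearized shift. A row $\vec{s}$-reduced basis of the row space of $\Hb$ (equivalently of $\Eb$) is the kernel of $[\,\Hb\mid -\mathbf{I}\,]$ under the shift $(\vec{s},\cdot)$ restricted to an appropriate block. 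Since the average degree is $O(d)$ and the dimension is $O(n)$, this costs $\Ot(n^\omega d)$.
\item Read off the $\vec{s}$-reduced form from the output of step 4.
\end{enumerate}

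\textbf{Expected obstacle.} The main difficulty is the degree control in step 4. One has to show that the linearized shifted problem genuinely has average degree $O(d)$. This needs the a priori bound $\sum\delta_i\le nd$ together with the standard fact that the $\vec{s}$-Popov form has the same sum of pivot degrees as the Hermite form (both equal $\deg\det$). The first thing I would try is to make this precise via the pivot-degree characterization of the $\vec{s}$-Popov form. Once that is in place, correctness reduces to the uniqueness of the $\vec{s}$-Popov form and the fact that shifted reducedness depends only on the row space. For singular $\Eb$, I would first extract a nonsingular row basis via a minimal kernel basis computation (again $\Ot(n^\omega d)$) and then apply the argument above.
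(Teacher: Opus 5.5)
The paper gives no argument for this statement; it is imported from \cite{neiger-vu}. Your small-shift case is fine: apply Proposition~\ref{prop:reduce} to $\Eb\Db$, which has degree at most $2d$, then recover $\Ub\Eb$ by dividing column $j$ by $t^{s_j}$. The large-shift case, however, is the whole content of the theorem (the cost must not depend on $\vec s$), and you do not establish it. Step~2 rests on a false claim: the $\vec s$-pivot degrees of a reduced basis are \emph{not} bounded by the Hermite diagonal degrees. Take the row space of $\mathbf{H}=\begin{pmatrix} t&0\\ 1&t\end{pmatrix}$, whose lower triangular Hermite form has diagonal degrees $(1,1)$. It contains $(0,t^2)$, and for $\vec s=(5,0)$ its $\vec s$-Popov form is $\begin{pmatrix}1&t\\ 0&t^2\end{pmatrix}$. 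So every $\vec s$-reduced basis has $\vec s$-row degrees $\{5,2\}$ and hence contains a row $(0,b)$ with $\deg b=2$. The only a priori bound on pivot degrees is $\deg\det\Eb$. The truncation that is valid without knowing the pivot degrees caps each gap between consecutive sorted shifts at about $\deg\det\Eb$. It leaves a spread up to $(n-1)\deg\det\Eb$, which can be of order $n^2d$, not $nd$. Moreover, ``clipping each difference $s_j-s_i$'' is not well defined, since pairwise differences cannot be clipped independently. You also assert without proof that an $\vec s\,'$-reduced basis is $\vec s$-reduced.

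Steps~3--4 do not close this gap. Partial linearization handles unbalanced \emph{degrees}, not an arbitrary shift. Step~4 hands the shifted kernel/relation computation back to ``the algorithm of Neiger--Vu'', which is the statement you are trying to prove, and you leave the degree control open yourself. The missing idea, which is how this line of work handles arbitrary shifts, is a two-phase approach:
\begin{enumerate}
\item Compute the $\vec s$-pivot degrees $\vec\delta$ of the row space, at a cost independent of $\vec s$; this is where the real work lies.
\item Use that the $\vec s$-Popov form is also the $(-\vec\delta)$-Popov form. Its $(-\vec\delta)$-leading matrix is the identity, because off-pivot entries in column $j$ have degree $<\delta_j$. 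The shift $-\vec\delta$ has amplitude at most $\deg\det\Eb\le nd$ and prescribes the column degrees of the output. This known degree profile is what allows a linearization to dimension $O(n)$ and degree $O(d)$.
\end{enumerate}
Finally, your treatment of singular $\Eb$ does not work as stated. A kernel basis does not give a row basis, and a row basis of a singular square matrix is $r\times n$ with $r<n$, so the Hermite-form and $\deg\det$ argument does not apply. The paper only needs the nonsingular case anyway, since $\Mbt$ is lower triangular with nonzero diagonal.
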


It is remarkable that the complexity estimate does not depend on the shift $\vec{s}$.

\begin{proof}[Proof of Theorem \ref{thm:complexity trivial index}]
The hypothesis $\exp(I_\infty)=0$ is equivalent to $\delta_{I_\infty}=0$ thanks to Lemma \ref{lem:index exp infinity}, that is $\deg [A_\infty[y]:I_\infty^*]=0$ by Definition \ref{def:height infinity}. Since, $A_\infty[y]\subset I_\infty^*$, we get $A_\infty[y]=I_\infty^*$ by Lemma \ref{lem:index}. Hence $\Nbt=\Db=\diag(1,u^\lambda,\ldots,u^{(n-1)\lambda})$ is diagonal (hence reduced), so the algorithm simplifies. The matrix $\Mbt_{\mathrm{red}}$ of step 6 in Algorithm \ref{alg:RR} coincides with the row  $\vec{s}$-reduced form of $\Mbt$ where $\vec{s}=(1,\lambda,\ldots,(n-1)\lambda)$, and we may replace steps 3, 4, 5, 6 by simply computing the row  $\vec{s}$-reduced form of $\Mbt$, with complexity $\Ot(n^{\omega}\exp(I))$ by Theorem \ref{thm:neiger}. Note that $\Nbt_{\mathrm{red}}=\Db$, hence $\Pb_{\mathrm{red}}=\Mbt_{\mathrm{red}} \Db^{-1}$. Thus 
$\rdeg \Pb_{\mathrm{red}}=\rdeg \Mbt_{\mathrm{red}} +(1,\lambda,\ldots,(n-1)\lambda)$ is deduced for free from the row-degree of $\Mbt_{\mathrm{red}}$. 
\end{proof}

\subsection{An example} 

Let us illustrate Algorithm \ref{alg:RR} on a concrete example.
Let $f=X^3-X^2+t^2\in \F_5(t)[X]$ as in Example \ref{ex:OM_representation}. 

\smallskip
\noindent
$\bullet$ Let $p=tA$. We saw in Example \ref{ex:OM_representation} that there are three primes $\p_1,\p_2,\p_3$ lying above $p$, with OM-representation $t_{\p_1}=[t,x-t]$, $t_{\p_2}=[t,x+t]$ and $t_{\p_3}=[t,x-1]$. We have for instance $tB=\p_1 \p_2 \p_3$ and $(x-1)B=\p_3^2$. 

\smallskip
\noindent
$\bullet$ Let $p'=(t-1)A$. There are exactly two primes $\p_1',\p_2'$ lying above $p'$, with OM-representation  $t_{\p_1'}=[t-1,x+3]$ and $t_{\p_2'}=[t-1,x,x^2+x+2]$. We have $(t-1)B=\p'_1 \p'_2$. 

\smallskip
\noindent
$\bullet$ Let $p_\infty=t^{-1}A_\infty$. Following \eqref{eq:lambda}, we have $\lambda=1$, $y=xt^{-1}$ and $y^3-t^{-1} y^2+t^{-1}=0$. There is a unique prime $\p_\infty$ above $p_\infty$, with OM-representation $\p_{\infty}=[t^{-1},y,y^3+t^{-1}]$. We have $t^{-1} B_\infty = \p_\infty^3$ and $y B_\infty =\p_\infty$.

\medskip
\noindent
Let us compute the Riemann--Roch space associated to the divisor $D=2\p_1 +3\p_2 -\p_1'-\p_2'+\p_\infty \in \Div(\F_5(t)[X]/f)$.

\medskip
\noindent
{\bf Step 1. } We have $I=I(D)=\p_1^{-2}\p_2^{-3}\p_1'\p_2'$. Using the computer algebra system \textsf{SageMath}, we find that this  fractional ideal has basis $$\B=(t-1)\B^*,\qquad \mathrm{with}\qquad \B^*=\left ( 1, \frac{x-1}{t^2}, \frac{x^2-(t+1)x+t(1+t-t^2)}{t^4} \right ).$$ 
We have $I^*=\p_1^{-2}\p_2^{-3}$, $q= t-1$, $p_{n-1}=t^4$ and  $$\Mbt=\begin{pmatrix}
t^4 & 0 & 0 \\
-t^2 & t^2 & 0 \\
t(1+t-t^2) & -(t+1) & 1
\end{pmatrix}.$$
{\bf Step 2.} We have $I_\infty=I_\infty(D)=\p_\infty^{-1}$.
This fractional ideal is normalized and \textsf{SageMath} provides a basis $$\B_\infty=\B_\infty^*=\left ( 1,y,t y^2 \right )=\left ( 1,\frac{x}{t},\frac{x^2}{t} \right ).$$
We have $m=0$, $m_{n-1}=1$  and $$\Nbt=\begin{pmatrix}
1/t & 0 & 0 \\
0 & 1/t^2 & 0 \\
0 & 0 & 1/t^2
\end{pmatrix}.$$
{\bf Step 3.} The matrix $\Nbt$ is diagonal, thus row reduced, and its inverse is obvious.\\

\noindent {\bf Step 4.} We compute $$\Pb=\Mbt \times \Nbt^{-1}=\begin{pmatrix}
t^5 & 0 & 0 \\
-t^3 & t^4 & 0 \\
t^2(1+t-t^2) & -t^2(t+1) & t^2
\end{pmatrix}.$$
{\bf Step 5.} The matrix $\Pb$ is not row reduced. Indeed, $\deg(\det(\Pb))=11<|\rdeg(\Pb)|=13$. We compute a row reduced form of $\Pb$ : $$\Pb_{\mathrm{red}}=\begin{pmatrix}
t^3 +t^2 & -2t^3-t^2 & t^2(t+1) \\
-t^3 & t^4 & 0 \\
0 & -t^4 & t^4+t^2
\end{pmatrix}.$$
{\bf Step 6.} We compute the product $\Mbt_{red}=\Pb_{\mathrm{red}}\times \Nbt_{\mathrm{red}}$ :
$$\Mbt_{\mathrm{red}}=\begin{pmatrix}
t^2 +t & -2t-1 & t+1 \\
-t^2 & t^2 & 0 \\
0 & -t^2 & t^2+1
\end{pmatrix}.$$
{\bf Step 7.} We obtain the following elements, corresponding to the lines of $\Mb_{\mathrm{red}}=\frac{q}{p_{n-1}} \Mbt_{\mathrm{red}}$: $$b_0=\frac{t^2 -1}{t^2}+x \frac{(3t-1)(t-1)}{t^4}+x^2\frac{t^2 -1}{t^4},\quad  b_1=\frac{1-t}{t^2}+ x\frac{t-1}{t^2} \quad \text{ and } \quad b_2=x\frac{1-t}{t^2}+ x^2\frac{(t^2+1)(t-1)}{t^4}.$$
{\bf Step 8.} We have $\delta =4$ and $\rdeg(\Pb_{\mathrm{red}})=(3,4,4)$, hence $(d_0,d_1,d_2)=(1,0,0)$.\\

\noindent {\bf Result : } A $k$-basis of $\L(D)$ is $(b_0, tb_0, b_1, b_2)$.

\section{Arithmetic vs Geometry (proof of Theorem \ref{thm:main})}\label{sec:6}

The complexity given in Theorem \ref{thm:main} is stated in terms of the delta invariant of a projective plane curve. To this aim, we need to relate curves and function fields. More details are given in Appendix \ref{sec:curves}.

\subsection{Curves vs function fields}

Let $\C\subset \mathbb{P}^2_k$ be a degree $n$ irreducible projective plane curve defined by a degree $n$ homogeneous polynomial $F\in k[X_0,X_1,X_2]$. We denote $k(\C)$ the field of rational functions of $\C$. The elements of $k(\C)$ are quotients $G/H$ of homogeneous polynomials of the same degrees, with $H\mod F\ne 0$, and modulo the equivalence relation $G/H\sim G'/H'$ if $GH'-G'H=0$ mod $F$. The field $k(\C)$ is a function field over $k$ (and every function field arises in such a way).

\begin{definition}\label{def:divisor_of_curve}
A divisor of $\C$ is a divisor of the function field $k(\C)$ of $\C$. 
\end{definition}

Equivalently, a divisor of $\C$ can be seen as a $\Z$-combination of closed points of the normalization $\overline{\C}$ of $\C$ (Theorem \ref{thm:projective_normalisation}). If $\C$ is smooth and $k$ is algebraically closed, a divisor is simply a $\Z$-combination of points on a curve, the usual way to think a divisor. 

\medskip

Let us assume that $F\ne X_0$. Denoting $t=X_1/X_0$ and $X=X_2/X_0$, the restriction of $\C$ to the affine chart $U_0=\{X_0\ne 0\}\simeq \A_k^2$ can be identified with the affine curve $C\subset \A^2_k$ defined by
$$
C=\Spec k[t,X]/(f)\quad {\rm where} \quad f(t,X)=F(1,t,X)\in k[t,X].
$$
The coordinate ring of $C$ is $k[C]:=k[t,X]/(f)$  and its function field is $k(C)=\Frac k[C]$. With notations  $A=k[t]$, $K=k(t)$ and $x=X\mod f$ used in the previous sections, we thus have
$$
k[C]=A[x] \quad {\rm and}\quad k(C)=K(x).
$$
Dehomogenization induces an isomorphism $k(\C)\simeq k(C)$. It identifies closed points of $\C\cap U_0$ with closed points of $C$  and induces isomorphisms of local rings $\O_{\C,P}\simeq \O_{C,P}$ for such points.  Note that a closed point $P\in C$ is by definition a maximal ideal of $k[C]$ and the local ring of $C$ at $P$ is $\O_{C,P}=k[C]_P$ (see Section \ref{ssec:closed_points}).


\subsection{Bounds for the indices of $B$ and $B_\infty$}

In order to prove Theorem \ref{thm:main}, there remains to relate the indices $\delta=\deg [B:A[x]]$ and $\delta_\infty=\deg [B_\infty:A_\infty[y]]$ of $f$ as defined in Definition \ref{def:height} and Definition \ref{def:height infinity} with the delta-invariant of the input projective plane curve $\C$ (Definition \ref{def:delta-invariant}). We denote $\overline{\O_{\C,P}}$ the integral closure of $\O_{\C,P}$.  

\begin{prop}\label{prop:delta et delta_infty} Suppose $F$ monic and separable in $X_2$. With notations as above, we have:
\begin{enumerate}
\item $\delta=\sum_{P\in \C\cap U_0} \dim_k \overline{\O_{\C,P}}/\O_{\C,P}$. 
\item $\delta_\infty=\sum_{P\in \C\setminus \C\cap U_0} \dim_k \overline{\O_{\C,P}}/\O_{\C,P}$. 
\end{enumerate}
\end{prop}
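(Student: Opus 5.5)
Both identities will be obtained by localizing the index ideal prime by prime and then identifying each local contribution with a sum of local delta invariants of the curve. For item 1, start from Lemma \ref{lem:index}: $B/A[x]\simeq \bigoplus_i A/a_iA$ with $\delta=\sum_i\deg a_i=\dim_k B/A[x]$. This quotient is a finite-length $A[x]$-module, so it splits as a direct sum of its localizations at the maximal ideals $P$ of $A[x]=k[C]$:
$$B/A[x]\simeq\bigoplus_{P}\left(B/A[x]\right)_P=\bigoplus_P B_P/k[C]_P .$$
Localization commutes with integral closure (Annex \ref{sec:A}), so $B_P=\overline{k[C]_P}=\overline{\O_{C,P}}$. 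The dehomogenization isomorphism $\O_{\C,P}\simeq\O_{C,P}$ then identifies each summand with $\overline{\O_{\C,P}}/\O_{\C,P}$. Taking $k$-dimensions gives item 1.

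For item 2, I run the same argument over $A_\infty$ with the order $A_\infty[y]$. The $k$-dimension of $B_\infty/A_\infty[y]$ equals $\delta_\infty$ and decomposes over the maximal ideals of $A_\infty[y]$. All of these lie above $(u)$, so the decomposition is finite. What remains is to identify these local rings with the local rings $\O_{\C,P}$ at the points at infinity. Since $F$ is monic of degree $n$ in $X_2$, the point $(0:0:1)$ is not on $\C$. So every point at infinity of $\C$ (i.e.\ with $X_0=0$) lies in the chart $X_1\neq 0$, with affine coordinates $u=X_0/X_1=t^{-1}$ and $z=X_2/X_1=x/t$. The points at infinity are those with $u=0$.

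The main obstacle is the mismatch between $y=xt^{-\lambda}$ and $z=x t^{-1}$: the ring $A_\infty[y]$ is not $k[u,z]/(F(u,1,z))$ localized. To handle it, I will use the local ring $R=k[u,z]_{(u)}$-style semilocal ring $k[u]_{(u)}[z]$ together with $\deg_X f=n$ and monicity in $X_2$. Monicity means the coefficient $c_i$ has $\deg c_i\le n-i$, hence $\lambda\le 1$. If $\lambda=1$, then $y=z$ and $A_\infty[y]=k[u]_{(u)}[z]=k[u]_{(u)}[X]/(F(u,1,X))$. This ring is exactly the semilocalization of the affine chart $X_1\ne0$ at the points at infinity, and the argument of item 1 applies verbatim. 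If $\lambda=0$, then all $c_i$ are constant for $i<n$, apart from $\deg c_0\le 0$, so $f$ is essentially independent of $t$; I will treat this degenerate case directly, or note that then $z=uy$ and compare indices. I expect I may need to argue that in this case $F=X_2^n+\cdots$ in $X_0,X_2$ only, which contradicts irreducibility unless $n=1$. If that fails, I would instead define $\delta_\infty$ using $z$, since any such change only adds a term absorbed in $n\lambda$.

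Finally, I sum the local dimensions over the points with $X_0=0$ to get item 2. Points that appear in both charts are avoided, because points at infinity have $u=0$ and are therefore not in $U_0$.
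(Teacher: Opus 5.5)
Your argument for item 1, and for item 2 when $\lambda=1$, is the paper's own argument. You split the finite-length module $B/A[x]$ (resp.\ $B_\infty/A_\infty[y]$) over the maximal ideals of the order by the Chinese remainder theorem. You then use that integral closure commutes with localization; this needs $B=\overline{k[C]}$, which holds because $A[x]$ is integral over $A$. At infinity you use $(0:0:1)\notin\C$ to identify $A_\infty[y]=k[u]_{(u)}[Y]/(F(u,1,Y))$ with the chart $X_1\neq 0$ localized along $u=0$. One small slip: the bound $\deg c_i\le n-i$ comes from $F$ being homogeneous of degree $n$, not from monicity. Monicity is what gives $(0:0:1)\notin\C$.

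The genuine problem is the case $\lambda=0$, which you are right to single out. The paper just asserts $\lambda=1$ ``as $f$ has total degree $n$'', but that only gives $\lambda\le 1$.

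\textbf{Your irreducibility argument fails over a non-closed field.} Take $F=X_2^2+X_0^2$ over $\Q$: it is irreducible, monic and separable in $X_2$, and $f=X^2+1$ gives $\lambda=0$.

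\textbf{Item 2 is actually false in this example, so no argument can close this case.} Here $y=x$, and $A_\infty[y]\simeq\Q(i)[u]_{(u)}$ is already a discrete valuation ring, so $\delta_\infty=0$. But the only point at infinity, $P=(0:1:0)$, has local equation $z^2+u^2=0$ with $z=X_2/X_1$. There $\overline{\O_{\C,P}}=\O_{\C,P}[z/u]$ and $\dim_{\Q}\overline{\O_{\C,P}}/\O_{\C,P}=1$.

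\textbf{The statement needs an extra hypothesis or a modified definition.} There are two options.
\begin{enumerate}
\item Assume $F$ absolutely irreducible. Then $\lambda=0$ makes $F$ a binary form in $X_0,X_2$, which splits over $\bar k$, forcing $n=1$, where both sides vanish. This is exactly where your irreducibility argument is valid.
\item Define $\delta_\infty$ with $z=x/t$ whatever $\lambda$ is; this is your fallback. It is legitimate, since $z$ is integral over $A_\infty$ by $\deg c_i\le n-i$.
\end{enumerate}

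Your justification that this change ``only adds a term absorbed in $n\lambda$'' does not address an exact equality. What holds unconditionally is $A_\infty[x/t]\subset A_\infty[y]$, and hence
\[
\delta_\infty\le\sum_{P\in\C\setminus\C\cap U_0}\dim_k\overline{\O_{\C,P}}/\O_{\C,P}.
\]
This inequality is all the complexity estimate of Theorem \ref{thm:main} actually needs.
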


Note that according to Proposition \ref{prop:regular_point}, $\overline{\O_{\C,P}}=\O_{\C,P}$ if and only if $P$ is a regular point of $\C$, hence the sums hold over the finite set of singular points of $\C$. 

\begin{proof} 
Let us show point 1. Let $B$ be the integral closure of $A$ in $L=k(C)$. As $A\subset A[x]=k[C]$, we have $B\subset \kCbar$. By hypothesis, $f(t,X)=F(1,t,X)$ is monic and separable in $X$. Hence  $x$ is integral over $A$ which forces $A[x]\subset B$, that is $k[C]\subset B$. As $B$ is integrally closed, it follows that $\kCbar\subset B$, hence $\kCbar=B$. In other words, $B$ coincides with the coordinate ring of the normalization $\Cbar$ of $C$. We  thus have
\begin{equation}\label{eq:delta_kC}
\delta:=\deg [B:A[x]]= \dim_k (B/A[x]) =\dim_k (\kCbar/ k[C]).
\end{equation}
A closed point $P\in \C\cap U_0\simeq C$  is by definition a maximal ideal of $k[C]=A[x]$ and we have $\O_{\C,P}\simeq k[C]_P= A[x]_P$ (localization at $P$). As integral closure commutes with localization, we have an isomorphism of semi-local rings $\overline{\O_{\C,P}}\simeq \kCbar_P=B_P$ and we get
\begin{equation}\label{eq:dimOCbar}
\dim_k (\overline{\O_{\C,P}}/\O_{\C,P})=\dim_k (B_P/A[x]_P).
\end{equation}
We now follow the proof of \cite[Thm.4.17]{kunz}. Let $a\in A[x]$ such that $aB\subset A[x]$. Thus $aA[x]$ and $aB$ are both ideals of $A[x]$, and we have an inclusion $a A[x]\subset aB$. The $k$-algebra $A[x]/a A[x]$ being finitely generated and zero-dimensional, it has a finite number of maximal ideals. Thus, the Chinese remainder theorem (see \cite[Cor.D.4]{kunz}) ensures that 
 $$
A[x]/aA[x]=\prod_{P\in \Spec A[x]} A[x]_P / aA[x]_P$$  and $aB/aA[x]$ identifies with $\prod_{P\in \Spec A[x]} aB_P / aA[x]_P.$ Recall that $C=\Spec A[x]$. We obtain
$$
\dim_k (B/A[x]) = \dim_k(aB/aA[x]) = \sum_{P\in C} \dim_k (aB_P / aA[x]_P)=\sum_{P\in C} \dim_k (B_P / A[x]_P)
$$ 
which combined with \eqref{eq:delta_kC} and \eqref{eq:dimOCbar} concludes for the first point.

Let us show point 2.  Since $F$ is monic in $X_2$, we have $F(0:0:1)\ne 0$. Thus $(0:0:1)\notin \C$ and all points of $\C\setminus \C\cap U_0 =\C\cap(X_0=0)$ (the points at infinity) belong to the affine chart $U_1=\{X_1\ne 0\}$. Denoting $u=X_0/X_1=t^{-1}$ and $Y=X_2/X_1=X t^{-1}$, the affine equation of $\C$ in the chart $U_1\simeq \A^2_k$ is given by the polynomial
$$f_\infty\in k[u,Y],\quad f_\infty(u,Y):=u^d f(u^{-1},u^{-1} Y).$$
The residue class $y$ of $Y$ modulo $f_\infty$ satisfies $y=x t^{-1}$ in $L=k(t,x)=k(u,y)$. As $f$ has total degree $n$, the integer $\lambda=\lambda(f)$ defined in \eqref{eq:lambda} satisfies $\lambda=1$  and Definition \ref{def:height infinity} becomes 
$$
\delta_\infty=\deg [B_\infty:A_\infty[y]],
$$ 
where $u=t^{-1}$, $y=x t^{-1}$, $A_\infty=k[u]_{(u)}$ and $B_\infty$ is the integral closure of $A_\infty$ in $L$.  The line at infinity $X_0=0$ has local equation $u=0$ and the points of $\C\setminus \C\cap U_0 =\C\cap(X_0=0)$ are in one-to-one correspondence with the maximal ideals of $k[u,y]$ containing $u$, hence with the maximal ideals $P$ of $A_\infty [y]$. By the same reasoning as above, we get 
$$
\dim_k (B_\infty/A_\infty[y]) = \sum_{P\in \C\setminus \C\cap U_0} \dim_k (B_{\infty,P} / A_\infty[y]_P).$$
and we conclude thanks to the isomorphisms $A_\infty [y]_P\simeq \O_{\C,P}$ and $B_{\infty,P}\simeq \overline{\O_{\C,P}}$. 
\end{proof}

Denote by $\delta(\C)$ the delta invariant of $\C$ and by $g(\C)$ its geometric genus, as defined in Section \ref{ssec:genus_formula}. We get:

\begin{coro}\label{cor:delta_vs_index}
Suppose $F$ monic and separable in $X_2$. Then $\delta+\delta_\infty=\delta(\C)=\frac{(n-1)(n-2)}{2}-g(\C)$.
\end{coro}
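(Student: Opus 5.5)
The plan is to combine the two items of Proposition \ref{prop:delta et delta_infty}. Since $F$ is monic in $X_2$, the point $(0:0:1)$ does not lie on $\C$. Hence the closed points of $\C$ split into two disjoint sets: the affine points $\C\cap U_0$ and the points at infinity $\C\cap(X_0=0)$, all of which lie in the chart $U_1$. Adding items 1 and 2 then gives
$$\delta+\delta_\infty=\sum_{P\in \C}\dim_k \overline{\O_{\C,P}}/\O_{\C,P}.$$
By the definition of the delta invariant in Section \ref{ssec:genus_formula} (Definition \ref{def:delta-invariant}), the right-hand side is exactly $\delta(\C)$. This settles the first equality. The one point to check is that the local contributions there are also measured as $k$-dimensions over closed points, and not as geometric lengths over $\bar k$. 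If the appendix uses geometric points instead, I would match the two by noting that $\dim_k$ is invariant under base change to $\bar k$. This uses that $k$ is perfect, so that normalization commutes with the extension $k\to\bar k$, and that a closed point of degree $m$ splits into $m$ geometric points whose contributions add up.

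The second equality, $\delta(\C)=\frac{(n-1)(n-2)}{2}-g(\C)$, is the genus formula for plane curves, which I take from Section \ref{ssec:genus_formula}. If it has to be argued, I would do it as follows. Let $\pi:\overline{\C}\to\C$ be the normalization. The exact sequence $0\to\O_\C\to\pi_*\O_{\overline{\C}}\to\bigoplus_P \overline{\O_{\C,P}}/\O_{\C,P}\to 0$ gives $\chi(\O_{\overline{\C}})=\chi(\O_\C)+\delta(\C)$. Combine this with $\chi(\O_\C)=1-\frac{(n-1)(n-2)}{2}$, which comes from $0\to\O_{\pr^2}(-n)\to\O_{\pr^2}\to\O_\C\to0$, and with $\chi(\O_{\overline{\C}})=1-g$. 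Here $h^0(\O_{\overline{\C}})=1$ requires $k$ to be algebraically closed in $k(\C)$; otherwise one uses the convention on $g$ adopted in the appendix.

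I expect the main obstacle to be only the bookkeeping of conventions between the arithmetic indices, the $k$-dimensions over non-rational points, and the definition of $\delta(\C)$. The algebraic content is already contained in Proposition \ref{prop:delta et delta_infty}.
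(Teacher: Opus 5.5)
Your proposal is correct and follows the paper's proof exactly: you add the two items of Proposition \ref{prop:delta et delta_infty} over the partition of $\C$ into affine points and points at infinity, recognize $\delta(\C)$ through Definition \ref{def:delta-invariant}, and quote the genus formula, Corollary \ref{coro:lien genre delta}. Your hedges are not needed, since that definition already sums $\dim_k$ over closed points and the appendix defines $g(\C)=p_a(\overline{\C})=1-\chi(\O_{\overline{\C}})$.

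One caveat, which you inherit from the paper rather than introduce: the proof of item 2 of that proposition asserts $\lambda(f)=1$, and this fails when $F$ does not involve $X_1$. For instance, $F=X_2^2-2X_0^2$ over $\Q$ gives $\lambda=0$ and $\delta=\delta_\infty=0$, yet $\C$ has a node at $(0:1:0)$, so $\delta(\C)=1$.
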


\begin{proof}
First equality follows from Proposition \ref{prop:delta et delta_infty} and Definition \ref{def:delta-invariant}. Second equality follows from the genus formula  (Corollary \ref{coro:lien genre delta}).
\end{proof}

\subsection{Proof of Theorem \ref{thm:main}}

Theorem \ref{thm:main} follows immediately from the slightly more general result. 

\begin{thm}\label{thm:main_recall}
    Let $\C\subset \mathbb{P}^2_k$ be an irreducible curve of degree $n$ defined by an homogeneous polynomial $F$ monic and separable in $X_2$.  Let $D\in Div(\C)$. There exists a deterministic algorithm which computes a compressed basis (Definition \ref{def:compressed_basis})  of $\mathcal{L}(D)$ with $$\Tilde{\O}(n^\omega(\deg(D^+)+\deg(D^-)+\delta(\C)+n ))\subset \Ot(n^\omega(\deg(D^+)+\deg(D^-)+n^2))$$
    arithmetic operations in $k$. If $\deg D\ge 0$, the complexity becomes 
    $$\Tilde{\O}(n^\omega(\deg(D^+)+\delta(\C)+n)).
    $$
    If moreover the singular locus of $\C$ and the support of $D$ are contained in the affine chart $X_0\ne 0$, the complexity becomes 
    $$\Tilde{\O}(n^\omega(\deg(D^+)+\delta(\C))).$$
\end{thm}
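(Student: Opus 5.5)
We plan to apply Algorithm \ref{alg:RR} to $L=k(\C)\simeq k(t)[X]/(f)$ with $f(t,X)=F(1,t,X)$, and to bound each term of Theorem \ref{thm:RR}. Since $F$ is monic of degree $n$ in $X_2$ and has total degree $n$, the polynomial $f$ is monic and separable in $X$ of degree $n$, and $\deg_t c_i\le n-i$. Hence $\lambda=\lambda(f)\le 1$, as already observed in the proof of Proposition \ref{prop:delta et delta_infty}, and the term $n^\omega\cdot n\lambda$ contributes $\O(n^{\omega+1})$. By Proposition \ref{prop:bound for expI},
$$\exp(I)+\exp(I_\infty)\le \deg(D^+)+\deg(D^-)+\exp(B)+\exp(B_\infty)\le \deg(D^+)+\deg(D^-)+\delta+\delta_\infty,$$
the last step using $\exp\le\delta$ (Lemmas \ref{lem:index exp} and \ref{lem:index exp infinity}). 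Corollary \ref{cor:delta_vs_index} turns $\delta+\delta_\infty$ into $\delta(\C)$. Proposition \ref{prop:bound for expI} also gives $h(q_I)\le \deg D^++\deg D^-$, so $n^2h(q_I)$ is absorbed. In small characteristic, the extra terms satisfy $\delta^2+\delta_\infty^2\le \delta(\C)^2\le n^2\delta(\C)\le n^\omega\delta(\C)$, because $\delta(\C)\le (n-1)(n-2)/2$. The inclusion in $\Ot(n^\omega(\cdots+n^2))$ follows from the same bound on $\delta(\C)$. Computing OM-representations of the singular places and the primes of $\disc(f)$ (Remark \ref{rem:integral_basis}) is already accounted for in Theorems \ref{thm:complexity_integral_basis} and \ref{thm:integral_basis_infinity}.

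For $\deg D\ge 0$, we would like to eliminate $\deg(D^-)$. We can write $D^-=D^+-D$ with $\deg D\ge0$, so $\deg(D^-)\le\deg(D^+)$ and the term is absorbed directly. This is immediate, but it is the key point of the case.

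For the last claim, the plan is to use Theorem \ref{thm:complexity trivial index} instead. If the singular locus avoids $X_0=0$, then $\delta_\infty=0$ by Proposition \ref{prop:delta et delta_infty}, so $B_\infty=A_\infty[y]$. If moreover $\Supp D$ contains no infinite place, then $I_\infty=B_\infty$, hence $\exp(I_\infty)=0$. Theorem \ref{thm:complexity trivial index} then gives cost $\Ot(n^\omega\exp(I)+n^2h(q_I))$, with $\exp(I)\le \deg D^++\deg D^-+\delta$. The $n\lambda$ term disappears, which removes the $+n$.

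The main obstacle is that $\deg D^-$ reappears in this last case, and the bound $\deg D^-\le\deg D^+$ requires $\deg D\ge0$. I would assume this hypothesis is carried over from the previous sentence ("if moreover"). Otherwise I would try the finer bound of Proposition \ref{prop:expI_vs_degD} via $D^*$, which is delicate because $r D_\infty$ introduces infinite support.
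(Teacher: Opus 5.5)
Your proposal is correct and follows the paper's proof. The general bound comes from Theorem \ref{thm:RR} together with Proposition \ref{prop:bound for expI} and Corollary \ref{cor:delta_vs_index}; the case $\deg D\ge 0$ uses $\deg D^-\le\deg D^+$; and the last case uses Theorem \ref{thm:complexity trivial index}, read, as you do, under the standing hypothesis $\deg D\ge 0$. You also spell out details the paper leaves implicit: the absorption of $n^2h(q_I)$ and of $\delta^2+\delta_\infty^2\le n^2\delta(\C)$, why $\exp(I_\infty)=0$ in the last case, and $\lambda\le 1$ (slightly more accurate than the paper's $\lambda=1$).
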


\begin{proof}
We have $\lambda(f)=1$ in this setting. Hence, the first result follows  straightforwardly from Theorem \ref{thm:RR} combined with Proposition \ref{prop:bound for expI} and Corollary \ref{cor:delta_vs_index}. The second statement (that is Theorem \ref{thm:main}) follows immediately since $\deg D\ge 0$ implies $\deg D^-+\deg D^+\le 2\deg D^+$. The last estimate follows from Theorem \ref{thm:complexity trivial index}.
\end{proof}

We may want to store all elements of a $k$-basis of $\L(D)$. We get the following size estimate.

\begin{prop}\label{prop:expanded_basis}
Suppose $\C$ irreducible over $\bar{k}$. Given a compressed basis of $\L(D)$, we can compute a $k$-basis of $\L(D)$ in time 
$$
\O(n(\deg(D)+1) (\deg(D^+) +\delta(\C)+n))
$$
(that is zero cost if $\deg D<0$). 
\end{prop}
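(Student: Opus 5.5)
Given a compressed basis $((b_i,d_i))_{0\le i\le n-1}$, the expanded basis is the family $b_i t^j$ with $0\le j\le d_i$. The plan is to bound two things separately: the number of such elements, and the cost of writing each one down. Multiplying by $t^j$ is just a shift of coefficients, so the cost is essentially the total size of the output. The target bound should then follow as (number of elements) $\times$ (size of each element), with each size bounded by $\O(n(\deg(D^+)+\delta(\C)+n))$.

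\emph{Number of elements.} The count is $\sum_{d_i\ge 0}(d_i+1)=\dim_k\L(D)$. Since $\C$ is irreducible over $\bar k$, the constant field of $L$ is $k$, so Riemann--Roch (or simply $\ell(D)\le \deg D+1$ for $\deg D\ge 0$) gives $\dim_k \L(D)\le \deg(D)+1$. If $\deg D<0$, then $\L(D)=\{0\}$, all $d_i<0$, and nothing needs to be written; this gives the zero cost claimed.

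\emph{Size of each element.} Each $b_i$ is a row of $\Mb_{\mathrm{red}}=\frac{q}{p_{n-1}}\Mbt_{\mathrm{red}}$. It has $n$ coefficients in $k(t)$, so it suffices to bound their heights.

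\begin{itemize}
\item By the proof of Lemma~\ref{lem:Mt0}, $\deg_t\Mbt_{\mathrm{red}}\le \exp(I)+\exp(I_\infty)+n\lambda$, and $\lambda=1$ here.
\item The denominator is $p_{n-1}$, of degree $\exp(I)$.
\item The factor $q=q_I$ contributes $h(q_I)$.
\end{itemize}

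By Proposition~\ref{prop:bound for expI} and Corollary~\ref{cor:delta_vs_index}, $\exp(I)+\exp(I_\infty)\le \deg D^++\deg D^-+\delta(\C)$, and $h(q_I)\le \deg D^++\deg D^-$. For $\deg D\ge 0$ we have $\deg D^-\le\deg D^+$. Hence each $b_i$, and so each $b_it^j$, has size $\O(n(\deg D^++\delta(\C)+n))$. Multiplying by the count $\deg D+1$ gives the claimed bound.

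The main obstacle is a size mismatch between the two factors. The size bound for $b_i$ only uses $\deg D^+$, while a cleaner count would use $\deg D$; I believe the argument above handles this correctly. The remaining subtle point is making sure that writing $b_i t^j$ has no hidden cost: I store $b_i$ with its common denominator, so multiplication by $t^j$ is a shift of the numerator (or a cancellation against powers of $t$ in the denominator). This keeps it linear in the size.
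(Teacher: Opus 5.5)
Your proposal is correct and follows essentially the same route as the paper. Like the paper, you bound the number of elements by $\ell(D)\le\deg D+1$ using that $k$ is the constant field, and you bound the size of each $t^jb_i$ by $\O(n(\deg D^++\delta(\C)+n))$ via the degree bound on $\Mbt_{\mathrm{red}}$ from the proof of Theorem~\ref{thm:RR}, Proposition~\ref{prop:bound for expI}, Corollary~\ref{cor:delta_vs_index} and $\deg D^-\le\deg D^+$. The only detail worth stating explicitly is that $j\le d_i\le\deg D\le\deg D^+$, so multiplying by $t^j$ does not break the per-element size bound.
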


\begin{proof}
The complexity is that of writting all elements of the set $\{t^j b_i, \, 0\le j \le d_i, 0,i\le n-1\}$. This set has cardinality $\ell(D):=\dim \L(D)$. If $\ell(D)=0$, there is nothing to do. Otherwise, $\deg D\ge 0$, hence $\deg D^+ + \deg D^-\le 2\deg D^+$.  Thus each element $t^j b_i\in\L(D)$ has arithmetic size $n(\deg D^+ +\delta(\C) +n)$ by (the proof) of Theorem \ref{thm:RR} and Corollary \ref{cor:delta_vs_index}. The assumption $\C$ irreducible over $\bar{k}$ is equivalent to $k$ is algebraically closed in $L$. In such a case, it is well known that $\ell(D)\le \deg(D)+1$ (see e.g. \cite[Prop.1.4.9 and Eq.1.21]{stichtenoth}), which concludes the proof. 
\end{proof}

\begin{rmq}
If  $\C$ is only irreducible over $k$, its number of irreducible factors over $\bar{k}$ is $\rho=[k_0:k]=\dim \L(0)$ where $k\subset k_0\subset L$ is the field of constants, algebraic closure of $k$ in $L$. In such a case, we have the inequality  $\ell(D)\le \deg D+ \rho$ by (the proof of) \cite[Prop.1.4.9]{stichtenoth} and we have to replace the factor $\deg D+ 1$ by $\deg D + \rho$ in Proposition \ref{prop:expanded_basis}. For instance, the curve defined by $f=X^2-2 t^2$ is irreducible over $\Q$, but has $\rho=2$ irreducible components over $\bar{\Q}$. The space of constant functions is $\L(0)=k_0\simeq k(\sqrt{2})$, with $k$-basis $(1,x/t)$ and dimension $\ell(0)=2$.
\end{rmq}

\subsection{Reduction to the monic separable case}\label{sec:monic_separable}

For the sake of completeness, let us explain how one can reach the monic and separable assumption of Theorem \ref{thm:main_recall} given an arbitrary projective plane curve. 

\begin{lem}\label{lem:non_vanish}
Let $F\in k[X_0,X_1,X_2]$ irreducible, homogeneous of degree $n>0$. Suppose that $\Card(k)>n$. We can compute two distinct rational points $P,P'\in \pr^2(k)$ such that $F(P)\ne 0$ and $F(P')\ne 0$ in time $\O(n^2)$. 
\end{lem}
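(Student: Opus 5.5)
We look for $P$ and $P'$ on lines and reduce to univariate non-vanishing. First pick a line $\ell$ of $\pr^2_k$ not contained in $\C$. Since $F$ is irreducible of degree $n>0$, at most one line is a component of $\C$, namely $\C$ itself when $n=1$. We try the lines $X_0=0$ and $X_1=0$, keeping one such that $F$ restricted to it is a nonzero polynomial. This is decided by inspecting the coefficients of $F(0,X_1,X_2)$ and then of $F(X_0,0,X_2)$, for $\O(n^2)$ operations. Parametrize the chosen line, say $X_0=0$, by $(0:a:b)$. The restriction $g(X_1,X_2)=F(0,X_1,X_2)$ is a nonzero binary form of degree $n$, so it has at most $n$ zeros in $\pr^1(k)$.

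We then need two points of $\pr^1(k)$ at which $g$ does not vanish. Because $\Card(k)>n$, we have $\Card(\pr^1(k))=\Card(k)+1\ge n+2$, so at least two such points exist. To locate them cheaply, fix a set $S\subset k$ of $n+1$ distinct elements, each a rational point $(1:s)$. If $g(1,X_2)$ has degree $m\le n$, it has at most $m$ roots, so at least $n+1-m\ge 1$ elements of $S$ are non-roots. The point at infinity $(0:1)$ is a non-root exactly when $m=n$.

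The only real obstacle is producing two points in every case rather than one. We split on $m$. If $m<n$, then $n+1-m\ge 2$ elements of $S$ are non-roots. If $m=n$, then $(0:1)$ is a non-root and at least one element of $S$ is a non-root, which again gives two points. To find them, evaluate $g(1,X_2)$ at the $n+1$ points of $S$ by multipoint evaluation in $\Ot(n)$, or naively by Horner in $\O(n^2)$. The two non-roots found give distinct points $P=(0:1:s)$ and $P'$ with $F(P)F(P')\ne 0$.

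The total cost is $\O(n^2)$: $\O(n^2)$ to extract the restriction from the $\O(n^2)$ coefficients of $F$, and at most $\O(n^2)$ for the evaluations.
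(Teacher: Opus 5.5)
Your proof is correct, and it takes a somewhat different route from the paper's. Both arguments restrict $F$ to a $k$-rational line and look for non-roots of a univariate polynomial of degree at most $n$ among $n+1$ elements of $k$; the difference lies in how the two distinct points are obtained.

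The paper works in the chart $X_0\neq 0$ with $f(t,X)=F(1,t,X)$ and uses two lines, $t=0$ and $t=1$. A non-root $\alpha$ of $f(0,X)$ and a non-root $\alpha'$ of $f(1,X)$ give $P=(1:0:\alpha)$ and $P'=(1:1:\alpha')$. These are automatically distinct because they lie on different lines, so no case analysis is needed.

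You stay on a single line and take both points from it. Since $\Card(k)>n$ only guarantees $n+1$ affine candidates, you need the point at infinity of that line and the split on $m=\deg g(1,X_2)$. Your count on $\pr^1(k)$ handles this correctly.

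In exchange, your route is more complete. The paper tacitly assumes $f(0,X)\neq 0$ and $f(1,X)\neq 0$. This holds for irreducible $F$ of degree $n\geq 2$, but fails for $n=1$ when $F$ is a scalar multiple of $X_1$ or of $X_1-X_0$. Your explicit choice between the lines $X_0=0$ and $X_1=0$ covers every $n>0$, since an irreducible $F$ is divisible by at most one linear form up to a scalar.

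Your restriction step is also cheaper. It only extracts the $n+1$ coefficients of $F$ free of $X_0$ (or of $X_1$), whereas the paper sums all coefficients of $F$ to obtain $f(1,X)$. With fast multipoint evaluation, your procedure therefore runs in $\Ot(n)$, below the stated $\O(n^2)$.
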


\begin{proof}
Let $f(t,X)=F(1,t,X)$. We can write $f=\sum f_i(t) X^i$ with $\deg f_i\le n-i$. Evaluating $f_i$ at $t=0$ and $t=1$ with Horner's rule costs $\O(n)$ operations. Summing over $i$, we compute $f(0,X)$ and $f(1,X)$ in time $\O(n^2)$. Let $\alpha_0,\ldots,\alpha_n$ be distinct elements of $k$. Evaluating $f(0,X)$ and $f(1,X)$ at $\alpha_0,\ldots,\alpha_n$ costs $\Ot(n)$ operations using fast multi-point evaluation. Since each polynomial has degree at most $n$, there exists $\alpha=\alpha_i$ and $\alpha'=\alpha_j$ such that $f(0,\alpha)\ne 0$ and $f(1,\alpha')\ne 0$. The points $P=(1:0:\alpha)$ and $P'=(1:1:\alpha')$ satisfy the required assumptions.
\end{proof}

\begin{lem}\label{lem:Ftilde}
Let $P,P'$ as above. There exists a projective automorphism $\tau:\pr^2_k\to \pr^2_k$ which sends $(0,1,0)$ to $P$ and $(0,0,1)$ to $P'$. We can compute $\tilde{F}=F\circ \tau$ in time $\Ot(n^2)$. The polynomial $\tilde{F}$ is homogeneous of degree $n$, irreducible, and monic (up to multiplication by a non zero scalar) of degree $n$ both with respect to $X_1$ and $X_2$ 
\end{lem}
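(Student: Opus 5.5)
The plan is to take $\tau$ to be the linear map given by an invertible $3\times 3$ matrix $T$ whose columns are coordinate vectors of three points. Write $P=(p_0:p_1:p_2)$ and $P'=(p'_0:p'_1:p'_2)$. The two points are distinct, so these vectors are linearly independent. We complete them with a third vector $Q$ so that the matrix $T=(Q\mid P\mid P')$ is invertible. In practice $Q$ can be taken among the standard basis vectors $e_0,e_1,e_2$; at least one of them works, and this is checked with $\O(1)$ determinant computations. With $\tau(X_0,X_1,X_2)=X_0Q+X_1P+X_2P'$, the map sends $(0,1,0)$ to $P$ and $(0,0,1)$ to $P'$, and it is a projective automorphism since $T$ is invertible.

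The structural properties of $\tilde F=F\circ\tau$ follow directly.
\begin{itemize}
\item Homogeneity of degree $n$ holds because $\tau$ is linear and invertible; in particular $\tilde F\neq 0$.
\item Irreducibility holds because composition with a linear automorphism is a ring automorphism of $k[X_0,X_1,X_2]$ preserving degrees. A factorization of $\tilde F$ would pull back through $\tau^{-1}$ to a factorization of $F$.
\item For monicity in $X_2$: the coefficient of $X_2^n$ in $\tilde F$ is $\tilde F(0,0,1)=F(\tau(0,0,1))=F(P')\neq 0$. Since $\tilde F$ is homogeneous of degree $n$, its degree in $X_2$ is exactly $n$, and dividing by the nonzero scalar $F(P')$ makes it monic.
\item Symmetrically, the coefficient of $X_1^n$ is $F(P)\neq0$.
\end{itemize}
Separability is not claimed in this statement, so nothing more is needed here.

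The main work is the complexity bound $\Ot(n^2)$ for computing $\tilde F$, which is substitution of linear forms into a bivariate-dense polynomial with $\O(n^2)$ coefficients. I would exploit the freedom in choosing $P,P'$ as in Lemma~\ref{lem:non_vanish}, namely $P=(1:0:\alpha)$ and $P'=(1:1:\alpha')$, and take $Q=e_1$ (if $T$ is invertible for that choice; otherwise adapt). The substitution then becomes a composition of elementary transformations, each changing only one variable:
\begin{itemize}
\item a shear of the form $X_0\mapsto X_0+aX_2$, which is a Taylor shift in one variable applied to each homogeneous slice;
\item a variable permutation, which costs nothing.
\end{itemize}
For a shear, write $F$ as a polynomial in the variable being shifted, with coefficients polynomials in the others. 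The shift is then performed coefficient-wise on the $\O(n)$ univariate slices of degree $\le n$ obtained after dehomogenizing. Each slice is handled by a fast Taylor shift in $\Ot(n)$ operations. Taylor shifts need a characteristic restriction, but the algorithm via multipoint evaluation and interpolation, or via the convolution approach, works over any field with $\Ot(n)$ operations when $\Card(k)>n$. That gives $\Ot(n^2)$ in total for a constant number of such steps.

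The obstacle I expect is this complexity step: ensuring that a general invertible linear substitution decomposes into $\O(1)$ elementary shears, permutations and scalings, for instance via an LU/Bruhat decomposition of $T$. I also need the one-variable shift to run in quasi-linear time in arbitrary characteristic. If Taylor shift in small characteristic is problematic, I would fall back on evaluation/interpolation: evaluate $\tilde F(1,X_1,X_2)$ on an $(n+1)\times(n+1)$ grid of points of $k$, using $\Card(k)>n$. Each row evaluation is a univariate multipoint evaluation after substituting, and we then interpolate, again giving $\Ot(n^2)$.
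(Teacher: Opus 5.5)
Your argument for the existence of $\tau$, homogeneity, irreducibility and monicity is the same as the paper's. The paper also reads off the coefficients of $X_1^n$ and $X_2^n$ as $\tilde F(0,1,0)=F(P)$ and $\tilde F(0,0,1)=F(P')$.

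The real difference is the $\Ot(n^2)$ bound. The paper does not prove it; it cites \cite[Lem.~2.5]{ACL2}. You instead give a self-contained algorithm: factor the substitution into $\O(1)$ elementary ones, and perform each shear as univariate Taylor shifts of the $\O(n)$ binary-form slices, for a total of $\sum_j \Ot(n-j)=\Ot(n^2)$ operations. This is sound and removes the dependence on the reference, but three points need tightening.

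First, $Q=e_1$ makes $T$ singular when $\alpha=\alpha'$. Take $Q=e_2$ instead. Then $\det T=1$ always, and
$\tilde F(X_0,X_1,X_2)=F(X_1+X_2,\,X_2,\,X_0+\alpha X_1+\alpha' X_2)$.
This is visibly three shears ($Y_0\mapsto Y_0+Y_1$, then $Y_2\mapsto Y_2+\alpha Y_0$ and $Y_2\mapsto Y_2+\alpha' Y_1$) followed by a cyclic permutation of the variables. So the decomposition you flag as the main obstacle is immediate. In general, Gaussian elimination writes any element of $\mathrm{GL}_3(k)$ as a bounded product of elementary and diagonal matrices.

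Second, the convolution-based Taylor shift divides by $i!$, so it breaks down when $0<\car(k)\le n$. Use one of these instead:
\begin{itemize}
\item the divide-and-conquer shift, valid over any commutative ring in $\Ot(n)$ operations;
\item evaluation at the points $\beta_i+a$ followed by interpolation at the nodes $\beta_i$, which only needs the $n+1$ distinct elements provided by $\Card(k)>n$.
\end{itemize}

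Third, your grid fallback is not justified as stated. Producing each row $\tilde F(1,\beta_i,X_2)$ means restricting $F$ to a line. That costs on the order of $n^2$ operations per row, hence $\Ot(n^3)$ overall, and you give no way to share the work between rows. Since the shear argument already suffices, simply drop the fallback.
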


\begin{proof}
First claim is clear : this amounts to find an invertible linear map from $\A^3\to \A^3$ which maps two given non zero vectors to two given non zero vectors. Computing $\tau$ is negligible and the second claim is proved for instance in \cite[Lem.2.5]{ACL2}. Of course, $\tilde{F}$ remains homogeneous of degree $n$ and irreducible. By construction, we have $\tilde{F}(0,1,0)=F(P)\ne 0$ and $\tilde{F}(0,0,1)=F(P')\ne 0$. This exactly means that the monomials $X_1^n$ and $X_2^n$ appear in $\tilde{F}$ with a non zero coefficient. 
\end{proof}

\begin{lem}\label{lem:separable}
Let $F\in k[X_0,X_1,X_2]$ be homogeneous of degree $n>0$, irreducible, and monic of degree $n$ in $X_1$ and $X_2$. If $k$ is perfect, then $F$ is separable with respect to $X_1$ or separable with respect to $X_2$.
\end{lem}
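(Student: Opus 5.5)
Suppose, for contradiction, that $F$ is inseparable with respect to both $X_1$ and $X_2$. Let $p=\car(k)>0$; in characteristic zero every irreducible polynomial of positive degree in a variable is separable in it, so there is nothing to prove. The plan is to show that inseparability in both variables forces $F$ to be a $p$-th power, which contradicts irreducibility. Perfectness of $k$ will be used to extract $p$-th roots of the coefficients.

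\emph{Step 1 (inseparability forces $p$-th powers in one variable).} Regard $F$ as a polynomial in $X_2$ over the field $k(X_0,X_1)$. Since $F$ is monic of degree $n$ in $X_2$, it is irreducible of positive degree in $X_2$; by Gauss's lemma it stays irreducible over $k(X_0,X_1)$. An irreducible polynomial over a field is inseparable exactly when its derivative vanishes identically. Hence $\partial F/\partial X_2=0$, which means every monomial $X_0^aX_1^bX_2^c$ occurring in $F$ has $p\mid c$. The same argument in $X_1$ gives $\partial F/\partial X_1=0$, so $p\mid b$ for every monomial.

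\emph{Step 2 (homogeneity handles the remaining exponent).} Every monomial of $F$ satisfies $a+b+c=n$. The monomial $X_2^n$ occurs in $F$, so Step 1 gives $p\mid n$. Therefore $p\mid a=n-b-c$ for every monomial as well. This step is the main point: the hypothesis is stated only for $X_1$ and $X_2$, and homogeneity together with the monicity in $X_2$ supplies the missing divisibility in $X_0$.

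\emph{Step 3 (conclusion).} All exponents of $F$ are now multiples of $p$, and each coefficient $c_{a,b,c}$ has a $p$-th root in $k$ because $k$ is perfect. Hence
$$F=\Big(\sum c_{a,b,c}^{1/p}X_0^{a/p}X_1^{b/p}X_2^{c/p}\Big)^p,$$
so $F$ is a $p$-th power of a polynomial of degree $n/p\ge 1$. This contradicts the irreducibility of $F$, so $F$ is separable with respect to $X_1$ or with respect to $X_2$.

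One point in Step 1 needs checking: that irreducibility in $k[X_0,X_1,X_2]$ passes to irreducibility over $k(X_0,X_1)$. This holds because $F$ is monic in $X_2$ and therefore primitive over $k[X_0,X_1]$, so Gauss's lemma applies; the same holds in $X_1$. If the intended meaning of ``separable'' is instead that the discriminant is nonzero, it is equivalent here, since an irreducible polynomial has nonzero discriminant if and only if its derivative is not identically zero.
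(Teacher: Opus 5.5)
Your proof is correct and follows essentially the same route as the paper. Irreducibility turns inseparability into $\partial_{X_1}F=\partial_{X_2}F=0$, monicity of degree $n$ gives $p\mid n$, homogeneity then forces every exponent of $X_0$ to be a multiple of $p$, and perfectness of $k$ makes $F$ a $p$-th power; the paper gets the $X_0$ step from the Euler identity $\sum_i X_i\partial_{X_i}F=nF$, while you count exponents directly via $a=n-b-c$, which amounts to the same thing.
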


\begin{proof}
Suppose on the contrary that $F$ is not separable w.r.t $X_1$ and $X_2$. Since $F$ is irreducible, we thus have $\partial_{X_1} F=\partial_{X_2} F=0$. Since $F$ is monic of degree $n$ in $X_1$, this forces $p=\car(k)$ to divide $n$ and the Euler formula $\sum X_i\partial_{X_i} F= nF=0$ forces $\partial_{X_0} F=0$. Finally, we get $F=G(X_0^p,X_1^p,X_2^p)$ for some $G$. Since $k$ is perfect, this leads to $F=G^p$, contradicting that $F$ is irreducible.
\end{proof}

We are led to the following result.

\begin{prop}\label{prop:monic_separable}
Let $F\in k[X_0,X_1,X_2]$ irreducible, homogeneous of degree $n>0$. If $\Card(k)>n$, we can compute in time $\Ot(n^2)$ a polynomial $f\in k[t,X]$ of total degree $n$, separable and monic of degree $n$ with respect to $X$ such that 
$k(\C)\simeq k(t)[X]/(f)$. 
\end{prop}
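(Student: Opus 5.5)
We chain the three preceding lemmas and then dehomogenize. First, since $\Card(k)>n$, Lemma \ref{lem:non_vanish} gives in time $\O(n^2)$ two distinct rational points $P,P'\in\pr^2(k)$ with $F(P)\ne0$ and $F(P')\ne0$. Lemma \ref{lem:Ftilde} then yields a projective automorphism $\tau$ sending $(0:1:0)$ to $P$ and $(0:0:1)$ to $P'$. The polynomial $\tilde F=F\circ\tau$ is computed in $\Ot(n^2)$. After scaling by a nonzero constant, it is homogeneous of degree $n$, irreducible, and monic of degree $n$ in both $X_1$ and $X_2$. Since $\tau$ is a projective automorphism defined over $k$, it induces a $k$-isomorphism between the curves $\{\tilde F=0\}$ and $\C$, hence $k(\C)\simeq k(\tilde\C)$.

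Next, Lemma \ref{lem:separable} (using that $k$ is perfect) says that $\tilde F$ is separable with respect to $X_1$ or with respect to $X_2$. To decide which, we test whether $\partial_{X_2}\tilde F$ vanishes identically; this costs $\O(n^2)$. If it vanishes, we swap $X_1$ and $X_2$, which costs nothing and keeps $\tilde F$ monic of degree $n$ in both variables. We may therefore assume $\tilde F$ is monic of degree $n$ and separable in $X_2$.

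Finally, set $f(t,X)=\tilde F(1,t,X)$. It has total degree at most $n$, and since the monomial $X_2^n$ occurs in $\tilde F$ with coefficient $1$, $f$ is monic of degree $n$ in $X$, so its total degree is exactly $n$. Separability in $X$ carries over from $\tilde F$: dehomogenization is compatible with $\partial_{X_2}$, and $f$ is irreducible since $\tilde F$ is irreducible and $\tilde F\neq X_0$. The paper's discussion of curves versus function fields then gives $k(\tilde\C)\simeq k(t)[X]/(f)$ via dehomogenization.

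The only delicate point is making sure separability really transfers to the dehomogenized polynomial. Since $f$ is irreducible in $k[t][X]$, it suffices that $\partial_X f\neq 0$. Because $\tilde F$ is monic of degree $n$ in $X_2$, dehomogenization is injective on the relevant polynomials, so $\partial_X f=(\partial_{X_2}\tilde F)(1,t,X)\ne0$ whenever $\partial_{X_2}\tilde F\ne0$. Adding up the steps, the total cost is $\Ot(n^2)$.
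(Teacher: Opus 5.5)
Your proof is correct and follows the paper's argument: Lemmas \ref{lem:non_vanish}, \ref{lem:Ftilde} and \ref{lem:separable} in sequence, then dehomogenization, with your swap of $X_1,X_2$ playing the role of the paper's choice $f=\tilde F(1,X,t)$ in the $X_1$-separable case. Two cosmetic points: $(\partial_{X_2}\tilde F)(1,t,X)\neq 0$ holds because $\partial_{X_2}\tilde F$ is homogeneous and dehomogenization is injective on homogeneous polynomials, not because $\tilde F$ is monic; and a single scalar need not make both the $X_1^n$ and $X_2^n$ coefficients equal to $1$, so you should renormalize the leading coefficient after the swap.
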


\begin{proof}
We compute $P,P'$ as in Lemma \ref{lem:non_vanish} and consider $\tilde{F}$ as in Lemma \ref{lem:Ftilde}. Following Lemma \ref{lem:separable}, we let $f(t,X)=\tilde{F}(1,t,X)$ if $\tilde{F}$ is separable with respect to $X_2$ or $f=\tilde{F}(1,X,t)$ if $\tilde{F}$ is separable with respect to $X_1$. 
\end{proof}

If $\Card(k)<n$,  we may have $F(P)=0$ for all $P\in \pr^2_k$ (filling curves), in which case Lemma \ref{lem:non_vanish} can consider a finite extension $k'$ of $k$ such that $\Card(k')>n$ and $[k':k]\in O(\log(n))$. Since $F$ is irreducible over $k$, its irreducible factors over $k'$ are conjugate over $k$ and it's straightforward to check that Lemma \ref{lem:separable} still holds in this context. Thus we are led to compute a basis of 
$$\L(D)\otimes k'\simeq \L(D')$$
where $D'$ is the divisor of the curve $\C'=\C\otimes k'$ defined by a degree $n$ homogeneous polynomial $G\in k'[X_0,X_1,X_2]$ which is separable and monic of degree $n$ in $X_2$. The curve $\C'$ is not necessarily irreducible over $k'$ but the all algorithm works exactly in the same way. The cost estimate is multiplied at most by a logarithmic factor $O(\log(n))$. In order to recover a basis of $\L(D)$ from a basis of $\L(D)\otimes k'$ we can use for instance the relative trace map
$$
\Tr_{k'/k}:\L(D)\otimes k' \to \L(D).
$$
A choice of a random basis of $\L(D)\otimes k'$ will lead to a $k$-basis of $\L(D)$ with probability $\ge 1/4$. We refer to \cite[Sec.7.5]{ACL2} for details (see also \cite{Huang-Ierardi} for a deterministic approach).

\begin{rmq}\label{rem:monic_separable_assumption}
Since $A=k[t]$ is a $k$-algebra of finite type, the fractional ideals of $B$ are always free $A$-module of rank $n=[L:K]$ by \cite[Ch.V,§2,no2, thm.2]{Bourbaki}. This suggests that the hypothesis monic and separable can probably be dropped. This would require to adapt the OM algorithm and the computation of integral bases in this larger context, which is beyond the scope of this paper. This would be of great interest, since then our algorithm would work for any irreducible polynomial $F$, avoiding any change of coordinates and any base field extension. 
\end{rmq}

\appendix

\section{Valuation rings and integral closures}\label{sec:A}

A common ingredient of geometric and arithmetic approaches is the notion of integral closure. Let us briefly remind the main properties we need. 

\begin{definition}\label{def:integral_closure}
Let $S$ be a ring and $R\subset S$ a subring. An element $x\in S$ is integral over $R$ if it is the root of a \emph{monic} polynomial $f\in R[X]$. The set of integral elements of $S$ over $R$ is called the integral closure of $R$ in $S$. We say that $R$ is integrally closed in $S$ if it coincides with its integral closure in $S$. If $R$ is integrally closed in its total ring of fractions, we say simply that $R$ is integrally closed.
\end{definition}

\begin{prop}\cite[Cor 5.3 and Cor. 5.5]{Atiyah}
Let $S$ be a ring and let $R\subset S$ be a subring. The integral closure of $R$ in $S$ is a subring of $S$ that contains $R$ and that is integrally closed in $S$. 
\end{prop}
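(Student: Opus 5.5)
The statement to prove is the basic closure property: the integral closure $\overline{R}$ of $R$ in $S$ is a subring of $S$ containing $R$, and it is integrally closed in $S$. The plan is the standard finite-module argument. First, $R\subset\overline{R}$ is immediate, since each $r\in R$ is a root of the monic polynomial $X-r\in R[X]$. The real content is closure under sums and products, followed by transitivity of integrality.

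The key tool is the following characterization. An element $x\in S$ is integral over $R$ if and only if there is a finitely generated $R$-submodule $M\subset S$ with $1\in M$ and $xM\subset M$. For the forward direction, take $M=R[x]$. It is generated by $1,x,\ldots,x^{d-1}$ once $x$ satisfies a monic equation of degree $d$. For the converse, write $xm_i=\sum_j a_{ij}m_j$ with $m_1,\ldots,m_r$ generators of $M$. The adjugate (Cayley--Hamilton) trick gives $\det(xI-(a_{ij}))\,m_j=0$ for all $j$. Since $1\in M$, multiplying by the coefficients expressing $1$ in terms of the $m_j$ yields $\det(xI-(a_{ij}))=0$, which is a monic equation in $x$ over $R$.

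With this in hand, closure follows. For $x,y$ integral, $R[x,y]$ is finitely generated as an $R$-module by the monomials $x^iy^j$ with $i<d$ and $j<e$. This module contains $1$ and is stable under multiplication by $x+y$ and by $xy$. Hence both elements are integral, and $\overline{R}$ is a subring of $S$.

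For the last claim, let $z\in S$ be integral over $\overline{R}$. Then $z^m+b_{m-1}z^{m-1}+\cdots+b_0=0$ with $b_i\in\overline{R}$. The ring $R[b_0,\ldots,b_{m-1}]$ is a finitely generated $R$-module, because the $b_i$ are integral over $R$, by iterating the previous step. Consequently $M=R[b_0,\ldots,b_{m-1},z]$ is also a finitely generated $R$-module: over the previous ring it is generated by $1,z,\ldots,z^{m-1}$. Since $M$ contains $1$ and is stable under multiplication by $z$, the criterion shows that $z$ is integral over $R$, so $z\in\overline{R}$. Thus $\overline{R}$ is integrally closed in $S$.

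The only genuinely delicate step is the determinant trick in the criterion. One must use $1\in M$ (or faithfulness of $M$) to conclude that the determinant itself vanishes, and not merely that it annihilates $M$. Everything else is bookkeeping with finite generation. These are exactly the contents of \cite[Prop. 5.1, Cor. 5.2, 5.3, 5.4, 5.5]{Atiyah}, which the paper cites.
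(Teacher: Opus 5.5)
Your proposal is correct and is exactly the standard argument behind the result the paper simply cites (Atiyah--Macdonald, Prop.~5.1 through Cor.~5.5): the finitely generated module criterion via the determinant trick, closure under sums and products via $R[x,y]$, and transitivity via $R[b_0,\ldots,b_{m-1},z]$. The paper gives no proof of its own beyond that citation, so there is nothing further to compare.
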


\medskip

For a ring $R$, we denote $\Spec\,R$ the set of maximal ideals of $R$. For $\p\in \Spec\,R$, we let $R_\p$ be the localization of $R$ at $\p$ (i.e. at the multiplicative subset $R\setminus\p$). It is a local ring with maximal ideal $\p R_\p$.

\begin{prop}\cite[Prop 5.13]{Atiyah}\label{prop:int_closed} An integral domain $R$ is integrally closed if and only if all local rings $R_\p$, $\p\in \Spec R$ are integrally closed. 
\end{prop}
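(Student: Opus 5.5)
The statement to prove is the last one displayed, Proposition \ref{prop:int_closed}: an integral domain $R$ is integrally closed if and only if every local ring $R_\p$, for $\p$ maximal, is integrally closed. I will work inside the common fraction field $F=\Frac R=\Frac R_\p$. The argument rests on two facts: integral closure commutes with localization, and a module that vanishes at every maximal ideal is zero.

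\emph{Forward direction.} Assume $R$ is integrally closed and let $x\in F$ be integral over $R_\p$, say
$$x^m+\frac{a_{m-1}}{s_{m-1}}x^{m-1}+\cdots+\frac{a_0}{s_0}=0,\qquad a_i\in R,\ s_i\in R\setminus\p.$$
Put $s=\prod s_i\in R\setminus\p$. Multiplying the relation by $s^m$ gives a monic equation over $R$ satisfied by $sx$. Hence $sx\in R$, and therefore $x=(sx)/s\in R_\p$. This direction only uses that $R\setminus\p$ is multiplicative, so it works for any prime $\p$.

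\emph{Backward direction.} Assume every $R_\p$ is integrally closed. Let $\overline R$ be the integral closure of $R$ in $F$, and let $f\colon R\hookrightarrow\overline R$ be the inclusion, viewed as a map of $R$-modules. The main step is the identity $(\overline R)_\p=\overline{R_\p}$, meaning that the integral closure of $R_\p$ in $F$ equals the localization of $\overline R$. This is the only point that needs care.

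For the inclusion $\subseteq$, an element $b/s$ with $b$ integral over $R$ is integral over $R_\p$: divide the monic equation of $b$ by $s^m$. For $\supseteq$, take $x$ integral over $R_\p$, clear denominators with $s$ exactly as in the forward direction, and conclude that $sx\in\overline R$, so $x\in(\overline R)_\p$.

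With this identity, the hypothesis gives $(\overline R)_\p=\overline{R_\p}=R_\p$. So $f_\p$ is surjective, and $(\overline R/R)_\p=0$ for every maximal $\p$ (localization is exact). It remains to recall the standard fact that a module $M$ with $M_\p=0$ for all maximal $\p$ is zero. Indeed, if $m\neq0$, its annihilator is a proper ideal; it lies in some maximal $\p$; then $m/1\neq0$ in $M_\p$, a contradiction. Applying this to $M=\overline R/R$ gives $\overline R=R$.

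Finally, I note that the equivalence also holds with "all primes" in place of "all maximal ideals". The forward direction already works for any prime, and the backward direction only needs maximal ideals, which are in particular prime. This is consistent with the paper's convention that $\Spec$ denotes maximal ideals here.
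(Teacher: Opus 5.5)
Your proof is correct. The paper gives no argument of its own here and simply cites Atiyah--Macdonald, Prop.~5.13, whose proof is exactly yours: integral closure commutes with localization (their Prop.~5.12), and surjectivity of $R\hookrightarrow\overline{R}$ can be checked locally at the maximal ideals (their Prop.~3.9).
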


Connection between integral closures and valuation rings starts with the following elementary result :

\begin{lem}\label{lem:valuation_ring_is_integrally_closed}
A valuation ring is integrally closed. 
\end{lem}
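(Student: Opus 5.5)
We prove that any valuation ring $\O$ with fraction field $L$ is integrally closed directly from the defining property of a valuation ring: for every $x\in L^\times$, either $x\in\O$ or $x^{-1}\in\O$. No further structure (discreteness, Noetherianity) will be needed.

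Take $x\in L$ integral over $\O$. We must show $x\in\O$. Suppose not. Then $x\neq 0$, so by the valuation ring property $x^{-1}\in\O$; moreover $x^{-1}$ is not a unit of $\O$, since otherwise $x\in\O$. Choose a monic relation
$$x^m+a_{m-1}x^{m-1}+\cdots+a_1x+a_0=0,\qquad a_i\in\O,\ m\ge 1.$$
Multiply it by $x^{-(m-1)}$, which is legitimate because $x\neq0$. This gives
$$x=-\left(a_{m-1}+a_{m-2}x^{-1}+\cdots+a_0x^{-(m-1)}\right).$$
The right-hand side is a polynomial expression in $x^{-1}\in\O$ with coefficients in $\O$, so it lies in $\O$. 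Hence $x\in\O$, contradicting the assumption. We conclude that $\O$ is integrally closed in $L$, which is its total ring of fractions, as Definition \ref{def:integral_closure} requires.

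Two details are worth checking. First, the case $m=1$ is harmless: the relation then reads $x=-a_0\in\O$ directly. Second, the fraction field of $\O$ is indeed $L$. Indeed, every $x\in L^\times$ lies in $\O$ or equals $1/y$ with $y\in\O$, so $\Frac\O=L$; this matches the way valuation rings are used in the paper, as subrings of the function field $L$. No step here is a genuine obstacle; the only point requiring care is to multiply by the correct power $x^{-(m-1)}$ so that every term except $x$ involves only nonnegative powers of $x^{-1}$.
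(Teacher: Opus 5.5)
Your proof is correct and is essentially the paper's argument: assuming $x\notin\O$ gives $x^{-1}\in\O$, and dividing the monic relation by $x^{m-1}$ writes $x$ as a polynomial in $x^{-1}$ with coefficients in $\O$, a contradiction. Your side remark that $x^{-1}$ is a non-unit is never used, while the check $\Frac\O=L$ is a harmless addition.
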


\begin{proof}
Let $\O$ be a valuation ring. Let $x\in \Frac(\O)$ such that $x^n=a_{n-1}x^{n-1}+\cdots +a_0$, where $a_i\in \O$.  If $x\notin \O$, then $x^{-1}\in \O$ by definition of a valuation ring. Dividing by $x^{n-1}$, we get $x=a_{n-1}+\cdots +a_0 x^{1-n}\in \O$, a contradiction. 
\end{proof}

The following result gives a characterization of the discrete valuation rings of a function field :

\begin{prop}\label{prop:DVR=integrally-closed}\cite[Prop.9.2]{Atiyah}
Let $\O$ be a Noetherian local ring of dimension one. Then $\O$ is a discrete valuation ring if and only if it is integrally closed.
\end{prop}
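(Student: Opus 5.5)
The statement to prove is the last one above (Proposition~\ref{prop:DVR=integrally-closed}): a Noetherian local domain $\O$ of dimension one, with maximal ideal $\m$, is a discrete valuation ring if and only if it is integrally closed. One direction is immediate. A discrete valuation ring is in particular a valuation ring, so it is integrally closed by Lemma~\ref{lem:valuation_ring_is_integrally_closed}.

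For the converse, assume $\O$ is integrally closed, Noetherian, local of dimension one. I would first show that $\m$ is principal; once that is done, the structure of $\O$ follows by a standard argument.

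\textbf{Step 1: find a candidate generator.} Fix $a\in\m$ with $a\neq0$. Since $\dim\O=1$, the only primes are $0$ and $\m$, so $\m$ is the unique minimal prime over $a\O$. Hence $\O/a\O$ is Artinian and $\m^N\subset a\O$ for some $N$. Choose $n$ minimal with $\m^n\subset a\O$, and pick $b\in\m^{n-1}$ with $b\notin a\O$. Put $y=a/b\in\Frac(\O)$. By construction $b\m\subset\m^n\subset a\O$, so $y^{-1}\m=(b/a)\m\subset\O$.

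\textbf{Step 2: show $\m$ is principal.} This is the main step. We have $y^{-1}\notin\O$, because $b\notin a\O$. Suppose $y^{-1}\m\subset\m$. Then $\m$ is a faithful finitely generated $\O[y^{-1}]$-module (faithful since $\O$ is a domain). The determinant trick then makes $y^{-1}$ integral over $\O$, so $y^{-1}\in\O$ by integral closedness, a contradiction. Therefore $y^{-1}\m$ is an ideal of $\O$ not contained in $\m$, and since $\O$ is local it equals $\O$. This gives $\m=y\O$, with $y\in\O$, so $\m$ is principal.

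\textbf{Step 3: conclude the DVR structure.} Let $x\in\O$ be nonzero. By Krull's intersection theorem (which uses Noetherianity) we have $\bigcap_k\m^k=0$. Hence there is a largest $k$ with $x\in y^k\O$. Write $x=uy^k$; then $u\notin\m$, so $u$ is a unit. Every nonzero element of $\Frac(\O)$ therefore has the form $uy^k$ with $u$ a unit and $k\in\Z$. Consequently $\O$ is a valuation ring with principal maximal ideal, i.e.\ a discrete valuation ring.

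I expect Step 2 to be the only real obstacle: choosing $b$ via minimality of $n$, and invoking the faithful-module form of the determinant trick.
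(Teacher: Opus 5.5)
Your proof is correct and follows the cited source \cite[Prop.~9.2]{Atiyah}; the paper gives no argument of its own and simply defers to it. Steps~1--2 are exactly Atiyah--Macdonald's argument that $\m$ is principal (minimal $n$ with $\m^n\subset a\O$, then $y=a/b$, then the determinant trick on the faithful module $\m$, which is finitely generated over $\O$). Your Step~3 via Krull's intersection theorem is a direct shortcut replacing their chain through $\dim_k \m/\m^2=1$ and ``every nonzero ideal is a power of $\m$''.

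You were also right to assume $\O$ is a domain, which the paper's statement omits but needs. With the paper's convention (integrally closed in the total ring of fractions), $k[[x,\varepsilon]]/(\varepsilon^2,x\varepsilon)$ is Noetherian, local and one-dimensional, and equals its own total ring of fractions because every non-unit kills $\varepsilon$. It is therefore integrally closed, yet it is not a DVR.
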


Integrally closed rings are of particular importance for our purpose, due to the following results (see e.g. \cite[Cor 5.22]{Atiyah}):

\begin{prop}\label{prop:integrally-closed=DVRintersection}
The integral closure of a subring $R$ of a field $L$ is the intersection of all valuation rings of $L$ containing $R$. 
\end{prop}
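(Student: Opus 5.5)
The statement to prove is Proposition \ref{prop:integrally-closed=DVRintersection}: the integral closure $\overline{R}$ of a subring $R$ of a field $L$ equals the intersection of all valuation rings $\O$ of $L$ containing $R$. I would prove the two inclusions separately.

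\emph{Inclusion $\overline{R}\subset\bigcap \O$.} Let $\O\supset R$ be a valuation ring of $L$. By Lemma \ref{lem:valuation_ring_is_integrally_closed}, $\O$ is integrally closed, and its fraction field is $L$: for $x\in L^\times$, either $x\in\O$ or $x=1/x^{-1}$ with $x^{-1}\in\O$. Now take $x\in L$ integral over $R$. It satisfies a monic equation with coefficients in $R\subset\O$, hence is integral over $\O$, and therefore $x\in\O$.

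\emph{Inclusion $\bigcap\O\subset\overline{R}$.} Argue by contraposition. Let $x\in L$ be not integral over $R$; we want a valuation ring $\O\supset R$ with $x\notin\O$.
\begin{enumerate}
\item Set $y=x^{-1}$ (note $x\neq 0$, since $0$ is integral). Consider the subring $R'=R[y]\subset L$.
\item Claim: $y$ is not a unit of $R'$. If $y^{-1}=x$ lay in $R[y]$, write $x=a_0+a_1y+\cdots+a_my^m$ with $a_i\in R$. Multiplying by $x^m$ gives a monic equation for $x$ over $R$, a contradiction.
\item Hence $y$ lies in some maximal ideal $\m$ of $R'$. Let $\Omega$ be an algebraic closure of $R'/\m$.
\item Extend the homomorphism $R'\to R'/\m\subset\Omega$ to a maximal pair $(S,h)$ with $S\subset L$ and $h:S\to\Omega$. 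Such a pair exists by Zorn's lemma.
\item The standard theorem \cite[Thm 5.21]{Atiyah} says that such a maximal $S$ is a valuation ring of $L$.
\item By construction $S\supset R$, and $h(y)=0$, so $y$ lies in the maximal ideal $\ker h$ of $S$. Therefore $x=y^{-1}\notin S$, which is exactly what we needed.
\end{enumerate}

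\emph{Main obstacle.} The only nontrivial step is the extension theorem in item 5, namely that maximal extensions of homomorphisms into an algebraically closed field are valuation rings. I would cite it rather than reprove it, since it is exactly \cite[Thm 5.21 and Cor 5.22]{Atiyah}, which is also the reference the paper points to for this statement.
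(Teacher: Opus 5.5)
Your proof is correct and follows essentially the same argument as the paper, which gives no proof of its own but defers to \cite[Cor 5.22]{Atiyah}. As there, one inclusion uses that valuation rings are integrally closed; the other uses that $x^{-1}$ is a non-unit of $R[x^{-1}]$ when $x$ is not integral, together with the extension theorem \cite[Thm 5.21]{Atiyah}.
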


Let us assume now that $R$ is a subring of a function field $L/k$, with $k\subset R$. The previous proposition says that $b\in L$ is integral over $R$ if and only if $v(b)\ge 0$ for all valuations $v$ of $L$ which are positive on $R$.  

\begin{prop}\label{prop:Rbar_to_R} The integral closure $\overline{R}$ of $R$ in $L$ is a finite $R$-module and the map 
$$
\Spec \overline{R}\to \Spec R,\quad \p\mapsto \p\cap R$$
is well-defined, surjective, and finite. Moreover  this map is one-to-one above (at least) all $\m \in \Spec R$ such that $R_\m$ is a valuation ring (i.e. integrally closed). 
\end{prop}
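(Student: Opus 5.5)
We must prove Proposition \ref{prop:Rbar_to_R}: $\overline{R}$ is a finite $R$-module, the contraction map $\Spec\overline{R}\to\Spec R$ is well defined, surjective and has finite fibers, and it is one-to-one above every $\m$ with $R_\m$ a valuation ring. The context is an integral domain $R$ with $k\subset R\subset L$ and $\Frac(R)=L$; in the relevant cases $R$ is of finite type over $k$ or a localization of such, e.g. $A[x]$ or $A_\infty[y]$. The plan is to take the four claims in turn.

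\textbf{Finiteness.} This is the step I expect to need the most care, since it uses the hypotheses on $R$.
\begin{enumerate}
\item First try Noether normalization: choose $k[z]\subset R$ with $R$ finite over $k[z]$.
\item Then $\overline{R}$ is the integral closure of $k[z]$ in $L$, because integral closure is transitive.
\item Since $L/k(z)$ is finite and separable (perfect base field), the classical trace/discriminant argument embeds $\overline{R}$ in a free $k[z]$-module of rank $[L:k(z)]$.
\item As $k[z]$ is Noetherian, $\overline{R}$ is finite over $k[z]$, hence over $R$.
\item For localizations such as $R=A_\infty[y]$, localize this statement, since integral closure commutes with localization.
\end{enumerate}

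\textbf{Well-definedness and surjectivity.} For a maximal $\p\subset\overline{R}$, the ring $\overline{R}/\p$ is integral over the domain $R/(\p\cap R)$ and is a field. An integral extension of domains in which the top ring is a field forces the bottom ring to be a field (Atiyah--Macdonald Prop.~5.7). Hence $\p\cap R$ is maximal. Surjectivity is lying-over (Atiyah--Macdonald Thm.~5.10) for the integral extension $R\subset\overline{R}$, together with the same field argument, which shows that a prime lying over a maximal ideal is maximal.

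\textbf{Finite fibers.} Given $\m$, the primes above $\m$ correspond to the maximal ideals of $\overline{R}/\m\overline{R}$. This ring is a finite-dimensional algebra over the field $R/\m$ by finiteness, so it is Artinian and has finitely many maximal ideals.

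\textbf{Injectivity above valuation points.} If $R_\m$ is a valuation ring, it is integrally closed by Lemma \ref{lem:valuation_ring_is_integrally_closed}. Integral closure commutes with localization, so $\overline{R}_\m=\overline{R_\m}=R_\m$, where $\overline{R}_\m$ means localization at $R\setminus\m$. The primes of $\overline{R}$ above $\m$ are in bijection with the maximal ideals of $\overline{R}_\m=R_\m$, which is local. So there is exactly one prime above $\m$.
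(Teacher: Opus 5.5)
Your arguments for well-definedness, surjectivity, finiteness of the fibres and injectivity above points where $R_\m$ is a valuation ring are correct. They spell out what the paper delegates to Kunz (Lemma~F.9 and Theorem~F.10), using lying-over, the Artinian fibre ring $\overline{R}/\m\overline{R}$ and the fact that integral closure commutes with localization. Your explicit hypothesis $\Frac(R)=L$ is genuinely needed for the last claim. Your separate treatment of localizations such as $A_\infty[y]$ is also more careful than the paper's assertion that a non-field $R$ is finitely generated.

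The gap is in step~3 of the finiteness argument. Perfectness of $k$ only guarantees that $L/k$ admits \emph{some} separating transcendental element. It does not make the particular $z$ produced by Noether normalization separating. For example, take $k=\F_p$ and $R=k[t]$. The choice $z=t^p$ is a legitimate Noether normalization, since $k[t]$ is generated by $1,t,\ldots,t^{p-1}$ over $k[t^p]$. But $L=k(t)$ is purely inseparable over $k(z)$, so $\Tr_{L/k(z)}$ vanishes identically and the trace/discriminant embedding into a free $k[z]$-module is unavailable.

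The step is repairable, but you have to supply the repair. Here are three ways.
\begin{itemize}
\item Invoke the finiteness theorem for finitely generated $k$-algebras, which holds for an arbitrary finite (possibly inseparable) extension of the fraction field. This is exactly what the paper does by citing Zariski--Samuel.
\item Choose $z$ separating. In characteristic $p$, since $R$ generates $L$ and $L\ne L^p$, there is $s\in R\setminus L^p$. If $z\in L^p$, replace it by $z'=z^N+s$ with $N\gg 0$.
\begin{itemize}
\item Then $z'\notin L^p$, so $z'$ is separating because $k$ is perfect.
\item $z'$ has exactly the same poles as $z$: at a pole $P$ of $z$ we have $v_P(z^N)<v_P(s)$ for $N$ large, and elsewhere $s\in R\subset\O_P$.
\item Hence, by Proposition~\ref{prop:integrally-closed=DVRintersection}, $R$ is still integral, and therefore finite, over $k[z']$.
\end{itemize}
\item In the cases the paper actually uses, $R=A[x]$ and $R=A_\infty[y]$, simply take $z=t$ (resp.\ $u$). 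Monicity of $f$ (resp.\ $f_\infty$) gives finiteness, and separability of $f$ gives separability of $L/K$.
\end{itemize}
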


\begin{proof}
If $R$ is a field, then $\overline{R}$ is a field which is a finite extension of $R$ and the proposition is obvious in such a case. If $R$ is not a field, then $R$ is a finitely generated $k$-algebra of Krull dimension one, and $\overline{R}$ is again a finite $R$-module by the finiteness theorem (see e.g. \cite[Thm.9, p.267]{samuel}). The claims then follow from \cite[Lem.F.9 and Thm.F.10]{kunz}
\end{proof}
\begin{definition}
The \emph{center} of a place $P\in V_L$ on $R$ is the set $\p=P\cap R$, that is the set of "functions" $b\in R$ which vanish at $P$.
\end{definition}

The center of a place is a prime ideal of $R$. 
In what follows, we say that a subring $S\subset L$ lies over $R$ if $R\subset S$.

\begin{prop}\label{prop:center}
Suppose that $R$ is not a field. Let $\overline{R}$ be the integral closure of $R$ in $L$ and let $P\in V_L$. The following conditions are equivalent :

(a) The center $\p=P\cap R$ is not the zero ideal;

(b) The center $\p=P\cap R$ is a maximal ideal of $R$;

(c) The valuation ring $\O_P$ lies over $R$;

(d) The valuation ring $\O_P$ lies over $\overline{R}$.

\noindent 
We say then that $P$ is centered on $R$ and that $P$ lies over $\p$.
\end{prop}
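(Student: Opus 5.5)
We prove the cycle of implications (a)$\Rightarrow$(b)$\Rightarrow$(c)$\Rightarrow$(d)$\Rightarrow$(a), using only that $R$ is a finitely generated $k$-algebra of Krull dimension one with $k\subset R\subset L$ (as in the proof of Proposition \ref{prop:Rbar_to_R}) and that $\O_P$ is a discrete valuation ring of $L$ containing $k$.

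For (a)$\Rightarrow$(b), note that $\p=P\cap R$ is the contraction of the prime ideal $P$ of $\O_P$ to the subring $\O_P\cap R$, hence a prime ideal of $R$. Since $R$ has dimension one and $\p\neq 0$, $\p$ is maximal. The real content is that we need $\p$ to be a prime of $R$ itself, not only of $\O_P\cap R$. So I would first treat this implication only after establishing $R\subset\O_P$, or argue directly that $P\cap R$ is prime in $R$ whenever $R\subset \O_P$. Because of this, I would organize the proof as (c)$\Leftrightarrow$(d), then (c)$\Rightarrow$(a),(b), then (a)$\Rightarrow$(c).

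(c)$\Leftrightarrow$(d). The direction (d)$\Rightarrow$(c) is trivial since $R\subset\overline R$. For the converse, $\O_P$ is integrally closed by Lemma \ref{lem:valuation_ring_is_integrally_closed}, and integral elements over $R$ are integral over $\O_P\supset R$, so $\overline R\subset \O_P$. Alternatively this follows from Proposition \ref{prop:integrally-closed=DVRintersection}.

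(c)$\Rightarrow$(a),(b). If $R\subset\O_P$, then $\p=P\cap R$ is the kernel of $R\to \O_P/P=k_P$, hence prime. Its image is a $k$-subalgebra of the finite extension $k_P$ of $k$, so it is a domain finite-dimensional over $k$, hence a field. Thus $\p$ is maximal, and it is nonzero since $R$ is not a field. This also gives (b)$\Rightarrow$(a), because $R$ is not a field and therefore $0$ is not maximal.

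(a)$\Rightarrow$(c), the main obstacle. Here the meaning of $P\cap R$ when $R\not\subset\O_P$ must be handled. Suppose $R\not\subset\O_P$, and pick $r\in R$ with $v_P(r)<0$. I would show $P\cap R=0$, contradicting (a). Take $0\neq s\in P\cap R$, so $v_P(s)>0$. Since $L$ has transcendence degree one over $k$ and $r\notin k$ (because $v_P(r)\neq 0$), $s$ is algebraic over $k(r)$. Working in the integral domain $R$, clear denominators to get a nontrivial relation $\sum a_{ij}s^ir^j=0$ with $a_{ij}\in k$. Then evaluate valuations: the terms have valuations $i\,v_P(s)+j\,v_P(r)$. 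A term with a unique minimal valuation would violate the strict triangle inequality, and I expect the minimal one to be unique after choosing the relation of minimal degree. The cleaner route is to argue via $k[r]$. Since $v_P(r)<0$, the place $P$ restricted to $k(r)$ is the place at infinity with respect to $r$, and $s$ lies in $P\cap R$. One then shows that the integral closure of $k[r]$ inside $R$'s fraction field contains no nonzero element of $P$ that is integral over $k[r]$, by looking at the leading-term behavior of its minimal polynomial at infinity, as in the proof of Lemma \ref{lem:y}. If this becomes messy, I would fall back on citing \cite[Lem.F.9]{kunz}, or restrict the definition of the center to the case $R\subset\O_P$. I expect this step to need the most care.
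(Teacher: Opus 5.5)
Your proofs of (c)$\Leftrightarrow$(d) (as in the paper) and of (c)$\Rightarrow$(b)$\Rightarrow$(a) are fine. Getting maximality of $\p$ from the finiteness of $k_P$ over $k$ is a neat replacement for the paper's route, which shows $P\cap R=0$ would give $L\subset\O_P$ and then uses Krull dimension one.

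The gap is in (a)$\Rightarrow$(c). The claim you plan to prove, that $R\not\subset\O_P$ forces $P\cap R=\{0\}$, is false. Take $R=k[t,t^{-1}]\subset L=k(t)$, which is finitely generated, of dimension one and not a field, and let $P$ be the place $t=0$. Then $t^{-1}\in R\setminus\O_P$, yet $0\neq t\in P\cap R$. So neither of your sketches can be completed: for instance, the relation $sr-1=0$ with $s=t$, $r=t^{-1}$ has two terms of valuation $0$, so there is no unique minimal term. No citation of \cite{kunz} can rescue a false claim. The same example shows that the proposition itself fails if (a) is read merely as ``$P\cap R\neq\{0\}$''. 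The same confusion appears in Example \ref{ex:places_infinity}, where in fact $t\in P_1\cap k[C]$.

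The missing idea is that (a) and (b) assert that $\p=P\cap R$ is an \emph{ideal} of $R$. This absorption property is what drives the implication; knowing that some nonzero element lies in $P\cap R$ gives nothing on its own. With it the step is immediate. Let $0\neq s\in\p$ and let $r\in R$ with $v_P(r)<0$. Then $r^Ns\in\p\subset P$ for all $N\ge 0$, while $v_P(r^Ns)=Nv_P(r)+v_P(s)<0$ once $N>v_P(s)$, a contradiction.

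The paper instead proves (b)$\Rightarrow$(c), again using the ideal property. For $a\in R\setminus\O_P$ it chooses $b\in R$ with $ab-1\in\p$. Comparing valuations gives $v_P(b)=-v_P(a)>0$, so $b\in\p$, hence $ab\in\p$ and $1\in\p$. With either fix your cycle closes, and there is no need to restrict the definition of the center.
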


\begin{proof}
$a)\Leftrightarrow b)$ follows from the fact that $R$ has Krull dimension one since $k\subset R\subset L$ is not a field.   
$d)\Rightarrow c)$ is clear since $R\subset \overline{R}$. 
Let us show $c)\Rightarrow d)$. If $R\subset \O_P$, then any element of $L$ which is integral over $R$ is a fortiory integral over $\O_P$, hence $\overline{R}\subset \overline{\O_P}$. But $\overline{\O_P}=\O_P$ by Lemma \ref{lem:valuation_ring_is_integrally_closed}. 
We recall that $\O_P=\{ a\in L \mid a^{-1}\notin P\}$. Let us prove $c)\Rightarrow a)$. Let $R\subset \O_P$. If $P\cap R=\{0\}$, then $a^{-1}\in \O_P$ for all nonzero $a\in R$ which leads to $L\subset \O_P$. 
It is not possible, so $P\cap R$ is not the zero ideal.
At last, we show $b)\Rightarrow c)$. Suppose that there exists $a \in R$ such that $a\notin \O_P$. In particular, $a\notin \p$. 
Since $\p$ is a maximal ideal of $R$ there exists $b\in R$ satisfying $ab-1\in \p$. However, $a \notin \O_P$ so $b\notin \O_P^{\times}$, hence $b\in \p$ follows. This leads to $1\in \p$, which is absurd, so no such element $a$ exist.
\end{proof}


\begin{prop}\label{prop:Chevalley} 
Let $V_L(R)$ be the set of places centered on $R$. Suppose that $\Frac(R)=L$. The map 
$$
V_L(R)\to \Spec \overline{R},\quad P\mapsto \p=P\cap \overline{R}
$$
is bijective, and we have $\O_P=\overline{R}_\p$ and $P=\p \overline{R}_\p$.
\end{prop}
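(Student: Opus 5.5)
The plan is to construct the inverse map and to use the earlier characterisations of places centred on $R$. Let $P\in V_L(R)$. By Proposition \ref{prop:center}, (b)$\Rightarrow$(d), the valuation ring $\O_P$ contains $\overline{R}$. Then $\p:=P\cap\overline{R}$ is a prime ideal of $\overline{R}$. It is nonzero, because it contains $P\cap R\neq 0$ by Proposition \ref{prop:center}(a). Since $\overline{R}$ is a finite $R$-module (Proposition \ref{prop:Rbar_to_R}), it has Krull dimension one, so $\p$ is maximal and the map is well defined. (If $R$ is a field then $R=L$ and $V_L(R)$ is empty, so I will assume $R$ is not a field.)

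Next I show $\O_P=\overline{R}_\p$. Elements of $\overline{R}\setminus\p$ lie in $\O_P\setminus P=\O_P^\times$, so $\overline{R}_\p\subset\O_P$. The ring $\overline{R}_\p$ is local, Noetherian (as $\overline{R}$ is a finitely generated $k$-algebra), of dimension one, and integrally closed (Proposition \ref{prop:int_closed}). By Proposition \ref{prop:DVR=integrally-closed} it is therefore a discrete valuation ring with fraction field $\Frac(\overline{R})=L$; here the hypothesis $\Frac(R)=L$ is used. A discrete valuation ring is a maximal proper subring of its fraction field, and $\O_P\neq L$, so $\overline{R}_\p=\O_P$. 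Comparing maximal ideals then gives $P=\p\overline{R}_\p$.

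From this, injectivity follows at once. If $P\cap\overline{R}=P'\cap\overline{R}=\p$, then $\O_P=\overline{R}_\p=\O_{P'}$, hence $P=P'$. For surjectivity, take $\p\in\Spec\overline{R}$ nonzero. As above, $\overline{R}_\p$ is a discrete valuation ring of $L$ containing $k$, so its maximal ideal $P=\p\overline{R}_\p$ is a place. It contains $\p\neq0$, so it is centred on $R$, and $P\cap\overline{R}=\p$ because localisation preserves the contraction of the maximal ideal.

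The main point requiring care is the dimension-one and Noetherian input needed to invoke Proposition \ref{prop:DVR=integrally-closed}. Both should follow from the finiteness of $\overline{R}$ over $R$ together with $R$ being a finitely generated $k$-algebra of dimension one, which is implicit in the proof of Proposition \ref{prop:Rbar_to_R}. I also read $\Spec$ as the set of maximal ideals, as the appendix specifies.
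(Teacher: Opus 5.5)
Your proof is correct, modulo one justification discussed below, but it takes a different route from the paper. The paper first uses items (c) and (d) of Proposition \ref{prop:center} to get $V_L(R)=V_L(\overline{R})$ and reduces to $R=\overline{R}$. By Proposition \ref{prop:integrally-closed=DVRintersection}, $\overline{R}=\bigcap_{P\in V_L(R)}\O_P$ is then a holomorphy ring, and the paper simply cites \cite[Prop.3.2.9]{stichtenoth}. That result proves the places/maximal ideals correspondence directly from the valuations, with no Noetherian or finiteness input. You instead argue by hand: $\overline{R}_\p\subset\O_P$, $\overline{R}_\p$ is a DVR by Proposition \ref{prop:DVR=integrally-closed}, and maximality of DVRs forces $\overline{R}_\p=\O_P$. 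This is the same mechanism the paper uses for $B$ and $B_\infty$ in Proposition \ref{prop:Place_to_primes}. Your route is self-contained within the paper's toolkit, but it requires knowing that $\overline{R}_\p$ is Noetherian of dimension one, which the citation avoids.

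That is the one soft spot. You justify Noetherianity by ``$\overline{R}$ is a finitely generated $k$-algebra''. This holds only when $R$ is of finite type over $k$, which is the case where the proposition is actually used ($R=k[C]$ in Theorem \ref{thm:affine_normalisation}). The claim in the proof of Proposition \ref{prop:Rbar_to_R} that every non-field $R$ is of finite type is too strong. For example, $R=A_\infty$ gives $\overline{R}=B_\infty$, which is semi-local and hence not a finitely generated $k$-algebra. The conclusion you need still holds in general. Since $\Frac(R)=L$, $R$ contains some $x$ transcendental over $k$; otherwise $R$ would lie in the algebraic closure of $k$ in $L$. Then $L/k(x)$ is finite, so by Krull--Akizuki every ring between $k[x]$ and $L$, in particular $\overline{R}$, is Noetherian of dimension at most one. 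With that substitution, or with the explicit restriction to finite-type $R$, your argument is complete.

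A minor point on surjectivity: ``centred on $R$'' follows most directly from condition (c), since $\O_P=\overline{R}_\p\supseteq\overline{R}\supseteq R$. Saying that $P$ contains $\p\neq 0$ gives $P\cap\overline{R}\neq 0$, which is not literally condition (a).
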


\begin{proof}
We have $V_L(R)=V_L(\overline{R})$ by items (c) and (d) of Proposition \ref{prop:center}, hence we may suppose that $R=\overline{R}$. The result follows from \cite[Prop.3.2.9]{stichtenoth}. 
\end{proof}



\section{Geometric point of view : places vs closed points on curves}\label{sec:curves}

\emph{Warning : we denote here by $P$ or $Q$ closed points of curves, which are not to be confused with places of function fields.}

\subsection{Affine curves and closed points}\label{ssec:closed_points}
Let $k$ be a perfect field and  $\bar{k}$ be an algebraic closure. Let $R=k[X_1,\ldots,X_n]$ be the polynomial ring in $n$ variables and let us consider the affine $n$-space  $$\A^n_k=\Spec\, R.$$
A closed point $P\in \A_n^k$ is by definition a maximal ideal of $R$.  Alternatively, we can think $P$ as the set of points in $\bar{k}^n$ at which all functions in $P$ vanish  : this set is a Galois orbit of a point in $\bar{k}^n$ under $\Gal(\bar{k}/k)$. Conversely, given a Galois orbit in $\bar{k}^n$, the set of all polynomials in $R$ vanishing at any (thus all) points of the orbit is a maximal ideal of $R$ (Nullstellensatz). An affine curve over $k$ is a one-dimensional integral subscheme $C\subset \A_n^k$, that is
$$
C=\Spec \, R/I
$$ 
where $I\subset R$ is a prime ideal of height $n-1$. 
A closed point of $C$ is now a maximal ideal of $R/I$, or equivalently a maximal ideal of $R$ containing $I$ by the correspondence theorem. Again, a closed point  $P\in C$ can be thought as a Galois orbit of a point of the variety (the geometric curve)
$$
C(\bar{k})=\{a\in \bar{k}^n,\,\, f(a)=0\,\, \forall\,\, f\in I\}.
$$
The ring of regular functions on $C$, or coordinate ring of $C$, is
$$
k[C]:=R/I.
$$
As $I$ is assumed to be prime, $k[C]$ is an integral domain. We call its  field of fractions the field of rational functions of $C$, denoted by $k(C)$. It is a function field and every function field is isomorphic to a field of shape $k(C)$. The local ring of $C$ at a closed point $P\in C$ is 
the localization $k[C]_P$ of $k[C]$ at the maximal ideal $P$, usually denoted $\O_{C,P}$. We thus have inclusions
$$
k[C]\subset \O_{C,P}\subset k(C).
$$ 
The field $k_P=\O_{C,P}/P \O_{C,P}\simeq k[C]/(P) $ is the residue field of $P$, which is a finite extension of $k$.  If $h\in \O_{C,P}$, we say that $h$ is regular at $P$. This terminology is justified by the fact that the evaluation map
$$
ev_P:\O_{C,P} \to k_P,\quad ev_P(h)=h \mod P
$$
is well defined. We write also $h(P)=ev_P(h)$. We say that $h$ vanishes at $P$ if  $h(P)=0$, that is if $h\in P$. Note that if $k=\bar{k}$ then $P=(X_1-a_1,\ldots,X_n-a_n)$ can be identified with  $a=(a_1,\ldots,a_n)\in C(\bar{k})$ and $h(P)$ is canonically identified with $h(a)$, usual evaluation of a function at a point. 

\begin{rmq}
The scheme $C$ is endowed with a topology : the closed subsets of $C$ are $C$ itself or finite unions of closed points. It comes also with a structural sheaf $\mathcal{O}_C$ on the topological space $C$, where for an open set $U\subset C$,  $\O_C(U)\subset k(C)$ is the subring of functions regular at all $P\in U$. The stalk at $P$ is the local ring $\O_{C,P}$. These are crucial data, especially to define morphisms between schemes. However, we deliberately don't go into these details here, and we mainly consider poorly $C$ as a set. See any book of algebraic geometry for more details.
\end{rmq}

\subsection{Regular and singular points.} The local rings $\O_{C,P}$ are not necessarily valuation rings. Indeed, we have the following classical characterization \cite[Prop.9.2, p.94]{Atiyah}:

\begin{prop}\label{prop:regular_point} Let $P\in C$ be a closed point. The following assertions are equivalent :

(a) The ring $\O_{C,P}$ is a discrete valuation ring.

(b) The ring $\O_{C,P}$ is integrally closed.

(c) If $I=(f_1,\ldots,f_r)$, the partial derivatives of the $f_i$'s do not all vanish at $P$. 
\end{prop}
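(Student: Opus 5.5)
The plan is to prove (a)$\Leftrightarrow$(b) by quoting the commutative algebra recalled in Annex \ref{sec:A}, and to prove (a)$\Leftrightarrow$(c) through the Zariski tangent space. Throughout, $M\subset R=k[X_1,\ldots,X_n]$ denotes the maximal ideal of $P$ (so $P=M/I$), $\m=P\O_{C,P}$, and $k_P=R/M$.

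\textbf{(a)$\Leftrightarrow$(b).} The ring $\O_{C,P}=k[C]_P$ is a Noetherian local domain. It has Krull dimension one, because $C$ is a one-dimensional integral scheme and $P$ is a closed point. Proposition \ref{prop:DVR=integrally-closed} then gives the equivalence directly. I expect no difficulty here.

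\textbf{(a)$\Leftrightarrow$(c), reduction to a dimension count.} A one-dimensional Noetherian local ring is a DVR if and only if it is regular, that is, if and only if $\dim_{k_P}\m/\m^2=1$. In general $\dim_{k_P}\m/\m^2\ge 1$, because $\m\neq 0$ and Nakayama's lemma applies. Using $\m/\m^2\simeq M/(M^2+I)$, there is a right exact sequence of $k_P$-vector spaces
\[
(I+M^2)/M^2 \longrightarrow M/M^2 \longrightarrow \m/\m^2\longrightarrow 0 .
\]

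\textbf{(a)$\Leftrightarrow$(c), the main step.} I expect the hard part to be identifying $M/M^2$ with $\Omega_{R/k}\otimes_R k_P\simeq k_P^n$ via $g\mapsto (\partial_{X_j}g(P))_j$. For this I will use the second fundamental exact sequence
\[
M/M^2\to \Omega_{R/k}\otimes k_P\to \Omega_{k_P/k}\to 0 .
\]
Since $k$ is perfect, $k_P/k$ is finite separable, so $\Omega_{k_P/k}=0$ and the differential map is surjective. Injectivity follows because the sequence is also left exact when the residue extension is separable (equivalently, $R_M$ is a regular local ring of dimension $n$, so $\dim M/M^2=n$). If this step becomes delicate, my fallback is to base change to $\bar{k}$: $k_P\otimes_k\bar k$ is reduced by separability, so the computation reduces to the classical case of a rational point $a$ and $M=(X_i-a_i)$, where the identification is immediate from Taylor expansion.

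\textbf{(a)$\Leftrightarrow$(c), conclusion.} Once this identification holds, the image of $I$ in $M/M^2$ is the $k_P$-span of the gradients $\nabla f_i(P)$. Hence
\[
\dim_{k_P}\m/\m^2 = n-\operatorname{rank}\bigl(\partial f_i/\partial X_j(P)\bigr).
\]
So $\O_{C,P}$ is a DVR if and only if the Jacobian has rank $n-1$ at $P$. Since the dimension on the left is always at least $1$, the rank is always at most $n-1$.

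\textbf{Caveat on the wording of (c).} For the plane curves used in this paper ($n=2$, $I=(f)$), "rank $n-1$" is exactly "the partial derivatives do not all vanish at $P$", which gives (c) verbatim. For $n>2$, (c) must be read as the Jacobian having maximal rank $n-1$ at $P$, and I would state it in that form.
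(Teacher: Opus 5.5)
Your proof is correct, and it does more than the paper does. The paper gives no argument of its own: it cites Atiyah--Macdonald, Prop.~9.2. For a one-dimensional Noetherian local domain, that result gives the equivalence of being a DVR, being integrally closed, and $\dim_{k_P}\mathfrak{m}/\mathfrak{m}^2=1$. So it covers (a)$\Leftrightarrow$(b) exactly as you do, but it says nothing about partial derivatives, and the Jacobian criterion (c) is never actually justified.

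Your route supplies the missing link: the identification $M/M^2\simeq \Omega_{R/k}\otimes_R k_P\simeq k_P^n$ via the conormal sequence, and the resulting formula $\dim_{k_P}\mathfrak{m}/\mathfrak{m}^2=n-\mathrm{rank}\,(\partial f_i/\partial X_j(P))$. This step is also where the standing hypothesis that $k$ is perfect is really used, through $\Omega_{k_P/k}=0$. Over $k=\mathbb{F}_p(s)$ with $p$ odd, the curve $Y^2=X^p-s$ is regular at the maximal ideal $(Y,X^p-s)$, since there $\mathfrak{m}=(y)$, yet all partial derivatives vanish at that point.

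Of your two justifications for injectivity, the dimension count is the cleanest. It says that $R_M$ is regular of dimension $n$, because maximal ideals of $k[X_1,\ldots,X_n]$ have height $n$ and are generated by $n$ elements. Calling this ``equivalent'' to left exactness of the conormal sequence is loose, though harmless.

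Your caveat on (c) is not pedantry: as literally stated, (c) is false for $n\ge 3$. Take $I=(X_3,\,X_2^2-X_1^3)\subset k[X_1,X_2,X_3]$ and $P$ the origin. Then $\partial X_3/\partial X_3=1\neq 0$, yet $\mathcal{O}_{C,P}$ is the local ring of a cusp, which is not integrally closed. The correct condition is that the Jacobian matrix has rank $n-1$ at $P$. That is what your argument proves, and it is what the paper's next sentence about smoothness of $C(\bar k)$ ``in the usual sense'' implicitly means. For plane curves it reduces to (c) verbatim.
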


\noindent
We say that $P$ is a \emph{regular point} of $C$ if it satisfies one of these conditions. Otherwise, we say that $P$ is a singular point. By (c), $P$ is a regular point if and only if the geometric curve $C(\bar{k})$ is smooth (in the usual sense) at all points of the Galois orbit defined by $P$.  
A closed point is singular if it's not regular. A curve $C$ is regular (or smooth) if it has no singular points. Otherwise, we say that $C$ is singular. The set of singular points on a curve $C$ is finite. By combining Proposition \ref{prop:int_closed} and Proposition \ref{prop:regular_point}, we get the classical result:

\begin{prop}\label{prop:regular_curves}
An affine curve $C$ is smooth if and only if  $k[C]$ is integrally closed. 
\end{prop}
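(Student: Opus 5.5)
The plan is to reduce the global statement to the pointwise characterization of Proposition \ref{prop:regular_point}, using the fact that integral closedness is a local property (Proposition \ref{prop:int_closed}). By definition, $C$ is smooth exactly when every closed point $P\in C$ is regular. By Proposition \ref{prop:regular_point}, $P$ is regular if and only if $\O_{C,P}=k[C]_P$ is integrally closed. So it suffices to establish
$$
k[C]\ \text{integrally closed} \iff k[C]_P \ \text{integrally closed for all closed points } P\in C.
$$

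First I would observe that $k[C]=R/I$ is an integral domain, since $I$ is prime. Proposition \ref{prop:int_closed} therefore applies. Its statement quantifies over $\Spec R$, which the appendix defines as the set of maximal ideals of $R$; these are exactly the closed points of $C$. The equivalence above is then precisely the content of Proposition \ref{prop:int_closed} applied to the domain $k[C]$.

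The only step needing care is this matching between the maximal ideals of $k[C]$ and the closed points of $C$. If one instead reads Proposition \ref{prop:int_closed} as quantifying over all prime ideals, there is one extra prime to handle, the zero ideal. But $k[C]_{(0)}=k(C)$ is a field, hence trivially integrally closed, so the equivalence is unchanged.

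Alternatively, I would give a direct argument for the nontrivial direction via intersections. Since $k[C]$ is a domain, $k[C]=\bigcap_{P} k[C]_P$ inside $k(C)$, with $P$ ranging over the maximal ideals. Suppose $b\in k(C)$ is integral over $k[C]$. Then $b$ is integral over each $k[C]_P$, hence lies in each $k[C]_P$ when these are integrally closed, and therefore lies in $k[C]$. The converse direction, that integral closedness passes to localizations, is standard.

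Combining these steps: $C$ is smooth $\iff$ every closed point $P$ is regular $\iff$ every $\O_{C,P}$ is integrally closed $\iff$ $k[C]$ is integrally closed. I expect no real obstacle here; the point requiring the most attention is the bookkeeping between closed points of $C$ and maximal ideals of $k[C]$ described above.
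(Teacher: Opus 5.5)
Your proof is correct and takes the same route as the paper, which obtains the result directly by combining Proposition \ref{prop:int_closed} (integral closedness is a local property) with the pointwise characterization in Proposition \ref{prop:regular_point}. Your remark that the zero ideal localizes to the field $k(C)$ correctly handles the paper's inconsistent use of $\Spec$, which denotes prime ideals in Section \ref{sec:3} but maximal ideals in the appendix.
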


Note that a regular point  $P\in C$ defines in particular a discrete rank one valuation $v_P:k(C)\to \Z\cup\{\infty\}$. Thus it makes sense to talk about the vanishing order or the pole order of a rational function $h\in k(C)$ at a regular point.

\subsection{Normalization of singular curves.} \emph{The local ring of a curve at a singular point is not a valuation ring.} In such a case, the notion of vanishing order or pole order of a function at $P$ does not make sense anymore. We need to desingularize, or normalize, the curve $C$. 

\begin{definition}
The normalization of $C$ is the curve 
$\Cbar=\Spec\, \kCbar$
where $ \kCbar$ stands for the integral closure of $k[C]$ in $k(C)$.
\end{definition}

\noindent
The fact that $\Cbar$ is a curve follows from the finiteness theorem (which is itself a consequence of Noether's normalization theorem) which ensure that $ \kCbar$ is a one-dimensional integral $k$-algebra (Proposition \ref{prop:Rbar_to_R}). That is,
$\kCbar\simeq k[Z_1,\ldots,Z_m]/J$
where $J$ is a prime ideal of height $m-1$. Thus $\overline{C}$ is indeed an affine curve in a (possibly larger) affine space $\A^m_k$. 

\begin{ex}\label{ex:blow-up}
Consider the affine plane curve $C=\Spec k[T,X]/( X^2-T^2-T^3)$. There is only one singular point $P=(T,X)$ (corresponding to the geometric point $(0,0)$). Denote $t$ and $x$ the residue classes of $T$ and $X$. Thus $k[C]=k[t,x]$ where $x^2=t^2-t^3$. The rational function $y=x/t\in k(C)$ satisfies $y^2-t-1=0$, thus is integral over $k[C]$. Adjoining this element to the ring $k[C]$ leads to the bigger ring
$$
k[t,x,x/t]= \frac{k[T,X,Y]}{(X^2-T^2-T^3,X-TY,Y^2-T-1)}\simeq k[Y].
$$
As $k[Y]$ is integrally closed, we deduce that $\kCbar=k[t,x,x/t]\simeq k[Y]$ and 
$\Cbar$ is isomorphic to the affine line $\A^1_k=\Spec k[Y]$ (note however that $\Cbar$ is naturally embedded in $\A_k^3$ here). 
\end{ex}

By Proposition \ref{prop:regular_curves}, the affine curve $\Cbar$ is smooth, and $\Cbar=C$ if and only if $C$ is smooth. We have inclusions
$k[C]\subset \kCbar \subset k(C)$
so the curves  $C$ and $\Cbar$ have same function field $k(C)=k(\Cbar)$. Two curves with isomorphic function fields are said to be birationally equivalent.
By Proposition \ref{prop:Rbar_to_R}, we have a surjective map (in fact a morphism of $k$-schemes)
$$
\pi:\Cbar \twoheadrightarrow C,\quad \pi(Q)=Q\cap k[C]
$$ 
which is finite, and one-to-one except above a finite number of closed points of $C$, these points being contained in the singular locus of $C$. If $P\in C$ is singular, there may be several points or not lying above $P$. 

\medskip

\begin{ex} (Example \ref{ex:blow-up} continued)
The  fiber of $\pi:\Cbar \to C$ above $P=(T,X)$ consists of the maximal ideals $Q\subset k[T,X,Y]$ containing $P+(X^2-T^2-T^3,X-TY,Y^2-T-1)=(T,X,Y^2-1)$. Assuming $\car(k)\ne 2$, the fiber thus consists of two closed points, given by the maximal ideal $Q_1=(T,X,Y-1)$ and $Q_2=(T,X,Y+1)$, corresponding to the geometric points $(0,0,\pm 1)$ (this is an example of a blow-up of a plane curve at a nodal point). If $\car(k)= 2$ then $Y^2-1=(Y-1)^2$ and there is a unique point $Q$ lying above $P$, given by the maximal ideal $(T,X,Y-1)$. 
\end{ex}

\medskip

We say that a place of $k(C)$ is centered on $C$ if it is centered on $k[C]$ (Proposition \ref{prop:center}). We can reformulate Proposition \ref{prop:Chevalley} in this context : 

\begin{thm}\label{thm:affine_normalisation}
Let $C$ be an affine curve. There is a one-to-one correspondence between the places of $k(C)$ centered on $C$ and the closed points of $\Cbar$.
\end{thm}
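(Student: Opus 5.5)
The plan is to reduce Theorem \ref{thm:affine_normalisation} to Proposition \ref{prop:Chevalley} applied to $R=k[C]$, and then to unwind the definition of closed points of $\Cbar$. By definition, a closed point of $\Cbar=\Spec\kCbar$ is a maximal ideal of $\kCbar$, and $\kCbar$ is the integral closure $\overline{R}$ of $R=k[C]$ in $L=k(C)$. Since $\Frac(k[C])=k(C)$ by definition of the function field of $C$, the hypothesis $\Frac(R)=L$ of Proposition \ref{prop:Chevalley} is satisfied. Moreover, $R$ is not a field, because $C$ is one-dimensional, so $k[C]$ has Krull dimension one. Hence Proposition \ref{prop:center} applies, and \lq\lq centered on $C$'' means exactly $P\in V_L(R)$.

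Proposition \ref{prop:Chevalley} then gives the bijection $V_L(R)\to\Spec\overline{R}$, $P\mapsto P\cap\kCbar$. Here one point needs care: the paper's appendix uses $\Spec$ to denote maximal ideals. I would check that the image consists of maximal, nonzero primes. This holds because $\kCbar$ is a finite $k[C]$-module by Proposition \ref{prop:Rbar_to_R}, so it is also one-dimensional, and the nonzero primes are exactly its maximal ideals. The center $P\cap\kCbar$ is nonzero by Proposition \ref{prop:center}(a) applied to $\overline{R}$, which is again not a field. Therefore $P\cap\kCbar$ is a closed point of $\Cbar$.

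For the inverse map, given a closed point $Q$ of $\Cbar$, the localization $\kCbar_Q$ is a Noetherian, local, one-dimensional, integrally closed domain (Proposition \ref{prop:int_closed}). By Proposition \ref{prop:DVR=integrally-closed} it is therefore a DVR containing $k$, so $Q\kCbar_Q$ is a place centered on $C$. This agrees with the inverse formula $\O_P=\overline{R}_\p$ in Proposition \ref{prop:Chevalley}.

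The main obstacle is only bookkeeping. One must ensure that the dimension-one facts, namely that nonzero primes are maximal in both $k[C]$ and $\kCbar$, are justified from the finiteness theorem. One must also verify that both compositions of the two maps are the identity. This last check follows from the maximality of DVRs among subrings of $L$, as in the proof of Proposition \ref{prop:Place_to_primes}.
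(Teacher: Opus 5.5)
Your proposal is correct and follows the paper's route. The paper presents Theorem~\ref{thm:affine_normalisation} simply as a reformulation of Proposition~\ref{prop:Chevalley} for $R=k[C]$ (so $\overline{R}=\kCbar$ and $\Frac(R)=k(C)$), with the correspondence $\tilde{Q}\mapsto \tilde{Q}\cap \kCbar$; your additional checks (maximality of the centers, the explicit inverse $Q\mapsto Q\kCbar_Q$ via Propositions~\ref{prop:int_closed} and~\ref{prop:DVR=integrally-closed}, and the two compositions) re-derive what Proposition~\ref{prop:Chevalley} already asserts, and they are sound.
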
 

The underlying map is $\tilde{Q}\mapsto Q=\tilde{Q}\cap k[\Cbar]$. The center on $C$ of the place $\tilde{Q}$ is  $P=\tilde{Q}\cap k[C]=\pi(Q)\in C$. If the curve $C$ is smooth, we may thus identify places centered on $C$ and closed points of $C$.  

\subsection{Places and branches of plane curves.}\label{ssec:plane_branches}
Suppose that $C=\Spec k[T,X]/(f)$ is a plane curve defined by an irreducible polynomial $f\in R=k[T,X]$ and let $P\in C$ be a closed point. We can describe  the places centered at $P$ as follows.  Let $\widehat{R}_{P}$ be the completion of $R$ with respect to the $P$-adic topology. Typically, $P=(T,X)$ and $\widehat{R}_{P}=k[[T,X]]$. The germ of the curve $C$ at $P$ is
$$(C,P):=\Spec \widehat{R}_{P}/(f).$$
The local irreducible factorization $f=f_1\cdots f_r\in \widehat{R}_{P}$ leads to the decomposition 
$$
\widehat{R}_{P}/(f)= \widehat{R}_{P}/(f_1)\oplus \cdots \oplus \widehat{R}_{P}/(f_r)
$$
where each summand is now an integral local ring (but not integrally closed in general). The irreducible germ of curve $(C_i,P)$ defined by $f_i$ is called \emph{a branch} of $(C,P)$. We have
$$
(C,P)=(C_1,P)\cup \cdots \cup (C_r,P).
$$
The following result is the local counterpart of Proposition \ref{prop:primes_vs_local_factors} and Proposition \ref{prop:valuation_via_resultant}.
\begin{prop}\label{prop:places_vs_branches}
There is a one-to-one correspondence between the places $Q_i$ of $k(C)$ with center $P$ and the branches $(C_i,P)$ of the germ of curve $(C,P)$.  The valuation $v_{Q_i}$ is induced by the intersection multiplicity at $P$ with $f_i$. More precisely, if $h \in\mathcal{O}_{C,P}$, we have the equality 
$$
v_{Q_i}(h)=\frac{1}{[k_P:k]}\dim_{k} \frac{\widehat{R}_{P}}{(f_i,h)}.
$$
where we still denote $h\in R_P\subset \widehat{R}_{P}$ an arbitrary lifting of $h$.
\end{prop}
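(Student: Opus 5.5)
We would reduce the statement to the global correspondence already established (Propositions \ref{prop:primes_vs_local_factors} and \ref{prop:valuation_via_resultant}), transported to the complete local ring $\widehat{R}_P$. First, by Proposition \ref{prop:Chevalley} the places of $k(C)$ with center $P$ correspond bijectively to the maximal ideals of $\overline{\O_{C,P}}$, the integral closure of the local ring $\O_{C,P}=R_P/(f)$; that is, to the closed points of $\Cbar$ lying over $P$. Since $\O_{C,P}$ is a one-dimensional reduced excellent local ring (a localization of a finitely generated $k$-algebra), completion commutes with normalization: $\overline{\O_{C,P}}\otimes_{\O_{C,P}}\widehat{\O_{C,P}}\simeq \overline{\widehat{\O_{C,P}}}$, and the latter is the product of the completions $\widehat{(\overline{\O_{C,P}})}_{\q}$ over the maximal ideals $\q$ of $\overline{\O_{C,P}}$, each a complete DVR.

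Second, we identify the minimal primes of $\widehat{\O_{C,P}}=\widehat{R}_P/(f)$ with the ideals $(f_i)$. Reducedness of the completion, again from excellence, guarantees that the $f_i$ are pairwise non-associate. The normalization of a reduced complete local ring is the product of the normalizations of its quotients $\widehat{R}_P/(f_i)$, and each of these is a complete local domain of dimension one. By Hensel's lemma its normalization is local, so it is a single complete DVR. Matching these factors with the previous product decomposition gives the bijection $Q_i\leftrightarrow (C_i,P)$.

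Third, for the valuation formula we take $h\in\O_{C,P}$ nonzero and $\O_i=\overline{\widehat{R}_P/(f_i)}$, a complete DVR with valuation $v_{Q_i}$ and residue field $k_{Q_i}$. The standard length argument gives
\[
\dim_k \O_i/h\O_i=v_{Q_i}(h)\,[k_{Q_i}:k].
\]
Because $\O_i/(\widehat{R}_P/(f_i))$ is a finite-dimensional $k$-space and $h$ is a nonzerodivisor, multiplication by $h$ shows that $\dim_k \widehat{R}_P/(f_i,h)=\dim_k \O_i/h\O_i$ (snake lemma applied to the inclusion of the two modules). This is the step where finiteness of the $\delta$-type quotient is used.

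The main obstacle is the normalization factor. The formula as stated divides by $[k_P:k]$, whereas the natural computation yields $[k_{Q_i}:k]=[k_{Q_i}:k_P][k_P:k]$. We would therefore either interpret $\dim$ as taken over $k_P$, after replacing $\widehat{R}_P$ by its coefficient-field form $k_P[[T',X']]$, or check that $k_{Q_i}=k_P$ in the intended setting. Should neither hold, we would correct the denominator to $[k_{Q_i}:k]$, which is the analogue of the factor $f_\p$ in Proposition \ref{prop:valuation_via_resultant}. Independence of the lifting of $h$ is immediate, since two lifts differ by a multiple of $f$, which lies in $(f_i)$.
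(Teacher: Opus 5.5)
The paper does not prove this proposition. It is stated in Appendix~\ref{sec:curves} as the local counterpart of Propositions~\ref{prop:primes_vs_local_factors} and~\ref{prop:valuation_via_resultant}, with no argument, so there is no authors' proof to compare with. Judged on its own, your argument is sound.

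Proposition~\ref{prop:Chevalley}, applied to $\O_{C,P}$, identifies the places with center $P$ with the maximal ideals $\q$ of $\overline{\O_{C,P}}$. The isomorphism $\overline{\O_{C,P}}\otimes\widehat{\O_{C,P}}\simeq\prod_{\q}\widehat{(\overline{\O_{C,P}})_{\q}}$ then does the matching. In dimension one you can check it by hand without invoking excellence: the right-hand side is a finite product of complete DVRs, it is finite over $\widehat{\O_{C,P}}$, and it lies between $\widehat{\O_{C,P}}$ and $\widehat{\O_{C,P}}[1/c]$ for a nonzero conductor element $c$, which stays a nonzerodivisor by flatness. Combined with the uniqueness of a decomposition into finitely many local factors, this matches the $\q$ with the normalizations of the branches $\widehat{R}_P/(f_i)$. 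You should state explicitly that $\widehat{R}_P$ is regular, hence factorial; that is what makes the $(f_i)$ the minimal primes. Your snake-lemma step is correct and gives
\[
\dim_k \widehat{R}_P/(f_i,h)=[k_{Q_i}:k]\,v_{Q_i}(h).
\]

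Your concern about the denominator is justified: the formula as printed is false in general. Take $k=\Q$, $f=X^2+T^2$ and $P=(T,X)$, so that $k_P=\Q$.
\begin{itemize}
\item The tangent cone $X^2+T^2$ is irreducible over $\Q$, so $f$ is irreducible in $\Q[[T,X]]$ and there is a single branch.
\item The normalization is $\Q[t,x/t]\simeq\Q(i)[t]$, so there is a single place $Q$ over $P$, with $k_Q=\Q(i)$ and $v_Q(t)=1$.
\item But $\dim_{\Q}\Q[[T,X]]/(X^2+T^2,T)=2$, so the printed formula would give $v_Q(t)=2$.
\end{itemize}

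Your first proposed repair does not help. Reading the dimension over $k_P$ only replaces $\dim_k$ by $\dim_{k_P}=\dim_k/[k_P:k]$. That accounts for the factor $[k_P:k]$ the statement already divides by, and leaves the missing factor $[k_{Q_i}:k_P]$ untouched.

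So drop the hedging and state the result you actually prove, with denominator $[k_{Q_i}:k]$. Equivalently, $v_{Q_i}(h)=\dim_{k_P}\widehat{R}_P/(f_i,h)\,/\,[k_{Q_i}:k_P]$, which is the exact analogue of dividing by $f_\p$ in Proposition~\ref{prop:valuation_via_resultant}. The printed version holds precisely when $k_{Q_i}=k_P$, for instance when $k$ is algebraically closed.
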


This point of view allows to represent a place $Q_i$ by the (truncated) local equation $f_i$ of the corresponding branch, and the intersection multiplicity can be computed by means of resultants or local parametrizations, as detailled in Section \ref{ssec:parametrization}.

\subsection{Places at infinity and compactification.} 
Unfortunately, the closed points of $\Cbar$ do not allow to describe all places of $k(C)$. In other words, there are some remaining places which are not centered on $C$. These are the places "at infinity" of the affine curve $C$. 

\begin{ex}\label{ex:places_infinity}
Consider the regular curve $C=\Spec k[T,X]/(TX-1)$, with coordinate ring $k[C]=k[t,x]$, where $tx=1$. Thus $k[C]=k[t,1/t]$ and $k(C)=k(t)$. Consider the two places $P_1$ and $P_2$ given respectively by the maximal ideals $(t)$ and $(1/t)$ of the discrete valuation rings $k[t]_{(t)}$ and $k[1/t]_{(1/t)}$ of $k(C)$. We check that $P_1\cap k[C]=\{0\}$ and $P_2\cap k[C]=\{0\}$, thus $P_1$ and $P_2$ are not centered on $C$ (Proposition \ref{prop:center}). 
\end{ex}

In order to give a geometric meaning of the places at infinity, we need to "compactify" $C$. 
A classical compactification is the projective closure. For the sake of completeness, let us briefly explain the construction. The  projective $n$-space over $k$ is
$$
\pr^n_k=\Proj\,k[X_0,\ldots,X_n].
$$ 
Set theoretically, $\pr^n_k$ is the set of homogeneous prime ideals of $R'=k[X_0,X_1,\ldots,X_n]$ of height $n$ whose radical is not equal to the irrelevant ideal $(X_0,\ldots,X_n)$. The underlying variety is the usual projective $n$-space over $\bar{k}$, that is
$$\pr^n(\bar{k})= (\bar{k}^{n+1}\setminus  0)/\sim
$$
where $\sim$ is the collinearity relation of equivalence. The hyperplane at infinity of $\pr^n_k$ is
$$
H_0=\Proj \, R'/(X_0)\simeq \Proj \,k[X_1,\ldots,X_n] \simeq \pr^{n-1}_k,
$$ 
denoted for short $H_0=\{X_0=0\}$. The closed points of $H_0$ corresponds to the closed points of $\pr^n_k$ containing $(X_0)$. The remaining points of $\pr^n_k$ are one-to-one with the maximal ideal of $R=k[X_1,\ldots,X_n]$ by dehomogeneization, that is, by letting $X_0=1$. We get in such a way a bijection
$$
\iota_0: U_0=\pr^n_k\setminus H_0 \longrightarrow \A^n_k. 
$$ 
The reverse map $\iota_0^{-1}$ consists of homogeneization of maximal ideals $P\subset k[X_1,\ldots,X_n]$ with respect to $X_0$, a generator $f$ of $P$ being sent to the homogeneous polynomial 
$$
f(X_1,\ldots,X_n)\mapsto F(X_0,\ldots,X_n)=X_0^{d} f(X_1/X_0,\ldots,X_n/X_0)
$$
where $d=\deg(f)=\deg(F)$. Note that for each $i=0,\ldots,n$, dehomogeneization with respect to $X_i$ leads to an isomorphism  $\iota_i:U_i=\pr^n_k\setminus H_i\simeq \A^k_n$ where $H_i=\{X_i=0\}$. Moreover, the charts $U_i$ cover the all space $\pr^n_k$, thus we can work locally in $\pr^n_k$ as in the usual affine $n$-space. 

\medskip

Let $I'\subset R'$ be the homogeneization of the ideal $I\subset R$ defining the affine curve $C$. The closed points of
$$\C=\Proj\, R'/I'$$
are the homogeneous prime ideals of $R'$ of height $n$ containing $I'$ and whose radical is not equal to $(X_0,\ldots,X_n)$.  We have a natural inclusion $\C\subset \pr^n_k$ and the dehomogenization map $\iota_0$ induces an isomorphism
$$
\iota_0 : \C\cap U_0 \longrightarrow C,
$$
hence an isomorphism $\O_{\C,P}\simeq \O_{C,\iota_0(P)}$ for all $P\in \C\cap U_0$. The remaining set of points of $\C$ is $H_0\cap \C$ (the points "at infinity"), and there are finitely many such points. We say that $\C$ is \emph{the projective closure} of $C$. For each chart $U_i$, the ideal of the affine curve $C_i=\iota_i(\C\cap U_i)$ is easily deduced from the ideal of $C$.
In the plane case $C=\Spec k[X_1,X_2]/(f)$ with $f$ of total degree $d$, we have 
$$
\C=\Proj \, k[X_0,X_1,X_2] /(F)
$$
where $F$ is the homogeneization of $f$ and we get $C_1=\Spec k[X_0,X_2]/(f_1)$ and $C_2=\Spec k[X_0,X_1]/(f_2)$ where 
$$
f_1(X_0,X_2)=X_0^d f(1/X_0,X_2/X_0)\quad \text{and}\quad f_2(X_0,X_1)=X_0^d f(1/X_0,X_1/X_0).
$$
The curve $\C$ is naturally isomorphic to the abstract curve obtained by gluing the affine curves $C_i$ along their intersection. Each affine curve $C_i\subset \A^n_k$ admits a normalization $\Cbar_i$ as described in the previous paragraph. The affine curves $\Cbar_i$ can be glued again together and the resulting curve $\overline{\C}$ is a \emph{smooth projective curve}, called the normalization of $\C$.

\begin{thm}\label{thm:projective_normalisation}
Let $C\subset \A^n_k$ be an affine curve with projective closure $\C\subset \pr^n_k$. There is a one-to-one correspondence between  the places of $k(C)$ and the closed points of the normalization $\overline{\C}$ of $\C$. 
\end{thm}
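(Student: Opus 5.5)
The plan is to reduce to the affine statement, Theorem \ref{thm:affine_normalisation}, by covering $\C$ with the standard charts $U_i$ and treating the places not centered on $C$ separately. Every place of $k(C)$ will first be shown to be centered on at least one of the affine curves $C_i=\iota_i(\C\cap U_i)$, all of which have the same function field $k(C)$. To see this, let $P$ be a place with valuation $v_P$ and let $x_j=X_j/X_0$ be the affine coordinates. Choose an index $i$ such that $v_P(x_i)$ is minimal among $v_P(1)=0,v_P(x_1),\ldots,v_P(x_n)$, with the convention $x_0=1$. Then every coordinate function $x_j/x_i=X_j/X_i$ of the chart $U_i$ has non-negative valuation. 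Hence $k[C_i]\subset \O_P$, and by Proposition \ref{prop:center} (c)$\Rightarrow$(b) the place $P$ is centered on $C_i$. Theorem \ref{thm:affine_normalisation} then associates to $P$ a unique closed point of $\Cbar_i$.

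Next I will check that this assignment is compatible with the gluing, so that it defines a map $V_L\to\overline{\C}$. Suppose $P$ is centered on both $C_i$ and $C_j$. Then $\O_P$ contains $k[C_i]$ and $k[C_j]$, so it contains the localization of the overlap ring $k[C_i][x_i/x_j]=k[C_j][x_j/x_i]$. Consequently the closed points of $\Cbar_i$ and $\Cbar_j$ attached to $P$ both lie in $\overline{\C\cap U_i\cap U_j}$ and are the same point there. The justification is that normalization commutes with localization, so the normalization of the overlap is $\overline{k[C_i]}[x_i/x_j]$, and Proposition \ref{prop:Chevalley} applies to this ring as well. The gluing of the $\Cbar_i$ is by definition along these overlaps, so the point of $\overline{\C}$ is well defined.

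Bijectivity will then be checked locally. For injectivity, two places giving the same closed point $Q$ of $\overline{\C}$ both lie in some chart $\Cbar_i$. Their valuation rings then both equal $(\overline{k[C_i]})_Q$ by Proposition \ref{prop:Chevalley}, so the two places coincide. For surjectivity, any closed point $Q$ lies in some $\Cbar_i$, and Theorem \ref{thm:affine_normalisation} produces a place centered on $C_i$ whose associated point is $Q$.

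I expect the main obstacle to be the compatibility step. It depends on a precise description of the gluing of the normalizations and on the fact that integral closure commutes with localization at $x_i/x_j$, and the paper only sketches the gluing. I would handle this by describing $\overline{\C}$ concretely as the set of pairs $(i,Q)$ with $Q\in\Spec \overline{k[C_i]}$, modulo the identification $\overline{k[C_i]}_Q=\overline{k[C_j]}_{Q'}$ inside $k(C)$. With that description, both well-definedness and injectivity become the statement that a place is determined by its valuation ring.
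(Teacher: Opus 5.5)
Your argument is correct. The paper actually gives no proof of this theorem: it states it right after sketching the gluing of the $\Cbar_i$, implicitly as a chartwise consequence of Theorem \ref{thm:affine_normalisation}. Your proof supplies the two facts that sketch leaves unstated. First, every place is centered on some chart, which you get from the minimal-valuation choice of $i$ (in effect the valuative criterion of properness). Second, the chartwise assignments agree on overlaps, because $k[C_i][x_i/x_j]=k[C_j][x_j/x_i]$ and integral closure commutes with localization.

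One small correction. A chart with $\C\subset\{X_i=0\}$ is empty, so your claim that all the $C_i$ have function field $k(C)$ should be restricted to indices with $x_i\neq 0$. Your choice of $i$ already guarantees this, since $v_P(x_0)=0$ forces the minimal valuation to be finite.
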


The places of $k(C)$ which are centered on $C$ are called the affine places. They correspond to closed points of the normalization $\Cbar$ of $C$. The remaining places are centered on the points at infinity of $\C$ that is on $\C\setminus C=\C\cap (X_0=0)$. Take care that the normalization of the projective closure $\overline{\C}$ is usually not the projective closure of the affine normalization $\Cbar$. This is the case if and only if the points at infinity are regular points of $\C$.

\begin{ex}\label{ex:places_infinity_continued} (Example \ref{ex:places_infinity} continued). 
The homogeneization of $f=TX-1$ with respect to a new variable $Z$ is $F=TX-Z^2$, and  $\C=\Proj \, k[Z,T,X]/(F)$. The intersection of $\C$ with  the line at infinity $Z=0$ is $\Proj \, k[Z,T,X]/(F,Z)\simeq \Proj \, k[T,X]/(TX)$, which consists of two closed points $P_1=(Z,T)$ and $P_2=(Z,X)$ (corresponding respectively to  the geometric points $(0:0:1)$ and $(0:1:0)$). These two points correspond to the two places not centered on $C$, as described in Example \ref{ex:places_infinity}. 
\end{ex}

\begin{rmq}
There are various ways of compactifying an affine curve. For instance, we can consider its closure in a product of projective spaces or in proper toric varieties. For instance, considering a plane curve defined by a polynomial $f$ with $\lambda(f)>1$ suggests to consider the Zariski closure $\C$ of the affine plane curve of $f$ in some well-chosen \textit{weighted projective compactification}  of $\A^2_k$. 
\end{rmq}

\paragraph{Affine places versus finite places.} Let $C=\Spec k[T,X]/(f)$ be an affine plane curve and let $\C\subset \pr^2_k$ be the projective compactification of $C$. Denoting $K=k(T)$ and $L=K[X]/(f)$ as in Section \ref{sec:3}, we may identify $L=k(\C)$. In this context, affine places of $L/k$ (i.e. centered on $C$) and finite places of $L/K$ (Definition \ref{def:finite_places}) do not necessarily coincide. Indeed :

\begin{prop} \label{prop : finite/centered places}
The set of places centered on $C$ is contained in the set of finite places of $L/K$. 
These two sets coincide if and only if $f$ is monic in $X$.  
\end{prop}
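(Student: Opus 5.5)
To prove the inclusion, I take a place $P$ of $L=k(C)$ centered on $C$, so that $\O_P\supset k[C]=k[t,x]\supset A=k[t]$ by Proposition \ref{prop:center}. Then $P\cap K$ is a place of $K$ whose valuation ring contains $k[t]$. By Proposition \ref{prop:palces_of_K}, it cannot be the place at infinity, since $t\notin A_\infty=k[1/t]_{(1/t)}$. So $P$ lies over a finite place of $K$, i.e. $P$ is finite in the sense of Definition \ref{def:finite_places}. This direction needs no hypothesis on $f$.

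For the characterization, I will use that the finite places are exactly the places whose valuation ring contains $A$, hence contains $B$ (Proposition \ref{prop:Place_to_primes}). The places centered on $C$ are those whose valuation ring contains $k[C]=A[x]$. Both sets are therefore places containing a given ring, and it suffices to show:
\[
\text{every place containing } A \text{ contains } x \iff f \text{ is monic in } X
\]
(up to a nonzero constant in $k$, which I would state explicitly).

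If $f$ is monic, $x$ is integral over $A$, so $A[x]\subset B\subset \O_P$ for every finite $P$, and the two sets coincide.

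Conversely, suppose the leading coefficient $c_n(t)\in A$ of $f$ is nonconstant. I need a finite place $P$ with $v_P(x)<0$. By Proposition \ref{prop:integrally-closed=DVRintersection}, $x\in B$ if and only if $v_P(x)\ge 0$ for all places containing $A$. So it suffices to show $x\notin B$, i.e. that $x$ is not integral over $A$. The minimal polynomial of $x$ over $K$ is $f/c_n$, because $f$ is irreducible over $K$; I will use that $f$ is irreducible in $k[t,X]$, hence primitive, so $f/c_n\in K[X]$ is irreducible. Since $A$ is integrally closed, $x$ is integral over $A$ if and only if its monic minimal polynomial has coefficients in $A$ (standard). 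For $f/c_n$ to lie in $A[X]$, $c_n$ would have to divide all the $c_i$, which contradicts primitivity unless $c_n\in k^\times$. Hence $x\notin B$, and some finite place $P$ has $x\notin\O_P$; this $P$ is finite but not centered on $C$.

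The main obstacle is this converse: being careful that the minimal-polynomial criterion for integrality applies over $A=k[t]$, and that primitivity of $f$ follows from irreducibility in $k[t,X]$ (a factor in $k[t]$ would be a nontrivial factorization, since $f$ has positive degree in $X$). An alternative concrete witness is a root $\alpha$ of $c_n$. For a place above $p=(t-\alpha)$, Newton-polygon reasoning gives $v_P(x)<0$ via the relation $c_n x^n=-\sum_{i<n}c_ix^i$. I would use the integrality argument, as it avoids computations.
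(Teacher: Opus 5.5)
Your proof is correct and essentially the paper's. Both arguments characterize finite places, resp.\ places centered on $C$, as those whose valuation ring contains $B$ (equivalently $A$), resp.\ $\kCbar$ (equivalently $k[C]$); both use monicity to get $x\in B$, and otherwise combine $x\notin B$ with Proposition \ref{prop:integrally-closed=DVRintersection} to exhibit a finite place not centered on $C$. Your Gauss-lemma argument that a nonconstant leading coefficient forces $x\notin B$ fills in a step the paper only asserts, and your caveat ``monic up to a nonzero constant'' is genuinely needed, since for a leading coefficient in $k^\times\setminus\{1\}$ the two sets still coincide.
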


\begin{proof}
By Proposition \ref{prop:center} (d), a place $P$ is centered on $C$ if and only if $\O_P$ contains $\kCbar$, while $P$ is finite if and only if $\O_P$ contains the integral closure $B$ of $A=k[t]$ in $L=k(t,x)$. Since $A\subset k[C]$, we have $B\subset \kCbar$, thus any place centered on $C$ is a finite place. If $f$ is monic, then $x\in B$, hence $B=\kCbar=\overline{A[x]}$ and affine places coincide with finite places. If $f$ is not monic, then $x\notin B$ and $B\varsubsetneq \kCbar=\overline{A[x]}$. Since both rings are integrally closed, we deduce from Proposition \ref{prop:integrally-closed=DVRintersection} that there exists a place $P$ such that $\O_P$ contains $B$ and does not contain $\kCbar$, that is $P$ is finite, but not centered on $C$. 
\end{proof}

For instance, in Exemple \ref{ex:places_infinity_continued}, the place with center $P_1=(Z,T)$ is not an affine place, but is a finite place.

\subsection{Delta invariant and the genus formula}\label{ssec:genus_formula}

Let $\C$ be a projective curve defined over $k$. Let $P\in \C$ be a closed point, with local ring $\O_{\C,P}$. The normalization $\overline{\O_{\C,P}}$ is a semi-local ring, whose maximal ideals correspond to the closed points of the normalization $\overline{\C}$ lying over $P$. It is a finite $\O_{\C,P}$-module (Proposition \ref{prop:Rbar_to_R}), thus $\overline{\O_{\C,P}}/\O_{\C,P}$ is a finite $k$-dimensional vector space. 

\begin{definition}\label{def:delta-invariant}
The delta-invariant of $\C$ at a closed point $P\in \C$ is 
$$
\delta_P(\C):=\dim_k \overline{\O_{\C,P}}/\O_{\C,P}.
$$ 
The global delta-invariant of $\C$ is $\delta(\C)=\sum_{P\in \C} \delta_P(\C)$.
\end{definition}

Note that $\delta_P(\C)= 0$ if and only if $P$ is a regular point (Proposition \ref{prop:regular_point}). Thus the sum holds over the finite set of singular points and the global delta-invariant is well-defined. It measures how singular the curve $\C$ is. If $\C$ is a nodal curve, its delta-invariant is simply the number of nodes (over $\bar{k}$), and is called sometimes the "double points number". 

\medskip
\noindent
Denoting $\O_{\C}$ the structural sheaf of $\C$, the arithmetic genus of $\C$ is defined as
$$
p_a(\C)=1-\dim_k H^0(\C,\O_{\C})+\dim_k H^1(\C,\O_{\C}).
$$
If $\C$ is irreducible over $\bar{k}$, this is $p_a(\C)=\dim_k H^1(\C,\O_{\C})$. 
By definition, the geometric genus $g(\C)$ is the arithmetic genus of the normalization of $\C$. In contrast to the arithmetic genus which depends on the embedding of $\C$, the geometric genus is a birational invariant.

The next result due to Hironaka \cite{Hironaka} is known as the genus formula. It relates the delta invariant, the arithmetic genus, and the geometric genus of a projective curve.

\begin{prop}\cite[Thm 2]{Hironaka}\label{prop:Hironaka}
Let $\C\subset \pr^r_k$ be a projective curve. We have $g(\C)=p_a(\C)-\delta(\C)$. 
\end{prop}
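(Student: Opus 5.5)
This is Hironaka's genus formula $g(\C)=p_a(\C)-\delta(\C)$. The plan is the standard sheaf-theoretic argument comparing $\O_{\C}$ with the direct image of the structure sheaf of the normalization. Let $\nu:\overline{\C}\to\C$ be the normalization. As recalled in Appendix \ref{sec:curves}, it is obtained by gluing the affine normalizations $\Cbar_i$ of the charts $C_i$, so $\nu$ is finite and affine. Hence $\nu_*\O_{\overline{\C}}$ is a coherent $\O_{\C}$-algebra whose stalk at a closed point $P$ is the integral closure $\overline{\O_{\C,P}}$. This uses Proposition \ref{prop:Rbar_to_R} and the fact that integral closure commutes with localization, as in the proof of Proposition \ref{prop:delta et delta_infty}.

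The first step is the short exact sequence of sheaves on $\C$
$$0\longrightarrow \O_{\C}\longrightarrow \nu_*\O_{\overline{\C}}\longrightarrow \mathcal{Q}\longrightarrow 0.$$
The quotient $\mathcal{Q}$ is supported on the finite set of singular points, by Proposition \ref{prop:regular_point}. It is a skyscraper sheaf with $H^0(\C,\mathcal{Q})=\bigoplus_P \overline{\O_{\C,P}}/\O_{\C,P}$, so $\dim_k H^0(\C,\mathcal{Q})=\delta(\C)$ by Definition \ref{def:delta-invariant}. Moreover $H^1(\C,\mathcal{Q})=0$, since $\mathcal{Q}$ has zero-dimensional support.

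The second step uses that $\nu$ is affine, so $H^i(\C,\nu_*\O_{\overline{\C}})=H^i(\overline{\C},\O_{\overline{\C}})$ for all $i$. This can be checked by computing Čech cohomology on the affine cover $\{\C\cap U_i\}$, whose preimages $\Cbar_i$ are affine.

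Third, I take the long exact cohomology sequence. All the spaces involved are finite-dimensional, because $\C$ is projective and the sheaves are coherent. The alternating sum of dimensions gives
$$\chi(\O_{\overline{\C}})=\chi(\O_{\C})+\delta(\C).$$
By definition $p_a=1-\chi(\O)$ on both curves, and $g(\C)=p_a(\overline{\C})$. Substituting gives $1-g(\C)=1-p_a(\C)+\delta(\C)$, which is the formula.

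The main obstacle is the bookkeeping of the second step together with the stalk identification. One has to check that the glued $\overline{\C}$ really has $\nu_*\O_{\overline{\C}}$ with the claimed stalks, and that the vanishing of higher direct images holds over the non-algebraically closed field $k$. I would handle both by working chart by chart, since everything reduces to the affine statement $\overline{k[C_i]}_P=\overline{\O_{\C,P}}$. Dimensions over $k$ are additive along exact sequences of $k$-vector spaces, so no residue-field correction appears: $\delta_P$ is already measured as a $k$-dimension.
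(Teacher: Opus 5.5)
Your proof is correct, but it does not follow the paper, because the paper gives no argument for this statement: it only cites Hironaka's Theorem~2. You instead give the standard self-contained cohomological proof. It rests on three ingredients:
\begin{enumerate}
\item the exact sequence $0\to\O_{\C}\to\nu_*\O_{\overline{\C}}\to\mathcal{Q}\to 0$;
\item the equality $H^i(\C,\nu_*\O_{\overline{\C}})=H^i(\overline{\C},\O_{\overline{\C}})$, which holds because $\nu$ is finite, hence affine;
\item the identity $\chi(\mathcal{Q})=h^0(\mathcal{Q})=\delta(\C)$, which holds because $\mathcal{Q}$ has finite support.
\end{enumerate}
All steps are sound.

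What your route buys is a check of the formula with exactly the paper's conventions: $p_a=1-h^0+h^1$ computed over $k$, $\delta_P$ measured as a $k$-dimension, and $g(\C)=p_a(\overline{\C})$. It works over a perfect, not necessarily algebraically closed $k$. It also does not need $H^0(\C,\O_{\C})=k$, since only Euler characteristics enter. Your decomposition $H^0(\C,\mathcal{Q})=\bigoplus_P\overline{\O_{\C,P}}/\O_{\C,P}$ is the global, sheaf-theoretic version of the Chinese-remainder splitting the paper itself uses in the proof of Proposition~\ref{prop:delta et delta_infty}.

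The price is importing coherent cohomology: Serre finiteness, cohomology along affine morphisms, and vanishing of $H^1$ for finitely supported sheaves. The paper's commutative-algebra appendix avoids this machinery by citing instead.

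The ``obstacle'' you flag about higher direct images over a non-closed field is not one. $R^i\nu_*$ vanishes on quasi-coherent sheaves for any affine morphism, over any base.
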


In other words, the difference between arithmetic genus and geometric genus measures how singular the curve is. For a projective plane curve of degree $n$, the genus degree formula states that 
$$p_a(\C)=\frac{(n-1)(n-2)}{2}.
$$ 
Combined with Proposition \ref{prop:Hironaka}, we get:
 
\begin{coro} \label{coro:lien genre delta}
Let $\C\subset \pr^2_k$ be a projective plane curve of degree $n$. Then $g(\C)=\frac{(n-1)(n-2)}{2}-\delta(\C)$. 
\end{coro}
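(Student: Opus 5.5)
The corollary should follow in one line by combining Proposition \ref{prop:Hironaka} with the classical value of the arithmetic genus of a plane curve. The only real work is to justify $p_a(\C)=\frac{(n-1)(n-2)}{2}$ for a degree $n$ plane curve, and to check that Hironaka's formula applies to $\C=\Proj\,k[X_0,X_1,X_2]/(F)$ over our perfect field $k$. I would proceed as follows.

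\textbf{Step 1: the arithmetic genus.} I compute $p_a(\C)=1-\dim_k H^0(\C,\O_\C)+\dim_k H^1(\C,\O_\C)$ from the ideal sheaf sequence
$$0\to \O_{\pr^2}(-n)\xrightarrow{\cdot F}\O_{\pr^2}\to \O_\C\to 0.$$
The cohomology of line bundles on $\pr^2_k$ is standard: $H^0(\O_{\pr^2})=k$, while $H^1(\O_{\pr^2}(m))=0$ for all $m$, and $H^2(\O_{\pr^2})=0$. By Serre duality, $\dim H^2(\O_{\pr^2}(-n))=\dim H^0(\O_{\pr^2}(n-3))=\binom{n-1}{2}$. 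Also $H^0(\O_{\pr^2}(-n))=0$ for $n>0$.

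The long exact sequence then gives $H^0(\O_\C)=k$ and $H^1(\O_\C)\simeq H^2(\O_{\pr^2}(-n))$. Hence
$$p_a(\C)=1-1+\frac{(n-1)(n-2)}{2}=\frac{(n-1)(n-2)}{2}.$$
This holds whether or not $\C$ is geometrically irreducible, since $H^0(\O_\C)=k$ regardless. An alternative route is to compute the Hilbert polynomial $\chi(\O_\C(m))=nm+1-\binom{n-1}{2}$ directly from the graded resolution.

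\textbf{Step 2: the genus formula.} Proposition \ref{prop:Hironaka} gives $g(\C)=p_a(\C)-\delta(\C)$, with $\delta(\C)$ as in Definition \ref{def:delta-invariant}. Substituting the value from Step 1 yields the claim.

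\textbf{Main obstacle.} The delicate point is conceptual rather than computational: one must check that the conventions match. Hironaka's $\delta$ uses $\dim_k$ of $\overline{\O_{\C,P}}/\O_{\C,P}$ summed over closed points, and the paper's $g$ is the arithmetic genus of the normalization in the same $1-h^0+h^1$ convention. Then both sides are computed over $k$ consistently.

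If one prefers to avoid Hironaka, the same identity follows from the exact sequence $0\to\O_\C\to\nu_*\O_{\overline{\C}}\to\bigoplus_P \overline{\O_{\C,P}}/\O_{\C,P}\to 0$. Since $\nu$ is finite, taking Euler characteristics gives $\chi(\O_{\overline{\C}})=\chi(\O_\C)+\delta(\C)$, which is exactly $p_a(\C)-g(\C)=\delta(\C)$.
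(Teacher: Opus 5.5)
Your proposal is correct and follows the same route as the paper, which simply combines Proposition~\ref{prop:Hironaka} with the degree--genus formula $p_a(\C)=\frac{(n-1)(n-2)}{2}$. The paper quotes that formula without proof. Your ideal-sheaf computation of $p_a(\C)$, and your Euler-characteristic derivation of Hironaka's formula from the normalization sequence, just supply standard details the paper leaves implicit.
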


%
%
%
%
%

\bibliographystyle{abbrv}
\bibliography{biblio}
\addcontentsline{toc}{section}{References.}

\end{document}